\title[Linear Generators]{Triangulated Categories Admitting Linear Generators}
\author{Marina Godinho and Dave Murphy}
\subjclass{}
\keywords{perfect derived category, classical generators, linear generators, discrete cluster categories.}
\date{}
\begin{document}

\begin{abstract}
	The main result of this paper is that there is an additive equivalence between $\ocC$, the Paquette-Y\i ld\i r\i m completion of the discrete cluster categories of Dynkin type $A_{\infty}$, and the perfect derived category of a certain DG algebra. This additive equivalence preserves some of the triangulated structure: it commutes with the suspension functor and preserves triangles with at least two indecomposable terms. In the process, we introduce the notion of a linear generator $G$ in a Krull-Schmidt, Hom-finite triangulated category. It turns out that the existence of a linear generator affords a large amount of control over $\cT$. For example, it allows us to describe all indecomposable objects in $\cT$ in terms of $G$, to determine all triangles of $\cT$ with at least two indecomposable objects, and to show that the Rouquier dimension of $\cT$ is at most one. Moreover, we prove that there is an additive equivalence (which preserves some of the triangulated structure) between $\cT$ and the perfect derived category of a certain DG algebra. Finally, we show that any triangulated category with a linear generator is additively equivalent to a thick subcategory of $\ocC$. 
\end{abstract}

\maketitle

\tableofcontents

\section{Introduction}

Cluster categories been studied extensively as examples of triangulated categories which exhibit remarkable algebraic and combinatorial structure, and which have connections to other areas of mathematics, such as singularity categories and cluster algebras (see, for example, \cite{Amiot,Barot07,BMRRT,CCS,Holm2009,Lamberti,Todorov2006}).
Indeed, cluster categories have often been studied through the lens of triangulations of marked surfaces; see \cite{BrustleZhang,SchifflerTypeDn,QiuZhou} for finite rank cases, and \cite{CanakciKalckPressland,Fisher2014,Franchini,FranchiniNegative,Gratz2017,GZ2021,Igusa2013,LiuPaquette} for infinite rank cases.

In this paper, we are interested in a family of infinite rank cluster categories $\ocC$ for $n \geq 1$, constructed in \cite{Paquette2020} called \textit{Paquette-Y\i ld\i r\i m completions of discrete cluster categories of Dynkin type} $A_{\infty}$. These categories have exceptionally powerful combinatorial properties, and can be used to develop combinatorial techniques in the study  of representation theory of triangulated categories containing infinitely many indecomposable objects up to isomorphism. Moreover, these categories are \textit{algebraic}, so that, given a compact generator $G \in \ocC$, there is a DG algebra $\mathrm{End}^{\mathrm{DG}}(G)$ and a triangle equivalence $\ocC \to \perf \mathrm{End}^{\mathrm{DG}}(G)$ by a result of Keller \cite{KellerDeriving}. 

In \cite{ACFGS}, the authors construct a compact generator $G \in \ocC[1]$ and find that $\mathrm{End}^{\mathrm{DG}}(G)$ is the DG algebra with underlying graded algebra $k[x]$, where $x$ concentrated in degree $-1$, and with trivial differential. Furthermore, in \cite{Generators}, the second named author classifies the compact generators $G$ of $\ocC$ for $n \geq 1$. Thus, a natural question arises: can we describe the DG algebra $\mathrm{End}^{\mathrm{DG}}(G)$ such that $\ocC$ is triangle equivalent to the perfect derived category of $\mathrm{End}^{\mathrm{DG}}(G)$?

With this question in mind, we introduce in \Cref{Sec: Linear Gens} the concept of triangulated categories with \textit{linear generators}. Given a triangulated category $\cT$, and an object $G \in \cT$, let $\lang[1]{G} \subset \cT$ denote the smallest full subcategory containing $G$ and closed under direct sums, direct summands, and suspension. Roughly, a linear generator $G$ of $\cT$ is a classical generator $G \in \cT$ such that the set $\mathrm{ind}( \lang[1]{G})$ of indecomposable objects in $\lang[1]{G}$ admits a total order where $Q < P \in \mathrm{ind}(\lang[1]{G})$ implies that $\hcT{P}{Q} \cong k$ and $\hcT{Q}{P} =0$. Moreover, we place some conditions on the total ordering concerning the factorisation of morphisms. 

	Throughout, we work within the following setup.

	\begin{setup*}\label[set]{setup: cluster cat intro}
		Let $k$ be a field and let $\cT$ be a $k$-linear Hom-finite, Krull-Schmidt triangulated category.   
	\end{setup*}  
	
	It turns out that the existence of a linear generator provides us a large amount of control over $\cT$. For example, in \Cref{Sec: Linear Gens}, we describe all indecomposable objects of $\cT$ in terms of $G$, we describe all triangles in $\cT$ with at least two indecomposable terms, and we compute the Rouquier dimension of $\cT$. Most importantly, the properties of linear generators allow us to prove our main result in \Cref{Sec: Linear Gens}. 

\begin{theorem*}[Theorem \ref{Thm1: F preserves 2 inds}]\label{Intro Thm1}
	Let $\cT$ and $\cS$ be triangulated categories satisfying \Cref{setup: cluster cat intro} and admitting linear generators $G$ and $G'$, respectively. If there is a fully faithful additive functor $F \colon \lang[1]{G} \rightarrow \lang[1]{G'}$ that commutes with the respective suspension functors, then there exists a fully faithful additive functor $\mathscr{F} : \cT \rightarrow \cS$ which commutes with suspension and preserves triangles with at least two indecomposable terms. Further, $\mathscr{F}$ is an additive equivalence if and only if $F$ is. 
\end{theorem*}

Moreover, using the results of \Cref{Sec: Linear Gens}, we are able to compute the graded endomorphism algebra of minimal linear generators. By a minimal generator $G$, we mean that there does not exist a proper direct summand $H$ of $G$, such that $H$ is a generator of $\cT$.
 
\begin{proposition*}[Proposition \ref{Prop: Endo Alg}] \label[prop]{Prop: Endo Alg Intro}
	Let $\cT$ be a triangulated category satisfying \Cref{setup: cluster cat intro} and admitting a minimal linear generator $G = \bigoplus_{i = 1}^m G_i$ with $m$ indecomposable summands. Then, $\Endo[\cT]{\ast}{G}$ is isomorphic to the upper triangular $m \times m$-matrix algebra $\chi^G$, with
	\[ \chi^G_{i, j} := \begin{cases} k[x] &  \text{if $i = j$ and $G_i < G_i[1]$} \\ 0 & \text{if $j < i$}  \\  k[x^\pm]  & \text{otherwise.} \end{cases} \]
	where $x$ is concentrated in degree $-1$. 
\end{proposition*}

We may view $\chi^G$ as a DG algebra with zero differential. Let us denote such DG algebra as $\Lambda^G$. Then, as an application of \Cref{Intro Thm1}, we obtain the main result of \Cref{Sec: Graded Endo}. 

\begin{theorem*}[Theorem \ref{Thm: T is equivalent to perf}] \label{Thm: T is equivalent to perf Intro}
	Let $G \in \cT$ be a minimal linear generator of a triangulated category satisfying \Cref{setup: cluster cat}. Then, there is an additive equivalence $\mathscr{F} \colon \cT \to \perf(\Lambda^G)$ which commutes with the respective suspension functors and preserves triangles with at least two indecomposable terms.
\end{theorem*}

In \Cref{Subsec: Lambda}, we focus on triangulated categories satistying \Cref{setup: cluster cat intro} and admitting a minimal linear generator $H = \bigoplus_{i=1}^{2n-1} H_i$ with $H_i \cong H_i[1]$ if and only if $i$ is even. Let $\Lambda_n$ denote the graded endomorphism algebra of such a generator. Then, \Cref{Subsec: Lambda} shows that $\Lambda_n$, is isomorphic to the path algebra of a graded quiver with relations.

More precisely, consider the quiver $Q$
	\[
	\begin{tikzcd}
		Q : & 1 \arrow[r,"\delta_{12}"] \arrow[loop right, in=290,out=250,looseness=15,"\alpha_1",swap] & 2 \arrow[r,"\delta_{23}"] \arrow[loop right, in=290,out=250,looseness=15,"\alpha_2",swap] \arrow[loop,in=70,out=110,looseness=12,"\beta_2"] & 3 \arrow[r,"\delta_{34}"] \arrow[loop right, in=290,out=250,looseness=15,"\alpha_3",swap] & 4 \arrow[loop right, in=290,out=250,looseness=15,"\alpha_4",swap] \arrow[loop,in=70,out=110,looseness=12,"\beta_4"] \arrow[r] & \cdots \arrow[r] & 2n-2 \arrow[loop right, in=290,out=250,looseness=15,"\alpha_{2n-2}",swap] \arrow[loop,in=70,out=110,looseness=12,"\beta_{2n-2}"] \arrow[rr,"\delta_{2n-2,2n-1}"] && 2n-1 \arrow[loop right, in=290,out=250,looseness=15,"\alpha_{2n-1}",swap].
	\end{tikzcd}
	\]
	Let $Q_0$ denote the set of vertices of $Q$ and for each $a \in Q_0$ write $\iota_a$ for the trivial idempotent path in $Q$ at vertex $a$. Moreover, let $B \subset Q_0$ be the set of vertices such that there is no loop $\beta_b$ for $b \in B$.  Let $I$ be the ideal 
	\[
	I := \langle \alpha_a \beta_a  - \iota_a,\, \beta_a \alpha_a  - \iota_a,\, \alpha_{b} \delta_{b,b+1} - \delta_{b,b+1} \alpha_{b+1},\, \beta_a  \delta_{a,a+1} \delta_{a+1,a+2} - \delta_{a,a+1} \delta_{a+1,a+2} \beta_{a+2}\rangle,
	\]
	for $1 \leq a,b \leq 2n-1$, and $a \notin B$. 

\begin{theorem*}[Theorem \ref{Thm:path algebras}]\label{Intro Thm2}
	Let $\cT$ be a triangulated category satisfying \Cref{setup: cluster cat intro}, and let $H \in \cT$ be a minimal linear generator with $H \cong \bigoplus_{i=1}^{2n-1} P_i$, where $P_i \cong P_i[1]$ if and only if $i \equiv 0 \, \mathrm{mod}\, 2$.
	Then there is an isomorphism of graded $k$-algebras,
	\[
	\Lambda_n \xrightarrow{\sim} kQ/I,
	\]
\end{theorem*}

	Our interest in $\Lambda_n$ is motivated by \cite{MurphyThesis}, which proves that there exists a classical generator $E \in \ocC$, called a \textit{fan generator}, satisfying the assumptions placed on $H$ in \Cref{Intro Thm2}. The construction of $E$ and the fact that it does indeed satisfy the assumptions of \Cref{Intro Thm2} are discussed in \Cref{Sec: Those cats}. This allows us to combine \Cref{Intro Thm2} and \Cref{Thm: T is equivalent to perf Intro} to partially answer our motivating question and prove the main result of \Cref{Sec: classical gens of ooC}.

\begin{theorem*}[Theorem \ref{Thm: Add Equiv}]\label{Into Thm3}
	There is an additive equivalence $\mathscr{F} : \ocC \xrightarrow{\sim} \mathrm{perf}\Lambda_n$ which commutes with the respective suspension functors and preserves triangles with at most two indecomposable terms.
	
	Moreover, let $\cS$ be a triangulated category satisfying \Cref{setup: cluster cat intro} and admitting a linear generator $G$.
	Then $\cS$ is additively equivalent to a thick subcategory of $\ocC$ for some $n \in \mathbb{Z}$.
\end{theorem*}

It is possible that the additive equivalence $\mathscr{F}$ is in fact a triangulated equivalence, however our techniques are not currently able to prove this.
The interested reader is encouraged to investigate if it is possible to lift $\mathscr{F}$ to a triangulated equivalence.

\subsubsection*{Conventions}

Throughout, we let $k$ be an algebraically closed field.
All modules will be considered as right modules, unless otherwise stated, and we will compose paths from left to right.
That is, if $a$ and $b$ are paths, then we write $ab$ to mean first apply $a$ then $b$. 
However, morphisms will be composed right to left, \ie if $f : X \rightarrow Y$ and $g :Y \rightarrow Z$, then $gf : X \rightarrow Z$.
For a category $\cC$ and two objects $X,Y \in \cC$, we write $\cC(X,Y) = \Hom[\cC]{X}{Y}$.

We will assume that the suspension functor $[1]_{\cT}$ for a triangulated category $\cT$ has a natural inverse, $[-1]_{\cT}$, and we will simply denote the suspension functor by $[1]$ if the category is clear from context.
We will always take triangle to mean a distinguished triangle in a triangulated category.

For a graded/differential graded algebra $A$, we denote the degree $m \in \mathbb{Z}$ component of $A$ by $A^m$.

\subsubsection*{Acknowledgements}

In the process of writing this work, the authors had helpful conversations with with many people, and so it is a pleasure to thank these people for their kind help. Special thanks goes to Greg Stevenson, David Paukstello, Rosanna Laking, and Jenny August for their encouragement and valuable suggestions. 

\subsubsection*{Funding}

The first named author was supported by ERC Consolidator Grant 101001227 (MMiMMa).
The second named author was supported by project LAVIE - Large views of small phenomena: decompositions, localizations, and representation type, FIS 00001706, funded by Program FIS2021 of Italian Ministry of University and Research.

\subsubsection*{Open Access} For the purpose of open access, the author has applied a Creative Commons Attribution (CC:BY) licence to any Author Accepted Manuscript version arising from this submission.

\section{Triangulated Categories with a Linear Generator}\label{Sec: Linear Gens}

This section introduces the concept of triangulated categories with linear generators. It turns out that the existence of such generators imply very strong and interesting properties on the triangulated category and its morphisms. We are interested in these types of categories as the main examples of categories in this paper satisfy this setup and, moreover, the abstract properties of this setup that are proved in this section will play a key role in the remainder of the paper.

Throughout, we will be interested in triangulated categories satisfying the following setup.

\begin{setup}\label[set]{setup: cluster cat}
	Let $k$ be a field and let $\cT$ be a $k$-linear Hom-finite, Krull-Schmidt triangulated category.   
\end{setup} 

\subsection{Linear Generators}

We begin by recalling some standard notation and terminology concerning generators of triangulated categories.

Let $\cT$ be a triangulated category and consider a subcategory $\cS$ of $\cT$. Recall $\cS$ is \textit{thick} if it is a triangulated subcategory of $\cT$ closed under direct summands. Moreover, given an object $X \in \cT$, we are interested in the following subcategories associated to $X$.

\begin{enumerate} 
	\item The \textit{thick closure} $\lang{X}$ of $X$ is the smallest thick subcategory of $\cT$ containing $X$.
	\item The category $\lang[1]{X}$ is the smallest additive full subcategory of $\cT$ which contains $X$, and is closed under direct summands and the suspension functor $[1]$ of $\cT$.
	\item The category $\lang[m]{X}$ is defined inductively as the full subcategory of $\cT$ consisting of objects $A \in \lang[m]{X}$ which fit into a triangle
\[
B \rightarrow A \rightarrow C \rightarrow B[1],
\]
where $B \in \lang[1]{X}$ and $C \in \lang[m-1]{X}$. In other words, $\lang[m]{X} := \lang[1]{X} \star \lang[m-1]{X}$.
\end{enumerate}

A \textit{classical generator} of $\cT$ is an object $G \in \cT$ such that $\lang{G} = \cT$. A classical generator is, moreover, a \textit{strong generator} if $\lang[m]{G} = \cT$ for some integer $m \geq 1$. Given a strong generator $G$, let $n$ be the smallest positive integer such that  $\lang[n]{G} = \cT$. Then, the \textit{generation time} of $G$ is $n-1$.

\medskip
This section investigates the properties of Krull-Schmidt, Hom-finite, $k$-linear, triangulated categories equipped with a classical generator $G$ such that $\lang[1]{G}$ satisfies a certain property.

\begin{definition} \label[def]{def: linear gen}
	Let $\cT$ be a triangulated category satisfying \Cref{setup: cluster cat} and suppose that $G \in \cT$ is a classical generator. Then, we say that $G$ is a \textit{linear generator} if it satisfies the properties:
\begin{enumerate}[label={(G\arabic*)}]
 \item The set of classes of indecomposable objects in $\lang[1]{G}$ admits a total ordering defined as:  \label{G1}
	
	For indecomposable objects $Q, P \in \lang[1]{G}$, then $Q \leq P$ if and only if $\hcT{P}{Q} \cong k$. Moreover, if $Q \not\cong P$, then $\hcT{Q}{P}=0$.
 \item Given an indecomposable object $P \in \lang[1]{G}$, then $P \leq P[1]$.  \label{G2}
\item For indecomposable objects $P, Q \in \lang[1]{G}$, then $P \leq Q \leq P[1]$ implies that $Q$ is isomorphic to at least one of $P$ and $P[1]$.  \label{G4}
 
 \item For indecomposable objects $R \leq Q \leq P$ in $\lang[1]{G}$, then any morphism from $P$ to $R$ factors non-trivially through $Q$.  \label{G3}
\end{enumerate}
\end{definition}

\begin{lemma}\label[lem]{Lem1: Pa cone is ind}
	Let $\cT$ be a triangulated category satisfying \Cref{setup: cluster cat} and admitting a linear generator $G$. Suppose that $f \colon P \rightarrow Q$ is a non-zero morphism between non-isomorphic, indecomposable objects in $\lang[1]{G}$. Then, $\mathrm{cone} \, f$ is indecomposable in $\cT$.
\end{lemma}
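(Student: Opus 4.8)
The plan is to show that $C := \mathrm{cone}\, f$ is nonzero and that its only idempotent endomorphisms are $0$ and $\mathrm{id}_C$; since $\cT$ is Krull--Schmidt, this forces $C$ to be indecomposable. Fix a triangle $P \xrightarrow{f} Q \xrightarrow{g} C \xrightarrow{h} P[1]$. As $f \neq 0$ and $P \not\cong Q$, property \ref{G1} forces $Q < P$, so $\hcT{Q}{P} = 0$; combining $Q < P$ with $P \leq P[1]$ from \ref{G2} gives $Q < P[1]$, whence $\hcT{Q}{P[1]} = 0$ and, applying $[-1]$, $\hcT{Q[1]}{P[1]} \cong \hcT{Q}{P} = 0$. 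Finally, reflexivity of the order in \ref{G1} gives $\hcT{X}{X} \cong k$ for every indecomposable $X \in \lang[1]{G}$, in particular for $Q$ and $P[1]$.

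First I would pin down two Hom spaces from the cone triangle. Applying $\hcT{Q}{-}$ and using $\hcT{Q}{P} = \hcT{Q}{P[1]} = 0$ shows that $g_* \colon \hcT{Q}{Q} \to \hcT{Q}{C}$ is an isomorphism, so $\hcT{Q}{C} = k\cdot g$ and in particular $g \neq 0$, hence $C \neq 0$. Dually, applying $\hcT{-}{P[1]}$ and using $\hcT{Q}{P[1]} = \hcT{Q[1]}{P[1]} = 0$ shows that $h^* \colon \hcT{P[1]}{P[1]} \to \hcT{C}{P[1]}$ is an isomorphism, so $\hcT{C}{P[1]} = k\cdot h$. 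A short separate step shows $h \neq 0$: if $h = 0$, rotating the triangle exhibits $f[1]$, and hence $f$, as a split monomorphism, forcing the indecomposable $P$ to be a direct summand of $Q$ and thus $P \cong Q$, a contradiction.

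Now I would run the idempotent argument. Let $e \in \hcT{C}{C}$ with $e^2 = e$. Then $eg \in \hcT{Q}{C} = k\cdot g$ and $he \in \hcT{C}{P[1]} = k\cdot h$, say $eg = \lambda g$ and $he = \mu h$; idempotency of $e$ together with $g, h \neq 0$ gives $\lambda, \mu \in \{0,1\}$. The case $(\lambda, \mu) = (1,0)$ is impossible: $he = 0$ lets $e$ factor through $g$, say $e = g e'$ with $e' \colon C \to Q$, and then $e' g \in \hcT{Q}{Q} = k$ together with $eg = g$ forces $e' g = \mathrm{id}_Q$; so $g$ is a split monomorphism and $f = (e'g)f = e'(gf) = 0$, contradicting $f \neq 0$. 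Replacing $e$ by $\mathrm{id}_C - e$ rules out $(\lambda, \mu) = (0,1)$ as well. If $(\lambda, \mu) = (0,0)$, write again $e = g e'$ and set $e' g = \nu\,\mathrm{id}_Q$; then $e = e^2 = g(e'g)e' = \nu e$, so either $e = 0$ or $\nu = 1$, and $\nu = 1$ again makes $g$ split mono, which is impossible; hence $e = 0$. Replacing $e$ by $\mathrm{id}_C - e$ shows that $(\lambda, \mu) = (1,1)$ forces $e = \mathrm{id}_C$. Thus $C$ is nonzero with no nontrivial idempotents, so it is indecomposable.

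The only genuinely delicate point is the vanishing bookkeeping at the start: one must check that, after the appropriate suspensions, every Hom space that needs to be annihilated really is zero, and this is exactly where \ref{G1} and \ref{G2} are used --- notably, neither of the two morphism-factorization axioms of a linear generator is needed here. Everything after that is a routine chase through the long exact sequences attached to the cone triangle.
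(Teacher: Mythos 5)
Your proof is correct, but it takes a genuinely different route from the paper's. The paper computes $\hcT{P}{\cone{f}} = \hcT{P[1]}{\cone{f}} = 0$ and $\hcT{Q}{\cone{f}} \cong k$, then feeds these into the long exact sequence for $\hcT{-}{\cone{f}}$ to conclude that $\Endo[\cT]{}{\cone{f}} \cong k$, i.e.\ that the endomorphism ring is local. To get the vanishing of $\hcT{P[1]}{\cone{f}}$ it must argue that the post-composition maps $\hcT{P[1]}{f}$ and $\hcT{P[1]}{f[1]}$ are nonzero, and that step silently leans on the factorisation axiom \ref{G3}. You instead compute the two one-dimensional spaces $\hcT{Q}{\cone{f}} = k\cdot g$ and $\hcT{\cone{f}}{P[1]} = k\cdot h$, which fall out of \ref{G1} and \ref{G2} alone with no claim about induced maps being nonzero, and then run a clean idempotent-splitting argument (with the split-mono cases killed by $g\circ f = 0$). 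Your observation that neither factorisation axiom is needed is accurate and is a genuine simplification of the hypotheses used; what the paper's route buys in exchange is the stronger and independently useful conclusion $\Endo[\cT]{}{\cone{f}} \cong k$, rather than merely the absence of nontrivial idempotents. One tiny redundancy: your separate argument that $h \neq 0$ is unnecessary, since $h = h^{*}(\id_{P[1]})$ and you have already shown $h^{*}$ is injective.
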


\begin{proof}
	We will prove the statement by proving that $\cone{f}$ has a local endomorphism ring. 

	Given that $f$ is non-zero and $G$ is a linear generator, then it must be that $Q \leq P$ by property \ref{G1} in \Cref{def: linear gen}. If $Q \cong P$, then, since $\hcT{P}{Q} \cong k$, it follows that $f$ is an isomorphism and we may conclude that $\cone{f} = 0$. Hence, it suffices to consider the case where $P$ and $Q$ are not isomorphic.

	Consider the triangle
	\[
	P \xrightarrow{f} Q \xrightarrow{g} \mathrm{cone} \,  f \rightarrow P[1].
	\]
	Applying the covariant functor $\hcT{P[1]}{-}$ induces the long exact sequence
	\[
	\hcT{P[1]}{P}\xrightarrow{\hcT{P[1]}{f}} \hcT{P[1]}{Q} \rightarrow
	\hcT{P[1]}{\mathrm{cone} \, f} \rightarrow
	\hcT{P[1]}{P[1]} \xrightarrow{\hcT{P[1]}{f[1]}} \hcT{P[1]}{Q[1]}
	\]
	It follows from properties \ref{G1} and \ref{G2} in \Cref{def: linear gen} that $\hcT{P[1]}{P} \cong k$ and $\hcT{P[1]}{Q} \cong k$. Further, observe that $\hcT{P[1]}{f}$ and $\hcT{P[1]}{f[1]}$  are not the zero map since $f$ is nonzero, and therefore, by dimension arguments, they must be isomorphisms. As a result, we may conclude from the long exact sequence that $\hcT{P[1]}{\cone{f}} = 0$. Similarly, applying $\hcT{P}{-}$ to the triangle defining $\cone{f}$ shows that $\hcT{P}{\cone{f}}$ vanishes. 

	Moreover, since $P \not\cong Q$, then $\hcT{Q}{P} = 0$ by \Cref{def: linear gen} \ref{G1}. Therefore, applying the functor $\hcT{Q}{-}$ to the triangle defining $\cone{f}$ shows that $\hcT{Q}{\cone{f}} \cong k$.
	
	Finally, the contravariant functor $\hcT{-}{\mathrm{cone} \, f}$ applied to the triangle defining $\cone{f}$ induces the exact sequence
	\[
	\hcT{P[1]}{\mathrm{cone} \, f} \rightarrow \hcT{\mathrm{cone} \, f}{\mathrm{cone} \, f} \rightarrow
	\hcT{Q}{\mathrm{cone} \, f} \rightarrow
	\hcT{P}{\mathrm{cone} \, f}.
	\]
	Since $\hcT{P}{\cone{f}}$, $\hcT{P[1]}{\cone{f}}$ vanish, and $\hcT{Q}{\cone{f}} \cong k$, then it must be that $\hcT{\cone{f}}{\cone{f}} \cong k$. That is, $\cone{f}$ has a local endomorphism ring, whence $\mathrm{cone} \, f$ is indecomposable.
\end{proof}

Given indecomposable objects $P, Q \in \lang[1]{G}$, it is worth noting that the cones $\cone{f}$ and $\cone{g}$ of any two nonzero morphisms $f \colon P \rightarrow Q$ and $g \colon P \rightarrow Q$ are isomorphic. To see this simply observe that, since there exists nonzeor morphisms $P \to Q$, then $\hcT{P}{Q} \cong k$. Therefore, there exists a scalar $\lambda \in k$ such that $f = \lambda \cdot g$. In other words, since composition is $k$-linear, $f = g \circ \lambda \cdot \id_P$. We may thus construct the following commutative diagram
\[
\begin{tikzcd}
P \arrow[rr, "f"] \arrow[d, "{\lambda \cdot \id_P}"', "\sim" labl1] && Q \arrow[rr] \arrow[d, equals] && \cone{f} \arrow[rr] \arrow[d, dashed] && P[1] \arrow[d] \\ 
P \arrow[rr, "g"] && Q \arrow[rr] && \cone{g} \arrow[rr] && P[1] \\ 
\end{tikzcd}
\]
showing that $\cone{f} \cong \cone{g}$. 

\begin{notation} \label{not: XPQ}
In view of the remark above, given any non-zero morphism $f \colon P \rightarrow Q$, the indecomposable object $\cone{f}$ will be denoted $X_{P, Q}$.
\end{notation}

\subsection{Morphism Spaces}

Given a triangulated category $\cT$ as in \Cref{setup: cluster cat} and a linear generator $G$ in $\cT$, this subsection aims to describe morphisms in $\hcT{X}{Y}$ where $X$ and $Y$ are indecomposable objects in $\lang[2]{G}$. In order to do so, we first describe morphisms whose domain or codomain object is $X_{P, Q}$. 

\begin{lemma}\label[lem]{Lem1: Hom X to P}
	With notation as above, then $\hcT{X_{P,Q}}{R} \cong k$ if and only if $Q[1] < R \leq P[1]$. Otherwise, $\hcT{X_{P,Q}}{R}=0$.
\end{lemma}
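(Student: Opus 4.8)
The plan is to compute $\hcT{X_{P,Q}}{R}$ directly from the defining triangle of $X_{P,Q}$, exactly as in the proof of \Cref{Lem1: Pa cone is ind}, by applying the contravariant functor $\hcT{-}{R}$. Starting from the triangle
\[
P \xrightarrow{f} Q \xrightarrow{g} X_{P,Q} \xrightarrow{h} P[1],
\]
one obtains the long exact sequence
\[
\hcT{P[1]}{R} \xrightarrow{\hcT{h}{R}} \hcT{X_{P,Q}}{R} \rightarrow \hcT{Q}{R} \xrightarrow{\hcT{f}{R}} \hcT{P}{R}.
\]
Since $P, Q, R$ are all indecomposable objects of $\lang[1]{G}$, property \ref{G1} pins down each outer term: each of $\hcT{P[1]}{R}$, $\hcT{Q}{R}$, $\hcT{P}{R}$ is either $k$ or $0$ according to whether $R \leq P[1]$, $R \leq Q$, $R \leq P$ respectively. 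So the computation reduces to a case analysis on where $R$ sits in the total order relative to $Q$, $P$, and $P[1]$, together with determining when the connecting maps $\hcT{f}{R}$ and $\hcT{h}{R}$ are nonzero.

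**The case analysis.**

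I would organize the argument around the chain $Q \leq P \leq P[1]$ (recall $Q \leq P$ since $f \neq 0$, and $P \leq P[1]$ by \ref{G2}; also $Q[1] \leq P[1]$). The claim is that $\hcT{X_{P,Q}}{R} \cong k$ precisely when $Q[1] < R \leq P[1]$. If $R \leq Q$, then $R \leq Q \leq P \leq P[1]$, so all three outer terms are $k$; here one must show $\hcT{f}{R}$ is an isomorphism — this should follow because a nonzero map $Q \to R$ composed with $f$ stays nonzero, which is where property \ref{G3} (factorization through intermediate objects) or a direct dimension argument enters — forcing $\hcT{X_{P,Q}}{R} = 0$ (after also checking the map $\hcT{P[1]}{R} \to \hcT{X_{P,Q}}{R}$, i.e. that $\hcT{h}{R}$ vanishes, which follows from surjectivity of $\hcT{f[-1]}{R}$ further left in the sequence). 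If $R \leq P$ but $R \not\leq Q$ — equivalently $Q < R \leq P$ — then $\hcT{Q}{R} = 0$ and $\hcT{P}{R} = k$, so from the four-term sequence $\hcT{X_{P,Q}}{R}$ injects into $\hcT{Q}{R} = 0$, giving $0$; here one should use \ref{G4} to note this forces $R \cong P$ or handle it via $Q[1]$ versus $R$. The key subtlety: the boundary $Q[1] < R$. If $R \leq P[1]$ and $R \not\leq P$, i.e. $P < R \leq P[1]$, then by \ref{G4} $R \cong P[1]$, and we get $\hcT{P[1]}{R} = k$, $\hcT{Q}{R} = 0$, so $\hcT{X_{P,Q}}{R} \cong \hcT{P[1]}{R} \cong k$, provided $\hcT{h}{R}$ is injective — which holds because the term to its left, $\hcT{P}{R} = 0$ (as $R \not\leq P$), kills the image coming in. Finally if $R > P[1]$ all outer terms vanish and so does $\hcT{X_{P,Q}}{R}$. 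One must separately treat the intermediate window $Q < R \leq Q[1]$ carefully, showing the Hom vanishes there — this is presumably where \ref{G4} applied to the pair $(Q, Q[1])$ does the work, since such $R$ must be $Q$ or $Q[1]$.

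**The main obstacle.**

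The routine part is reading off the outer terms from \ref{G1}; the delicate part is controlling the two connecting homomorphisms $\hcT{f}{R}$ and $\hcT{h}{R}$ — i.e. deciding, in each stratum, whether precomposition with $f$ (resp. with $h$) is zero, injective, surjective, or an isomorphism. Since all the relevant Hom spaces are at most one-dimensional, this amounts to checking non-vanishing of a single composite morphism, and that is exactly what properties \ref{G3} and \ref{G4} are designed to guarantee. I expect the hardest case to be the upper boundary $Q[1] < R \leq P[1]$, where one needs $\hcT{h}{R} \colon \hcT{P[1]}{R} \to \hcT{X_{P,Q}}{R}$ to be nonzero (equivalently, the map $P[1] \to X_{P,Q}$... rather $h \colon X_{P,Q} \to P[1]$ composed appropriately is detected); establishing this cleanly may require invoking the explicit factorization behavior of morphisms from \ref{G3} applied to the triangle rotated into the position $Q \to X_{P,Q} \to P[1] \to Q[1]$, together with \ref{G4} to identify $R$ with $P[1]$ on the nose.
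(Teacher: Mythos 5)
Your overall strategy --- apply $\hcT{-}{R}$ to the defining triangle and run a case analysis on the position of $R$ in the total order, using \ref{G1}--\ref{G4} to identify the outer terms and control the connecting maps --- is exactly the paper's approach. However, there is a genuine error in your middle stratum that would derail the proof precisely where the lemma asserts a nonzero answer. In the case $Q < R \leq P$ you claim that $\hcT{Q}{R}=0$ forces $\hcT{X_{P,Q}}{R}$ to inject into $\hcT{Q}{R}$ and hence vanish. This reads the exact sequence backwards: exactness at $\hcT{X_{P,Q}}{R}$ says that the kernel of the map to $\hcT{Q}{R}$ equals the image of $\hcT{h}{R}$, so when $\hcT{Q}{R}=0$ the map $\hcT{h}{R}\colon\hcT{P[1]}{R}\to\hcT{X_{P,Q}}{R}$ is \emph{surjective}, and since $R\leq P\leq P[1]$ gives $\hcT{P[1]}{R}\cong k$, the group does not vanish for free. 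Indeed your conclusion contradicts the lemma itself: for $Q[1]<R\leq P$ the statement asserts $\hcT{X_{P,Q}}{R}\cong k$, and the correct computation (as in the paper) uses the five-term sequence beginning with $\hcT{Q[1]}{R}$ --- when $Q[1]<R\leq P[1]$ both $\hcT{Q[1]}{R}$ and $\hcT{Q}{R}$ vanish while $\hcT{P[1]}{R}\cong k$, yielding $\hcT{X_{P,Q}}{R}\cong k$ in one stroke. Your stratification only recovers the answer $k$ at the single point $R\cong P[1]$ and incorrectly assigns $0$ to the rest of the ``yes'' region.

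A second, smaller slip: in the $P<R\leq P[1]$ case you justify injectivity of $\hcT{h}{R}$ by appealing to $\hcT{P}{R}=0$, but $\hcT{P}{R}$ sits at the far right of the sequence; the term that controls injectivity of $\hcT{h}{R}$ is $\hcT{Q[1]}{R}$, immediately to the left of $\hcT{P[1]}{R}$, via $\hcT{f[1]}{R}$. (It does vanish here, since $Q[1]<P[1]\cong R$, so the conclusion survives, but for a different reason than the one you give.) Where your proposal is on target is the remaining boundary cases: the window $Q<R\leq Q[1]$ does collapse to $R\cong Q[1]$ by \ref{G4}, and the case $R\leq Q$ (more precisely $R<Q[1]$) does require showing $\hcT{f}{R}$ and $\hcT{f[1]}{R}$ are isomorphisms via the factorisation property \ref{G3}, exactly as the paper does.
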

\begin{proof}
	Applying the functor $\hcT{-}{R}$ to the triangle 
	\[
	P \xrightarrow{f} Q \rightarrow X_{P,Q}  \rightarrow P[1]
	\]
	defining $X_{P, Q}$ induces the long exact sequence
	\[
	\hcT{Q[1]}{R} \rightarrow \hcT{P[1]}{R} \rightarrow \hcT{X_{P,Q} }{R} \rightarrow \hcT{Q}{R} \rightarrow \hcT{P}{R}.
	\]
	Observe that $Q \leq P$ since $X_{P, Q}$ is the cone of a non-zero morphism $P \to Q$ by assumption.

	Suppose first that $Q[1] < R \leq P[1]$, then we will show that $\hcT{X_{P,Q}}{R} \cong k$. Since $R \leq P[1]$, $\hcT{P[1]}{R} \cong k$. Moreover, since $Q \leq Q[1] < R$, then $\hcT{Q}{R}$ and $\hcT{Q[1]}{R}$ vanish and so we may conclude that $\hcT{X_{P,Q}}{R} \cong k$.  

	For the converse, suppose that $Q[1] < R \leq P[1]$ does not hold. Then, either $R = Q[1]$, $R < Q[1]$ or $P[1] < R$. Consider the latter case where $Q \leq P \leq P[1] < R$. Then, $\hcT{P}{R}$, $\hcT{Q}{R}$, and $\hcT{P[1]}{R}$ all vanish, which, by the long exact sequence, forces $\hcT{X_{P, Q}}{R}$ to vanish and proves the the claim for the $P[1] < R$ case. 
	
	Assume next that $R < Q[1] \leq P[1]$. Then, the Hom spaces $\hcT{Q}{R}$, $\hcT{P}{R}$, $\hcT{Q[1]}{R}$, and $\hcT{P[1]}{R}$ are all one-dimensional. Hence, if $\hcT{f}{R}$ and $\hcT{f[1]}{R}$ are nonzero, then they are isomorphisms, and so it must be that $\hcT{X_{P,Q}}{R} = 0$ by the long exact sequence. 
	
	To see that $\hcT{f}{R}$ is non-zero, observe that since, $\hcT{P}{R}$ is one dimensional, then we may choose a non-zero morphism $\alpha_{P,R} \in \hcT{P}{R}$. By property \ref{G3} of \Cref{def: linear gen}, there are morphisms $\beta \in \hcT{Q}{R}$ and $\gamma \in \hcT{P}{Q}$ such that $\alpha_{P, R} = \beta \cdot \gamma$. Moreover, since $f$ is nonzero and $\hcT{P}{Q}$ is one-dimensional, there is a non-zero $c \in k$ such that $f = c \cdot \gamma$. Since composition is k-linear, this means that 
\[ \hcT{f}{R}\left(\frac{1}{c} \beta \right) = \frac{1}{c} \beta \cdot f = \beta \cdot \gamma = \alpha_{P, R} \neq 0\]
as required. A similar proof works to show that $\hcT{f[1]}{R} \neq 0$.

	Finally, suppose that $R = Q[1]$. If $Q < Q[1]$, then, $\hcT{Q}{R}$ vanishes, and $\hcT{P[1]}{R}$, $\hcT{Q[1]}{R}$ are one-dimensional since $R \leq Q[1], P[1]$. Thus, if $\hcT{f[1]}{R} \neq 0$, then it also must be that $\hcT{X_{P,Q}}{R} = 0$. The fact that $\hcT{f[1]}{R}$ is nonzero follows from similar arugements as in the $R < Q[1]$ case. 

	On the other hand, if $R = Q[1]$ and $Q = Q[1]$, then the Hom spaces $\hcT{Q}{R}$, $\hcT{P}{R}$, $\hcT{Q[1]}{R}$, and $\hcT{P[1]}{R}$ are all one-dimensional. That is, similarly to before, since $\hcT{f}{R}$ and $\hcT{f[1]}{R}$ are nonzero, then they are isomorphisms, and we may conclude that $\hcT{X_{P,Q}}{R}$ vanishes.
\end{proof}

\begin{lemma}\label[lem]{Lem1: Hom P to X}	

Let $\cT$ be a triangulated category satisfying \Cref{setup: cluster cat} and equipped with a linear generator $G$. Then, $\hcT{R}{X_{P,Q}} \cong k$ if and only if $Q \leq R < P$. Else, $\hcT{R}{X_{P,Q}} =0$.
\end{lemma}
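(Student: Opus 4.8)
The plan is to mimic the proof of \Cref{Lem1: Hom X to P}, but applying the contravariant functor in the other variable — i.e. apply the covariant functor $\hcT{R}{-}$ to the defining triangle of $X_{P,Q}$ and read off the dimension of $\hcT{R}{X_{P,Q}}$ from the resulting long exact sequence, using property \ref{G1} to evaluate each Hom space and property \ref{G3} to show the relevant connecting maps are nonzero (hence isomorphisms by the Hom-finiteness and one-dimensionality).

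\begin{proof}
	Applying the covariant functor $\hcT{R}{-}$ to the triangle
	\[
	P \xrightarrow{f} Q \rightarrow X_{P,Q} \rightarrow P[1]
	\]
	defining $X_{P,Q}$ yields the long exact sequence
	\[
	\hcT{R}{P} \xrightarrow{\hcT{R}{f}} \hcT{R}{Q} \rightarrow \hcT{R}{X_{P,Q}} \rightarrow \hcT{R}{P[1]} \xrightarrow{\hcT{R}{f[1]}} \hcT{R}{Q[1]}.
	\]
	Recall $Q \leq P$ since $X_{P,Q}$ is the cone of a non-zero morphism $P \to Q$, and hence also $Q[1] \leq P[1]$ by applying $[1]$.

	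Suppose first that $Q \leq R < P$. Then $\hcT{R}{Q} \cong k$ since $R \leq Q$ fails unless... wait — by \ref{G1}, $\hcT{R}{Q} \cong k$ precisely when $Q \leq R$, which holds. Since $R < P \leq P[1]$, both $\hcT{R}{P}$ and $\hcT{R}{P[1]}$ vanish by \ref{G1}. The long exact sequence then forces $\hcT{R}{X_{P,Q}} \cong \hcT{R}{Q} \cong k$.

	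For the converse, suppose $Q \leq R < P$ fails; then either $R < Q$, or $R = P$, or $R > P$. If $R < Q \leq P \leq P[1]$, then all of $\hcT{R}{P}$, $\hcT{R}{Q}$, $\hcT{R}{P[1]}$, $\hcT{R}{Q[1]}$ are one-dimensional, so it suffices to show $\hcT{R}{f}$ and $\hcT{R}{f[1]}$ are nonzero, whence they are isomorphisms and $\hcT{R}{X_{P,Q}} = 0$. To see $\hcT{R}{f} \neq 0$: pick a non-zero $\alpha_{R,P} \in \hcT{R}{P}$; by \ref{G3} applied to $R < Q < P$ there are $\beta \in \hcT{R}{Q}$ and $\gamma \in \hcT{Q}{P}$ with $\alpha_{R,P} = \gamma \cdot \beta$, and since $f$ is non-zero and $\hcT{P}{Q} \cong k$ this already gives a factorisation through $f$ after rescaling, showing $\hcT{R}{f}(\beta)$ spans. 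Hmm — actually \ref{G3} gives factorisation of maps \emph{into} $R$; for maps out of $R$ I instead use that $\hcT{R}{f} \neq 0$ follows because $\hcT{P}{Q} \to \hcT{R}{Q}$, precomposition by a non-zero $R \to P$, is nonzero by \ref{G3}. The same argument with everything shifted by $[1]$ shows $\hcT{R}{f[1]} \neq 0$. If $R = P$, then $\hcT{R}{P} \cong k$ with $f$ corresponding to a generator of $\hcT{P}{Q} \cong k$; the map $\hcT{R}{f} = \hcT{P}{f}$ sends $\id_P$ to $f \neq 0$, so it is an isomorphism, and similarly $\hcT{R}{P[1]} \to \hcT{R}{Q[1]}$ — here $\hcT{P}{P[1]} \cong k$ and $\hcT{P}{Q[1]}$ is one-dimensional since $Q \leq P \leq P[1]$ so $Q[1] \geq P[1] \geq P$, giving $\hcT{P}{Q[1]} \cong k$; the map is $f[1] \circ (-)$, nonzero since it hits $f[1] \neq 0$, hence an isomorphism — so $\hcT{R}{X_{P,Q}} = 0$. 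Finally if $R > P$, then $R > P \geq Q$ so $\hcT{R}{P} = \hcT{R}{Q} = 0$, and $\hcT{R}{X_{P,Q}} \hookrightarrow \hcT{R}{P[1]}$; but $\hcT{R}{f[1]} \colon \hcT{R}{P[1]} \to \hcT{R}{Q[1]}$ is injective (it is either the zero map between $0$-spaces, or when $R \leq P[1]$ it is a nonzero map between one-dimensional spaces by the same factorisation argument, or when $R > P[1]$ both spaces vanish), forcing $\hcT{R}{X_{P,Q}} = 0$. This exhausts all cases.
\end{proof}

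The main obstacle I expect is bookkeeping: there are several boundary cases ($R = P$, $R = Q$, $R$ above or below everything, together with the degenerate possibilities $P \cong P[1]$ or $Q \cong Q[1]$ coming from \ref{G2}--\ref{G4}) and in each one must correctly identify which Hom spaces are $k$ versus $0$ and check the relevant connecting maps are isomorphisms. The non-vanishing of $\hcT{R}{f}$ and $\hcT{R}{f[1]}$ is where property \ref{G3} is genuinely used, and care is needed that \ref{G3} is being applied to a valid chain of inequalities $R \leq Q \leq P$ (or its shift); when $R$, $Q$, $P$ are not all distinct the factorisation statement degenerates but the conclusion still holds trivially since $f$ (or the identity) already witnesses it.
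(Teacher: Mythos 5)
Your overall strategy is the one the paper intends (its proof of this lemma literally reads ``analogous\ldots by considering the functor $\hcT{R}{-}$''), and your forward direction is correct. But the converse direction has the Hom-space convention inverted, and this is not cosmetic. By \ref{G1} the space $\hcT{A}{B}$ is one-dimensional precisely when $B \leq A$ (morphisms go from larger objects to smaller ones -- this is how you correctly use it in the forward direction, where $R < P$ gives $\hcT{R}{P} = 0$). Consequently, in the case $R < Q \leq P$ all four spaces $\hcT{R}{P}$, $\hcT{R}{Q}$, $\hcT{R}{P[1]}$, $\hcT{R}{Q[1]}$ \emph{vanish} (they are not one-dimensional as you claim), so that case is immediate from the long exact sequence and your factorisation argument there is moot. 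Conversely, in the case $P < R$ -- which you dismiss by asserting $\hcT{R}{P} = \hcT{R}{Q} = 0$ -- all four spaces are in fact one-dimensional: $Q \leq P < R$ gives $\hcT{R}{P} \cong \hcT{R}{Q} \cong k$, and \ref{G4} forces $P[1] \leq R$ and $Q[1] \leq R$ as well. This is the genuinely non-trivial case: one must show $\hcT{R}{f}$ and $\hcT{R}{f[1]}$ are isomorphisms, which is exactly where \ref{G3} enters (any nonzero $R \to Q$ factors non-trivially through $P$ since $Q \leq P \leq R$, and the factor $P \to Q$ is a scalar multiple of $f$). So you have the right ingredients but have attached them to the wrong cases; as written, neither converse case is actually proved.

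There are also smaller defects worth fixing. Your invocation of \ref{G3} in the $R<Q$ case factors a morphism $R \to P$, but \ref{G3} only factors morphisms from the largest object to the smallest in a chain $R \leq Q \leq P$; the half-retraction in the middle of that paragraph does not repair this. In the case $R \cong P$ you assert $\hcT{P}{P[1]} \cong k$ unconditionally, but by \ref{G2} this holds only when $P \cong P[1]$; when $P < P[1]$ that space is zero and the second connecting map is irrelevant (the conclusion still holds, but split the two subcases). Finally, the ``wait''/``Hmm'' artifacts should be removed before this could stand as a proof.
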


\begin{proof}
	The proof is analogous to that of \Cref{Lem1: Hom X to P} by considering the functor $\hcT{R}{-}$.
\end{proof}

\begin{corollary} \label[cor]{cor: unique triangle} \label[cor]{Cor1: Xs arent iso}

Let $\cT$ be a triangulated category satisfying \Cref{setup: cluster cat} and equipped with a linear generator $G$. Then, $Q$ and $P$ are the unique (up to isomorphism) indecomposable objects in $\lang[1]{G}$ which sit in a triangle
\[ P \rightarrow Q \rightarrow X_{P, Q} \rightarrow P[1]. \]
In other words, $X_{P, Q} \cong X_{P', Q'}$ if and only $P \cong P'$ and $Q \cong Q'$.
\end{corollary}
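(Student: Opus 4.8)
The plan is to extract the objects $P$ and $Q$ intrinsically from $X_{P,Q}$ using the Hom-computations of \Cref{Lem1: Hom X to P} and \Cref{Lem1: Hom P to X}, so that any isomorphism $X_{P,Q}\cong X_{P',Q'}$ is forced to come with $P\cong P'$ and $Q\cong Q'$. First I would record the two relevant profiles: by \Cref{Lem1: Hom P to X} the set of indecomposables $R\in\lang[1]{G}$ with $\hcT{R}{X_{P,Q}}\neq 0$ is exactly the order-interval $\{R : Q\leq R < P\}$, and by \Cref{Lem1: Hom X to P} the set of indecomposables $R$ with $\hcT{X_{P,Q}}{R}\neq 0$ is exactly $\{R : Q[1] < R \leq P[1]\}$. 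An additive equivalence (or just an isomorphism in $\cT$) $X_{P,Q}\cong X_{P',Q'}$ induces bijections on these Hom-spaces for every indecomposable $R\in\lang[1]{G}$, hence an equality of the corresponding order-intervals.

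Next I would argue that such an order-interval determines its endpoints. The interval $\{R : Q\leq R < P\}$ is non-empty (it contains $Q$, using $Q<P$ which holds since $X_{P,Q}$ is the cone of a nonzero non-isomorphism) and, by property \ref{G2}, the suspension is strictly increasing, so one can recover $Q$ as the minimum of this interval. Thus $\{R : Q\leq R<P\} = \{R : Q'\leq R<P'\}$ gives $Q\cong Q'$ immediately. For the upper endpoint one has to be a touch careful because the interval is half-open and $P$ itself is not in it, but the second interval $\{R : Q[1]<R\leq P[1]\}$ has maximum $P[1]$, so the equality of these two intervals gives $P[1]\cong P'[1]$, and applying $[-1]$ yields $P\cong P'$. (Alternatively, once $Q\cong Q'$ is known, $P$ is recovered from the first interval together with property \ref{G4}, which pins down which indecomposables can be "just above" a given one.) The converse direction — that $P\cong P'$ and $Q\cong Q'$ forces $X_{P,Q}\cong X_{P',Q'}$ — is already contained in the discussion preceding \Cref{not: XPQ}, where it is shown that the cone does not depend on the choice of nonzero morphism $P\to Q$; combining this with the given isomorphisms $P\cong P'$, $Q\cong Q'$ and the functoriality of cones gives the claim.

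For the uniqueness of the triangle $P\to Q\to X_{P,Q}\to P[1]$ among indecomposables of $\lang[1]{G}$, I would observe that in any triangle $A\to B\to X_{P,Q}\to A[1]$ with $A,B$ indecomposable in $\lang[1]{G}$, the map $B\to X_{P,Q}$ is nonzero (else $X_{P,Q}$ would be a summand of $B$, forcing $X_{P,Q}\in\lang[1]{G}$, contradicting e.g. the Hom-profile above together with \ref{G1}), so $\hcT{B}{X_{P,Q}}\neq 0$ and $\hcT{A[1]}{X_{P,Q}[1]}\neq 0$, i.e. $\hcT{A}{X_{P,Q}}\neq 0$; hence $B$ and $A$ both lie in the interval $\{R : Q\leq R<P\}$, and then rotating and comparing with the connecting map $X_{P,Q}\to A[1]$ (so $\hcT{X_{P,Q}}{A[1]}\neq 0$, putting $A[1]$ in the interval $\{R : Q[1]<R\leq P[1]\}$) forces $A\cong P$; a dimension count on the long exact sequence obtained by applying $\hcT{-}{A}$ or $\hcT{A}{-}$ then forces $B\cong Q$.

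The main obstacle I anticipate is the bookkeeping around the half-open versus closed intervals and the degenerate cases where $R\cong R[1]$ for some indecomposable (allowed by \ref{G2}, which only gives $R\leq R[1]$), since there the "minimum" and "maximum" of the intervals must be identified carefully; the cleanest route is probably to phrase everything in terms of the two Hom-profile sets and invoke \ref{G2} and \ref{G4} to recover endpoints, rather than manipulating the triangles directly.
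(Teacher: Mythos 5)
Your main argument is correct and, for the essential half, is the paper's own: the paper also deduces $Q \cong Q'$ by noting that an isomorphism $X_{P,Q} \cong X_{P',Q'}$ yields nonzero morphisms $Q' \to X_{P,Q}$ and $Q \to X_{P',Q'}$ and then invoking \Cref{Lem1: Hom P to X} — exactly your "incoming Hom-profile". You diverge only in how $P \cong P'$ is obtained: the paper completes the isomorphism $Q \cong Q'$ and the identity on $X_{P,Q}$ to a morphism of triangles and applies the two-out-of-three property, whereas you read $P[1]$ off as the maximum of the outgoing profile via \Cref{Lem1: Hom X to P}. Both are valid; yours treats the two endpoints symmetrically, the paper's avoids any min/max bookkeeping (and your worry about the degenerate case $R \cong R[1]$ is unfounded — the intervals still have $Q$ as minimum and $P[1]$ as maximum since $Q < P$ forces $Q[1] < P[1]$). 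The converse is handled identically via the remark preceding \Cref{not: XPQ}.

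The one genuine flaw is in your supplementary paragraph on arbitrary triangles $A \to B \to X_{P,Q} \to A[1]$. The claim $\hcT{A}{X_{P,Q}} \neq 0$ is false: in the defining triangle itself $A = P$, and $\hcT{P}{X_{P,Q}} = 0$ by \Cref{Lem1: Hom P to X} since $P < P$ fails. So $A$ does \emph{not} lie in $\{R : Q \leq R < P\}$, and the nonvanishing of the connecting map only gives $Q[1] < A[1] \leq P[1]$, i.e.\ $Q < A \leq P$, which does not by itself force $A \cong P$. To close this reading of the statement one would instead rotate and use that $A[1]$ is the cone of a nonzero $B \to X_{P,Q}$, which by \Cref{Lem1: P to X cone} is $P[1] \oplus X_{B,Q}$; indecomposability of $A[1]$ then forces $B \cong Q$ and $A \cong P$. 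Note, however, that the paper's proof only establishes the "in other words" reformulation, so this paragraph attempts more than the paper itself does.
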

\begin{proof}
Suppose that there is a nonzero morphism $g \colon P' \to Q'$ such that $\cone{g} = X_{P', Q'} \cong X_{P, Q}$. We will show that $P' \cong P$ and $Q' \cong Q$. Well, since $X_{P', Q'} \cong X_{P, Q}$ by assumption, there are nonzero morphisms $Q' \to X_{P, Q}$ and $Q \to X_{P', Q'}$. By \Cref{Lem1: Hom P to X}, it must be that $Q \leq Q'$ and $Q' \leq Q$. That is, $Q \cong Q'$. Hence, we may construct the following commutative diagram 	
\[
\begin{tikzcd}
P \arrow[rr, "f"] \arrow[d, dashed] && Q \arrow[rr] \arrow[d, "\sim" labl1] && X_{P, Q} \arrow[rr] \arrow[d, equals] && P[1] \arrow[d] \\ 
P' \arrow[rr, "g"] && Q' \arrow[rr] && X_{P, Q} \arrow[rr] && P[1] \\ 
\end{tikzcd}
\]
showing that $P \cong P'$. 
\end{proof}

\begin{lemma}\label[lem]{Lem1: Pa factors to X}
	Let $\cT$ be a triangulated category satisfying \Cref{setup: cluster cat} and equipped with a linear generator $G$. Suppose that $P, Q, R, S$ are indecomposables in $\lang[1]{G}$ such that $Q \leq S \leq R < P$. Then, a morphism $f \colon R \rightarrow X_{P, Q}$ factors through S.
\end{lemma}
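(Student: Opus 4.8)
The plan is to reduce the factorisation problem---which a priori involves the object $X_{P,Q}$, not contained in $\lang[1]{G}$---to a factorisation statement taking place entirely inside $\lang[1]{G}$, where property \ref{G3} applies verbatim.

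Write the defining triangle of $X_{P,Q}$ (see Notation~\ref{not: XPQ}) as
\[ P \xrightarrow{a} Q \xrightarrow{b} X_{P,Q} \xrightarrow{c} P[1]. \]
The first step is to show that the given morphism $f \colon R \to X_{P,Q}$ lifts along $b$, that is, $f = b \circ r$ for some $r \colon R \to Q$. To see this, apply the homological functor $\hcT{R}{-}$ to the triangle above, obtaining the exact sequence
\[ \hcT{R}{P} \longrightarrow \hcT{R}{Q} \xrightarrow{\,\hcT{R}{b}\,} \hcT{R}{X_{P,Q}} \longrightarrow \hcT{R}{P[1]}. \]
Since $R < P$, property \ref{G1} gives $\hcT{R}{P} = 0$; and since $R < P \leq P[1]$ by \ref{G2}, the same property gives $\hcT{R}{P[1]} = 0$. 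Hence $\hcT{R}{b}$ is an isomorphism, and in particular $f$ lies in its image, say $f = b \circ r$ with $r \colon R \to Q$.

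Next, $r$ is a morphism between the indecomposable objects $R$ and $Q$ of $\lang[1]{G}$, and by hypothesis $Q \leq S \leq R$. Applying property \ref{G3} to this chain---instantiating the objects $R \leq Q \leq P$ appearing in \ref{G3} as $Q \leq S \leq R$ respectively---shows that $r$ factors as $r = r'' \circ r'$ for some $r' \colon R \to S$ and $r'' \colon S \to Q$. Therefore
\[ f = b \circ r = (b \circ r'') \circ r', \]
which exhibits $f$ as factoring through $S$, via $r' \colon R \to S$ and $b \circ r'' \colon S \to X_{P,Q}$. (If $f = 0$ the claim is trivial, so there is no harm in assuming all maps involved are non-zero when invoking \ref{G3}.)

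The only substantive point is the first step: once $f$ is recognised as coming from a morphism $R \to Q$ through the triangle defining $X_{P,Q}$, the problem becomes one purely about morphisms inside $\lang[1]{G}$ and is settled by the factorisation axiom \ref{G3}. I do not anticipate a genuine obstacle; the only care required is in checking the two vanishing statements $\hcT{R}{P} = 0$ and $\hcT{R}{P[1]} = 0$, both of which follow immediately from $R < P$ together with \ref{G1} and \ref{G2}.
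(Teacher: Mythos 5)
Your proof is correct and follows the same route as the paper: apply $\hcT{R}{-}$ to the defining triangle of $X_{P,Q}$, use $R < P \leq P[1]$ together with \ref{G1} and \ref{G2} to see that $\hcT{R}{P}$ and $\hcT{R}{P[1]}$ vanish so that $f$ lifts to a morphism $R \to Q$, and then invoke \ref{G3} on the chain $Q \leq S \leq R$. No differences of substance.
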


\begin{proof}
	Applying the functor $\hcT{R}{-}$ to the triangle
	\[
	P \rightarrow Q  \xrightarrow{g} X_{P,Q} \rightarrow P[1],
	\]
	induces the exact sequence
	\[
	\hcT{R}{P} \rightarrow \hcT{R}{Q} \xrightarrow{\hcT{R}{g}} \hcT{R}{X_{P,Q}} \rightarrow \hcT{R}{P[1]}.
	\]
	Since $R < P \leq P[1]$, it follows from the definition of a linear generator that $\hcT{R}{P} = \hcT{R}{P[1]} =0$, hence $\hcT{R}{g}$ is an isomorphism. Therefore, any morphism $f \colon R \rightarrow X_{P,Q}$ factors through a morphism $q \colon R \rightarrow Q$. Moreover, because $Q \leq S \leq R$, then property \ref{G3} in \Cref{def: linear gen} implies that $q$ (and thus $f$) also factors through $S$.
\end{proof}

An analogous proof to \Cref{Lem1: Pa factors to X}, with the application of the functor $\hcT{-}{S}$ proves the following.

\begin{lemma}\label[lem]{Lem1: X factors to Pa}
	Let $\cT$ be a triangulated category satisfying \Cref{setup: cluster cat} and equipped with a linear generator $G$. Suppose that $P, Q, R, S$ are indecomposables in $\lang[1]{G}$ such that $Q[1] < S \leq R \leq P[1]$. Then, a morphism $f \colon X_{P, Q} \to S$ factors through $R$.
\end{lemma}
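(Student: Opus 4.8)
The plan is to dualise the proof of \Cref{Lem1: Pa factors to X}: instead of the covariant functor $\hcT{R}{-}$ I would apply the contravariant functor $\hcT{-}{S}$ to the triangle
\[ P \rightarrow Q \rightarrow X_{P,Q} \xrightarrow{w} P[1] \]
defining $X_{P,Q}$, which yields the exact sequence
\[ \hcT{Q[1]}{S} \rightarrow \hcT{P[1]}{S} \xrightarrow{\hcT{w}{S}} \hcT{X_{P,Q}}{S} \rightarrow \hcT{Q}{S}. \]
By property \ref{G2} of \Cref{def: linear gen} we have $Q \leq Q[1]$, and the hypothesis gives $Q[1] < S$; hence $Q < S$ and $Q[1] < S$, so both outer terms vanish by \ref{G1}. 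Consequently $\hcT{w}{S}$ is an isomorphism, and any morphism $f \colon X_{P,Q} \rightarrow S$ can be written $f = \bar f \circ w$ for some $\bar f \colon P[1] \rightarrow S$.

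It then remains to factor $\bar f$ through $R$. Since $\lang[1]{G}$ is closed under the suspension functor and $[1]$ is an autoequivalence, $P[1]$ is an indecomposable object of $\lang[1]{G}$, and the hypothesis places it in a chain $S \leq R \leq P[1]$ of indecomposables of $\lang[1]{G}$. Property \ref{G3} of \Cref{def: linear gen} applied to this chain therefore provides a factorisation $\bar f = f'' \circ r$ with $r \colon P[1] \rightarrow R$ and $f'' \colon R \rightarrow S$, so that $f = f'' \circ (r \circ w)$ factors through $R$, as required. (When $R$ is isomorphic to $S$ or to $P[1]$ the factorisation is trivial.)

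I do not anticipate a real obstacle, as the argument is essentially a routine dualisation. The only points needing care are the order bookkeeping — turning the single hypothesis $Q[1] < S$ into the two vanishing statements $\hcT{Q[1]}{S} = 0$ and $\hcT{Q}{S} = 0$ via \ref{G2} and transitivity of the total order — and the observation that $P[1]$ really is an indecomposable object of $\lang[1]{G}$, so that \ref{G3} may legitimately be invoked for the triple $S \leq R \leq P[1]$.
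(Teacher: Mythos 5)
Your proof is correct and is precisely the dualisation the paper has in mind: the paper explicitly says the proof of this lemma is analogous to that of \Cref{Lem1: Pa factors to X} via the functor $\hcT{-}{S}$, and your argument (vanishing of $\hcT{Q[1]}{S}$ and $\hcT{Q}{S}$ forcing $\hcT{w}{S}$ to be an isomorphism, followed by \ref{G3} applied to $S \leq R \leq P[1]$) is exactly that dual argument.
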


These lemmas allow us to completely describe the indecomposable objects in $\lang[2]{G}$ in terms of indecomposable objects $\lang[1]{G}$ and the cones $X_{P, Q}$.

\begin{proposition}\label[prop]{Prop1: Ind in langle 2}
	Let $\cT$ be a triangulated category satisfying \Cref{setup: cluster cat} and equipped with a linear generator $G$. Suppose that $X \in \lang[2]{G}$ is an indecomposable object. Then, $X$ is either isomorphic to an object $P \in \lang[1]{G}$ or to $X_{P, Q}$, where $P, Q$ are indecomposable objects in $\lang[1]{G}$.
\end{proposition}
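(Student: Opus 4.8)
The plan is to take an indecomposable $X \in \lang[2]{G}$ and unpack the definition of $\lang[2]{G} = \lang[1]{G} \star \lang[1]{G}$: there is a triangle $B \to X \to C \to B[1]$ with $B, C \in \lang[1]{G}$. Since $\cT$ is Krull-Schmidt and Hom-finite, we may decompose $B = \bigoplus B_i$ and $C = \bigoplus C_j$ into indecomposables, each $B_i, C_j$ lying in $\lang[1]{G}$ (recall $\lang[1]{G}$ is closed under summands). The connecting map $C \to B[1]$ is then a matrix of morphisms $C_j \to B_i[1]$, each of which, by properties \ref{G1} and \ref{G2}, is either zero or (up to scalar) a generator of a one-dimensional Hom space. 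The strategy is to use elementary row and column operations — that is, to change the direct sum decompositions of $B$ and $C$ by automorphisms — to put this matrix into a normal form, and then read off $X$ as a direct sum of the cones of the resulting morphisms, which are exactly the objects of $\lang[1]{G}$ (when the component is zero, contributing $C_j \oplus B_i[1]$) or the $X_{P,Q}$ of \Cref{not: XPQ} (when it is nonzero), shifted appropriately. Indecomposability of $X$ then forces a single summand.

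In more detail, first I would reduce to the case where the connecting morphism $w \colon C \to B[1]$ has no zero rows or columns: if some $C_j$ maps to zero in $B[1]$, it splits off $X$ as a summand, and likewise if some $B_i[1]$ receives nothing; since $X$ is indecomposable, such a split summand is either all of $X$ (so $X \cong C_j \in \lang[1]{G}$, or $X \cong B_i[1] \in \lang[1]{G}$) or zero. So assume every $C_j \to B[1]$ and every $\bullet \to B_i[1]$ is nonzero. Next, fix any indecomposable summand $B_{i_0}$ and consider the column of $w$ landing in $B_{i_0}[1]$: pick an index $j_0$ with $w_{i_0 j_0} \neq 0$. Using \ref{G3} (factorisation of morphisms through intermediate indecomposables) together with \Cref{Lem1: Hom X to P}-type reasoning, I would show that all other entries $w_{i_0 j}$ in that row, and all other entries $w_{i j_0}$ in that column, can be cleared by composing with suitable automorphisms of $B$ and $C$ — precisely because any nonzero morphism $C_{j} \to B_{i_0}[1]$ factors through $C_{j_0} \to B_{i_0}[1]$ or vice versa, depending on the order relation, so it is a scalar multiple of (a composite involving) $w_{i_0 j_0}$. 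This is the Gaussian-elimination step. After clearing, $w$ becomes block diagonal with one block equal to $(w_{i_0 j_0})$, and induction on the number of indecomposable summands of $B \oplus C$ finishes the decomposition of $X$; indecomposability of $X$ then says there is exactly one block, i.e. $X \cong \cone(w_{i_0 j_0}[-1])$ which, after a shift, is $X_{P,Q}$ for $P \cong B_{i_0}$, $Q \cong$ (the relevant shift of) $C_{j_0}$ — noting that $\cone(C_{j_0} \to B_{i_0}[1])[-1]$ fits in a triangle $B_{i_0} \to C_{j_0} \to X \to B_{i_0}[1]$.

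There is one subtlety I would need to address carefully: the order relations among the $B_i$'s, the $C_j$'s, and their suspensions need not all be comparable in a way that makes the elimination immediate, since the total order on $\mathrm{ind}(\lang[1]{G})$ is what governs which Hom spaces vanish. So before running the elimination I would sort the summands of $B$ and of $C$ according to the total order \ref{G1}, and observe that the matrix $w$, read in this sorted order, is already forced to be "triangular" in a suitable sense by the vanishing statements in \Cref{def: linear gen} \ref{G1} combined with \ref{G2}; the nonzero entries occur only for pairs $(C_j, B_i[1])$ with $B_i[1] \leq C_j$ — wait, with the order constraint coming from $\hcT{C_j}{B_i[1]} \cong k$ iff $B_i[1] \leq C_j$. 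This triangularity is exactly what makes the elimination terminate and keeps the cones of the diagonal blocks of the expected form $X_{P,Q}$ rather than something larger.

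The main obstacle I anticipate is bookkeeping the automorphisms correctly: one must check that clearing a row does not reintroduce entries into a previously cleared column, which is where the triangular structure (from the total order) is essential, and one must verify that the factorisation property \ref{G3} applies in precisely the configuration at hand — i.e., that the three objects $C_j$, $C_{j_0}$, $B_{i_0}[1]$ (or $B_i[1]$, $B_{i_0}[1]$, $C_{j_0}$) sit in the order $R \leq Q \leq P$ required by \ref{G3}, using \ref{G2} to handle the suspensions. Once the triangular normal form is in hand, the identification of the surviving $1\times 1$ block's cone with an $X_{P,Q}$ is immediate from \Cref{not: XPQ} and the rotation of triangles, and Krull-Schmidt uniqueness guarantees the decomposition we extracted is the genuine one, so indecomposability of $X$ pins it down to a single term of the asserted form.
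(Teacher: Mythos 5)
Your proposal is correct, and it reaches the conclusion by a genuinely different mechanism from the paper's. The paper fixes a morphism $f \colon \bigoplus_j Q_j \to \bigoplus_i P_i$ in $\lang[1]{G}$ whose cone is the given object, clears a single row at a time (choosing the maximal domain summand), and then inducts on the number of domain summands via the octahedral axiom, using \Cref{Lem1: Hom P to X} to see that the induced map into the newly created summand $X_{Q_1,P_1}$ vanishes so that the cone splits. You instead run a full Gaussian elimination on the connecting morphism $w \colon C \to B[1]$, block-diagonalising it by unipotent automorphisms of $B$ and $C$ and then reading off $\cone(w)[-1]$ as a direct sum of cones of $1\times 1$ blocks; this avoids both the octahedral axiom and the auxiliary Hom-vanishing lemma into $X_{P,Q}$, at the price of the pivot bookkeeping you anticipate. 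That bookkeeping is the real content and it does go through, but not for "any" pivot as you first say: one must take $B_{i_0}[1]$ \emph{maximal} among the targets with a nonzero row, and then $C_{j_0}$ \emph{minimal} among the sources with $w_{i_0 j_0}\neq 0$, so that \ref{G3} applies in the configuration $B_{i_0}[1]\leq C_{j_0}\leq C_j$ (for clearing the row) and $B_i[1]\leq B_{i_0}[1]\leq C_{j_0}$ (for clearing the column); clearing the row first ensures the subsequent row operations only touch column $j_0$. You flag exactly this verification, so I regard it as a correctly identified and dischargeable obligation rather than a gap. One small slip at the end: the surviving block gives a triangle $B_{i_0}\to X\to C_{j_0}\to B_{i_0}[1]$ (you transposed $X$ and $C_{j_0}$), so $X\cong X_{C_{j_0}[-1],\,B_{i_0}}$ in \Cref{not: XPQ}; the intended conclusion is unaffected.
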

\begin{proof}
	It suffices to show that any morphism $f \colon P \rightarrow Q \in \lang[1]{G}$ is such that $\cone{g} \cong A \oplus B$, where $A \in \lang[1]{G}$ and $B \cong \bigoplus_{(P', Q') \in I} X_{P',Q'}$ where $I$ is a set containing tuples $(P', Q')$ of indecomposable objects in $\lang[1]{G}$. We will do so by considering three cases.

\begin{enumerate}
	\item \label{P=P1} \label{PgtP1} Consider first a morphism $f \colon P \rightarrow \bigoplus_{i=1}^n P_i$, where $P$ and $P_i$ are indecomposable objects in $\lang[1]{G}$. We may write $f = (f_i)_{i =1}^n$. If there is a $j$ where $f_j \colon P \to P_j$ is the zero morphism, then $\cone{f} \cong \cone{(f_i)_{i \neq j}} \oplus P_j$. Hence, we may reduce to the case where none of the $f_i$ are the zero morphism. Thus, by reindexing if necessary, we may assume that $P_i \leq P_1 \leq P$ for all $1 \leq i \leq n$. By property \ref{G3} in \Cref{def: linear gen}, it follows that $f_i \colon P \to P_i$ factors through a morphism $g_i \colon P \to P_1$ for  all $2 \leq i \leq n$. By rescaling the morphisms $g_i$ if necessary, we may assume $f_i = g_i \circ f_1$. We may thus construct the following morphism of triangles.
	\[
	\begin{tikzcd}[column sep=4.5em]
		P \arrow[r,"{\begin{pmatrix} f_1 \\ 0 \end{pmatrix}}"] \arrow[d,"\sim" labl1 ,swap] & \bigoplus_{i =1}^n P_{i} \arrow[r] \arrow[d,"h"] & X_{P, P_1} \oplus \bigoplus_{i =2}^n P_{i} \arrow[r]\arrow[d] & P_1[1] \arrow[d] \\
		P \arrow[r,"f"] & \bigoplus_{i=1}^n P_{i} \arrow[r] & \mathrm{cone} \, f \arrow[r]& P[1].
	\end{tikzcd}
	\]
	Here, $h = \begin{pmatrix}
		\mathrm{id}_{P} & 0 \\
		g & \mathrm{id}_{\bigoplus_{i=2}^n P_i}
	\end{pmatrix}$ and $g = (g_i)_{i=2}^n : P \rightarrow \bigoplus_{i=2}^n P_i$. Observe that $h$ is an isomorphism, as it admits the inverse $h^{-1} = \begin{pmatrix}
		\mathrm{id}_{P} & 0 \\
		-g & \mathrm{id}_{\bigoplus_{i=2}^n P_i}
	\end{pmatrix}$. We may thus conclude that $\cone{f} \cong X_{P, P_1} \oplus \bigoplus_{i=2}^n P_i$.

	\item Next, consider a morphism $f = (f_{ij})_{i=1, j=1}^{m, n}: \bigoplus_{j=1}^m Q_{j}  \rightarrow \bigoplus_{i=1}^n P_{i}$, where $f_{ij} \colon Q_{j} \rightarrow P_i$ is a morphism between indecomposable objects in $\lang[1]{G}$ for all $i, j$. We will show by induction on $m$ that $\cone{f} \cong A \oplus B$ where $A \in \lang[1]{G}$ and $B \cong \bigoplus_{(P',Q') \in I} X_{P', Q'}$. Observe that \eqref{PgtP1} proves the statement for $m=1$, and suppose by induction that it holds for $m-1$.
	
	By reindexing if necessary, assume that $Q_j \leq Q_1$ for all $1 \leq j \leq n$. If there is an $i_0$ such that $P_{i_0} > Q_1$, then $f_{i_0, j} = 0$ for all $j$ and $\cone f \cong \cone (f_{ij})_{i \neq i_0, j} \oplus P_{i_0}$. Hence, we may reduce to the case where $P_i, Q_j \leq Q_1$ for all $1 \leq i \leq n$ and $1 \leq j \leq m$. 
	Consider the diagram 
	\[
	\begin{tikzcd}
		Q_{1}  \arrow[r, "{\begin{pmatrix} 1 \\ 0 \end{pmatrix}}"] \arrow[d,"\sim" labl1 ,swap] & \bigoplus_{j =1}^m Q_{j}  \arrow[r] \arrow[d,"f",swap] & \bigoplus_{j =2}^n Q_{j}  \arrow[r] \arrow[d,"w \oplus v", dashed] & Q_{1}[1] \arrow[d]\\
		Q_{1}  \arrow[r,"{(f_{i1})_{i=1}^n}"] \arrow[d]  & \bigoplus_{i=1}^n P_{i} \arrow[r] \arrow[d] & X_{Q_1,P_1}  \oplus \bigoplus_{i=2}^n P_{i} \arrow[d, dashed] \arrow[r] & Q_{1}[1] \arrow[d]\\
		0 \arrow[r, dashed] \arrow[d] & \mathrm{cone} \, f \arrow[r,"\sim", dashed] \arrow[d] & \mathrm{cone} \, (w \oplus v) \arrow[r, dashed] \arrow[d,"u", dashed] & 0\arrow[d]\\
		Q_{1}[1] \arrow[r]  & \bigoplus_{j=1}^m Q_{j}[1] \arrow[r] & \bigoplus_{j=2}^m Q_j \arrow[r]  & Q_{1}[2].
	\end{tikzcd}
	\]
	The top left square of this diagram clearly commutes. The bottom two rows in this diagram, as well as the dashed morphisms, are obtained by the octahedral axiom. In fact, it follows from this axiom that the diagram commutes and all four columns and rows are distinguished triangles. The fact that the $\cone{(f_{i1})_{i=1}^n} = X_{Q_1,P_1} \oplus \bigoplus_{i=2}^n P_i$ follows from \eqref{PgtP1} above. 
	
	Furthermore, observe that $w$ can be written as $w = (w_{j} \colon Q_j \to X_{Q_1, P_1})_{j=1}^m$ and, by \Cref{Lem1: Hom P to X}, $\hcT{Q_j}{X_{Q_1,P_1}} = 0$ since $Q_j \leq Q_1$. Thus, $w = 0$, and so there is an isomorphism $\cone{f} \cong \cone{v} \oplus X_{Q_1,P_1}$. However, by the inductive hypothesis, the cone of the morphism $v \colon \bigoplus_{j=2}^m Q_j \to \bigoplus_{i=2}^n P_i$ is of the required form. Hence, our claim follows by induction. \qedhere
\end{enumerate}
\end{proof}

\medskip
In view of \Cref{Prop1: Ind in langle 2}, it is possible to separate nonzero morphisms between indecomposable objects in $\lang[2]{G}$ into two classes, which will play an important role in controlling the behaviour of the Hom spaces.

\begin{definition}
Let $\cT$ be a triangulated category satisfying \Cref{setup: cluster cat} and equipped with a linear generator $G$. Let $Y,Z \in \lang[2]{G}$ be indecomposable objects, and let $f: Y \rightarrow Z$ be a non-zero morphism. Then, $f$ is a \textit{forwards morphism} if there exists a pair of triangles
\[
\begin{tikzcd}
	P \arrow[d,dashed, "f_1"] \arrow[rr] && Q \arrow[d,dashed, "f_0"] \arrow[rr] && Y \arrow[d,"f"] \arrow[rr] && P[1] \arrow[d,dashed]\\
	P' \arrow[rr] && Q' \arrow[rr] && Z \arrow[rr] && P'[1],
\end{tikzcd}
\]
where $P,P',Q,Q' \in \lang[1]{G}$ and a pair of morphisms $f_1$ and $f_0 \neq 0$ making the above diagram a morphism of triangles. A morphism which is not a forwards morphism is called a \textit{backwards morphism}.
\end{definition}

\begin{lemma}\label[lem]{Lem1: Forward and Back morphisms}
	Let $\cT$ be a triangulated category satisfying \Cref{setup: cluster cat} and equipped with a linear generator $G$. Suppose that $P \in \lang[1]{G}$ and $X \in \lang[2]{G}$ are indecomposable objects. Then,
	\begin{enumerate}
		\item a non-zero morphism $f \colon P \rightarrow X$ is a forwards morphism, \label{forward from P}
		\item a non-zero morphism $g: X \rightarrow P$ is a backwards morphism if and only if $X \not\in \lang[1]{G}$. \label{backward to P}
	\end{enumerate}
\end{lemma}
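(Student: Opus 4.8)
The plan is to argue throughout with the canonical triangle presentations of indecomposable objects of $\lang[2]{G}$ furnished by \Cref{Prop1: Ind in langle 2} (these being the triangles meant in the definition of a forwards morphism): an indecomposable $W \in \lang[1]{G}$ is presented by the split triangle $0 \to W \xrightarrow{\mathrm{id}} W \to 0$, while $X_{P_0,Q_0}$ is presented by its defining triangle $P_0 \xrightarrow{f} Q_0 \xrightarrow{\gamma} X_{P_0,Q_0} \xrightarrow{\delta} P_0[1]$. I will repeatedly use that consecutive morphisms in such a triangle compose to zero (so $\gamma f = 0$, $\delta\gamma = 0$, $f[1]\delta = 0$), that $[1]$ is an autoequivalence and hence preserves the total order of \ref{G1}, and the Hom-space computations of \Cref{Lem1: Hom X to P} and \Cref{Lem1: Hom P to X}. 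First I dispatch the cases where the object of $\lang[2]{G}$ under consideration lies in $\lang[1]{G}$: then $P$ and that object are both in $\lang[1]{G}$, both relevant triangles are split, and taking the middle component of the morphism of triangles to be (a copy of) the given non-zero morphism exhibits it as forwards.

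For the first assertion, it remains to treat $f\colon P \to X$ non-zero with $X \notin \lang[1]{G}$, so $X \cong X_{P_0,Q_0}$. Since $f \neq 0$, \Cref{Lem1: Hom P to X} forces $Q_0 \leq P < P_0$. I then apply $\hcT{P}{-}$ to the defining triangle of $X$; since $P < P_0 \leq P_0[1]$ (the second inequality by \ref{G2}), property \ref{G1} makes $\hcT{P}{P_0}$ and $\hcT{P}{P_0[1]}$ vanish, so post-composition with $\gamma$ is an isomorphism $\hcT{P}{Q_0} \xrightarrow{\sim} \hcT{P}{X}$. Letting $f_0 \in \hcT{P}{Q_0}$ be the non-zero preimage of $f$, the split triangle of $P$, the defining triangle of $X$, the maps $f_0$ and $f$, and the zero maps on the outer terms assemble into a morphism of triangles: the only squares to check are $\gamma f_0 = f$ (by construction) and $\delta f = \delta\gamma f_0 = 0$ (since $\delta\gamma = 0$). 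Hence $f$ is forwards.

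For the second assertion, the case $X \in \lang[1]{G}$ was settled above and shows $g$ is forwards, hence not backwards; so it remains to show that if $X \notin \lang[1]{G}$ then $g\colon X \to P$ is backwards. Write $X \cong X_{P_0,Q_0}$. From $g \neq 0$ and \Cref{Lem1: Hom X to P} we get $Q_0[1] < P \leq P_0[1]$, whence $Q_0 \leq Q_0[1] < P$ by \ref{G2}, so $Q_0 < P$ and therefore $\hcT{Q_0}{P} = 0$ by \ref{G1}. I then observe that any morphism of triangles realising $g$, from the defining triangle of $X$ to the split triangle of $P$, has middle component $g_0\colon Q_0 \to P$; commutativity of the square over $\gamma$ (the second map of the split triangle of $P$ being $\mathrm{id}_P$) forces $g_0 = \mathrm{id}_P\circ g_0 = g\gamma \in \hcT{Q_0}{P} = 0$, so $g_0 = 0$. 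Thus no morphism of triangles realising $g$ has non-zero middle component, i.e. $g$ is not forwards, i.e. $g$ is backwards. Together with the $X \in \lang[1]{G}$ case this gives the stated equivalence.

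The routine content consists of the two diagram chases with the canonical presentations. The one point needing care is the order bookkeeping in the second assertion: one must pass from the bare non-vanishing of $g$, via \Cref{Lem1: Hom X to P}, to the strict inequality $Q_0 < P$ (in fact $Q_0[1] < P$), and then recognise that any forwards structure would demand a non-zero map $Q_0 \to P$, which \ref{G1} forbids precisely because $P$ lies strictly past $Q_0$ in the total order — this is the essential reason a morphism out of a genuine $\lang[2]{G}\setminus\lang[1]{G}$ object into $\lang[1]{G}$ can never be forwards.
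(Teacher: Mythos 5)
Your proof is correct and follows essentially the same route as the paper's: for (1) you factor $f$ through the canonical map $Q_0 \to X_{P_0,Q_0}$ (re-deriving \Cref{Lem1: Pa factors to X} on the fly rather than citing it) and assemble the split triangle of $P$ with the defining triangle of $X$ into a morphism of triangles, and for (2) you show the would-be middle component lies in $\hcT{Q_0}{P}=0$ because \Cref{Lem1: Hom X to P} forces $Q_0<P$. This matches the paper's argument, including its implicit restriction to the canonical triangle presentations.
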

\begin{proof}
	We begin by proving (1). By \Cref{Lem1: X factors to Pa}, it suffices to consider the cases $f \colon P \to Q$ or $f \colon P \to X_{R, Q}$ for $P, Q, R$ indecomposable objects in $\lang[1]{G}$.
	Suppose $X \cong Q$, then the natural morphism of triangles
	\[
	\begin{tikzcd}
		0 \arrow[d] \arrow[rr] && P \arrow[d,"f"] \arrow[rr,"1"] && P \arrow[d,"f"] \arrow[rr] && 0 \arrow[d]\\
		0 \arrow[rr] && Q \arrow[rr,"1"] && Q \arrow[rr] && 0,
	\end{tikzcd}
	\]
	proves that $f$ is a forwards morphism.
	
	Suppose next that $X \cong X_{R, Q}$. Since $f \colon P \to X_{R, Q}$ is a nonzero morphism, then $R < P \leq Q$ by \Cref{Lem1: Hom P to X}. Observe that by\ \Cref{Lem1: Pa factors to X}, there are nonzero morphisms $g \colon P \to Q$ and $h \colon Q \to X_{R, Q}$ such that $f = h \circ g$. Moreover, by rescaling if necessary, we may assume that $h \colon Q \to X_{R, Q}$ is the morphism in the triangle that defines $X_{R, Q}$. Hence, we may construct the following commutative diagram
	\[
	\begin{tikzcd}
		0 \arrow[d] \arrow[rr] && P \arrow[d, "g"] \arrow[rr,"1"] && P \arrow[d,"f"] \arrow[rr] && 0 \arrow[d]\\
		R \arrow[rr] && Q \arrow[rr, "h"] && X_{R,Q} \arrow[rr] && R[1],
	\end{tikzcd}
	\]
	which is a morphism of triangles, proving that $f$ is a forward morphism.

	To prove (2), consider a nonzero morphism $g: X \rightarrow P$.
	If $X \in \lang[1]{G}$, then, since $X$ is indecomposable, we are in the situation of (1), and we have already proved that in such cases $g$ is a forward morphism.
	For the converse, suppose that $X \not\in \lang[1]{G}$. Then, $X \cong X_{R, Q}$ for indecomposables $R, Q \in \lang[1]{G}$ by \Cref{Lem1: X factors to Pa}. By \Cref{cor: unique triangle}, it suffices to check that there does not exist a morphism of triangles as in the diagram 
	\[
	\begin{tikzcd}
		R \arrow[d,dashed] \arrow[rr] && Q \arrow[d, "h", dashed] \arrow[rr] && X_{R,Q} \arrow[d,"g"] \arrow[rr] && R[1] \arrow[d,dashed]\\
		0 \arrow[rr] && P \arrow[rr,"1"] && P \arrow[rr] && 0.
	\end{tikzcd}
	\]
 	with $h \neq 0$. Suppose for a contradiction that such diagram exists. Because $g$ is nonzero, then $Q[1] < P \leq R[1]$ by \Cref{Lem1: Hom X to P}, and so $Q < P$. That is, $h = 0$, since there does not exist a nonzero morphism $Q \to P$. This contradicts the assumption that $g$ is a forwards morphism, whence it is a backwards morphism. 
\end{proof}

Due to \Cref{Lem1: Forward and Back morphisms}, it is possible to complete the classification of Hom-spaces between indecomposable objects in $\cT$.

\begin{lemma}\label[lem]{Lem1: Morphisms X to X}
	Let $\cT$ be a triangulated category satisfying \Cref{setup: cluster cat} and equipped with a linear generator $G$. Let $P, Q, R, S$ be indecomposable objects in $\lang[1]{G}$. Then, $\hcT{X_{P,Q}}{X_{R,S}} \cong k$ if and only if one of the following holds;
	\begin{enumerate}
		\item \label{En1: Morph 1} $S \leq Q < R \leq P$,
		\item \label{En1: Morph 2} $Q[1] < S \leq P[1] < R$.
	\end{enumerate}
	Else $\hcT{X_{P,Q}}{X_{R,S} } =0$.
	Moreover, a morphism of type (\ref{En1: Morph 1}) is a forwards morphism, and a morphism of type (\ref{En1: Morph 2}) is a backwards  morphism.
\end{lemma}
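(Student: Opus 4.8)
The plan is to compute $\hcT{X_{P,Q}}{X_{R,S}}$ by applying the homological functor $\hcT{-}{X_{R,S}}$ to the defining triangle $P \xrightarrow{f} Q \to X_{P,Q} \to P[1]$, obtaining the exact sequence
\[
\hcT{P[1]}{X_{R,S}} \xrightarrow{\hcT{f[1]}{X_{R,S}}} \hcT{Q[1]}{X_{R,S}} \to \hcT{X_{P,Q}}{X_{R,S}} \to \hcT{Q}{X_{R,S}} \xrightarrow{\hcT{f}{X_{R,S}}} \hcT{P}{X_{R,S}}.
\]
Each of the four outer terms is described by \Cref{Lem1: Hom P to X}: $\hcT{Z}{X_{R,S}} \cong k$ precisely when $S \le Z < R$, and vanishes otherwise. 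So first I would record, for each of $Z \in \{P, Q, P[1], Q[1]\}$, exactly when $S \le Z < R$ holds; using $Q \le P$, $Q \le Q[1]$, $P \le P[1]$, and the monotonicity of $[1]$ (so $Q[1] \le P[1]$), this is a finite case analysis organised by the position of the interval $[S, R)$ relative to the four points $Q \le P$, $Q[1] \le P[1]$. The two "good" configurations that must be isolated are exactly (\ref{En1: Morph 1}) $S \le Q < R \le P$ and (\ref{En1: Morph 2}) $Q[1] < S \le P[1] < R$; in all other configurations I must check that either all four outer terms vanish, or the connecting maps $\hcT{f}{X_{R,S}}$ and $\hcT{f[1]}{X_{R,S}}$ are isomorphisms, forcing $\hcT{X_{P,Q}}{X_{R,S}}=0$.

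The key subtlety — and the main obstacle — is establishing that the relevant connecting maps are \emph{nonzero} (hence isomorphisms, by dimension count) in the cases where two consecutive outer terms are both one-dimensional. This cannot be read off from dimensions alone: I would argue as in the proof of \Cref{Lem1: Hom X to P}, using property \ref{G3} of \Cref{def: linear gen} to factor a chosen nonzero morphism $P \to X_{R,S}$ through $Q$ (when $S \le Q \le P < R$, so that $\hcT{P}{X_{R,S}}$ and $\hcT{Q}{X_{R,S}}$ are both nonzero), and then rescaling via the one-dimensionality of $\hcT{P}{Q} \ni f$ to conclude $\hcT{f}{X_{R,S}}$ is nonzero; the shifted statement for $\hcT{f[1]}{X_{R,S}}$ follows by applying the same argument after suspension, or equivalently by \Cref{Lem1: Hom P to X} applied to $P[1], Q[1]$. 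Care is needed at the boundary cases $S = Q$, $R = P[1]$, etc., and when $Q \cong Q[1]$ or $P \cong P[1]$ (i.e. fixed points of $[1]$), exactly as in \Cref{Lem1: Hom X to P}; these degenerate cases should be handled by the same dimension-plus-factorisation reasoning rather than separately.

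Once the Hom-space dimensions are pinned down, the final clause — that type (\ref{En1: Morph 1}) morphisms are forwards and type (\ref{En1: Morph 2}) are backwards — I would prove by exhibiting, respectively, the absence or presence of a morphism of triangles. For (\ref{En1: Morph 1}), since $S \le Q < R \le P$, the composite $Q \to X_{P,Q}$ (the defining map) followed by the nonzero map $X_{P,Q} \to X_{R,S}$ lands in $\hcT{Q}{X_{R,S}} \cong k$, which is nonzero by \Cref{Lem1: Hom P to X}; I would use this nonzero $f_0 \colon Q \to Q'$-type component together with the octahedral axiom (or a direct diagram chase on the two defining triangles, as in \Cref{Lem1: Forward and Back morphisms}(1)) to build the required morphism of triangles with $f_0 \ne 0$. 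For (\ref{En1: Morph 2}), I would show no such morphism of triangles can exist: any candidate $f_0 \colon Q \to S$ would need to be nonzero, but $Q < Q[1] < S$ forces $\hcT{Q}{S} = 0$ by \ref{G1}, contradicting $f_0 \ne 0$ — this is the direct analogue of the backwards-morphism argument in \Cref{Lem1: Forward and Back morphisms}(2). I expect the dimension computation, specifically the nonvanishing of the connecting maps in the mixed cases, to be where essentially all the work lies; the forwards/backwards dichotomy is then a short formal consequence.
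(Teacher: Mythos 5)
Your proposal is correct and follows essentially the same route as the paper: the long exact sequence from $\hcT{-}{X_{R,S}}$ applied to the defining triangle of $X_{P,Q}$, the case analysis via \Cref{Lem1: Hom P to X}, the nonvanishing of the connecting maps in the mixed cases via factorisation through $Q$ (property \ref{G3} / \Cref{Lem1: Pa factors to X}) plus one-dimensionality, and the forwards/backwards dichotomy via existence or non-existence of the component $f_0$. The only point to tighten is the remark that the composite $Q \to X_{P,Q} \to X_{R,S}$ "lands in $\hcT{Q}{X_{R,S}} \cong k$, which is nonzero": its nonvanishing follows not from the target being one-dimensional but from the exact sequence showing precomposition with $Q \to X_{P,Q}$ is injective in case (\ref{En1: Morph 1}).
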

\begin{proof}
	Applying the functor $\hcT{-}{X_{R,S}}$ to the triangle
 	\[ P \xrightarrow{\alpha} Q \xrightarrow{\beta} X_{P, Q} \rightarrow P[1]\]
defining $X_{P,Q}$ induces the long exact sequence
	\begin{align} \label{les: XPQ XRS} 
	\hcT{Q[1]}{X_{R,S} } \rightarrow \hcT{P[1]}{X_{R,S} } \rightarrow \hcT{X_{P,Q}}{X_{R,S} } \rightarrow \hcT{Q}{X_{R,S} } \rightarrow \hcT{P}{X_{R,S} }. 
	\end{align} 	
	\begin{enumerate}
	\item Consider first the case $S \leq Q < R \leq P$. Then, we will show that $\hcT{X_{P,Q}}{X_{R,S}} \cong k$ and, moreover, any morphism $f \in \hcT{X_{P,Q}}{X_{R,S}}$ is a forwards morphism.
	Observe that by \Cref{Lem1: Hom P to X}, since $S \leq Q < R$, then $\hcT{Q}{X_{R,S}} \cong k$. Moreover, because $R \leq P \leq P[1]$ and by \Cref{Lem1: Hom P to X}, then $\hcT{P}{X_{R,S}}$ and $\hcT{P[1]}{X_{R,S}}$ both vanish. Thus, it follows from the exact sequence \Cref{les: XPQ XRS} that $\hcT{X_{P,Q}}{X_{R,S}} \cong k$, as required.  
	Moreover, given $f \in \hcT{X_{P,Q}}{X_{R,S}}$, it is possible to construct the morphism of triangles 
	\begin{equation} \label{cd: forwards morphism proof} 
		\begin{tikzcd}
		P \arrow[d, "f_1", dashed] \arrow[rr] && Q \arrow[d, "f_0", dashed] \arrow[rr, "g"] && X_{P,Q} \arrow[d,"f"] \arrow[rr] && P[1] \arrow[d]\\
		R \arrow[rr] && S \arrow[rr] && X_{R,S} \arrow[rr] && R[1]
	\end{tikzcd}
	\end{equation} 
in $\lang[2]{G}$. To see this, observe that, by \Cref{Lem1: X factors to Pa}, since $S \leq Q < R$, then the morphism $f \circ g \colon Q \rightarrow X_{R,S}$ factors through a nonzero morphism $f_0 \colon Q \to S$. Moreover, by the axioms of triangulated cateogries, there exists a map $f_1 \colon P \to R$ completing the diagram into a morphism of triangles. Thus, $f$ is a forwards morphism.
	
	\item Next, suppose that $Q[1] < S \leq P[1] < R$. Then, since $Q < Q[1] < S$, \Cref{Lem1: Hom P to X} implies that $\hcT{Q}{X_{R,S}}$ and $\hcT{Q[1]}{X_{R,S}}$ both vanish. Moreover, since $S \leq P[1] < R$, then\ \Cref{Lem1: Hom P to X} implies that $\hcT{P[1]}{X_{R,S}} \cong k$. It thus follows from \Cref{les: XPQ XRS} that $\hcT{X_{P,Q}}{X_{R,S}} \cong k$, as required. 
	To see that a morphism $f \in \hcT{X_{P,Q}}{X_{R,S}}$ is a backwards morphism, it suffices to check that there does not exist a morphism of triangles as in \ref{cd: forwards morphism proof} with $f_0 \neq 0$. However, by assumption, $Q < S$ and so there do not exist nonzero morphisms $Q \to S$. 
	\item We now begin to prove the converse statements. Suppose that neither $S \leq Q < R \leq P$ nor $Q[1] < S \leq P[1] < R$ hold. Then, we are in one of the following cases: $S = Q[1] \leq R \leq P[1]$,$ \,$ $S = Q[1] \leq P[1] < R$, $ \,$ $ S \leq Q \leq P < R = P[1]$, $ \,$ $P[1] < S$, $ \,$ $R \leq Q$, $ \,$ $Q < S \leq R < P[1]$ and $S < Q[1] \leq P[1] \leq R$. We will show that in any of these cases, $\hcT{X_{P,Q}}{X_{R,S}}$ vanishes. 
	
	Well, note that \Cref{Lem1: Hom P to X} implies that $\hcT{P[1]}{X_{R,S}}$ vanishes if $P[1] < S$ or $R \leq P[1]$, and $\hcT{Q}{X_{R,S}}$ vanishes if $Q < S$ or $R \leq Q$. Therefore, if $S = Q[1] \leq R \leq P[1]$, $ \,$ $P[1] < S$, $ \,$ $Q < S \leq R \leq P[1]$ or $R \leq Q$,then both  $\hcT{P[1]}{X_{R,S}}$ and $\hcT{Q}{X_{R,S}}$ vanish, so that $\hcT{X_{P,Q}}{X_{R,S}}$ must also vanish by \Cref{les: XPQ XRS}. 

	Consider the case $S \leq Q \leq P < R = P[1]$. Then, \Cref{Lem1: Hom P to X} implies that $\hcT{P[1]}{X_{R,S}} = 0$ and that $\hcT{Q}{X_{R,S}}$ and $\hcT{P}{X_{R,S}}$ are one dimensional. Hence, if $\hcT{\alpha}{X_{R,S}} \neq 0$, then it follows from \Cref{les: XPQ XRS} that $\hcT{X_{P,Q}}{X_{R,S}}$ vanishes. The fact that $\hcT{\alpha}{X_{R,S}}$ is nonzero follows from the observation that, by \Cref{Lem1: Pa factors to X}, any nonzero morphism $P \to X_{R,S}$ factors through a morphism $P \to Q$. Since the Hom-spaces are all one dimensional and composition is $k$-linear, then any nonzero morphism $P \to X_{R,S}$ factors through $\alpha$. That is, any nonzero morphism is in the image of $\hcT{\alpha}{X_{R,S}}$.

	Similarly, suppose that $S = Q[1] \leq P[1] < R$. Then, by \Cref{Lem1: Hom P to X}, $\hcT{Q}{X_{R,S}}$ vanishes, and $\hcT{P[1]}{X_{R,S}}$ and $\hcT{Q[1]}{X_{R,S}}$ are both one dimensional. Observe, moreover, that $\hcT{\alpha[1]}{X_{R,S}}$ is nonzero for similar reasons as in the $S \leq Q \leq P < R = P[1]$ case above. Whence, \Cref{les: XPQ XRS} implies that $\hcT{X_{P,Q}}{X_{R,S}}$ vanishes. 
	
	Finally, consider the case that $S < Q[1] \leq P[1] < R$. Then, by \Cref{Lem1: Hom P to X}, the Hom spaces $\hcT{P}{X_{R,S}}$, $\hcT{Q}{X_{R,S}}$, $\hcT{P[1]}{X_{R,S}}$, and $\hcT{Q[1]}{X_{R,S}}$ are all one dimensional. Moreover, the morphisms $\hcT{\alpha}{X_{R,S}}$ and $\hcT{\alpha[1]}{X_{R,S}}$ are nonzero, by the same arguement as in the $S \leq Q \leq P < R = P[1]$ case above. Hence, it follows from \Cref{les: XPQ XRS} that $\hcT{X_{P,Q}}{X_{R,S}}$ vanishes, as required. \qedhere
\end{enumerate}
\end{proof}

\begin{remark}\label[rem]{rem: unique up to canonical iso}
	As a corollary of \Cref{Lem1: Morphisms X to X}, we see that there is a canonical isomorphism between cones of a morphism $P \to Q \in \lang[1]{G}$, when $P, Q$ are indecomposable. Consider the morphism of triangles
\[ \begin{tikzcd}
P \arrow[d, equals] \arrow[r, "f"] & Q \arrow[r, "g"] \arrow[d, equals] & X \arrow[r, "h"] \arrow[d, "u", dashed, xshift=-0.5em]  \arrow[d, "v", dashed, xshift=0.5em] & P[1] \\
P \arrow[r, "f"] & Q \arrow[r, "g'"] & X' \arrow[r, "h'"] & P[1].
\end{tikzcd}
\]
Since $\hcT{X}{X'}$ is one dimensional, there is a $\lambda \in k^\times$ such that $v = \lambda u$. However, commutativity of the diagram implies that $\lambda = 1$.
\end{remark}

\subsection{Rouquier Dimension}

This section uses the properties of linear generators established in the previous sections, in order to compute the Rouquier dimension of a triangulated category admitting a linear generator. 

\begin{lemma}\label[lem]{Lem1: cone Pa into Z}
	Let $\cT$ be a triangulated category satisfying \Cref{setup: cluster cat} and equipped with a linear generator $G$. Let $P$ be an indecomposable object in $\lang[1]{G}$ and let $f \colon P \rightarrow Z$ be a nonzero morphism with $Z \in \lang[2]{G}$. Then, $\cone{f} \in \lang[2]{G}$. 
\end{lemma}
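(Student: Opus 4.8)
The plan is to reduce to the case where $Z$ is indecomposable, apply \Cref{Prop1: Ind in langle 2} to split into the cases $Z \in \lang[1]{G}$ and $Z \cong X_{R,S}$ for indecomposables $R,S \in \lang[1]{G}$, and in each case exhibit $\cone{f}$ explicitly as an object of $\lang[1]{G} \star \lang[1]{G} = \lang[2]{G}$. First I would observe that if $Z \cong \bigoplus_i Z_i$ is a decomposition into indecomposables and $f = (f_i)_i$, then any $f_i$ which is zero contributes a summand $Z_i \in \lang[1]{G}$ (or a summand $Z_i$ which is itself in $\lang[2]{G}$) to the cone, and such summands can be absorbed, so we may assume every component $f_i \colon P \to Z_i$ is nonzero; since $\lang[2]{G}$ is closed under finite direct sums it then suffices to treat a single nonzero $f \colon P \to Z$ with $Z$ indecomposable.

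If $Z \in \lang[1]{G}$ is indecomposable, then $\cone{f} = X_{P,Z}$ by \Cref{Lem1: Pa cone is ind} (together with \Cref{not: XPQ}) — assuming $P \not\cong Z$, while if $P \cong Z$ the cone is zero — and this sits in the triangle $P \to Z \to X_{P,Z} \to P[1]$ with $P, Z \in \lang[1]{G}$, so $X_{P,Z} \in \lang[1]{G} \star \lang[1]{G} = \lang[2]{G}$. If $Z \cong X_{R,S}$, then by \Cref{Lem1: Hom P to X} the existence of a nonzero $f \colon P \to X_{R,S}$ forces $S \leq P < R$. Here the idea is to use \Cref{Lem1: Pa factors to X}: taking $S \le P \le P < R$ in that lemma, $f$ factors as $P \xrightarrow{g} Q' \to X_{R,S}$ through some indecomposable, and more usefully, rescaling so that the map $S \to X_{R,S}$ is the one from the defining triangle of $X_{R,S}$, one writes $f$ through a morphism $P \to S$. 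I would then invoke the octahedral axiom on the composite $P \xrightarrow{p} S \xrightarrow{} X_{R,S}$ — more precisely, complete $f \colon P \to X_{R,S}$ using the triangle $R \to S \to X_{R,S} \to R[1]$ — to produce a triangle relating $\cone{f}$, $\cone{(P \to S)}$, and $R$ (or $R[1]$); since $\cone{(P \to S)} \in \lang[2]{G}$ (it is $X_{P,S}$, or a summand, by the first case) and $R \in \lang[1]{G}$, one concludes $\cone{f} \in \lang[1]{G} \star \lang[2]{G}$, which suffices once one checks $\lang[1]{G}\star\lang[2]{G} \subseteq \lang[2]{G}$ — or, arranging the octahedron the other way, directly as an extension of an object of $\lang[1]{G}$ by an object of $\lang[1]{G}$.

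The main obstacle I expect is bookkeeping in the octahedral step: one must set up the octahedron on the correct composable pair so that the third triangle genuinely places $\cone{f}$ inside $\lang[1]{G} \star \lang[1]{G}$ (rather than some longer extension), and this requires knowing the factorisation of $f$ through $\lang[1]{G}$ precisely — which is exactly what \Cref{Lem1: Pa factors to X} and the one-dimensionality of the relevant Hom spaces provide. A clean alternative, avoiding the octahedron, is: since $f$ factors as $P \xrightarrow{p} S \xrightarrow{\beta} X_{R,S}$ with $\beta$ from the defining triangle, build the $3\times 3$ diagram with rows $P \xrightarrow{p} S \to X_{P,S} \to P[1]$, $P \xrightarrow{f} X_{R,S} \to \cone{f} \to P[1]$ and first column $0 \to R \to R[1] \to 0$ (shifted); the third column is then a triangle $X_{P,S} \to \cone{f} \to R[1] \to X_{P,S}[1]$ exhibiting $\cone f$ as an extension of $R[1] \in \lang[1]{G}$ by $X_{P,S} \in \lang[2]{G}$, hence in $\lang[3]{G}$ a priori; to land in $\lang[2]{G}$ one then re-expresses $X_{P,S}$ via its own defining triangle and reassembles, or more simply notes that $\cone f$ is indecomposable (by the argument of \Cref{Lem1: Pa cone is ind}, or directly) and in $\lang[2]{G}$ — at which point \Cref{Prop1: Ind in langle 2} identifies it as some $X_{P', Q'}$ or an object of $\lang[1]{G}$, completing the proof. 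I would likely present the octahedral version as the cleanest, with the key input being the factorisation $f = \beta \circ p$.
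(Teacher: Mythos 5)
Your reduction to the case of $Z$ indecomposable is where the argument breaks down, and unfortunately that reduction is the entire difficulty of the lemma. You write that ``since $\lang[2]{G}$ is closed under finite direct sums it then suffices to treat a single nonzero $f \colon P \to Z$ with $Z$ indecomposable,'' but the cone of $f = (f_i)_i \colon P \to \bigoplus_i Z_i$ is not $\bigoplus_i \cone{f_i}$ when several components are nonzero (already $\cone{(\id,\id)} \colon P \to P \oplus P$ has cone $P$, not $0$), so knowing each $\cone{f_i} \in \lang[2]{G}$ tells you nothing about $\cone{f}$. Indeed the paper's own \Cref{Prop1: Ind in langle 2}\eqref{PgtP1} shows that for $P \to \bigoplus_i P_i$ with all components nonzero the cone is $X_{P,P_1} \oplus \bigoplus_{i \geq 2} P_i$, not $\bigoplus_i X_{P,P_i}$. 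The paper's proof therefore never reduces to $Z$ indecomposable: it keeps the whole sum $Z = \bigoplus_i Z_i$, replaces each $Z_i$ by $Q_i$ (namely $Z_i$ itself if $Z_i \in \lang[1]{G}$, or $S$ if $Z_i \cong X_{R,S}$), factors $f$ as $P \xrightarrow{(g_i)} \bigoplus_i Q_i \xrightarrow{h} Z$ using \Cref{Lem1: Pa factors to X}, computes $\cone{(g_i)} \cong X_{P,Q_1} \oplus \bigoplus_{i\geq 2} Q_i$ via \Cref{Prop1: Ind in langle 2}\eqref{PgtP1}, and then runs the octahedral axiom on this composite together with a Hom-vanishing argument (\Cref{Lem1: Hom P to X}) to kill the component of the connecting map landing in $X_{P,Q_1}$, so that $\cone f$ splits off $X_{P,Q_1}$ plus the cone of a morphism entirely inside $\lang[1]{G}$.

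Your treatment of the genuinely indecomposable case is essentially right in outline (factor through $S$, octahedron, connecting map vanishes), and it coincides with what the paper's argument specialises to when $n=1$; but even there the ``more simply notes that $\cone f$ is indecomposable'' alternative is false --- by \Cref{Lem1: P to X cone} the cone of a nonzero $P \to X_{R,S}$ is $R[1] \oplus X_{P,S}$, which is decomposable --- so you must actually verify that the connecting map $R[1] \to X_{P,S}[1]$ vanishes (which \Cref{Lem1: Hom P to X} gives, since $P < R$). To repair the proof you need to confront the multi-summand case head-on along the lines above.
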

\begin{proof}
	Since $\cT$ is Krull-Schmidt, we may write $Z= \bigoplus_{i=1}^n Z_i$ where $Z_i$ are indecomposable objects in $\lang[2]{G}$. Moreover, we may write $f = (f_i \colon P \rightarrow Z_i)_{i=1}^n$, and we may assume without loss of generality that $f_i \colon P \rightarrow Z_i$ is nonzero for all $i$. 

	By \Cref{Prop1: Ind in langle 2}, $Z_i \cong Q \in \lang[1]{G}$ or $Z_i \cong X_{R, S}$ for $R, S \in \lang[1]{G}$. For each $1 \leq i \leq n$, let $Q_i = Z_i$ if $Z_i \in \lang[1]{G}$. Otherwise, let $Q_i = S$ if $Z_i \cong X_{R,S}$. Similarly, for each $1 \leq i \leq n$, let $P_i = 0$ if $Z_i \in \lang[1]{G}$ and let $P_i = R$ if $Z_i \cong X_{R, S}$. Observe, then, that here is a natural morphism $h = (h_i)_{i=1}^n \colon \bigoplus_{i=1}^n Q_i \to Z$ whose cone is $\bigoplus_{i=1}^n P_i[1]$.

	Furthermore, for all $1 \leq i \leq n$, we claim that there are nonzero morphisms $g_i \colon P \to Q_i$ such that $h \circ (g_i)_{i=1}^n = f$. To construct such $g_i$, if $Z_i \in \lang[1]{G}$, let $g_i = f_i$, which is nonzero by assumption. On the other hand, if $Z_i = X_{R, S}$, then, by \Cref{Lem1: Pa factors to X}, there is a nonzero morphism $g_i \colon P \to S$ such that $f = h_i \circ g_i$. Thus, an application of the octahedral axiom induces the commutative diagram
	\[
	\begin{tikzcd}[column sep=4em]
		P \arrow[d,"\sim" labl1] \arrow[r, "{g = (g_i)_{i=1}^n}"] & \bigoplus_{i =1}^n Q_{i}  \arrow[d, "h"] \arrow[r] & X_{P,Q_1}  \oplus \bigoplus_{i=2}^n Q_{i}  \arrow[d] \arrow[r] & P[1] \arrow[d]\\
		P \arrow[d] \arrow[r,"f"] & Z \arrow[d] \arrow[r] & \mathrm{cone} \, f \arrow[d] \arrow[r] & P[1] \arrow[d]\\
		0 \arrow[d] \arrow[r] & \bigoplus_{i=1}^n P_{i}[1] \arrow[d] \arrow[r] & \bigoplus_{i=1}^n P_{i}[1] \arrow[d,"{(u,v)}"] \arrow[r] & 0 \arrow[d]\\
		P[1] \arrow[r] & \bigoplus_{i=1}^n Q_{i}[1] \arrow[r] & X_{P,Q_1} [1] \oplus \bigoplus_{i=2}^n Q_{i}[1] \arrow[r] & P[2].
	\end{tikzcd}
\]
which has exact rows and columns. The fact that $\cone{g} \cong X_{P,Q_1} \oplus \bigoplus_{i=2}^n Q_i$ follows from \Cref{Prop1: Ind in langle 2} \ref{PgtP1}. If $u =0$, then $\cone f \cong X_{P, Q_1} \oplus \cone v$, where $v \colon \bigoplus_{i=1}^n P_i[1] \to \bigoplus_{i=2}^n Q_i[2]$ is a morphism in $\lang[1]{G}$, whence $\cone v \in \lang[2]{G}$ by definition of $\lang[2]{G}$. That is, if $u=0$, then $\cone f \cong X_{P, Q_1} \oplus \cone v \in \lang[2]{G}$.

To see that $u = (u_i \colon P_i[1] \to X_{P, Q_1})_{i=1}^n = 0$, observe that, if $Z_i \in \lang[1]{G}$, then $P_i = 0$ by construction, and so $u_i =0$. Otherwise, $P_i = R$, and the assumption that $f_i \colon P \to X_{R,S}$ is nonzero means that $S \leq P < R = P_i$ by \Cref{Lem1: Hom P to X}. By the same lemma and since $P < P_i$ implies that $P[1] \leq P_i$, then $\hcT{P_i[1]}{X_{P, Q_1}} = 0$. Whence, $u_i = 0$. 
\end{proof}

\begin{theorem}\label{Thm1: Perf 2 is Tri}
	Let $\cT$ be a triangulated category satisfying \Cref{setup: cluster cat}, and equipped with a linear generator $G$. Then, category $\langle G \rangle_2$ is triangulated, and hence equivalent to $\cT$. In other words, the generation time of $G$ is $1$.
\end{theorem}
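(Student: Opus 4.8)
The plan is to show that $\lang[2]{G}$ is closed under the suspension functor and under cones, which together with the fact that it is a full subcategory closed under direct summands containing $0$ will establish that it is a triangulated subcategory; since it also contains a generator $G$, it must then equal $\cT$. Closure under $[1]$ is immediate: if $A \in \lang[1]{G} \star \lang[1]{G}$ via a triangle $B \to A \to C \to B[1]$ with $B, C \in \lang[1]{G}$, then $B[1] \to A[1] \to C[1] \to B[2]$ exhibits $A[1]$ in $\lang[1]{G}[1] \star \lang[1]{G}[1] = \lang[1]{G} \star \lang[1]{G}$, using that $\lang[1]{G}$ is closed under $[1]$. So the real content is closure under cones.

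First I would reduce closure under cones to a statement about cones of morphisms between indecomposables. By the Krull-Schmidt property, any object of $\lang[2]{G}$ is a finite direct sum of indecomposables, and by \Cref{Prop1: Ind in langle 2} each indecomposable summand is either an object of $\lang[1]{G}$ or of the form $X_{P,Q}$. Given a morphism $\varphi \colon Z \to Z'$ with $Z, Z' \in \lang[2]{G}$, I want to show $\cone{\varphi} \in \lang[2]{G}$. Writing both source and target as direct sums of indecomposables and splitting off summands on which $\varphi$ vanishes (these contribute direct summands to the cone lying in $\lang[2]{G}$), I reduce to the case where every matrix entry of $\varphi$ is nonzero. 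The next reduction is to peel off the source one indecomposable summand at a time using the octahedral axiom, exactly as in the proof of \Cref{Prop1: Ind in langle 2} and \Cref{Lem1: cone Pa into Z}: if $Z = Z_1 \oplus Z''$, factor $\varphi$ as a composite whose first cone involves only $\cone$ of a map out of $Z_1$ and whose second cone is $\cone$ of a map out of $Z''$, then induct on the number of summands of $Z$.

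This leaves the base case of a nonzero morphism $f \colon W \to Z$ where $W$ is a single indecomposable of $\lang[2]{G}$ and $Z \in \lang[2]{G}$. If $W \in \lang[1]{G}$, this is precisely \Cref{Lem1: cone Pa into Z}. The remaining case is $W \cong X_{P,Q}$ for indecomposables $P, Q \in \lang[1]{G}$; here I would use the defining triangle $P \xrightarrow{\alpha} Q \xrightarrow{\beta} X_{P,Q} \to P[1]$ and the octahedral axiom applied to the composite $Q \xrightarrow{\beta} X_{P,Q} \xrightarrow{f} Z$. This produces an octahedron relating $\cone{\beta} = P[1]$, $\cone{(f\beta)}$, and $\cone{f}$, fitting them into a triangle $P[1] \to \cone{(f\beta)} \to \cone{f} \to P[2]$. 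Now $f\beta \colon Q \to Z$ is a morphism out of an object of $\lang[1]{G}$, so \Cref{Lem1: cone Pa into Z} gives $\cone{(f\beta)} \in \lang[2]{G}$ — provided $f\beta \neq 0$, which I must check separately, and if $f\beta = 0$ then $f$ factors through $P[1] \to X_{P,Q}$, i.e.\ through the connecting map, and one argues directly that $\cone{f}$ splits off summands in $\lang[1]{G}$ together with a cone handled by the previous cases. Since $P[1] \in \lang[1]{G}$, the triangle $P[1] \to \cone{(f\beta)} \to \cone{f} \to P[2]$ would naively only place $\cone{f}$ in $\lang[1]{G} \star \lang[2]{G} = \lang[3]{G}$, which is not good enough; so the key is to rotate this triangle to $\cone{(f\beta)} \to \cone{f} \to P[2] \to \cone{(f\beta)}[1]$ and observe $\cone{f}$ is the cone of a map $P[1] \to \cone{(f\beta)}$ from an object of $\lang[1]{G}$ into an object of $\lang[2]{G}$, so \Cref{Lem1: cone Pa into Z} applies once more to give $\cone{f} \in \lang[2]{G}$.

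The main obstacle I anticipate is the bookkeeping in the inductive peeling-off argument — specifically, after splitting off a summand via octahedral calculus one must verify that the residual morphism $v$ (the analogue of the map $v$ in \Cref{Prop1: Ind in langle 2} and \Cref{Lem1: cone Pa into Z}) really is again a morphism between objects of $\lang[2]{G}$ with strictly fewer source summands, and that the cross-terms (the analogues of the vanishing maps $w$, $u$) vanish by the Hom-space computations of \Cref{Lem1: Hom X to P}, \Cref{Lem1: Hom P to X}, and \Cref{Lem1: Morphisms X to X}. These vanishing checks rely on having first reindexed so that the source summands are ordered compatibly with $\leq$, and keeping that ordering consistent across the induction is the delicate point. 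Once $\lang[2]{G}$ is shown to be triangulated, it is a thick subcategory (full, closed under summands, triangulated) containing $G$, hence contains $\lang{G} = \cT$, so $\lang[2]{G} = \cT$ and the generation time of $G$ is at most $1$; it is exactly $1$ unless $\cT = \lang[1]{G}$, which does not occur when $G$ is a genuine linear generator with a nonzero non-iso endomorphism producing an $X_{P,Q} \notin \lang[1]{G}$.
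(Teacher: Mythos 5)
Your overall strategy---show that $\lang[2]{G}$ is closed under suspension and under cones, hence is a thick triangulated subcategory containing $G$ and therefore equals $\cT$---is sound, and your peeling-off induction together with the base case $W \in \lang[1]{G}$ (which is exactly \Cref{Lem1: cone Pa into Z}) is how the paper argues. The genuine gap is in your remaining base case $W \cong X_{P,Q}$, in the branch where $f\beta = 0$. This branch is not a degenerate corner: by \Cref{Lem1: Hom X to P} and \Cref{Lem1: Morphisms X to X}, the composite $f_i\beta$ vanishes precisely when $f_i$ is a nonzero map into an object of $\lang[1]{G}$ or a backwards morphism into some $X_{R,S}$, so for instance every nonzero $f \colon X_{P,Q} \to R$ with $R \in \lang[1]{G}$ lands in this branch. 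There one has $f = g\gamma$ with $\gamma \colon X_{P,Q} \to P[1]$ the connecting map and $g \colon P[1] \to Z$ nonzero, and the octahedron for $g \circ \gamma$ produces a triangle $Q[1] \to \cone{f} \to \cone{g} \xrightarrow{w} Q[2]$, so that $\cone{f} \cong \cone{w}[-1]$ where $w$ is a morphism from the (generally decomposable) object $\cone{g} \in \lang[2]{G}$ \emph{into} $Q[2] \in \lang[1]{G}$. This is not ``handled by the previous cases'': \Cref{Lem1: cone Pa into Z} treats maps \emph{out of} an indecomposable of $\lang[1]{G}$, and appealing to your own induction for $w$ is circular, since $\cone{g}$ can have more indecomposable summands than the single summand you are treating in the base case. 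As it stands the triangle only places $\cone{f}$ in $\lang[1]{G} \star \lang[2]{G} = \lang[3]{G}$.

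To repair this you would either need a dual of \Cref{Lem1: cone Pa into Z} (cones of morphisms from $\lang[2]{G}$ into $\lang[1]{G}$ lie in $\lang[2]{G}$, proved via \Cref{Lem1: X to P cone} and a separate induction on the source), or, more economically, take the paper's shortcut: since $\lang[3]{G} = \lang[1]{G} \star \lang[1]{G} \star \lang[1]{G} = \lang[2]{G} \star \lang[1]{G}$, it suffices to show that $\cone{f} \in \lang[2]{G}$ for every $f \colon P \to Z$ with $P \in \lang[1]{G}$ and $Z \in \lang[2]{G}$; once $\lang[3]{G} = \lang[2]{G}$ one gets $\lang[m]{G} = \lang[2]{G}$ for all $m \geq 2$, hence $\lang{G} = \lang[2]{G} = \cT$, and closure under arbitrary cones follows for free from $\lang[2]{G} \star \lang[2]{G} = \lang[4]{G} = \lang[2]{G}$. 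That reduction never asks for the cone of a map out of an $X_{P,Q}$, so the problematic case disappears entirely and the whole proof is \Cref{Lem1: cone Pa into Z} plus your peeling-off octahedron applied to the summands of $P$.
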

\begin{proof}
	To show that $\lang[2]{G}$ is triangulated, it suffices to show that $\lang[3]{G} = \lang[2]{G}$. That it is, it is enough to show that the cone of any morphism $f \colon P \rightarrow Z$ with $P \in \lang[1]{G}$ and $Z \in \lang[2]{G}$ is in $\lang[2]{G}$.
	Since $\cT$ is Krull-Schmidt, we may write $P = \bigoplus_{i=1}^n P_i$, where $P_i$ are indecomposable objects, and $f = (f_i \colon P_i \rightarrow Z)_{i=1}^n$. We will show that $\cone{f} \in \lang[2]{G}$ by induction on $n$. The $n=1$ case is \Cref{Lem1: cone Pa into Z}. Suppose by induction that $\cone{f'} \in \lang[2]{G}$ for all $f' \colon Q \to Z$ where $Q = \bigoplus_{i=1}^{n-1} Q_i$ for indecomposable objects $Q_i \in \lang[1]{G}$.
	
	By reindexing if necessary, assume $P_i \leq P_1$ for all $1 \leq i \leq n$. By applying the octahedral axiom, it is possible to construct the following commutative diagram with exact rows and columns.	
	\[
	\begin{tikzcd}
		P_1  \arrow[d,"\sim" labl1] \arrow[r, "{(1, 0, \hdots)}"] & P \arrow[d,"f"] \arrow[r] & P/P_{1}  \arrow[d, "h"] \arrow[r] & P_{1}[1] \arrow[d]\\
		P_{1}  \arrow[d] \arrow[r,"{g=(f_1, 0, \hdots)}"] & Z \arrow[d] \arrow[r] & \mathrm{cone} \, g \arrow[d] \arrow[r] & P[1] \arrow[d]\\
		0 \arrow[d] \arrow[r] & \mathrm{cone} \, f \arrow[d] \arrow[r,"\sim"] & \mathrm{cone} \, h \arrow[d] \arrow[r] & 0 \arrow[d]\\
		P_{1}[1] \arrow[r] & P[1] \arrow[r] & P/P_{1}[1] \arrow[r] & P_{1}[2].
	\end{tikzcd}
	\]
	By \Cref{Lem1: cone Pa into Z}, $\mathrm{cone} \, g \in \lang[2]{G}$. By the inductive hypothesis, since $P/P_1$ has exactly $n-1$ indecomposable summands, $\cone{h} \in \lang[2]{G}$. Since $\cone{f} \cong \cone{h}$, it follows that $\cone{f} \in \lang[2]{G}$.

	Therefore $\lang[2]{G}$ is the smallest triangulated subcategory of $\cT$ containing $G$. Hence $\lang[2]{G} = \cT$.
\end{proof}

As an immediate corollary of \Cref{Thm1: Perf 2 is Tri}, we are able to compute the Roquier dimension of certain triangulated categories equipped with a linear generator. Recall that the \textit{Orlov spectrum} $\cO(\cS)$ of a triangulated category $\cS$ is the collection of generation times across all classical generators in $\cS$ \cite{OrlovSpectra}. Further, the \textit{Rouquier dimension} of a triangulated category is the infimum of $\cO(\cS)$ \cite{Rouquier}.

\begin{corollary}\label[cor]{Cor: Rouquier dimension}
	Let $\cT$ be a triangulated category satisfying \Cref{setup: cluster cat}, and equipped with a linear generator $G$. Then, $\cT$ has Rouquier dimension 1 if and only if $G \not\cong G[1]$. 
\end{corollary}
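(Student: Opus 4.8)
The Rouquier dimension of $\cT$ is at most $1$ for free: by \Cref{Thm1: Perf 2 is Tri} we have $\lang[2]{G}=\cT$, so $G$ is a strong generator of generation time at most $1$. Thus the statement amounts to deciding when generation time $0$ is achieved, i.e.\ when there is a classical generator $H$ with $\lang[1]{H}=\cT$. Since $\cT$ is Krull--Schmidt and $H=\bigoplus_{i=1}^{r}H_i$ has only finitely many indecomposable summands, $\lang[1]{H}=\cT$ is equivalent to $\mathrm{ind}(\cT)=\{[H_i[n]]:1\le i\le r,\ n\in\mathbb{Z}\}$, that is, to $\mathrm{ind}(\cT)$ being a union of at most $r$ orbits of the suspension functor. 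So the plan is to describe these orbits, which is feasible because \Cref{Prop1: Ind in langle 2} identifies $\mathrm{ind}(\cT)=\mathrm{ind}(\lang[2]{G})$ with the indecomposables of $\lang[1]{G}$ together with the cones $X_{P,Q}$.

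Suppose first that $G\cong G[1]$. In a Krull--Schmidt category this forces $[1]$ to permute the finite set of isomorphism classes of indecomposable summands of $G$, so $\mathrm{ind}(\lang[1]{G})$ equals that finite set; then \Cref{Prop1: Ind in langle 2} shows that $\mathrm{ind}(\cT)$, being contained in $\mathrm{ind}(\lang[1]{G})$ together with the finitely many objects $X_{P,Q}$, is itself finite. Taking $H$ to be the direct sum of one object from each isomorphism class in $\mathrm{ind}(\cT)$, we obtain a classical generator of $\cT$ (it has $G$ as a direct summand, so $\lang{H}=\cT$) with $\lang[1]{H}=\cT$. Hence $\cT$ has a generator of generation time $0$, so its Rouquier dimension is $0$, which is not $1$.

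Now suppose $G\not\cong G[1]$, so that some indecomposable summand $G_i$ of $G$ satisfies $G_i\not\cong G_i[1]$. The first step is to upgrade this to the statement that $G_i$ has \emph{infinite} suspension-orbit: by property \ref{G2} we have $G_i\le G_i[1]$, hence $G_i<G_i[1]$, and iterating \ref{G2} gives $G_i[1]\le G_i[2]\le\cdots$, so if $G_i\cong G_i[p]$ for some $p\ge 1$ we would get $G_i<G_i[1]\le G_i[p]\cong G_i$, a contradiction. Fixing such a $G_i$, for every $k\ge 1$ we have $G_i<G_i[k]$ by \ref{G2}, hence a nonzero morphism $G_i[k]\to G_i$ by \ref{G1}, whose cone $X_{G_i[k],G_i}$ is an indecomposable object of $\lang[2]{G}=\cT$ by \Cref{Lem1: Pa cone is ind}. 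Suspending its defining triangle shows $X_{G_i[k],G_i}[1]\cong X_{G_i[k+1],G_i[1]}$, so the suspension-orbit of $X_{G_i[k],G_i}$ consists of the objects $X_{G_i[k+n],G_i[n]}$, $n\in\mathbb{Z}$; and since $G_i$ has infinite order, \Cref{Cor1: Xs arent iso} shows that for $k\ne l$ the objects $X_{G_i[k],G_i}$ and $X_{G_i[l],G_i}$ never lie in a common orbit. Therefore $\mathrm{ind}(\cT)$ has infinitely many suspension-orbits, no classical generator $H$ satisfies $\lang[1]{H}=\cT$, and so the Rouquier dimension of $\cT$ equals $1$.

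The main obstacle is the final implication of the second case: passing from ``$\mathrm{ind}(\cT)$ is infinite'' to ``$\mathrm{ind}(\cT)$ has infinitely many suspension-orbits'' is genuinely where the hypothesis is used, since an infinite triangulated category can perfectly well be generated in a single step when $[1]$ has infinite orbits. What makes it work here is the rigidity of the cones $X_{P,Q}$ recorded in \Cref{Cor1: Xs arent iso}: the objects $X_{G_i[k],G_i}$ form an infinite fan of indecomposables all based at the single object $G_i$, and because $G_i$ is isomorphic to none of its nonzero suspensions, no suspension of $\cT$ can carry one member of the fan onto another. The rest is routine bookkeeping with isomorphism classes and orbits, plus checking the degenerate cases (such as $\cT=0$, or $G$ indecomposable with $G\cong G[1]$, where the arguments degenerate harmlessly).
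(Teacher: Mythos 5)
Your proof is correct and follows essentially the same route as the paper: both reduce to deciding whether some generator has generation time $0$, handle the case $G\cong G[1]$ by observing that $\mathrm{ind}(\cT)$ is then finite via \Cref{Prop1: Ind in langle 2}, and in the case $G\not\cong G[1]$ exhibit an infinite family of cones over a fixed indecomposable of infinite suspension-orbit, using \Cref{Cor1: Xs arent iso} to see that no two lie in a common $[1]$-orbit. Your family $\{X_{G_i[k],G_i}\}_{k\ge 1}$ is just a shift of the paper's $\{X_{P,P[i]}\}_{i<0}$, so the difference is purely cosmetic.
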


\begin{proof}
	Suppose that $G \not\cong G[1]$. In view of \Cref{Thm1: Perf 2 is Tri}, it is suffices to show that the Rouquier dimension of $\cT$ is greater than zero. That is, it suffices to show that there does not exist an object $G' \in \cT$ such that $\lang[1]{G'} = \cT$.
	
	Observe first that, since $\cT$ is Krull-Schmidt, $G'$ is a finite direct sum of indecomposable objects in $\cT$, and so $\lang[1]{G'}$ has finitely many indecomposable objects up to suspension. However, when $G \not\cong G[1]$, then we can fix $P \in \lang[1]{G}$ such that $P \not\cong P[1]$, which in turn, implies that $P[i-1] < P[i]$ for all $i \in \bZ$. Thus, by \Cref{cor: unique triangle} the collection of objects $\{X_{P,P[i]}\}_{i <0}$ is infinite. Again, by \Cref{cor: unique triangle}, since $P$ is fixed, then $X_{P,P[i]} \not\cong X_{P,P[i']}[j]$, for all $i,i' <0$ and $j \in \bZ$. Hence, the collection $\{X_{P,P[i]}\}_{i < 0}$ is an infinite collection of indecomposable objects which are not isomorphic and are not suspensions of each other. It thus follows that this collection is not in $\lang[1]{G'}$, from which we conclude that $\lang[1]{G'} \subsetneq \cT$ for all objects $G' \in \cT$, and so the Rouquier dimension of $\cT$ is greater than zero.
	
	Now consider the case $G \cong G[1]$. Then, $\lang[1]{G}$ has finitely many isomorphism classes of indecomposable objects. Hence, it follows from \Cref{Prop1: Ind in langle 2} and \Cref{Thm1: Perf 2 is Tri} that $\cT$ has finitely many isomorphism classes of indecomposable objects $\{P_i\}_{i=1}^n$. Hence, by letting $G' = \bigoplus_{i=1}^n P_i$, then $\cT = \lang[1]{G'}$ and we may conclude that $\cT$ has Rouquier dimension 0 if there exists a linear generator $G \cong G[1]$.
\end{proof}

\begin{corollary}\label[cor]{Cor: Ind in a tri with PQ}
	Let $\cT$ be a triangulated category satisfying \Cref{setup: cluster cat}, and equipped with a linear generator $G$.
	Let $X \in \cT$ be an indecomposable object, then there exists a triangle
	\[
	P \rightarrow X \rightarrow Q \rightarrow P[1],
	\]
	such that $P,Q \in \lang[1]{G}$, with $P$ indecomposable, and $Q$ either indecomposable or trivial.
\end{corollary}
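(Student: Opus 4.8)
The plan is to deduce this directly from the structural results already in hand. By \Cref{Thm1: Perf 2 is Tri} we have $\cT = \lang[2]{G}$, so any indecomposable $X \in \cT$ is an indecomposable object of $\lang[2]{G}$. Applying \Cref{Prop1: Ind in langle 2}, there are only two possibilities: either $X \cong P$ for some indecomposable $P \in \lang[1]{G}$, or $X \cong X_{P,Q}$ for some indecomposable $P,Q \in \lang[1]{G}$. I would treat the two cases separately.

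If $X \cong P \in \lang[1]{G}$ with $P$ indecomposable, then the split triangle
\[ P \xrightarrow{\ \id\ } X \longrightarrow 0 \longrightarrow P[1] \]
has the required form, with $P$ indecomposable and the third term trivial. If instead $X \cong X_{P,Q}$, then by \Cref{not: XPQ} there is a distinguished triangle $P \xrightarrow{f} Q \xrightarrow{g} X \xrightarrow{h} P[1]$ with $P,Q$ indecomposable in $\lang[1]{G}$. Rotating this triangle once produces the distinguished triangle
\[ Q \xrightarrow{\ g\ } X \xrightarrow{\ h\ } P[1] \xrightarrow{-f[1]} Q[1], \]
which is exactly of the shape demanded by the statement: $Q \in \lang[1]{G}$ is indecomposable, and $P[1] \in \lang[1]{G}$ (since $\lang[1]{G}$ is closed under suspension) is indecomposable, being the suspension of an indecomposable object.

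Since each case is immediate once \Cref{Thm1: Perf 2 is Tri} and \Cref{Prop1: Ind in langle 2} are invoked, I do not expect a substantive obstacle here. The only point requiring a little care is the bookkeeping of the rotation, so that the indecomposable object of $\lang[1]{G}$ lands in the first slot of the triangle (which forces one to take $P[1]$, not $P$, as the third term), together with the trivial observations that $P[1]$ is again indecomposable and again lies in $\lang[1]{G}$.
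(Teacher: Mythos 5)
Your proof is correct and follows essentially the same route as the paper: reduce to the two cases of \Cref{Prop1: Ind in langle 2} via \Cref{Thm1: Perf 2 is Tri}, and in the second case rotate the defining triangle of the cone (the paper simply relabels the objects as $X \cong X_{Q[-1],P}$ so that the rotated triangle reads $P \to X \to Q \to P[1]$ directly, which is the same bookkeeping you carry out explicitly).
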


\begin{proof}
	Suppose $X \in \lang[1]{G}$, then we may take $X \cong P$ and $Q=0$ to obtain the identity triangle for $X$.
	Now suppose that $X \not\in \lang[1]{G}$, then $X \cong X_{Q[-1],P}$ by \Cref{Prop1: Ind in langle 2} and \Cref{Thm1: Perf 2 is Tri}.
\end{proof}

\subsection{Cones} \label{subsec: cones}

Throughout this section, let $\cT$ be a triangulated category satisfying \Cref{setup: cluster cat}, and equipped with a linear generator $G$. The properties of linear generators are strong enough to afford control over cones of morphisms between indecomposable objects in $\cT$. In this section, we present such results. 

\begin{lemma}\label[lem]{Lem1: P to X cone}
	Let $Q \leq R < P$ be indecomposables in $\lang[1]{G}$, and let $g: R \rightarrow X_{P,Q} $ be a non-zero morphism.
	Then, $\mathrm{cone} \, g \cong P[1] \oplus X_{R,Q}$.
\end{lemma}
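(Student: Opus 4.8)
The plan is to factor $g$ through $Q$ and then invoke the octahedral axiom. Write the defining triangle of $X_{P,Q}$ as $P \xrightarrow{f} Q \xrightarrow{\beta} X_{P,Q} \to P[1]$. Applying $\hcT{R}{-}$ to this triangle and using $R < P \leq P[1]$ together with property \ref{G1} of \Cref{def: linear gen} gives $\hcT{R}{P} = \hcT{R}{P[1]} = 0$, so $\hcT{R}{\beta}$ is an isomorphism; this is exactly the computation performed in the proof of \Cref{Lem1: Pa factors to X}. Hence there is a morphism $q \colon R \to Q$ with $g = \beta \circ q$, and $q \neq 0$ since $g \neq 0$. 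Because $Q \leq R$, there are two cases: if $R \cong Q$ then $q$ is an isomorphism (as $\hcT{R}{Q} \cong k$ by \ref{G1}) and $\cone{q} = 0$; otherwise $R \not\cong Q$ and $\cone{q} \cong X_{R,Q}$ is indecomposable by \Cref{Lem1: Pa cone is ind}. In either case we have a triangle $R \xrightarrow{q} Q \to X_{R,Q} \to R[1]$, adopting the convention $X_{R,Q} = 0$ when $R \cong Q$.

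Rotating the triangle defining $X_{P,Q}$ yields $Q \xrightarrow{\beta} X_{P,Q} \to P[1] \to Q[1]$, so $\cone{\beta} \cong P[1]$. Applying the octahedral axiom to the composite $g = \beta \circ q$ then produces a distinguished triangle
\[
	X_{R,Q} \longrightarrow \cone{g} \longrightarrow P[1] \xrightarrow{\delta} X_{R,Q}[1].
\]
It remains to show $\delta = 0$, since then the triangle splits and $\cone{g} \cong P[1] \oplus X_{R,Q}$, as claimed. Now $\delta \in \hcT{P[1]}{X_{R,Q}[1]} \cong \hcT{P}{X_{R,Q}}$, and by \Cref{Lem1: Hom P to X} this space is nonzero only when $Q \leq P < R$; since $R < P$ by hypothesis, it vanishes, so $\delta = 0$.

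I do not anticipate a genuine obstacle here: the argument is a single application of the octahedral axiom sandwiched between the morphism-space computations of \Cref{Lem1: Pa factors to X} and \Cref{Lem1: Hom P to X}. The only points needing care are checking that $X_{P,Q}$ and $X_{R,Q}$ are legitimately defined — the hypothesis $Q \leq R < P$ forces $Q < P$, so $X_{P,Q}$ is a genuine indecomposable, while $R$ and $Q$ may coincide, which is why the degenerate case $R \cong Q$ is recorded above — and invoking the octahedral axiom with $\cone{q}$, $\cone{\beta q}$, and $\cone{\beta}$ placed in the correct slots of the resulting triangle.
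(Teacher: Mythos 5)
Your proof is correct and follows essentially the same route as the paper: factor $g$ through $Q$ via \Cref{Lem1: Pa factors to X}, apply the octahedral axiom to $g = \beta\circ q$, and kill the connecting map $P[1]\to X_{R,Q}[1]$ using the vanishing of $\hcT{P}{X_{R,Q}}$ from \Cref{Lem1: Hom P to X}. The only difference is presentational — you extract the single relevant triangle from the octahedron rather than writing out the full $4\times 4$ diagram, and you explicitly record the degenerate case $R\cong Q$, which the paper leaves implicit.
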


\begin{proof}
	Consider the diagram
	\[
	\begin{tikzcd}
		R \arrow[d,"\sim" labl1 ] \arrow[r] & Q \arrow[d] \arrow[r] & X_{R,Q} \arrow[d] \arrow[r] & R[1] \arrow[d]\\
		R \arrow[d] \arrow[r,"g"] & X_{P,Q}  \arrow[d] \arrow[r] & \mathrm{cone} \, g \arrow[d] \arrow[r] & R[1] \arrow[d]\\
		0 \arrow[d] \arrow[r] & P[1] \arrow[d] \arrow[r,"\sim"] & P[1] \arrow[d,"u"] \arrow[r] & 0 \arrow[d]\\
		R[1] \arrow[r] & Q[1] \arrow[r] & X_{R,Q}[1] \arrow[r] & R[2].
	\end{tikzcd}
	\]
	where the top leftmost square exists and commutes by \Cref{Lem1: Pa factors to X}. Thus, the diagram commutes and has exact rows and columns by the octahedral axiom. Moreover, the morphism $u$ is trivial, as $\hcT{P[1]}{X_{R,Q}[1]}= 0$ by \Cref{Lem1: Hom P to X}, hence $\mathrm{cone} \, g \cong P[1] \oplus X_{R,Q}$.
\end{proof}

\begin{lemma}\label[lem]{Lem1: X to P cone}
	Let $Q[1] < R \leq P[1]$, and let $g: X_{P,Q}  \rightarrow R$ be a non-zero morphism.
	Then $\mathrm{cone} \, g \cong Q[1] \oplus X_{P[1],R}$.
\end{lemma}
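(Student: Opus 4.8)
The plan is to mirror the proof of \Cref{Lem1: P to X cone}, applying the octahedral axiom to a factorisation of $g$ on its source side and then observing that the resulting extension splits for dimension reasons. Write the triangle defining $X_{P,Q}$ as $P \xrightarrow{\alpha} Q \xrightarrow{\beta} X_{P,Q} \xrightarrow{\delta} P[1]$. Applying $\hcT{-}{R}$ yields the exact sequence
\[
\hcT{Q[1]}{R} \to \hcT{P[1]}{R} \xrightarrow{\hcT{\delta}{R}} \hcT{X_{P,Q}}{R} \xrightarrow{\hcT{\beta}{R}} \hcT{Q}{R},
\]
in which the outer terms vanish since $Q \leq Q[1] < R$. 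Hence $\hcT{\delta}{R}$ is an isomorphism, so $g = g' \circ \delta$ for a unique morphism $g' \colon P[1] \to R$, which is non-zero because $g$ is; by \Cref{not: XPQ}, $\cone{g'} = X_{P[1],R}$.

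Next I would form, exactly as in the proof of \Cref{Lem1: P to X cone}, the $3 \times 3$ diagram produced by the octahedral axiom from the commuting square with top edge $\delta$, bottom edge $g$, left edge $\mathrm{id}_{X_{P,Q}}$, and right edge $g'$. Since $\cone{\delta} \cong Q[1]$ (a rotation of the triangle defining $X_{P,Q}$) and $\cone{g'} = X_{P[1],R}$, this diagram, with exact rows and columns, contains the triangle
\[
Q[1] \longrightarrow \cone{g} \longrightarrow X_{P[1],R} \overset{u}{\longrightarrow} Q[2].
\]

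Finally I would check that $u = 0$. By \Cref{Lem1: Hom X to P} applied to $X_{P[1],R}$, the space $\hcT{X_{P[1],R}}{Q[2]}$ is non-zero only if $R[1] < Q[2] \leq P[2]$; but $Q[1] < R$ forces $Q[2] < R[1]$, as the suspension functor is an autoequivalence and hence order-preserving, so this inequality fails and $\hcT{X_{P[1],R}}{Q[2]} = 0$. Therefore the triangle above splits, giving $\cone{g} \cong Q[1] \oplus X_{P[1],R}$, as claimed.

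I do not anticipate a genuine obstacle, since the argument is formally dual to that of \Cref{Lem1: P to X cone}. The points requiring care are: checking that $g$ factors through the \emph{defining} map $\delta$ of $X_{P,Q}$ (which the long exact sequence above makes automatic); using \Cref{Lem1: Hom X to P} (morphisms out of a cone), rather than \Cref{Lem1: Hom P to X}, for the concluding vanishing; and the suspension bookkeeping $Q[1] < R \Rightarrow Q[2] < R[1]$.
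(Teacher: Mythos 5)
Your proof is correct and follows essentially the same route as the paper's: factor $g$ through the connecting morphism $X_{P,Q}\to P[1]$, apply the octahedral axiom to the composite, and split the resulting triangle $Q[1]\to \cone{g}\to X_{P[1],R}\to Q[2]$ using $\hcT{X_{P[1],R}}{Q[2]}=0$ from \Cref{Lem1: Hom X to P}. The only cosmetic difference is that you re-derive the factorisation through $P[1]$ from the long exact sequence, whereas the paper cites \Cref{Lem1: X factors to Pa}, whose proof is that same computation.
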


\begin{proof}
	By \Cref{Lem1: X factors to Pa}, the top leftmost square in the diagram 
	\[
	\begin{tikzcd}
		X_{P,Q}  \arrow[r] \arrow[d,"\sim" labl1 ] & P[1] \arrow[d,"h"] \arrow[r] & Q[1] \arrow[d] \arrow[r] & X_{P,Q} [1] \arrow[d] \\
		X_{P,Q}  \arrow[r,"g"] \arrow[d] & R \arrow[d] \arrow[r] & \mathrm{cone} \, g \arrow[d] \arrow[r] & X_{P,Q} [1] \arrow[d]\\
		0 \arrow[r] \arrow[d] & X_{P[1],R} \arrow[r,"\sim"] \arrow[d] & X_{P[1],R} \arrow[r] \arrow[d] & 0 \arrow[d]\\
		X_{P,Q} [1] \arrow[r] & P[2]  \arrow[r] & Q[2] \arrow[r] & X_{P,Q} [2].
	\end{tikzcd}
	\]
	exists and commutes. Thus, the diagram exists, commutes and has exact rows and columns by the octahedral axiom. 	

	Since $\hcT{X_{P[1],R}}{Q[2]}=0$ by \Cref{Lem1: Hom X to P}, then $\mathrm{cone} \, g \cong Q[1] \oplus X_{P[1],R}$.
\end{proof}

Understanding the cone of a morphism from $R \to X_{P,Q}$ allows us to establish further factorization properties. 

\begin{lemma} \label[lem]{Lem1: P to Q via X}
	Let $P, Q \in \lang[1]{G}$ and $X_{R,S} \in \cT$ be indecomposable objects. Then, a nonzero morphism $f \colon P \to Q$ does not factor through $X_{R,S}$.
\end{lemma}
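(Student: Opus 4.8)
The plan is to argue by contradiction. Suppose a non-zero morphism $f \colon P \to Q$ factored through $X_{R,S}$, say $f = b \circ a$ with $a \colon P \to X_{R,S}$ and $b \colon X_{R,S} \to Q$. Since $f \neq 0$, neither $a$ nor $b$ can be the zero morphism, so the strategy is to combine the descriptions of $\hcT{P}{X_{R,S}}$ and $\hcT{X_{R,S}}{Q}$ already obtained in order to force the composite $b \circ a$ to vanish anyway.

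First I would locate $R$ and $S$ relative to $P$ and $Q$ using the two Hom-space lemmas. Because $a \neq 0$, \Cref{Lem1: Hom P to X} forces $S \leq P < R$. Because $b \neq 0$, \Cref{Lem1: Hom X to P} forces $S[1] < Q \leq R[1]$; since $S \leq S[1]$ by property \ref{G2} of \Cref{def: linear gen}, this yields $S < Q$, hence $S \not\cong Q$ and therefore $\hcT{S}{Q} = 0$ by property \ref{G1}.

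Next I would factor $a$ through the canonical morphism $g \colon S \to X_{R,S}$ appearing in the triangle $R \to S \xrightarrow{g} X_{R,S} \to R[1]$ defining $X_{R,S}$. Applying $\hcT{P}{-}$ to this triangle and noting that $\hcT{P}{R} = 0$ and $\hcT{P}{R[1]} = 0$ (both via property \ref{G1}, using $P < R \leq R[1]$ and property \ref{G2}), one sees that $\hcT{P}{g}$ is an isomorphism, so $a = g \circ a'$ for some $a' \colon P \to S$. Then $f = b \circ a = (b \circ g) \circ a'$ with $b \circ g \in \hcT{S}{Q} = 0$, so $f = 0$, contradicting the assumption. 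Hence no non-zero $f \colon P \to Q$ factors through $X_{R,S}$.

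I do not expect a serious obstacle: the whole argument is a bookkeeping exercise on the total order together with the Hom-space formulas established earlier. The only points requiring a little care are to observe that a factorisation $f = b \circ a$ automatically has $a, b \neq 0$ (else $f = 0$ already), and to use property \ref{G2} to upgrade $S[1] < Q$ to $S < Q$ before invoking property \ref{G1}. Whether or not $X_{R,S}$ happens to lie in $\lang[1]{G}$ plays no role, since \Cref{Lem1: Hom P to X} and \Cref{Lem1: Hom X to P} apply to $X_{P,Q}$ in either case.
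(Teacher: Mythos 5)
Your proof is correct, but it takes a genuinely different route from the paper's. The paper also reduces to the chain $S \leq Q < P \leq R[1]$ via \Cref{Lem1: Hom P to X,Lem1: Hom X to P}, but then invokes the cone computation of \Cref{Lem1: P to X cone} to replace the triangle on $g \colon P \to X_{R,S}$ by $P \xrightarrow{g} X_{R,S} \to R[1] \oplus X_{P,S} \to P[1]$, applies $\hcT{-}{Q}$, and deduces $\hcT{g}{Q} = 0$ from a dimension count on the resulting exact sequence $k \to k^2 \to k \to k$. You instead factor the first leg $a \colon P \to X_{R,S}$ through the canonical map $S \to X_{R,S}$ (which is exactly the mechanism of \Cref{Lem1: Pa factors to X}, so you could simply cite it) and then kill the composite using $\hcT{S}{Q} = 0$, which follows from $S \leq S[1] < Q$ and property \ref{G1}. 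Your argument is more elementary: it needs only the defining triangle of $X_{R,S}$ and the Hom-space lemmas, and avoids \Cref{Lem1: P to X cone} and the exactness bookkeeping entirely. The trade-off is essentially nil here, since \Cref{Lem1: P to X cone} is proved before this lemma in the paper and is needed elsewhere anyway; but your version is shorter and arguably more transparent about \emph{why} the composite vanishes (it literally passes through a zero Hom-space) rather than extracting the vanishing from a rank count. All the individual steps check out: $a, b \neq 0$, the inequalities $S \leq P < R$ and $S[1] < Q \leq R[1]$, the upgrade $S < Q$ via \ref{G2}, and the isomorphism $\hcT{P}{S} \cong \hcT{P}{X_{R,S}}$ from $\hcT{P}{R} = \hcT{P}{R[1]} = 0$.
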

\begin{proof}
	First, note that if there does not exists a nonzero morphisms $g \colon P \to X_{R,S}$ and $h \colon X_{R,S} \to Q$, then $f$ cannot factor through $X_{R,S}$. Hence, we may assume that such morphisms exists, and so we may assume by \Cref{Lem1: Hom P to X} and \Cref{Lem1: Hom X to P} that $S \leq Q < P \leq R[1]$. 

Applying the functor $\hcT{-}{Q}$ to the triangle 
	\[
	P \xrightarrow{g} X_{R,S} \rightarrow R[1] \oplus X_{P,S} \rightarrow P[1],
	\]
induced by \Cref{Lem1: P to X cone} yields the exact sequence	
	\[
	\hcT{P[1]}{Q} \rightarrow \hcT{R[1] \oplus X_{P,S}}{Q} \rightarrow \hcT{X_{R,S}}{Q} \xrightarrow{\hcT{g}{Q}} \hcT{P}{Q}.
	\]
	By \Cref{def: linear gen}, and \Cref{Lem1: Hom X to P}, and \Cref{Lem1: Morphisms X to X} this simplifies to the following exact sequence of vector spaces.
	\[ k \rightarrow k^2 \rightarrow k \xrightarrow{\hcT{g}{Q}} k \]
	This is only possible if $\hcT{g}{Q} = 0$. That is, for any pair of maps $g \colon P \to X_{R,S}$ and $h \colon X_{R,S} \to Q$, then $h \cdot g = 0 \neq f$, and we may conclude that $f$ does not factor through $W$.
\end{proof}

\begin{lemma}\label[lem]{Lem1: P to X via X}
	Let $R,X_{P,Q}  \in \cT$ be indecomposable objects, such that there exists a non-zero morphism $f: R \rightarrow X_{P,Q} $.
	Then, $f$ factors through $X_{P',Q'}$ if and only if $Q \leq Q' \leq R < P \leq P'$.
\end{lemma}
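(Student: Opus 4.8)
The plan is to prove both implications directly from the Hom-space classifications in \Cref{Lem1: Hom P to X} and \Cref{Lem1: Morphisms X to X}, using the factorisation axiom \ref{G3} to detect non-vanishing of composites. (One could instead feed the cone computation \Cref{Lem1: P to X cone} into a Hom-functor as in the proof of \Cref{Lem1: P to Q via X}, but the bare-hands approach seems shorter here.)

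First I would record the constraints that come for free. Since $f \colon R \to X_{P,Q}$ is non-zero, \Cref{Lem1: Hom P to X} already forces $Q \leq R < P$, so the middle inequality $R < P$ of the claimed conclusion costs nothing. Now suppose $f = h \circ g$ with $g \colon R \to X_{P',Q'}$ and $h \colon X_{P',Q'} \to X_{P,Q}$; as $f \neq 0$, both $g$ and $h$ are non-zero. Applying \Cref{Lem1: Hom P to X} to $g$ gives $Q' \leq R < P'$, and applying \Cref{Lem1: Morphisms X to X} to $h$ puts us in one of the cases (i) $Q \leq Q' < P \leq P'$ or (ii) $Q'[1] < Q \leq P'[1] < P$. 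In case (i) the inequalities $Q \leq Q'$, $Q' \leq R$, $R < P$, $P \leq P'$ combine to $Q \leq Q' \leq R < P \leq P'$, which is exactly what we want, so the forward implication reduces to excluding case (ii). For that, apply $\hcT{R}{-}$ to the triangle $P' \to Q' \xrightarrow{\beta'} X_{P',Q'} \to P'[1]$ defining $X_{P',Q'}$; since $R < P' \leq P'[1]$ (using \ref{G2}), the map $\hcT{R}{\beta'}$ is an isomorphism, so $g = \beta' \circ g'$ for some non-zero $g' \colon R \to Q'$. In case (ii) we have $Q' \leq Q'[1] < Q$, hence $Q' < Q$, so $\hcT{Q'}{X_{P,Q}} = 0$ by \Cref{Lem1: Hom P to X}; therefore $h \circ \beta' = 0$ and $f = h \circ g = (h\circ \beta')\circ g' = 0$, contradicting $f \neq 0$. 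This settles the forward direction.

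For the converse, assume $Q \leq Q' \leq R < P \leq P'$. Then $Q' \leq R < P \leq P'$ gives $Q' < P'$, so $X_{P',Q'}$ is a genuine indecomposable; \Cref{Lem1: Hom P to X} supplies a non-zero $g \colon R \to X_{P',Q'}$ (as $Q' \leq R < P'$), and \Cref{Lem1: Morphisms X to X}(\ref{En1: Morph 1}) supplies a non-zero forwards morphism $h \colon X_{P',Q'} \to X_{P,Q}$ (as $Q \leq Q' < P \leq P'$), which sits in a morphism of the defining triangles with non-zero component $h_0 \colon Q' \to Q$ satisfying $h \circ \beta' = \beta \circ h_0$, where $\beta' \colon Q' \to X_{P',Q'}$ and $\beta \colon Q \to X_{P,Q}$ are the defining maps. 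Since $\hcT{R}{X_{P,Q}} \cong k$, it is enough to show $h \circ g \neq 0$: then $h\circ g$ is a non-zero scalar multiple of $f$, and rescaling $h$ realises $f$ itself. Writing $g = \beta' \circ g'$ with $g' \colon R \to Q'$ non-zero (as above) gives $h \circ g = \beta \circ h_0 \circ g'$. Here $h_0 \circ g' \colon R \to Q$ is non-zero: by \ref{G3} every non-zero morphism $R \to Q$ factors non-trivially through $Q'$, and since $\hcT{R}{Q}$, $\hcT{R}{Q'}$, $\hcT{Q'}{Q}$ are all one-dimensional (using $Q \leq Q' \leq R$), any such morphism is a non-zero scalar multiple of $h_0 \circ g'$. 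Finally $\beta \circ (h_0 \circ g') \neq 0$, because applying $\hcT{R}{-}$ to the triangle defining $X_{P,Q}$ and using $R < P \leq P[1]$ shows $\hcT{R}{\beta}$ is an isomorphism. Hence $h \circ g \neq 0$, as needed.

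I expect the only real obstacle to be the bookkeeping in the forward direction: one must be confident that \Cref{Lem1: Morphisms X to X} leaves exactly the two cases (i) and (ii) for $h$, and that case (ii) genuinely forces $h$ to annihilate the image of $g$. Everything else is the by-now-standard interplay of one-dimensional Hom spaces, the axiom \ref{G3}, and the long exact sequences attached to the triangles defining the objects $X_{P',Q'}$, all of which have been used repeatedly earlier in the section.
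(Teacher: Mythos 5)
Your proof is correct, but it takes a genuinely different route from the paper's. The paper proves both directions by feeding the triangle $R \xrightarrow{g} X_{P',Q'} \to P'[1]\oplus X_{R,Q'} \to R[1]$ from \Cref{Lem1: P to X cone} into $\hcT{-}{X_{P,Q}}$: in the "if" direction the two outer Hom spaces vanish, so $\hcT{g}{X_{P,Q}}$ is an isomorphism, and in the residual "only if" case a dimension count $k \to k \to k^2 \to k$ forces $\hcT{g}{X_{P,Q}} = 0$. You instead bypass the cone computation entirely: you factor $g$ through the defining map $\beta'\colon Q' \to X_{P',Q'}$ (legitimate, since $R < P' \leq P'[1]$ makes $\hcT{R}{\beta'}$ an isomorphism) and then track the composite down into $\lang[1]{G}$ — in the excluded case (ii) the vanishing of $\hcT{Q'}{X_{P,Q}}$ kills $h\circ\beta'$ and hence $f$, while in the converse the forwards-morphism square $h\circ\beta' = \beta\circ h_0$ together with \ref{G3} and the one-dimensionality of the relevant Hom spaces shows $h\circ g = \beta\circ h_0\circ g' \neq 0$. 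Both arguments are sound; yours is more elementary and self-contained (it needs only \Cref{Lem1: Hom P to X}, \Cref{Lem1: Morphisms X to X} and the factorisation axiom, not \Cref{Lem1: P to X cone}), at the cost of unwinding the forwards-morphism structure explicitly, whereas the paper's version is the uniform template it reuses verbatim for \Cref{Lem1: X to P via X} and \Cref{Lem1: X to X via P}.
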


\begin{proof}
	Suppose first that $Q \leq Q' \leq R < P \leq P'$. Then, by \Cref{Lem1: Hom P to X}, there is a nonzero morphism $g \colon R \to X_{P', Q'}$. Moreover, by \Cref{Lem1: P to X cone}, there is a triangle
	\[
	R \xrightarrow{g} X_{P',Q'} \rightarrow P'[1] \oplus X_{R,Q'} \rightarrow R[1].
	\]
	By applying the functor $\hcT{-}{X_{P,Q}}$ to this triangle, we obtain the long exact sequence
   	\begin{equation} \label{les: X factors through X}
        \begin{tikzcd}
            \hcT{P'[1] \oplus X_{R,Q'}}{X_{P,Q} }  \arrow[d, phantom,""{coordinate,name=Z}] \arrow[r] & \hcT{X_{P',Q'}}{X_{P,Q}} \arrow[r, "\hcT{g}{X_{P,Q}}"]  &  \hcT{R}{X_{P,Q}} \arrow[dll, rounded corners, to path={ -- ([xshift=2ex]\tikztostart.east) |- (Z) [near end]\tikztonodes -| ([xshift=-2ex]\tikztotarget.west) -- (\tikztotarget)}] \\
            \hcT{P' \oplus X_{R,Q'}[-1]}{X_{P,Q}} \arrow[r] & \hcT{X_{P',Q'}}{X_{P,Q}[-1]} & {} 
        \end{tikzcd}
    \end{equation}
	We claim that $\hcT{g}{X_{P,Q}}$ is an isomorphism, so that we may conclude that $f$ factors through $g$. Well, by  \Cref{Lem1: Hom P to X,Lem1: Morphisms X to X}, the Hom spaces $\hcT{P'[1] \oplus X_{R,Q'}}{X_{P,Q}}$ and $\hcT{P' \oplus X_{R,Q'}[-1]}{X_{P,Q}}$ vanish, and so we may conclude that $\hcT{g}{X_{P, Q}}$ is an isomorphism. 

	To prove the converse, suppose that $Q \leq Q' \leq R < P \leq P'$ does not hold. Observe that since $f$ is nonzero, it must be that $Q \leq R < P$ by \Cref{Lem1: Hom P to X}. Moreover, if $\hcT{R}{X_{P', Q'}} = 0$, then $f$ cannot factor through $X_{P', Q'}$. Hence, it suffices to consider cases where $\hcT{R}{X_{P', Q'}} \neq 0$. By \Cref{Lem1: Hom P to X}, this means that we may assume $Q' \leq R < P$. 

	Similarly, it suffices to consider cases where $\hcT{X_{P',Q'}}{X_{P,Q}} \neq 0$. By \Cref{Lem1: Morphisms X to X}, we may assume that either (1) $Q \leq Q' < P \leq P'$ or (2) $Q'[1] < Q \leq P'[1] < P$. Combining (1) with the inequality $Q' \leq R < P$ implies that $Q \leq Q' \leq R < P \leq P'$, which is a contradiction since we assumed that this does not hold. Hence, (2) must hold. Combining (2) and the fact that $Q \leq R < P'$ implies $Q'[1] < Q \leq R < P' \leq P'[1] < P$.  However, in this case, \Cref{Lem1: Hom P to X,Lem1: Morphisms X to X} imply that the exact sequence \eqref{les: X factors through X}, seen as a sequence of vector spaces, becomes
\[ k \xrightarrow{\hcT{g}{X_{P,Q}}} k \xrightarrow{} k^2 \xrightarrow{} k. \]
This is only possible if $\hcT{g}{X_{P,Q}}= 0$. Since this is true for any $g \colon R \to X_{P',Q'}$, given any pair of morphisms $g \in \hcT{R, X_{P', Q'}}$ and $h \in \hcT{X_{P', Q'}}{X_{P,Q}}$, then $h \cdot g = 0 \neq f$. Thus, $f$ does not factor through $X_{P',Q'}$, as required. 
\end{proof}

\begin{lemma}\label[lem]{Lem1: X to P via X}
	Let $R,X_{P,Q}  \in \cT$ be indecomposable objects, such that there exists a non-zero morphism $f: X_{P,Q}  \rightarrow R$.
	Then $f$ factors through $X_{P',Q'}$ if and only if $Q'[1] \leq Q[1] < R \leq P'[1] \leq P[1]$.
\end{lemma}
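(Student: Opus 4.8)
The plan is to mirror the structure of the proof of \Cref{Lem1: P to X via X}, using \Cref{Lem1: X to P cone} in place of \Cref{Lem1: P to X cone}. First I would record that, since $f \colon X_{P,Q} \to R$ is nonzero, \Cref{Lem1: Hom X to P} forces $Q[1] < R \leq P[1]$. For the forward implication, suppose $Q'[1] \leq Q[1] < R \leq P'[1] \leq P[1]$. Then \Cref{Lem1: Hom X to P} gives a nonzero morphism $g \colon X_{P',Q'} \to R$ (as $Q'[1] < R \leq P'[1]$), and \Cref{Lem1: X to P cone} provides a triangle
\[
X_{P',Q'} \xrightarrow{g} R \rightarrow Q'[1] \oplus X_{P'[1],R} \rightarrow X_{P',Q'}[1].
\]
Applying $\hcT{X_{P,Q}}{-}$ to this triangle yields a long exact sequence in which the terms $\hcT{X_{P,Q}}{Q'[1] \oplus X_{P'[1],R}}$ and $\hcT{X_{P,Q}}{Q'[1]\oplus X_{P'[1],R}[-1]}$ appear adjacent to $\hcT{X_{P,Q}}{R}$; I would use \Cref{Lem1: Hom X to P} and \Cref{Lem1: Morphisms X to X} to check that both of these vanish under the hypothesised inequalities, so that $\hcT{X_{P,Q}}{g}$ is an isomorphism and hence $f$ factors through $g$, i.e. through $X_{P',Q'}$.

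For the converse, assume the chain $Q'[1] \leq Q[1] < R \leq P'[1] \leq P[1]$ fails; I want to show $f$ does not factor through $X_{P',Q'}$. It suffices to treat the cases where both $\hcT{X_{P,Q}}{X_{P',Q'}} \neq 0$ and $\hcT{X_{P',Q'}}{R} \neq 0$, since otherwise no nonzero composite exists. The second condition, via \Cref{Lem1: Hom X to P}, forces $Q'[1] < R \leq P'[1]$; the first, via \Cref{Lem1: Morphisms X to X}, forces either $Q' \leq Q < P \leq P'$ (type (1)) or $Q'[1] < Q \leq P'[1] < P$ (type (2)). In the type (1) case, $Q' \leq Q$ gives $Q'[1] \leq Q[1]$, and $P \leq P'$ gives $P'[1] \leq P[1]$; combined with $Q'[1] < R \leq P'[1]$ this reconstructs exactly the forbidden chain, a contradiction. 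So only type (2) can occur, and then I would feed the inequalities $Q'[1] < Q \leq P'[1] < P$ together with $Q[1] < R \leq P[1]$ and $Q'[1] < R \leq P'[1]$ into \Cref{Lem1: Hom X to P} and \Cref{Lem1: Morphisms X to X} to evaluate the relevant long exact sequence as a sequence of vector spaces of the shape $k \xrightarrow{\hcT{X_{P,Q}}{g}} k \to k^2 \to k$, which is exact only if $\hcT{X_{P,Q}}{g} = 0$; since this holds for every $g \colon X_{P',Q'} \to R$, no composite $h \circ g$ can equal the nonzero $f$.

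The main obstacle I anticipate is the bookkeeping in the converse: enumerating the ways the chain $Q'[1] \leq Q[1] < R \leq P'[1] \leq P[1]$ can fail and confirming that, after intersecting with the necessary conditions $Q'[1] < R \leq P'[1]$ and the type (1)/(2) dichotomy, every surviving case is genuinely of type (2) with the dimension count above. One should be slightly careful that \Cref{Lem1: X to P cone} is being applied with the correct roles of the objects (its output involves $X_{P[1],R}$, so here it produces $X_{P'[1],R}$), and that all the Hom-space computations respect the possible coincidences $Q \cong Q[1]$ etc.; but these are routine once the inequality analysis is set up, so I expect no conceptual difficulty beyond the casework.
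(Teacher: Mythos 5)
Your proposal is correct and follows exactly the route the paper takes: the paper's entire proof of this lemma is the single remark that it is analogous to \Cref{Lem1: P to X via X}, obtained by applying $\hcT{X_{P,Q}}{-}$ to the triangle supplied by \Cref{Lem1: X to P cone}, which is precisely what you carry out (including the correct observation that the cone term is $X_{P'[1],R}$). The only blemish is a transposition in your statement of type (1), which should read $Q' \leq Q < P' \leq P$ to match \Cref{Lem1: Morphisms X to X}; since the consequences you draw from it, namely $Q'[1] \leq Q[1]$ and $P'[1] \leq P[1]$, are exactly those that follow from the correctly stated inequality, the argument is unaffected.
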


\begin{proof}
	The proof is analogous to that of \Cref{Lem1: P to X via X}, by applying the functor $\hcT{X_{P,Q} }{-}$ to the triangle
	\[
	X_{P',Q'} \rightarrow R \rightarrow Q'[1] \oplus X_{P',R} \rightarrow X_{P',Q'}[1].
	\]
\end{proof}

\begin{lemma}\label[lem]{Lem1: X to X via P}
	Let $X_{P,Q} ,X_{P',Q'} \in \cT$ and $R \in \lang[1]{G}$ be indecomposable objects.
	Let $f: X_{P,Q}  \rightarrow X_{P',Q'}$ be a non-zero morphism.
	Then $f$ factors through $R$ if and only if $Q[1] < Q' \leq R \leq P[1] < P'$. 
\end{lemma}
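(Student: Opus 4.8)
The plan is to follow the template of \Cref{Lem1: P to X via X} and \Cref{Lem1: X to P via X}: reduce the factorisation question to the surjectivity of a single map between one-dimensional Hom-spaces, and decide that surjectivity by coning and reading off a long exact sequence.

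First I would dispose of the degenerate situation. If $f$ factors through $R$, then since $f \neq 0$ there exist non-zero morphisms $h \colon X_{P,Q} \to R$ and $g \colon R \to X_{P',Q'}$, so $\hcT{X_{P,Q}}{R} \neq 0$ and $\hcT{R}{X_{P',Q'}} \neq 0$; by \Cref{Lem1: Hom X to P} and \Cref{Lem1: Hom P to X} this forces $Q[1] < R \leq P[1]$ and $Q' \leq R < P'$. Both of these inequalities are also immediate consequences of the claimed condition $Q[1] < Q' \leq R \leq P[1] < P'$, so whenever one of them fails, both sides of the asserted equivalence are false and there is nothing to prove. I may therefore assume $Q[1] < R \leq P[1]$ and $Q' \leq R < P'$, and fix once and for all a non-zero $g \colon R \to X_{P',Q'}$. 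Since $\hcT{R}{X_{P',Q'}} \cong k$, every non-zero morphism $R \to X_{P',Q'}$ is a scalar multiple of $g$, so $f$ factors through $R$ if and only if $f$ lies in the image of $\hcT{X_{P,Q}}{g} \colon \hcT{X_{P,Q}}{R} \to \hcT{X_{P,Q}}{X_{P',Q'}}$; as $f \neq 0$ and $\hcT{X_{P,Q}}{X_{P',Q'}} \cong k$ (by \Cref{Lem1: Morphisms X to X}, since $f$ exists), this holds exactly when $\hcT{X_{P,Q}}{g}$ is surjective.

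Next I would cone $g$. By \Cref{Lem1: P to X cone} there is a triangle $R \xrightarrow{g} X_{P',Q'} \to P'[1] \oplus X_{R,Q'} \to R[1]$, and applying $\hcT{X_{P,Q}}{-}$ produces a long exact sequence in which $\hcT{X_{P,Q}}{g}$ has target $\hcT{X_{P,Q}}{X_{P',Q'}}$, the next term is $\hcT{X_{P,Q}}{P'[1] \oplus X_{R,Q'}}$, and the term immediately to the left of $\hcT{X_{P,Q}}{R}$ is $\hcT{X_{P,Q}}{P' \oplus X_{R,Q'}[-1]}$. Now \Cref{Lem1: Morphisms X to X} puts us, since $f \neq 0$, into one of two cases: either (1) $Q' \leq Q < P' \leq P$, or (2) $Q[1] < Q' \leq P[1] < P'$. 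Combined with the standing assumptions $Q[1] < R \leq P[1]$ and $Q' \leq R < P'$, case (2) is precisely the claimed inequality $Q[1] < Q' \leq R \leq P[1] < P'$ and conversely; and case (1) is incompatible with that inequality, which gives $Q[1] < Q'$ while case (1) gives $Q' \leq Q < Q[1]$. So it remains to prove that $\hcT{X_{P,Q}}{g}$ is surjective in case (2) and zero in case (1).

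For case (2) I would use \Cref{Lem1: Hom X to P} and \Cref{Lem1: Morphisms X to X} to check that both $\hcT{X_{P,Q}}{P'[1]}$ and $\hcT{X_{P,Q}}{X_{R,Q'}}$ vanish — the inequalities $P[1] < P'$ and $R \leq P[1]$ defeat every possibility offered by those lemmas — so that $\hcT{X_{P,Q}}{P'[1] \oplus X_{R,Q'}} = 0$, which by exactness forces $\hcT{X_{P,Q}}{g}$ to be surjective; hence $f$ factors through $R$. For case (1) the nearby terms are one- or two-dimensional, and I would compute their dimensions from the same lemmas: one finds $\hcT{X_{P,Q}}{R} \cong \hcT{X_{P,Q}}{X_{P',Q'}} \cong k$ while $\hcT{X_{P,Q}}{P'[1] \oplus X_{R,Q'}} \cong \hcT{X_{P,Q}}{P' \oplus X_{R,Q'}[-1]} \cong k^2$, and the term $\hcT{X_{P,Q}}{X_{P',Q'}[-1]}$ one step further to the left is again one-dimensional. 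A short chase of this portion of the sequence then shows that the connecting map $\hcT{X_{P,Q}}{P' \oplus X_{R,Q'}[-1]} \to \hcT{X_{P,Q}}{R}$ is surjective, whence $\hcT{X_{P,Q}}{g} = 0$ and $f$ does not factor through $R$. The step I expect to be the main obstacle is the bookkeeping in case (1): one has to verify that the shifted objects $X_{R,Q'}$, $X_{R,Q'}[-1]$ and $X_{P',Q'}[-1]$ all land in the forwards regime (1) of \Cref{Lem1: Morphisms X to X}, and to handle the boundary sub-cases in which some indecomposable is isomorphic to its own suspension, where several of the strict inequalities used above collapse to equalities.
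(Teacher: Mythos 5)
Your proposal is correct and follows essentially the same route as the paper: reduce factorisation through $R$ to surjectivity of $\hcT{X_{P,Q}}{g}$ for a fixed non-zero $g\colon R\to X_{P',Q'}$, cone $g$ via \Cref{Lem1: P to X cone}, and decide the two cases of \Cref{Lem1: Morphisms X to X} by dimension counts in the resulting long exact sequence. The only (immaterial) difference is that in the non-factoring case you read off the portion of the sequence to the left of $\hcT{X_{P,Q}}{R}$, whereas the paper uses the portion to the right ending in $\hcT{X_{P,Q}}{R[1]}$; both give $\hcT{X_{P,Q}}{g}=0$ by the same $k\to k^2\to k$ exactness argument.
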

\begin{proof}
	Suppose that $Q[1] < Q' \leq R \leq P[1] < P'$. Then, since $Q' \leq R < P'$, it follows from \Cref{Lem1: Hom P to X} that there is a nonzero morphism $g \colon R \to X_{P', Q'}$. Therefore, applying $\hcT{X_{P, Q}}{-}$ to the triangle
\[R \xrightarrow{g} X_{P',Q'} \rightarrow P'[1] \oplus X_{R, Q'} \rightarrow R[1] \]
given by \Cref{Lem1: P to X cone}, induces the long exact sequence 
	\[
	\dotsb \to \hcT{X_{P,Q} }{P' \oplus X_{R,Q'}[-1]} \rightarrow \hcT{X_{P,Q} }{R} \xrightarrow{\hcT{X_{P,Q}}{g}} \hcT{X_{P,Q}}{X_{P',Q'}} \rightarrow \hcT{X_{P,Q} }{P'[1] \oplus X_{R,Q'}} \to \dotsb
	\]
	Therefore, it suffices to prove that $\hcT{X_{P,Q}}{g}$ is an isomorphism. Well, it follows from \Cref{Lem1: Hom P to X} and \Cref{Lem1: Morphisms X to X} that under the assumptions  $Q[1] < Q' \leq R \leq P[1] < P'$, then the Hom spaces $\hcT{X_{P,Q} }{P' \oplus X_{R,Q'}[-1]}$ and $\hcT{X_{P,Q} }{P'[1] \oplus X_{R,Q'}}$ vanish, so that we may conclude that $\hcT{X_{P,Q}}{g}$ is an isomorphism.

	To prove that converse, assume that $Q[1] < Q' \leq R \leq P[1] < P'$ does not hold. Observe that we may assume $\hcT{R}{X_{P',Q'}} \neq 0$, $\hcT{X_{P,Q}}{R} \neq 0$ and $\hcT{X_{P,Q}}{X_{P',Q'}} \neq 0$, since otherwise, the statement is clear. Thus, we may assume that $Q' \leq R < P'$, $\, Q[1] < R \leq P[1]$ and that either (1) $\, Q' \leq Q < P' \leq P$ or (2) $\, Q[1] < Q' \leq P[1] < P'$. Combining (2) with $Q' \leq R \leq P[1]$ implies that $Q[1] < Q' \leq R \leq P[1] < P'$, which is a contradiction since we assumed this does not hold. Hence, (1) must hold. Combining (1) and $Q[1] \leq R < P'$ implies that $Q' \leq Q \leq Q[1]  < R < P' \leq P$ holds. Similarly to the proof of \Cref{Lem1: P to X via X}, this implies that there is an exact sequence of vector spaces
\[k \xrightarrow{\hcT{X_{P,Q}}{g}} k \to k^2 \to k \]
and so  $\hcT{X_{P,Q}}{g} = 0$. That is, the morphism $f$ cannot factor through $g$. 
\end{proof}

We may now complete our computations for the cone of a morphism between indecomposable objects.

\begin{lemma}\label[lem]{Lem1: X to X cone}
	Let $X_{P,Q} ,X_{P',Q'} \in \cT$ be indecomposable objects and let $f: X_{P,Q}  \rightarrow X_{P',Q'}$ be a non-zero morphism.
	Then,
	\[
	\mathrm{cone} \, f \cong \begin{cases*}
		X_{P,P'}[1] \oplus X_{Q,Q'} & if $Q' \leq Q < P' \leq P$,\\
		X_{P',Q[1]} \oplus X_{P[1],Q'} & if $Q[1] < Q' \leq P[1] < P'$.
	\end{cases*}
	\]
	Equivalently, $\cone{f} \cong X_{P, P'}[1] \oplus X_{Q, Q'}$ if $f$ is a forwards morphism. Else, $f$ is a backwards morphism and $\cone{g} \cong X_{P', Q[1]} \oplus X_{P[1], Q'}$.
\end{lemma}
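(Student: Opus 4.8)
The plan is to compute $\cone f$ by the same octahedral bootstrapping used throughout this section, reducing a morphism between two $X$-objects to a composite of morphisms each involving at most one $X$-object, for which the cone is already known. First I would use \Cref{Lem1: Morphisms X to X} to split into the two cases. In the forwards case $Q' \leq Q < P' \leq P$, I would factor $f$ through the morphism-of-triangles diagram \eqref{cd: forwards morphism proof}: there is a nonzero $f_0 \colon Q \to S$—here $S = Q'$—so that $f$ fits into a commutative square with the defining triangle maps $Q \to X_{P,Q}$ and $Q' \to X_{P',Q'}$. The idea is then to build an octahedron on the composite $Q \xrightarrow{f_0} Q' \to X_{P',Q'}$ (equivalently, on $Q \to X_{P,Q} \xrightarrow{f} X_{P',Q'}$) and read off $\cone f$ from the resulting diagram, using \Cref{Lem1: Pa cone is ind}/\Cref{not: XPQ} to identify $\cone f_0 \cong X_{Q,Q'}$ and \Cref{Lem1: P to X cone} (or the defining triangles) to identify the other pieces, with the cross-terms vanishing by the Hom computations in \Cref{Lem1: Hom X to P} and \Cref{Lem1: Hom P to X}.

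Concretely, in the forwards case I expect a $3\times 3$ octahedral diagram with first row the triangle $Q \xrightarrow{f_0} Q' \to X_{Q,Q'} \to Q[1]$, second row (or column) involving $X_{P,Q}$, $X_{P',Q'}$, and $\cone f$, and the connecting object $\cone$ of the induced map being a sum $X_{P,P'}[1] \oplus X_{Q,Q'}$ once a connecting morphism $u$ is shown to vanish; that vanishing should come from $\hcT{\text{something}}{X_{Q,Q'}} = 0$ or $\hcT{X_{P,P'}[1]}{X_{Q,Q'}[1]} = 0$ via \Cref{Lem1: Morphisms X to X}, exactly as in the proofs of \Cref{Lem1: P to X cone} and \Cref{Lem1: X to P cone}. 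For the backwards case $Q[1] < Q' \leq P[1] < P'$, the morphism $f$ factors through $P[1]$ (by \Cref{Lem1: X to X via P}, applied with $R = P[1]$, since $Q[1] < Q' \leq P[1] < P'$ is precisely the factorization condition there): there is a nonzero $X_{P,Q} \to P[1] \to X_{P',Q'}$, where $X_{P,Q} \to P[1]$ is the defining triangle map and $P[1] \to X_{P',Q'}$ is nonzero by \Cref{Lem1: Hom P to X} since $Q' \leq P[1] < P'$. Building the octahedron on this composite, the cone of $P[1] \to X_{P',Q'}$ is computed by \Cref{Lem1: P to X cone} (giving $P'[1] \oplus X_{P[1],Q'}$), the cone of $X_{P,Q} \to P[1]$ is $Q[1]$ from the defining triangle, and assembling these—again with a connecting morphism killed by a Hom vanishing—yields $\cone f \cong X_{P',Q[1]} \oplus X_{P[1],Q'}$.

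The main obstacle I anticipate is the bookkeeping in the octahedral diagrams: making sure the induced map whose cone appears is the right one, correctly identifying all four objects at the corners using \Cref{Prop1: Ind in langle 2} and the notation $X_{-,-}$, and in particular pinning down the connecting morphism $u$ and proving it is zero. In each subcase this reduces to a single application of \Cref{Lem1: Morphisms X to X} or \Cref{Lem1: Hom P to X}/\Cref{Lem1: Hom X to P} to a Hom space between two explicit indecomposables, but one must check that the inequalities defining the subcase put those indices outside the ranges where the Hom space is nonzero. A secondary check is consistency of the two descriptions in the statement: that $Q' \leq Q < P' \leq P$ is exactly the forwards condition and $Q[1] < Q' \leq P[1] < P'$ exactly the backwards condition, which is immediate from \Cref{Lem1: Morphisms X to X}. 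I would also note that \Cref{rem: unique up to canonical iso} (or \Cref{Cor1: Xs arent iso}) guarantees the answer is independent of the choice of nonzero $f$, so it suffices to treat one convenient representative in each case.
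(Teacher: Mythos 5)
Your strategy matches the paper's. In the forwards case the paper builds exactly the octahedron you describe, on the composite $Q \to X_{P,Q} \xrightarrow{f} X_{P',Q'}$, identifying the cone of $Q \to X_{P',Q'}$ as $P'[1] \oplus X_{Q,Q'}$ via \Cref{Lem1: P to X cone}. In the second case the paper factors $f$ through $Q'$ rather than through $P[1]$, but your choice of $R = P[1]$ is equally legitimate under \Cref{Lem1: X to X via P} and leads to the same answer by a symmetric computation, so that variation is harmless.

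The one imprecision worth fixing concerns the connecting morphism. In the forwards case the induced map is $u = (u_1, u_2) \colon P[1] \to P'[1] \oplus X_{Q,Q'}$ with $\cone{f} \cong \cone{u}$. You correctly anticipate that the cross-term $u_2$ dies because $\hcT{P[1]}{X_{Q,Q'}} = 0$, but your phrasing ``once a connecting morphism $u$ is shown to vanish'' is the wrong target: if all of $u$ vanished, the cone would be $P[2] \oplus P'[1] \oplus X_{Q,Q'}$, not $X_{P,P'}[1] \oplus X_{Q,Q'}$. The identification $\cone{u_1} \cong X_{P,P'}[1]$ requires $u_1 \neq 0$, and this is not automatic since $\hcT{P[1]}{P'[1]} \cong k$ is nonzero. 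The paper supplies this by applying $\hcT{X_{P,Q}}{-}$ to the triangle $Q \to X_{P',Q'} \xrightarrow{h} P'[1] \oplus X_{Q,Q'} \to Q[1]$ and using \Cref{Lem1: Hom X to P} to see that $\hcT{X_{P,Q}}{h}$ is an isomorphism, whence $u \circ g = h \circ f \neq 0$. The same nonvanishing check is needed for the component $P'[1] \to Q[2]$ of the connecting map in your backwards-case octahedron (there, too, the relevant Hom space is one-dimensional rather than zero). Once you add these arguments, your proof goes through.
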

\begin{proof}
	Let $Q' \leq Q < P' \leq P$, then by \Cref{Lem1: P to X via X}, the top left square of the diagram
	\[
	\begin{tikzcd}
		Q \arrow[d,"\sim" labl1 ] \arrow[r] & X_{P,Q}  \arrow[d,"f"] \arrow[r] & P[1] \arrow[d,"{u = (u_1,u_2)}"] \arrow[r] & Q[1] \arrow[d]\\
		Q \arrow[r] \arrow[d] & X_{P',Q'} \arrow[r, "h"] \arrow[d] & P'[1] \oplus X_{Q,Q'} \arrow[r] \arrow[d] & Q[1] \arrow[d] \\
		0 \arrow[d] \arrow[r] & \mathrm{cone} \, f \arrow[r,"\sim"] \arrow[d] & \mathrm{cone} \, f \arrow[d] \arrow[r] & 0 \arrow[d]\\
		Q[1] \arrow[r] & X_{P,Q} [1] \arrow[r] & P[2] \arrow[r] & Q[2].
	\end{tikzcd}
	\] 
	commutes. Hence, the full diagram with exact rows and columns exists and commutes by the octahedral axiom. By \Cref{Lem1: Hom P to X}, $\hcT{P[1]}{X_{Q,Q'}} = 0$. Hence, if $u \neq 0$, then $\cone{f} \cong \cone{u_1} \oplus X_{Q, Q'} \cong X_{P,P'}[1] \oplus X_{Q, Q'}$ by the definition of $X_{P, P'}$.	
	To see that $u \neq 0$, observe that applying $\hcT{X_{P,Q}}{-}$ to the second row of the diagram yields the exact sequence
	\[\hcT{X_{P,Q}}{Q} \rightarrow \hcT{X_{P,Q}}{X_{P',Q'}} \xrightarrow{\hcT{X_{P,Q}}{h}} \hcT{X_{P,Q}}{P'[1] \oplus X_{Q,Q'}} \rightarrow \hcT{X_{P,Q}}{Q[1]} \]
	It therefore follows from \Cref{Lem1: Hom X to P} that $\hcT{X_{P,Q}}{h}$ is an isomorphism. Hence, $0 \neq h \cdot f = u \cdot g$ and we may conclude that $u \neq 0$.
 
	Next, suppose that $Q[1] < Q' \leq P[1] < P'$, then by \Cref{Lem1: X to X via P}, the top left square of the following diagram commutes.
	\[
	\begin{tikzcd}
		X_{P,Q}  \arrow[d,"\sim" labl1 ] \arrow[r] & Q'  \arrow[d] \arrow[r] & Q[1] \oplus X_{P[1],Q'} \arrow[d] \arrow[r] &  X_{P,Q} [1] \arrow[d]\\
		X_{P,Q}  \arrow[d] \arrow[r,"f"] & X_{P',Q'} \arrow[d] \arrow[r] & \mathrm{cone} \, f \arrow[d] \arrow[r] &  X_{P,Q} [1] \arrow[d]\\
		0 \arrow[d] \arrow[r] & P'[1] \arrow[d] \arrow[r] & P'[1]  \arrow[d, "{w = (w_1, w_2)}"] \arrow[r] &  0 \arrow[d]\\
		X_{P,Q} [1] \arrow[r] & Q'[1] \arrow[r] & Q[2] \oplus X_{P[1],Q'}[1] \arrow[r] &  X_{P,Q} [2].
	\end{tikzcd}
	\]
	Therefore, this diagram commutes and has exact columns and rows by the octahedral axiom. By assumption, $P[1] < P'$ and so \Cref{Lem1: Hom P to X} implies that $\hcT{P'}{X_{P[1],Q'}} = 0$. Hence, $w_2 = 0$. It follows that $\cone{f} \cong \cone{w_1}[-1] \oplus X_{P[1],Q'}$. That is, $\cone{f} \cong X_{P', Q[1]} \oplus X_{P[1], Q'}$, as required.

	The final statement follows directly from \Cref{Lem1: Morphisms X to X}. 
\end{proof}

\subsection{Factorisation Properties, Composition and Triangles}

In this section, we consolidate the results established thus far into a handful of propositions that are easier to track. Our first result summarises the factorisation properties of morphisms between indecomposable objects.

\begin{proposition}\label[prop]{Prop1: Factoring arcs}
	Let $Y,Z \in \cT$ be non-isomorphic indecomposable objects such that $\hcT{Y}{Z} \cong k$.
	Then, any nonzero morphism $f: Y \rightarrow Z$ factors through an indecomposable object $W$ if and only if there exist non-zero morphisms $g: Y \rightarrow W$ and $h: W \rightarrow Z$, such that
	\begin{enumerate}
		\item \label{for for} if $f$ is a forward morphism, then $g$ and $h$ are both forward morphisms,
		\item \label{back for} if $f$ is a backwards morphism, then one of $g$ or $h$ is a backwards morphism and the other is a forwards morphism.
	\end{enumerate}
\end{proposition}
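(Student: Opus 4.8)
The plan is to reduce the statement to a numerical comparison and then run a case analysis on the ``shapes'' of $Y$, $Z$ and $W$. By \Cref{Cor: Ind in a tri with PQ} (together with \Cref{Prop1: Ind in langle 2} and \Cref{Thm1: Perf 2 is Tri}), every indecomposable object of $\cT$ is either an object of $\lang[1]{G}$ or of the form $X_{P,Q}$. Moreover, by \Cref{def: linear gen} \ref{G1}, \Cref{Lem1: Hom P to X}, \Cref{Lem1: Hom X to P} and \Cref{Lem1: Morphisms X to X}, each of $\hcT{Y}{W}$ and $\hcT{W}{Z}$ is either zero or one-dimensional. If one of them vanishes then both sides of the claimed equivalence fail trivially, so we may assume both are one-dimensional; then the nonzero $g \colon Y \to W$ and $h \colon W \to Z$ are unique up to a scalar, so their forward/backward type is well defined, and ``$f$ factors through $W$'' is equivalent to ``$h \circ g \neq 0$''. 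Hence it suffices to check, case by case, that the numerical condition on the objects under which $f$ factors through $W$ coincides with the stated forward/backward pattern for $g$ and $h$, the latter being itself a numerical condition via \Cref{Lem1: Forward and Back morphisms} and \Cref{Lem1: Morphisms X to X}.

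Next I would split into cases according to whether $Y$ and $Z$ lie in $\lang[1]{G}$. If $Y,Z \in \lang[1]{G}$, then $f$ is automatically a forward morphism; factorisation through an object of the form $X_{-,-}$ is impossible by \Cref{Lem1: P to Q via X}, and factorisation through $W \in \lang[1]{G}$ is governed by property \ref{G3} of \Cref{def: linear gen} together with the order constraints $Z \leq W \leq Y$ coming from $g,h$ being nonzero; all of $g,h$ are forward by \Cref{Lem1: Forward and Back morphisms}, matching (1). If $Y \in \lang[1]{G}$ and $Z = X_{-,-}$, use \Cref{Lem1: Pa factors to X} when $W \in \lang[1]{G}$ and \Cref{Lem1: P to X via X} when $W = X_{-,-}$; in both sub-cases $g$ is forward (a morphism out of $\lang[1]{G}$), and one checks from \Cref{Lem1: Forward and Back morphisms} and \Cref{Lem1: Morphisms X to X} that $h$ is forward exactly under the relevant factorisation inequality, again matching (1). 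The case $Y = X_{-,-}$, $Z \in \lang[1]{G}$ is symmetric, using \Cref{Lem1: X factors to Pa} and \Cref{Lem1: X to P via X}: here $f$ is a backward morphism, $h$ is forward (a morphism into $\lang[1]{G}$), and the factorisation inequality is equivalent to $g$ being backward, matching (2). Finally, when $Y,Z$ are both of the form $X_{-,-}$ and $W \in \lang[1]{G}$, \Cref{Lem1: X to X via P} shows $f$ factors through $W$ exactly when its defining inequality holds; that inequality forces $f$ to be a morphism of type (2) (backward), and then $g$ is backward and $h$ forward by \Cref{Lem1: Forward and Back morphisms} and \Cref{Lem1: Morphisms X to X}, matching (2).

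The one configuration not covered by the factorisation lemmas of the previous subsections is $Y = X_{P,Q}$, $Z = X_{R,S}$ and $W = X_{P',Q'}$ all of the form $X_{-,-}$, and this is the main obstacle. I would treat it in the style of \Cref{Lem1: X to X via P}: assuming a nonzero $g \colon X_{P,Q} \to X_{P',Q'}$, use \Cref{Lem1: X to X cone} to write $\cone{g}$ as a direct sum of two objects of the form $X_{-,-}$, apply $\hcT{-}{X_{R,S}}$ to the triangle $X_{P,Q} \xrightarrow{g} X_{P',Q'} \to \cone{g} \to X_{P,Q}[1]$, and note that $f$ factors through $g$ if and only if $-\circ g \colon \hcT{X_{P',Q'}}{X_{R,S}} \to \hcT{X_{P,Q}}{X_{R,S}}$ is an isomorphism, if and only if the two neighbouring groups $\hcT{\cone{g}}{X_{R,S}}$ and $\hcT{\cone{g}[-1]}{X_{R,S}}$ vanish. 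Evaluating these via \Cref{Lem1: Hom P to X} and \Cref{Lem1: Morphisms X to X} pins down the numerical condition for factorisation, which one then checks is equivalent to: $g$ and $h$ both forward when $f$ is forward, and one of $g,h$ backward and the other forward when $f$ is backward (again reading off the types from \Cref{Lem1: Morphisms X to X}). The remaining work, namely keeping track of the various inequalities among $P,Q,R,S,P',Q'$ and their suspensions in each sub-case, is routine but somewhat lengthy, and is the only genuinely laborious part of the argument.
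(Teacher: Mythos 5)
Your proposal is correct and follows essentially the same route as the paper: the same eight-fold case split on the shapes of $Y$, $Z$, $W$, the same appeal to the earlier factorisation lemmas for seven of the cases, and the same long-exact-sequence argument for the remaining all-$X_{-,-}$ case (apply $\hcT{-}{Z}$ to the triangle on $g$, compute $\cone{g}$ via \Cref{Lem1: X to X cone}, and test whether $\hcT{g}{Z}$ is an isomorphism or zero). One small correction: in the case $Y = X_{P,Q}$, $Z \in \lang[1]{G}$ with $W = X_{P',Q'}$, the morphism $h \colon W \to Z$ is backwards (its source is not in $\lang[1]{G}$, cf.\ \Cref{Lem1: Forward and Back morphisms}) and $g$ is then forwards by \Cref{Lem1: Morphisms X to X} --- the opposite of the pattern you describe --- although the conclusion that exactly one of $g,h$ is backwards, as required by part (2), still holds.
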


\begin{proof}
	There are eight cases for us to consider, where each of $Y,Z,W$ are either of the form $R \in \lang[1]{G}$ or of the form $X_{P,Q}$. However, seven of these cases have been considered. 

	First, when $Y, Z, W \in \lang[1]{G}$, then the statement follows from \Cref{def: linear gen} and \Cref{Lem1: Forward and Back morphisms} \eqref{forward from P}. Further, for $Y, W \in \lang[1]{G}$ and $Z = X_{P,Q}$, then the statement is a consequence of \Cref{Lem1: Pa factors to X,Lem1: Forward and Back morphisms} \eqref{forward from P}. The third case, where $Y = X_{P,Q}$ and $Z, W \in \lang[1]{G}$, is implied by \Cref{Lem1: X factors to Pa,Lem1: Forward and Back morphisms} \eqref{backward to P}. Next, when $Y \in \lang[1]{G}$, $Z=X_{P,Q}$ and $W=X_{P',Q'}$, then the statement is entailed by \Cref{Lem1: Forward and Back morphisms,Lem1: Morphisms X to X,Lem1: P to X via X}. The fifth case, where $Y = X_{P,Q}$, $Z \in \lang[1]{G}$ and $W = X_{P',Q'}$, can be deduced by applying \Cref{Lem1: Forward and Back morphisms,Lem1: Morphisms X to X,Lem1: X to P via X}. Additionally, the situation where $Y = X_{P, Q}$, $Z = X_{P', Q'}$ and $W \in \lang[1]{G}$ results from applying \Cref{Lem1: Forward and Back morphisms,Lem1: Morphisms X to X,Lem1: X to X via P}. The seventh case, where $Y, Z \in \lang[1]{G}$ and $W \cong X_{P,Q}$ follows from \Cref{Lem1: Forward and Back morphisms,Lem1: P to Q via X}.

	Consider the final case, where $Y \cong X_{P,Q}$, $Z \cong X_{P',Q'}$ and $W \cong X_{R,S}$. If there does not exists a nonzero morphism $g \colon Y \to W$, then the statement is clear. Hence, we may assume that such a morphism exists. The functor $\hcT{-}{Z}$ induces the exact sequence
\begin{align} \label{les: X to X via X}
	\hcT{Y[1]}{Z} \rightarrow \hcT{\cone{g}}{Z} \rightarrow \hcT{W}{Z} \xrightarrow{\hcT{g}{Z}} \hcT{Y}{Z} \rightarrow \hcT{\cone{g}[-1]}{Z} \rightarrow \hcT{W[-1]}{Z} 
\end{align}

	We begin by showing that if either \eqref{for for} or \eqref{back for} hold, then $f$ factors through $W$. Suppose first that \eqref{for for} holds. That is, $f \colon Y \to Z$ is a fowards morphism and there exist forwards morphisms $g \colon Y \to W$ and $h \colon W \to Z$. We will show that $f$ factors through $W$ by proving that $\hcT{g}{Z}$ is an isomorphism. Due to \Cref{Lem1: Morphisms X to X}, it must be that $Q' \leq S \leq Q < P' \leq R \leq P$. Moreover, by \Cref{Lem1: X to X cone}, $\cone{g} \cong X_{P, R} \oplus X_{Q, S}$. It thus follows from \Cref{Lem1: Morphisms X to X} that $\hcT{\cone{g}}{Z} = 0$, and so \eqref{les: X to X via X} implies that there is an exact sequence
	\[ 0 \rightarrow k \xrightarrow{\hcT{g}{Z}} k \]
	Hence, we may conclude that $\hcT{g}{Z}$ is an isomorphism, as required.
	
	Next, assume that $f \colon Y \to Z$ is a backwards morphism and that there exist a forwards morphism $g \colon Y \to W$ and a backwards morphism $h \colon W \to Z$. Again, we will show that $f$ factors through $W$ by showing that $\hcT{g}{Z}$ is an isomorphism. Our assumptions imply that $S \leq Q[1] < Q' \leq R[1] \leq P[1] < P'$ by \Cref{Lem1: Morphisms X to X} and that $\cone{g} \cong X_{P,R} \oplus X_{Q,S}$ by \Cref{Lem1: X to X cone}. As before, this implies that $\hcT{\cone{g}}{Z} = 0$ and we may deduce that $\hcT{g}{Z}$ is an isomorphism.
	
	Finally, assume that $f \colon Y \to Z$ is a backwards morphism and that there exist a backwards morphism $g \colon Y \to W$ and a forwards morphism $h \colon W \to Z$. Then, $Q[1] < Q'\leq S \leq P[1] < P' \leq R$ by \Cref{Lem1: Morphisms X to X} and $\cone{g} \cong X_{R,Q} \oplus X_{P,S}$ by \Cref{Lem1: X to X cone}. Therefore, $\hcT{\cone{g}}{Z} = 0$ and $\hcT{g}{Z}$ is an isomorphism. Consequently, if either \eqref{for for} or \eqref{back for} hold, then $f$ factors through $W$. 
	
\medskip
	To prove the converse, suppose that $f \colon Y \to Z$ is such that $f = h \circ g$ where $g \colon Y \to W$ and $h \colon W \to Z$. Assume first that $f$ is a forwards morphism. We will show that \eqref{for for} holds by showing that all other cases lead to a contradiction. If $f$ is forwards and both $g$ and $h$ are backwards, then \Cref{Lem1: Morphisms X to X} implies that $S < Q' \leq Q \leq Q[1] < S$, which is a contradiction. Whence, $g$ and $h$ cannot both be backwards. 
	
	On the other hand, if $f$ is forwards, $g$ is backwards and $h$ is forwards, then $Q' \leq Q[1] < S < P' \leq P[1] < R$ by \Cref{Lem1: Morphisms X to X}. By the same lemma, \eqref{les: X to X via X} induces the exact sequence
\[ \label{les: vector spaces X to X via X} 
	\hcT{Y[1]}{Z} \rightarrow k^2 \rightarrow k \xrightarrow{\hcT{g}{Z}} k. 
\]
Since $\hcT{Y[1]}{Z}$ is at most one dimensional, then $\hcT{g}{Z}$ cannot be an isomorphism, whence it is zero. We conclude that $f = h \circ g = 0$, which is a contradiction. 

	By the same arguement, it cannot be that $f$ is a forwards morphsm, $g$ is a forwards morphism and $h$ is a backwards morphism. We thus conclude that if $f$ is a forwards morphism, then both $g$ and $h$ are forwards morphisms. That is, \eqref{for for} holds.

	Next, suppose that $f$ is a backwards morphism. We will show that  \eqref{back for} holds by showing that the two other possible cases lead to contradictions. If both $g$ and $h$ are backwards, then by \Cref{Lem1: Morphisms X to X}, \eqref{les: X to X via X} induces again the exact sequence \eqref{les: vector spaces X to X via X}. Therefore, it must be that $\hcT{g}{Z} = 0$ and $f = h \circ g = 0$, which is a contradiction. Furthermore, if $g$ and $h$ are both forwards, then \Cref{Lem1: Morphisms X to X} implies that $S \leq Q \leq Q[1] < Q' \leq S$, which is a contradiction. Consequently, if $f$ is a backwards morphism, then \eqref{back for} holds. \qedhere	
\end{proof}

With \Cref{Prop1: Factoring arcs} in hand, we are able to prove that for all indecomposable objects $X, Y \in \cT$, there is a choice of generators $\alpha^{X, Y} \in \hcT{X}{Y}$ which respects composition in a suitable sense. This is a subtle result, but it is the key technology we use to write down functors between triangulated categories with linear generators. It is also helpful for describing certain triangles in $\cT$.

\begin{proposition}\label[prop]{Prop1: Generators of Hom}
	Let $\mathrm{ind} \, \cT$ denote the set of indecomposable objects in $\cT$. Then, there is a choice of morphisms $\{ \alpha^{Y, Z} \colon Y \to Z \}_{Y, Z \in \mathrm{ind} \, \cT}$ such that for any pair of indecomposable objects $Y,Z \in \cT$ 
	\[
	\alpha^{Y,Z} = \alpha^{W,Z} \alpha^{Y,W},
	\]
	whenever $\alpha^{Y, Z} \colon Y \rightarrow Z$ factors through an indecomposable object $W$.
\end{proposition}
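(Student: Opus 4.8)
The plan is to construct the generators $\alpha^{Y,Z}$ in two stages --- first inside $\lang[1]{G}$, then over all of $\mathrm{ind}\,\cT$ by means of the canonical cone triangles --- and then to verify the compatibility by a case analysis driven by \Cref{Prop1: Factoring arcs}. Fix a set of representatives for $\mathrm{ind}\,\cT$; by property \ref{G1} of \Cref{def: linear gen} together with the Hom-computations of the previous subsections, $\hcT{Y}{Z}$ is $0$ or a copy of $k$ for all indecomposable $Y,Z$, and $\hcT{Y}{Y}\cong k$. Put $\alpha^{Y,Y}:=\id_Y$ and $\alpha^{Y,Z}:=0$ whenever $\hcT{Y}{Z}=0$, which already settles every factorisation through a $W\cong Y$ or $W\cong Z$. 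Since the Hom-spaces are at most one-dimensional, whenever $\alpha^{Y,Z}=h\circ g$ with $g\colon Y\to W$ and $h\colon W\to Z$ nonzero one automatically has $\alpha^{W,Z}\circ\alpha^{Y,W}=\lambda\,\alpha^{Y,Z}$ for some $\lambda\in k^\times$; the whole point of the proposition is to choose the generators so that $\lambda=1$ in every such instance.

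\emph{Stage 1.} The indecomposables of $\lang[1]{G}$ form a totally ordered set by \ref{G1}, and a nonzero morphism $P\to Q$ (so $Q\le P$) factors through $R$ if and only if $Q\le R\le P$; this is the case $Y,Z,W\in\lang[1]{G}$ of \Cref{Prop1: Factoring arcs}, and follows from \ref{G1} (as $\hcT{P}{R}=0$ for $R>P$ and $\hcT{R}{Q}=0$ for $R<Q$) and \ref{G3}. Choose provisional generators $\alpha^{P,Q}$ of each $\hcT{P}{Q}$. For $Q\le R\le P$, \ref{G3} provides a \emph{nonzero} factorisation of any nonzero $P\to Q$ through $R$, which (all three Hom-spaces being one-dimensional) forces $\alpha^{R,Q}\circ\alpha^{P,R}=\lambda(P,R,Q)\,\alpha^{P,Q}$ with $\lambda(P,R,Q)\in k^\times$. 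Associativity makes $\lambda$ a $k^\times$-valued $2$-cocycle on the totally ordered set $\mathrm{ind}\,\lang[1]{G}$; such a cocycle is a coboundary (a totally ordered set is filtered, so its classifying space is contractible; alternatively one rescales by a direct inductive argument), so after rescaling the $\alpha^{P,Q}$ we may assume $\alpha^{R,Q}\circ\alpha^{P,R}=\alpha^{P,Q}$ for all $Q\le R\le P$. Together with the factorisation criterion, this is exactly the required compatibility inside $\lang[1]{G}$.

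\emph{Stage 2.} By \Cref{Prop1: Ind in langle 2} and \Cref{Thm1: Perf 2 is Tri}, every indecomposable of $\cT$ outside $\lang[1]{G}$ is $X_{P,Q}=\cone{\alpha^{P,Q}}$ for indecomposables $Q<P$ in $\lang[1]{G}$, and by \Cref{rem: unique up to canonical iso} the cone together with its triangle $P\xrightarrow{\alpha^{P,Q}}Q\xrightarrow{\iota_{P,Q}}X_{P,Q}\xrightarrow{\pi_{P,Q}}P[1]$ is canonical. I then \emph{define} $\alpha^{R,X_{P,Q}}:=\iota_{P,Q}\circ\alpha^{R,Q}$ for $Q\le R<P$; $\alpha^{X_{P,Q},R}:=\alpha^{P[1],R}\circ\pi_{P,Q}$ for $Q[1]<R\le P[1]$; $\alpha^{X_{P,Q},X_{R,S}}:=\alpha^{P[1],X_{R,S}}\circ\pi_{P,Q}$ in the backwards range $Q[1]<S\le P[1]<R$ of \Cref{Lem1: Morphisms X to X}; and in the forwards range $S\le Q<R\le P$ I let $\alpha^{X_{P,Q},X_{R,S}}$ be the unique morphism $\beta$ with $\beta\circ\iota_{P,Q}=\alpha^{Q,X_{R,S}}$. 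In each case the relevant Hom-lemma (\Cref{Lem1: Hom P to X}, \Cref{Lem1: Hom X to P}, \Cref{Lem1: Morphisms X to X}) shows that applying $\hcT{R}{-}$ or $\hcT{-}{X_{R,S}}$ to the triangle makes the appropriate structure map induce an isomorphism between the one-dimensional Hom-spaces in play, so each definition gives a well-defined nonzero generator.

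\emph{Stage 3 and the main obstacle.} It remains to check $\alpha^{Y,Z}=\alpha^{W,Z}\circ\alpha^{Y,W}$ whenever $\alpha^{Y,Z}$ factors through $W$. Using \Cref{Prop1: Factoring arcs} to record when this happens, and which of the two factors are forwards and which backwards, one reduces to a finite case analysis indexed by the shapes ($\lang[1]{G}$-type or $X$-type) of $Y,W,Z$; in each case the Stage~2 formulas let the factorisation peel off a single structure map $\iota$ or $\pi$, reducing the identity --- via the defining equation of $\beta$, the cone computations of \Cref{subsec: cones}, and the triangle identities --- to an equality of composites internal to $\lang[1]{G}$, which holds by Stage~1. (For instance, if $Y=N\in\lang[1]{G}$, $Z=X_{P,Q}$ and $W=X_{R,S}$ with $Q\le S\le N<P\le R$, cf.\ \Cref{Lem1: P to X via X}, then $\alpha^{X_{R,S},X_{P,Q}}$ is forwards, so $\beta\circ\iota_{R,S}\circ\alpha^{N,S}=\alpha^{S,X_{P,Q}}\circ\alpha^{N,S}=\iota_{P,Q}\circ\alpha^{N,Q}=\alpha^{N,X_{P,Q}}$.) The genuine difficulty is not the \emph{existence} of a consistent composite --- that is immediate from one-dimensionality and \ref{G3} --- but forcing the scalar to be \emph{exactly} $1$ uniformly, which is precisely why the generators in Stage~2 are defined as composites of the \emph{canonical} triangle maps rather than as arbitrary choices: every factorisation identity, once stripped of its $\iota$'s and $\pi$'s, then lives inside the totally ordered $\lang[1]{G}$, where Stage~1 has already installed coherence. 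I expect the heaviest bookkeeping to be the cases with $Y,W,Z$ all of $X$-type, which additionally call on the cone descriptions of \Cref{Lem1: X to X cone}.
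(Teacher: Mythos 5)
Your architecture (generators on $\lang[1]{G}$ first, then extension to the cone objects via the canonical triangle maps, then a case analysis driven by \Cref{Prop1: Factoring arcs}) parallels the paper's, but your Stage~1 is genuinely different: the paper fixes morphisms out of a cofinal family $\iP[m]$ and defines every other generator by factoring through it, whereas you normalise the scalars by a cocycle/coboundary argument on the totally ordered set. That part is fine, and your Stage~2 definitions via $\iota_{P,Q}$ and $\pi_{P,Q}$ are cleaner than the paper's.

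The gap sits exactly in the Stage~3 cases you defer. Making composition strict inside $\lang[1]{G}$ does \emph{not} make the generators compatible with suspension, i.e.\ it does not give $\alpha^{P,Q}[1]=\alpha^{P[1],Q[1]}$, and the coboundary rescaling leaves this unconstrained. But every verification in which a forwards morphism between cone objects is followed by a generator built from $\pi$ forces precisely that identity. Concretely, take $Y=X_{P,Q}$, $W=X_{R,S}$, $Z=M\in\lang[1]{G}$ in the range of \Cref{Lem1: X to P via X}. The forwards generator $\beta=\alpha^{X_{P,Q},X_{R,S}}$, characterised by $\beta\circ\iota_{P,Q}=\iota_{R,S}\circ\alpha^{Q,S}$, completes to a morphism of triangles whose first component is forced (one-dimensionality plus Stage~1) to equal $\alpha^{P,R}$, so the third square gives $\pi_{R,S}\circ\beta=\alpha^{P,R}[1]\circ\pi_{P,Q}$. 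Your required identity $\alpha^{X_{R,S},M}\circ\beta=\alpha^{X_{P,Q},M}$ then reads $\alpha^{R[1],M}\circ\alpha^{P,R}[1]\circ\pi_{P,Q}=\alpha^{P[1],M}\circ\pi_{P,Q}$, and since $(-)\circ\pi_{P,Q}$ is injective on $\hcT{P[1]}{R[1]}$ here, it holds if and only if $\alpha^{P,R}[1]=\alpha^{P[1],R[1]}$ --- an equality ``internal to $\lang[1]{G}$'' that Stage~1 does not deliver. The same defect scalar appears in the forwards-then-backwards composites between $X$-type objects. The paper avoids this by building the $\lang[1]{G}$-generators shift-equivariantly from the outset (``extend this choice to include all shifts''). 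Your argument is repairable: the defect $\epsilon(P,Q)$ defined by $\alpha^{P,Q}[1]=\epsilon(P,Q)\,\alpha^{P[1],Q[1]}$ satisfies $\epsilon(P,R)=\epsilon(P,Q)\epsilon(Q,R)$, hence has the form $\nu(P)\nu(Q)^{-1}$, and a further strictness-preserving rescaling by $\sigma(P)\sigma(Q)^{-1}$ with $\sigma(P[1])=\sigma(P)\nu(P)$ kills it --- but this must be carried out, and the summands with $P\cong P[1]$ need separate attention since the recursion for $\sigma$ then closes up on itself.
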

\begin{proof}
 	The proof of this result is quite long and subtle, so we postpone it to \Cref{sec: choice of gen}.
\end{proof}

Finally, we may completely determine the triangles containing at least two indecomposable terms up to isomorphism of triangles.

\begin{corollary}\label[cor]{Cor: Mor in Triangles}
	 Let $\{\alpha^{Y, Z}\}$ be a choice of generators as in \Cref{Prop1: Generators of Hom}. Then,
\begin{enumerate}
\item Any triangle 
	\[ X \rightarrow Y \rightarrow Z \rightarrow X[1] \]
	in $\cT$ with three indecomposable objects is isomorphic to the triangle
	\[ X \xrightarrow{\alpha^{X, Y}} Y \xrightarrow{\alpha^{Y, Z}} Z \xrightarrow{\delta \alpha^{Z, X[1]}} X[1]\]
	where $\delta \in k^{\times}$ is a scalar that does not depend (up to a sign) on the objects $X, Y$ or $Z$.
\item Any non-split triangle in $\cT$ with exactly two indecomposable summands is isomorphic to the triangle
	\[
	X \xrightarrow{{\begin{psmallmatrix} \alpha^{X, Z} \\ \alpha^{X, Z'} \end{psmallmatrix}}} Z \oplus Z' \xrightarrow{\left( \alpha^{Z, Y}, - \alpha^{Z', Y} \right)} Y[1] \xrightarrow{\delta \alpha^{Y, X[1]}} X[1],
	\]
 	for indecomposable objects $Z, Z'$ and $\delta \in k^{\times}$ is the same scalar as in (1).
\end{enumerate}
\end{corollary}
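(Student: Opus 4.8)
The plan is to establish (1) first and then deduce (2) from it.

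For (1), write the triangle as $X \xrightarrow{a} Y \xrightarrow{b} Z \xrightarrow{c} X[1]$. None of $a,b,c$ can vanish: if $a = 0$ then the triangle is isomorphic to the direct sum $\bigl(0 \to Y \xrightarrow{1} Y \to 0\bigr) \oplus \bigl(X \to 0 \to X[1] \xrightarrow{1} X[1]\bigr)$, so $Z \cong Y \oplus X[1]$ is decomposable, and rotating the triangle disposes of the cases $b = 0$ and $c = 0$. Hence each of $\hcT{X}{Y}$, $\hcT{Y}{Z}$, $\hcT{Z}{X[1]}$ contains a nonzero morphism between indecomposables, so by the Hom-space computations of the preceding subsections (where every such space is $0$ or $k$) each is one-dimensional, and $a,b,c$ are nonzero scalar multiples of $\alpha^{X,Y}$, $\alpha^{Y,Z}$, $\alpha^{Z,X[1]}$. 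Moreover every indecomposable object $W$ of $\cT$ has $\hcT{W}{W} \cong k$ --- this holds for $W \in \lang[1]{G}$ by \ref{G1} and for the $X_{P,Q}$ by \Cref{Lem1: Pa cone is ind}, and these exhaust $\mathrm{ind}\,\cT$ by \Cref{Prop1: Ind in langle 2} and \Cref{Thm1: Perf 2 is Tri} --- so every self-isomorphism of an indecomposable is a nonzero scalar. Rescaling the objects $X,Y,Z$ therefore yields an isomorphism of triangles carrying $(a,b,c)$ to $\bigl(\alpha^{X,Y},\,\alpha^{Y,Z},\,\delta(X,Y,Z)\,\alpha^{Z,X[1]}\bigr)$ for a well-defined scalar $\delta(X,Y,Z) \in k^{\times}$, the two rescalings needed to fix the first two maps leaving $\delta$ untouched.

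The heart of (1) is the claim that $\delta(X,Y,Z)$ is, up to sign, independent of the triple. I would first classify triangles with three indecomposable terms: combining the cone computations \Cref{Lem1: P to X cone}, \Cref{Lem1: X to P cone} and \Cref{Lem1: X to X cone} with \Cref{Prop1: Ind in langle 2} and \Cref{Thm1: Perf 2 is Tri}, one checks that up to rotation and shift any such triangle is isomorphic either to a \emph{type (a)} triangle
\[
\Delta_{P,Q}\colon\quad P \xrightarrow{\alpha^{P,Q}} Q \longrightarrow X_{P,Q} \longrightarrow P[1], \qquad P,Q \in \lang[1]{G},\ Q < P,
\]
or to a \emph{type (b)} triangle
\[
X_{P,Q} \longrightarrow X_{P,Q'} \longrightarrow X_{Q,Q'} \longrightarrow X_{P,Q}[1], \qquad Q' < Q < P,
\]
since in every other configuration the cone of the first morphism is a genuine sum of two nonzero indecomposables. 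Now apply the octahedral axiom to the composable pair $P \xrightarrow{\alpha^{P,Q}} Q \xrightarrow{\alpha^{Q,Q'}} Q'$, whose composite is $\alpha^{P,Q'}$ by \Cref{Prop1: Generators of Hom}, taking the three triangles on $\alpha^{P,Q}$, $\alpha^{Q,Q'}$ and $\alpha^{P,Q'}$ to be the normalised $\Delta_{P,Q}$, $\Delta_{Q,Q'}$, $\Delta_{P,Q'}$ from the previous display. The fourth face of the octahedron is then precisely the type (b) triangle on $X_{P,Q}, X_{P,Q'}, X_{Q,Q'}$, and the compatibility identities of the octahedron --- read together with the composition-compatibility of the $\alpha$'s and the factorisation lemmas \Cref{Lem1: Pa factors to X} and \Cref{Lem1: P to X via X} --- force its first two maps into normalised form and pin down the remaining scalars; one extracts $\delta(\Delta_{P,Q}) = \delta(\Delta_{P,Q'}) = \delta(\Delta_{Q,Q'})$ and that the type (b) scalar equals this common value. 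Since $\mathrm{ind}(\lang[1]{G})$ is totally ordered by \ref{G1}, any two admissible pairs $Q < P$ are linked by a finite chain of such moves (varying one coordinate at a time), so all type (a) scalars coincide and hence so do all type (b) scalars; the only place an overall sign can enter is the passage to rotation-equivalence, since rotating a triangle replaces the connecting morphism $c$ by $-a[1]$.

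For (2), let $X \xrightarrow{f} Z \oplus Z' \xrightarrow{g} Y \xrightarrow{h} X[1]$ be a non-split triangle with $X,Y$ indecomposable and $Z,Z'$ indecomposable --- after rotation we may assume the decomposable term sits in the middle. Write $f = \binom{f_1}{f_2}$ and $g = (g_1,g_2)$; if any $f_i$ or $g_i$ were zero the triangle would split off an identity triangle, contrary to assumption, so all four are nonzero morphisms between indecomposables and lie in one-dimensional Hom-spaces. Rescaling $Z$ and $Z'$ gives $f_1 = \alpha^{X,Z}$ and $f_2 = \alpha^{X,Z'}$; then $g_1 = \mu_1 \alpha^{Z,Y}$ and $g_2 = \mu_2 \alpha^{Z',Y}$, and $g \circ f = 0$ gives $g_1 f_1 = -g_2 f_2$, which a short exact-sequence argument with $\hcT{X}{-}$ applied to the triangle shows to be nonzero, hence a nonzero multiple of $\alpha^{X,Y}$ that factors through both $Z$ and $Z'$; \Cref{Prop1: Generators of Hom} then forces $g_1 f_1 = \mu_1 \alpha^{X,Y}$ and $g_2 f_2 = \mu_2 \alpha^{X,Y}$, whence $\mu_1 = -\mu_2$. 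Rescaling $Y$ by $\mu_1^{-1}$ puts $f$ and $g$ in the asserted form and makes $h = \delta'\,\alpha^{Y,X[1]}$ for some $\delta' \in k^{\times}$. To identify $\delta'$ with the scalar $\delta$ of (1), apply the octahedral axiom to $Z' \xrightarrow{\iota} Z \oplus Z' \xrightarrow{g} Y$ (composite $g_2$): the resulting octahedron exhibits our triangle as glued from faces which, after splitting off the $\lang[1]{G}$-summands produced by the cone lemmas, are triangles with three indecomposable terms, so (1) applies and tracing the scalars back through the octahedron gives $\delta' = \delta$.

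The main obstacle throughout is the bookkeeping of scalars and signs through the octahedral diagrams: one must check that the chosen generators $\alpha$ interact with the octahedral isomorphisms compatibly with suspension (and with the conventional signs), which is precisely what makes the universal value of $\delta$ --- up to a single sign --- well defined.
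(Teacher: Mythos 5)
Your argument is correct in outline and reaches the same normal forms, but the crucial step --- showing that the scalar $\delta$ is universal --- is handled by a genuinely different mechanism than the paper's. You classify the three-indecomposable triangles into the two types $P \to Q \to X_{P,Q}$ and $X_{P,Q} \to X_{P,Q'} \to X_{Q,Q'}$ up to rotation and shift (which is a correct and cleaner packaging of the paper's four cases), and then propagate $\delta$ by running the octahedral axiom over composable pairs $P \to Q \to Q'$ in $\lang[1]{G}$ and chaining through the totally ordered set of pairs. The paper instead compares an arbitrary triangle to a single fixed reference triangle on $X_{\iP,\iQ}$ by exhibiting explicit morphisms of triangles out of a ``cofinal'' object $X_{\iP[m],Q}$, using only forwards morphisms and the already-established composition identities for the $\alpha$'s. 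Your route buys a more conceptual reason for the universality of $\delta$ (it is forced by associativity of composition in $\lang[1]{G}$ via TR4), but it puts more weight on the octahedron bookkeeping: the axiom only supplies \emph{some} fill-in maps, and to conclude $\delta(\Delta_{P,Q}) = \delta(\Delta_{P,Q'}) = \delta(\Delta_{Q,Q'})$ you must pin each fill-in down to the normalised generator using \Cref{Lem1: Pa factors to X}, \Cref{Lem1: P to X via X} and \Cref{Prop1: Generators of Hom}, and one of the three compatibilities involves a suspended generator $\alpha^{P,Q}[1]$, so you also need to control how the chosen generators interact with $[1]$ --- you flag this but do not carry it out, and it is exactly the point where the ``up to a sign'' caveat enters. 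For part (2) your normalisation of the scalars (via the $\hcT{X}{-}$ exactness argument showing $g_1f_1 \neq 0$, then $\mu_1 = -\mu_2$) is a valid alternative to the paper's case-by-case use of \Cref{Prop1: Factoring arcs}; for the identification $\delta' = \delta$ the paper's device is simpler than your octahedron on $Z' \to Z \oplus Z'$: it maps the two-term triangle directly onto a three-term triangle $X \to Z \to Y' \to X[1]$ by the morphism of triangles $(\mathrm{id}, (\alpha^{Z,Z},0), \alpha^{Y,Y'}, \mathrm{id})$ and reads off $\delta' = \delta$ from the right-hand square, with no splitting-off of summands required. No genuine gap, but the octahedral scalar extraction and the suspension-compatibility of the $\alpha$'s are the two places your write-up would need to be expanded into actual verifications.
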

\begin{proof}
Let us begin by proving (1). Since morphism spaces between indecomposable objects in $\cT$ are at most one-dimensional, it follows that any triangle in $\cT$ with three indecomposable objects is of the form 
	\[ X \xrightarrow{\lambda_1 \alpha^{X, Y}} Y \xrightarrow{\lambda_2 \alpha^{Y, Z}} Z \xrightarrow{\lambda_3 \alpha^{Z, X[1]}} X[1]\]
	with $\lambda_1, \lambda_2,\lambda_3 \in k^\times$. We may thus construct the following isomorphism of triangles
\[
\begin{tikzcd}[column sep=5.5em, row sep=2.5em]
X \arrow[r, "{\lambda_1 \alpha^{X, Y}}"] \arrow[d, equals]  & Y \arrow[d, "{\lambda_1^{-1} \id_{Y}}"', "\sim" labl1] \arrow[r, "{\lambda_2 \alpha^{Y, Z}}"]  & Z \arrow[r, "{\lambda_3 \alpha^{Z, X[1]}}"] \arrow[d, "{\lambda_2^{-1} \lambda_1^{-1} \id_Z}"', "\sim" labl1] & X[1] \arrow[d, equals] \\	
X \arrow[r, "{\alpha^{X, Y}}"] & Y  \arrow[r, "{\alpha^{Y, Z}}"] & Z \arrow[r, "{\lambda_1 \lambda_2 \lambda_3 \alpha^{Z, X[1]}}"]& X[1] \\	
\end{tikzcd}
\]
as required. 

It remains to show that $\delta$ does not depend on $X, Y$ or $Z$. Due to the above and \Cref{subsec: cones}, there we only need to consider four cases:
\begin{align*}
	& P \xrightarrow{ \alpha^{P, Q} } Q \xrightarrow{ \alpha^{Q, X_{P, Q}} }X_{P, Q} \xrightarrow{ \delta \alpha^{X_{P, Q}, P[1]} } P[1] \\
	& X_{R, Q} \xrightarrow{ \alpha^{X_{R,Q}, X_{P,R}[1] } } X_{P,R}[1] \xrightarrow{ \alpha^{X_{P,R}, X_{P, Q}} }X_{P, Q}[1] \xrightarrow{ \delta \alpha^{X_{P, Q}, X_{R,Q} } } X_{R,Q}[1] \\
	& X_{P, S} \xrightarrow{ \alpha^{X_{P,S}, X_{Q,S} } } X_{Q,S} \xrightarrow{ \alpha^{X_{Q,S}, X_{P, Q}} }X_{P, Q}[1] \xrightarrow{ \delta \alpha^{X_{P, Q}, X_{P,S} } } X_{P,S}[1] \\
	& X_{R, P[1]} \xrightarrow{ \alpha^{X_{R,P[1]}, X_{R,Q} } } X_{R,Q} \xrightarrow{ \alpha^{X_{R,Q}, X_{P, Q}} }X_{P, Q}[1] \xrightarrow{ \delta \alpha^{X_{P, Q}, X_{R,P[1]} } } X_{R,P[1]}[1] \\
\end{align*}
for objects $P, Q, R, S \in \lang[1]{G}$.

As in the proof of \Cref{Prop1: Generators of Hom}, there are objects $\iP, \iQ \in \lang[1]{G}$ and $\iX = X_{\iP,\iQ}$ such that for any $X_{P,Q}$, there are forwards morphisms $X_{\iP, Q} \rightarrow X_{P, Q}$ and $X_{\iP, Q} \rightarrow \iX$. Hence, by the definition of forwards morphisms, the following commutative diagram exists
\[ \begin{tikzcd}[column sep=5em, row sep=2em]
P \arrow[r, "\alpha^{P, Q}"] & Q \arrow[r, "\alpha^{Q, X_{P,Q}}"] & X_{P, Q} \arrow[r, "{ \delta \alpha^{X_{P,Q}, P[1]}}"] & P[1] \\
\iP \arrow[r, "\alpha^{\iP, Q}"] \arrow[u, "\alpha^{\iP, P}"] \arrow[d, "\alpha^{\iP, \iP}",swap] & Q \arrow[r, "\alpha^{Q, X_{\iP,Q}}"] \arrow[u, "\alpha^{Q, Q}"] \arrow[d, "\alpha^{Q, \iQ}", swap] & X_{\iP, Q} \arrow[r, "{\delta' \alpha^{X_{\iP,Q}, \iP[1]}}"] \arrow[u, "\alpha^{X_{\iP,Q}, X_{P,Q}}"] \arrow[d, "\alpha^{X_{\iP, Q}, \iX}", swap] & \iP[1] \arrow[u, "{\alpha^{\iP, P}[1]}"] \arrow[d,"{\alpha^{\iP, \iP}[1]}", swap] \\
\iP \arrow[r, "\alpha^{\iP,\iQ}"] & \iQ \arrow[r, "\alpha^{\iQ, \iX}"] & X_{\iP, \iQ} \arrow[r, "{\delta'' \alpha^{\iX, \iP[1]}}"] & \iP[1] \\
\end{tikzcd}
\] 
from which we may conclude that $\delta = \delta' = \delta''$. Thus, the scalar $\delta$ is the same for all triangles of the form 
\begin{align*}
	& P \xrightarrow{ \alpha^{P, Q} } Q \xrightarrow{ \alpha^{Q, X_{P, Q}} }X_{P, Q} \xrightarrow{ \delta \alpha^{X_{P, Q}, P[1]} } P[1].
\end{align*}

Now, consider an arbitrary tiangle with three indecomposable objects. Then, a case by case analysis shows that there are objects $P, Q$ such that (up to rotation) it is always possible to construct the left hand diagram below or the right hand diagram below. 
\[ \begin{tikzcd}[column sep=3.5em,row sep= 2em]
 X \arrow[r,"\alpha^{X, Y}"] \arrow[d, "\alpha^{X, Q}"] & Y \arrow[r, "\alpha^{Y, Z}"] \arrow[d, "\alpha^{Y, X_{P,Q}}"] & Z \arrow[r, "{ \delta' \alpha^{Z, X[1]}}"] \arrow[d, "{h = \alpha^{Z, P[1]}}"]  & X[1] \arrow[d, "{\alpha^{X, Q}[1]}"] &[-2.75em] {} &[-2.75em]  X \arrow[r,"\alpha^{X, Y}"] \arrow[d, "\alpha^{Q, X}", leftarrow] & Y \arrow[r, "\alpha^{Y, Z}"] \arrow[d, "\alpha^{X_{P,Q}, Y}", leftarrow] & Z \arrow[r, "{ \delta'\alpha^{X[1], Z}}"] \arrow[d, "{h' = \alpha^{P[1], Z}}", leftarrow]  & X[1] \arrow[d, "{\alpha^{Q, X}[1]}", leftarrow] \\
Q \arrow[r, "\alpha^{Q, X_{P,Q}}"] & X_{P,Q} \arrow[r, "\alpha^{X_{P,Q}, P[1]}"] & P[1] \arrow[r, "{ \delta \alpha^{P[1], Q[1]}}"] & Q[1]  & {} & Q \arrow[r, "\alpha^{Q, X_{P,Q}}"] & X_{P,Q} \arrow[r, "\alpha^{X_{P,Q}, P[1]}"] & P[1] \arrow[r, "{ \delta \alpha^{P[1], Q[1]}}"] & Q[1].
\end{tikzcd}
\]
In either diagram, the left most square commutes, and so the axioms of triangulated categories implies that there must exist an $h \colon Z \to P[1]$ or $h' \colon P[1] \to Z$ making either diagram into a morphism of triangles. By commutativity of the central square, this forces $h = \alpha^{Z, P[1]}$ or $h' = \alpha^{P[1], Z}$. Whence, because the right most square commutes, we may conclude that $\delta' = \delta$. That is, $\delta$ is independent of the objects $X, Y$ and $Z$.
  
Next, let us prove (2). Consider a non-split triangle with exactly $2$ indecomposable summands. Then, we may assume all morphisms in the triangle are nonzero. Moreover, as in (1), we may use \Cref{Prop1: Generators of Hom} and the fact that Hom-sets between indecomposable objects in $\cT$ have dimension at most one to write
	\[
	X \xrightarrow{{\begin{psmallmatrix} \lambda_1 \alpha^{X, Z} \\ \lambda_2 \alpha^{X, Z'} \end{psmallmatrix}}} Z \oplus Z' \xrightarrow{\left( \lambda_3 \alpha^{Z, Y}, \lambda_4 \alpha^{Z', Y} \right)} Y \xrightarrow{\lambda_5 \alpha^{Y, X[1]}} X[1],
	\]
for $\lambda_i \in k^{\times}$ with $1 \leq i \leq 5$.
	
	There are four cases to consider, depending on whether or not $X, Y \in \lang[1]{G}$. By \Cref{Lem1: P to X cone,Lem1: X to P cone,Lem1: X to X cone}, we have one of the four following triangles;
	\begin{align}
		R \xrightarrow{{\begin{psmallmatrix} f_1 \\ g_1 \end{psmallmatrix}}} Q[1] \oplus X_{P[1],R} & \xrightarrow{\left( f_2, g_2 \right)} X_{P,Q}[1] \xrightarrow{h} R[1], \label{eq: T1} \\
		X_{P,Q} \xrightarrow{{\begin{psmallmatrix} f_1\\ g_1 \end{psmallmatrix}}} P[1] \oplus X_{R,Q} & \xrightarrow{\left( f_2, g_2 \right)} R[1] \xrightarrow{h} X_{P,Q}[1], \label{eq: T2} \\
		X_{R,S} \xrightarrow{{\begin{psmallmatrix} f_1\\ g_1 \end{psmallmatrix}}} X_{P,R}[1] \oplus X_{Q,S} & \xrightarrow{\left( f_2, g_2 \right)} X_{P,Q}[1] \xrightarrow{h} X_{R,S}[1], \label{eq: T3} \\
		X_{R,S} \xrightarrow{{\begin{psmallmatrix} f_1\\ g_1 \end{psmallmatrix}}} X_{R,Q[1]} \oplus X_{P[1],S} & \xrightarrow{\left( f_2, g_2 \right)} X_{P,Q}[1] \xrightarrow{h} X_{R,S}[1]. \label{eq: T4} 
	\end{align}
	It is straightforward to check, by using \Cref{Lem1: Hom X to P,Lem1: Hom P to X,Lem1: Morphisms X to X} and the assumption that $Z \oplus Z'$ is not indecomposable, that in each case there are nonzero morphisms $X \to Y$. Moreover, at least one of $f_1$ or $f_2$ and at at least one $g_1$ or $g_2$ are forwards morphisms. Hence, the compositions $f_2 \circ f_1$ and $g_2 \circ g_1$ are nonzero by \Cref{Prop1: Factoring arcs}. Therefore, $f_2 \circ f_1 + g_2 \circ g_1 = (\lambda_1 \lambda_3 + \lambda_2 \lambda_4) \alpha^{X, Y}$ by \Cref{Prop1: Generators of Hom}. However, $f_2 \circ g_1 + g_2 \circ g_1 = 0$, and so we may write $\lambda_4 = -\lambda_1 \lambda_3 \lambda_2^{-1}$. We can, thus, construct the isomorphism of triangles
	\[
	\begin{tikzcd}[ampersand replacement=\&,column sep = 8em, row sep=4.5em]
		X \arrow[r,"{\begin{psmallmatrix} \lambda_1 \alpha^{X, Z} \\ \lambda_2 \alpha^{X, Z'} \end{psmallmatrix}}"] \arrow[d, equals] \& Z \oplus Z' \arrow[r,"{\left(\lambda_3 \alpha^{Z, Y}, - \lambda_1 \lambda_3 \lambda_2^{-1} \alpha^{Z', Y} \right) }"] \arrow[d,"{\begin{psmallmatrix} \lambda_1^{-1} \alpha^{Z, Z} & 0\\ 0 & \lambda_2^{-1} \alpha^{Z', Z'} \end{psmallmatrix}}", pos=0.35] \& Y \arrow[r,"{\lambda_5 \alpha^{Y, X[1]}}"] \arrow[d,"(\lambda_1 \lambda_3)^{-1} \alpha^{Y, Y}"] \& X[1] \arrow[d, equals] \\
		X \arrow[r,"{\begin{psmallmatrix} \alpha^{X, Z} \\ \alpha^{X, Z'} \end{psmallmatrix}}"] \& Z \oplus Z' \arrow[r,"{\left(\alpha^{Z, Y},- \alpha^{Z', Y} \right) }"] \& Y \arrow[r,"\lambda_1 \lambda_3 \lambda_5 \alpha^{Y, X[1]}"] \& X[1]
	\end{tikzcd}
	\]
as required. 

It remains to show that $\delta$ is independent of $X, Y, Z$, and $Z'$. 
Consider a triangle with exactly two indecomposable summands and assume that it is written as in one of the four cases \eqref{eq: T1}-\eqref{eq: T4}. Then, a straightforward case by case analysis shows that there is an indecomposable $Y'$ such that there is a commutative diagram
\[ \begin{tikzcd}[ampersand replacement=\&,column sep = 8em, row sep=1.5em]
	X \arrow[d, equals] \arrow[r,"{\begin{psmallmatrix} \alpha^{X, Z} \\ \alpha^{X, Z'} \end{psmallmatrix}}"] \& Z \oplus Z' \arrow[d, "{(\alpha^{Z, Z},0)}"] \arrow[r,"{\left(\alpha^{Z, Y},- \alpha^{Z', Y} \right) }"] \& Y \arrow[d, "{\alpha^{Y, Y'}}"] \arrow[r,"\delta' \alpha^{Y, X[1]}"] \& X[1] \arrow[d, equals] \\
X \arrow[r, "{\alpha^{X, Z}}"] \& Z \arrow[r, "{\alpha^{Z, Y'}}"] \& Y' \arrow[r, "{\delta \alpha^{Y', X[1]}}"] \& X[1]
\end{tikzcd}
\]
where both rows are triangles, and all maps in the diagram are nonzero. Therefore, $\delta' = \delta$.
\end{proof}

\subsection{Functors Preserving Linear Generators}

	We aim to understand functors between triangulated categories satisfying \Cref{setup: cluster cat} with linear generators, and what properties these functors may preserve under mild assumptions.

	Recall that a functor $\mathscr{F} : \cT \rightarrow \cS$ between triangulated categories commutes with the respective suspension functors if there exists a natural isomorphism $\phi \colon \mathscr{F} \circ [1]_\cT \xrightarrow{\sim} [1]_{\cS} \circ \mathscr{F}$. Moreover, the functor $\mathscr{F}$ preserves a triangle 
\[
X \xlongrightarrow{f} Y \xlongrightarrow{g} Z \xlongrightarrow{h} X[1]_{\cT},
\]
in $\cT$ if 
\[
\mathscr{F}(X) \xrightarrow{\mathscr{F}(f)} \mathscr{F}(Y) \xrightarrow{\mathscr{F}(g)} \mathscr{F}(Z) \xrightarrow{\mathscr{F}(h)} \mathscr{F}(X[1]_{\cT}),
\]
is a triangle in $\cS$.

\begin{theorem}\label{Thm1: F preserves 2 inds}
	Let $\cT$ and $\cS$ be triangulated categories satisfying \Cref{setup: cluster cat} and admitting linear generators $G$ and $G'$, respectively. If there is a fully faithful additive functor $F \colon \lang[1]{G} \rightarrow \lang[1]{G'}$ that commutes with the respective suspension functors, then there exists a fully faithful additive functor $\mathscr{F} : \cT \rightarrow \cS$ which commutes with suspension and preserves triangles with at least two indecomposable terms.
\end{theorem}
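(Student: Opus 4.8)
The plan is to construct $\mathscr{F}$ completely explicitly, using the structural results already established. By \Cref{Thm1: Perf 2 is Tri} we have $\cT=\lang[2]{G}$ and $\cS=\lang[2]{G'}$, and by \Cref{Prop1: Ind in langle 2} every indecomposable object of $\cT$ is either an object of $\lang[1]{G}$ or of the form $X_{P,Q}$ for non-isomorphic indecomposables $P,Q\in\lang[1]{G}$ (and likewise in $\cS$). The first step is the observation that $F$, being fully faithful and commuting with $[1]$, preserves and reflects the entire order datum: for indecomposable $P,Q\in\lang[1]{G}$ one has $Q\leq P$ iff $\hcT{P}{Q}\cong k$ iff $\Hom[\cS]{F(P)}{F(Q)}\cong k$ iff $F(Q)\leq F(P)$, and $F$ intertwines the two shift operations. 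Hence $F$ embeds $\mathrm{ind}(\lang[1]{G})$ order-isomorphically into $\mathrm{ind}(\lang[1]{G'})$, and it preserves and reflects every one of the order inequalities occurring in \Cref{def: linear gen} and in the Hom-space classifications of this section.

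On objects I would set $\mathscr{F}|_{\lang[1]{G}}=F$ and $\mathscr{F}(X_{P,Q}):=X_{F(P),F(Q)}$, which is well defined by the first step and \Cref{Cor1: Xs arent iso}, extending additively via Krull--Schmidt decompositions. On morphisms, fix families of generators $\{\alpha^{Y,Z}\}$ in both $\cT$ and $\cS$ as in \Cref{Prop1: Generators of Hom}, normalised so that $\alpha^{Y,Y}=\id_Y$, and put $\mathscr{F}(\alpha^{Y,Z}):=\alpha^{\mathscr{F}(Y),\mathscr{F}(Z)}$ whenever $\hcT{Y}{Z}\neq0$, extending $k$-linearly between indecomposables and then by matrices over direct sums. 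This is well defined precisely because \Cref{Lem1: Hom X to P}, \Cref{Lem1: Hom P to X} and \Cref{Lem1: Morphisms X to X}, together with the first step, give $\hcT{Y}{Z}=0\iff\Hom[\cS]{\mathscr{F}(Y)}{\mathscr{F}(Z)}=0$ and $\hcT{Y}{Z}\cong k\iff\Hom[\cS]{\mathscr{F}(Y)}{\mathscr{F}(Z)}\cong k$ for all indecomposable $Y,Z$. Granting functoriality, full faithfulness of $\mathscr{F}$ is then automatic: on indecomposables it sends a generator to a generator (and $0$ to $0$), hence is bijective, and the general case follows by additivity.

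The real work is functoriality, i.e.\ verifying $\mathscr{F}(\alpha^{Y,Z}\alpha^{X,Y})=\alpha^{\mathscr{F}(Y),\mathscr{F}(Z)}\alpha^{\mathscr{F}(X),\mathscr{F}(Y)}$ for all indecomposable $X,Y,Z$ (identities being covered by the normalisation). If $\hcT{X}{Y}$ or $\hcT{Y}{Z}$ vanishes, both sides are zero by the previous step. Otherwise both generators exist and, since $\hcT{X}{Z}$ is at most one-dimensional, \Cref{Prop1: Generators of Hom} says $\alpha^{Y,Z}\alpha^{X,Y}$ equals $\alpha^{X,Z}$ when $\alpha^{X,Z}$ factors through $Y$ and is $0$ otherwise. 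By \Cref{Prop1: Factoring arcs} this dichotomy is governed entirely by the forwards/backwards types of $\alpha^{X,Y},\alpha^{Y,Z},\alpha^{X,Z}$, and by \Cref{Lem1: Forward and Back morphisms} and \Cref{Lem1: Morphisms X to X} those types are read off from the order relations of the first step, which $\mathscr{F}$ preserves. Hence the corresponding composite in $\cS$ vanishes, resp.\ equals $\alpha^{\mathscr{F}(X),\mathscr{F}(Z)}$, in exactly the same cases. I expect transferring this factorisation dichotomy along $\mathscr{F}$ to be the main obstacle, as it forces a careful case analysis through all combinations of whether $X,Y,Z$ are of type $R\in\lang[1]{G}$ or of type $X_{P,Q}$.

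It then remains to check the structural properties. Commutation with suspension is the hypothesis on $F$ over $\lang[1]{G}$, while for the remaining indecomposables $X_{P,Q}[1]\cong X_{P[1],Q[1]}$ yields $\mathscr{F}(X_{P,Q}[1])\cong X_{F(P)[1],F(Q)[1]}\cong\mathscr{F}(X_{P,Q})[1]$; these isomorphisms assemble into a natural isomorphism using the canonicity in \Cref{rem: unique up to canonical iso} and additivity. For triangle preservation, by \Cref{Cor: Mor in Triangles} every triangle in $\cT$ with at least two indecomposable terms is isomorphic to one in a standard form built from the $\alpha$'s and a universal scalar $\delta$; applying $\mathscr{F}$ and using that it carries $\alpha$'s to $\alpha$'s and commutes with $[1]$ produces the analogous standard diagram in $\cS$, which by \Cref{Cor: Mor in Triangles} in $\cS$ is distinguished, once one notes the two scalars $\delta$ coincide (each is determined by the triangulated structure together with the chosen connecting generators). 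Finally, for the equivalence statement: if $F$ is dense then, by \Cref{Prop1: Ind in langle 2}, every indecomposable object of $\cS$ — being an object of $\lang[1]{G'}$ or an $X_{P',Q'}$ — lies in the essential image of $\mathscr{F}$, so $\mathscr{F}$ is dense; and the converse follows since $\mathscr{F}$ restricts to $F$ on $\lang[1]{G}$ and a nontrivial cone $X_{P,Q}$ is not an object of $\lang[1]{G}$. Combined with full faithfulness, $\mathscr{F}$ is an equivalence if and only if $F$ is.
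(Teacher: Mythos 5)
Your overall architecture matches the paper's: send $P\mapsto F(P)$ and $X_{P,Q}\mapsto X_{F(P),F(Q)}$, transport a compatible family of generators of Hom-spaces, get full faithfulness from the one-dimensionality of Hom-spaces, and read off triangle preservation from \Cref{Cor: Mor in Triangles}. However, there is one genuine gap, and it sits exactly at the point you flag and then dismiss: the scalars $\delta_{\cT}$ and $\delta_{\cS}$ do \emph{not} coincide in general. The scalar $\delta$ in \Cref{Cor: Mor in Triangles} is independent of the objects but it is \emph{not} determined by the triangulated structure alone --- it depends on the chosen generator family. Concretely, a distinguished triangle $P\to Q\to X_{P,Q}\to P[1]$ has maps $\lambda_1\alpha^{P,Q}$, $\lambda_2\alpha^{Q,X_{P,Q}}$, $\lambda_3\alpha^{X_{P,Q},P[1]}$, and $\delta=\lambda_1\lambda_2\lambda_3$; rescaling the chosen generator $\alpha^{Q,X_{P,Q}}$ by $c\in k^\times$ rescales $\delta$ by $c^{-1}$. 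Since you choose the families $\{\alpha^{Y,Z}\}$ in $\cT$ and in $\cS$ independently, the image under $\mathscr{F}$ of a distinguished triangle with three indecomposable terms is the candidate triangle with middle scalar $\delta_{\cT}$, whereas the actual distinguished triangles of $\cS$ carry $\delta_{\cS}$. A triangle in which a single map of a genuine triangle is multiplied by $\lambda\neq 1$ while the other two are held fixed is not distinguished (the would-be isomorphism of triangles forces $\lambda=1$ against a nonzero connecting map), so triangle preservation fails as written whenever $\delta_{\cT}\neq\delta_{\cS}$.

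The fix is the ``change of basis'' the paper performs: it first uses \Cref{cor: extending basis} to extend the family $\{F(\alpha^{Y,Z})\}$ on $\lang[1]{G'}$ to a compatible family $\{\beta^{Y,Z}\}$ on all of $\cS$, and then rescales the generators of type $\gamma^{F(Q),\,X_{F(P),F(Q)}}:=\delta_{\cS}\delta_{\cT}^{-1}\beta^{F(Q),\,X_{F(P),F(Q)}}$ (checking this stays compatible with composition and leaves the connecting generators unchanged) so that the universal scalar of the new family equals $\delta_{\cT}$. Only after this normalisation does ``$\mathscr{F}$ carries $\alpha$'s to generators'' imply that standard triangles go to standard triangles. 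A secondary, smaller point: because you pick the generators of $\cS$ independently rather than extending $F(\alpha^{P,Q})$ itself, your $\mathscr{F}$ agrees with $F$ on objects of $\lang[1]{G}$ but not necessarily on morphisms (they may differ by nonzero scalars); this is harmless for the theorem as stated, but your later assertion that ``$\mathscr{F}$ restricts to $F$'' is not literally true for your construction, whereas the paper's use of \Cref{cor: extending basis} makes it so. The rest of your argument --- order preservation and reflection under $F$, well-definedness of the object map via \Cref{Cor1: Xs arent iso}, functoriality via the factorisation dichotomy of \Cref{Prop1: Factoring arcs}, and commutation with suspension via \Cref{rem: unique up to canonical iso} --- is sound and parallels the paper.
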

\begin{proof}
	 Let $\{\alpha^{Y, Z}\}_{Y, Z \in \mathrm{ind} \, \cT}$ be a choice of generators in as in \Cref{Prop1: Generators of Hom}. Then, by \Cref{cor: extending basis}, the generators $\{F(\alpha^{Y, Z})\}_{Y, Z \in \lang[1]{G'} \, \cT}$ can be extended to a choice of generators $\{ \beta^{Y, Z} \}_{Y, Z \in \cS}$ which is compatible with composition. 

	Let $P, Q \in \lang[1]{G}$ be non-isomorphic indecomposable objects. Then, by \Cref{Cor: Mor in Triangles}, any triangle with vertices $P \not\cong Q$ in $\cT$ and any triangle with vertices $F(P), F(Q) \in \cS$ is isomorphic to
\begin{align} \label{eq: triangles in S and T}
 	& P \xrightarrow{\alpha^{P, Q}} Q \xrightarrow{\delta_{\cT} \alpha^{Q, X_{P, Q}}} X_{P, Q} \xrightarrow{\alpha^{X_{P, Q}, P[1]}} P[1] \\
 	& F(P) \xrightarrow{\beta^{F(P), F(Q)}} F(Q) \xrightarrow{\delta_{\cS} \beta^{F(Q), X_{F(P), F(Q)}}} X_{F(P), F(Q)} \xrightarrow{\beta^{X_{F(P), F(Q)}, F(P)[1]}} F(P)[1]
\end{align}
respectively. Here, $\delta_{\cT}, \delta_{\cS} \in k^\times$ are scalars that only depend on the generators for Hom spaces that we have chosen in $\cT$ and $\cS$ and not on the objects $P, Q$. 

	We may "change basis" $\{ \beta^{X, Y} \}$ in $\cS$ in the following way. We first let $\gamma^{P', Q'} = \beta^{P', Q'}$ for all indecomposable objects $P', Q' \in \lang[1]{G'}$. Then, for indecomposable objects $F(Q) \in \lang[1]{G'}$ and $Z = X_{F(P), F(Q)} \not\in \lang[1]{G'}$, let $\gamma^{F(Q), Z} = \delta_{\cS} \delta_{\cT}^{-1} \beta^{F(Q), Z}$. Moreover, for indecomposable objects $Q' \in \lang[1]{G'}$ and $Z' = X_{P', Q'} \not\in \lang[1]{G'}$ which are not in the essential image of $F$, let $\gamma^{Q', Z} = \beta^{Q', Z}$. Then, we use \Cref{Prop1: Generators of Hom} to complete this choice of generators to $\{ \gamma^{Y, Z} \}_{Y, Z \in \mathrm{ind} \, \cS}$. By the proof of \Cref{Prop1: Generators of Hom}, it is straightforward to check that $\gamma^{Z, F(P)[1]} = \beta^{Z, F(P)[1]}$. Under this choice of basis, we may rewrite the triangles \eqref{eq: triangles in S and T} to 	
\begin{align} \label{eq: triangles in S and T 2}
 	& P \xrightarrow{\alpha^{P, Q}} Q \xrightarrow{\delta_{\cT} \alpha^{Q, X_{P, Q}}} X_{P, Q} \xrightarrow{\alpha^{X_{P, Q}, P[1]}} P[1] \\
 	& F(P) \xrightarrow{\gamma^{F(P), F(Q)}} F(Q) \xrightarrow{\delta_{\cT} \gamma^{F(Q), X_{F(P), F(Q)}}} X_{F(P), F(Q)} \xrightarrow{\gamma^{X_{F(P), F(Q)}, F(P)[1]}} F(P)[1].
\end{align}

	Let us now define the functor $\mathscr{F} \colon \cT \rightarrow \cS$. Note that, since $\cS$ and $\cT$ are Krull-Schmidt, it suffices to define $\mathscr{F}$ on indecomposable objects. For $Y \in \cT$ indecomposable, if $Y \in \lang[1]{G}$ then we let $\mathscr{F}(Y) = F(Y)$. Otherwise, if $Y \notin \lang[1]{G}$, then $Y = X_{R, S}$ for some $R, S \in \lang[1]{G}$ and so we let $\mathscr{F}(Y) := X_{\mathscr{F}(R), \mathscr{F}(S)}$. For a morphism $\alpha^{Y, Z} \in \cT$, we let $\mathscr{F}(\alpha^{Y, Z}) = \gamma^{Y, Z}$ and extend linearly over $k$.  

	Given indecomposable objects $Y, Z \in \cT$, the linear map $\mathscr{F}\colon \hcT{Y}{Z} \to \cS( \mathscr{F}(Y), \mathscr{F}(Z) )$ is bijective since it maps basis to basis. Since our categories are Krull-Schmidt, this is enough to conclude that $\mathscr{F}$ is fully faithful. 

	Moreover, $\mathscr{F}$ commutes with the suspension functor. To see this, let $P, Q \in \lang[1]{G}$. Then, there is a natural isomorphism
\[  \mathscr{F}(P[1]) = F(P[1]) \cong F(P)[1] = \mathscr{F}(P)[1] \]
by assumption on $F$. On the other hand, for $Y = X_{P', Q'} \in \cS$ and $Z = X_{P, Q} \in \cT$, recall that by \Cref{rem: unique up to canonical iso}, there are canonical isomorphisms $Y[1] \to X_{P'[1], Q'[1]}$ and $Z[1] \to X_{P[1], Q[1]}$, so that we may identify $Y[1] = X_{P'[1], Q'[1]}$ and $Z[1] = X_{P[1], Q[1]}$. Therefore, 
\[ \mathscr{F}(Z[1]) = \mathscr{F}(X_{P[1], Q[1]}) = X_{F(P[1]), F(Q[1])} \cong X_{F(P)[1], F(Q)[1]} = X_{F(P), F(Q)}[1] = \mathscr{F}(Z)[1].\]
where the isomorphisms involved are natural by assumption on $F$. Again, since our categories are Krull-Schmidt, this is enough to conclude that $\mathscr{F}$ commutes with suspension.
 
	Finally, given the definition of $\mathscr{F}$, then \Cref{Cor: Mor in Triangles} implies that $\mathscr{F}$ preserves all triangles with three indecomposable objects. Hence, it remains to show that $\mathscr{F}$ preserves triangles with exactly two indecomposable terms. Well, the image of a triangle in $\cT$ with two indecomposable summands under $\mathscr{F}$ is
\[ \mathscr{F}(X) \xrightarrow{\begin{psmallmatrix} \gamma^{ \mathscr{F}(X), \mathscr{F}(Z)} \\  \gamma^{ \mathscr{F}(X), \mathscr{F}(Z')}  \end{psmallmatrix}} \mathscr{F}(Z) \oplus \mathscr{F}(Z') \xrightarrow{ (\gamma^{ \mathscr{F}(Z), \mathscr{F}(Y)}, \gamma^{ \mathscr{F}(Z'), \mathscr{F}(Y)})} \mathscr{F}(Y) \xrightarrow{ \delta_\cS \gamma^{\mathscr{F}(Y), \mathscr{F}(X)[1]} }  \mathscr{F}(X)[1]\]
which, by \Cref{Cor: Mor in Triangles} and our choice of $\{\gamma^{Y, Z}\}$, is a triangle in $\cS$. 
\end{proof}

We find an immediate corollary to Theorem \Cref{Thm1: F preserves 2 inds}.

\begin{corollary}\label[cor]{Cor: Linear equivalence}
	Let $\cT$ and $\cS$ be triangulated categories satisfying \Cref{setup: cluster cat} and admitting linear generators $G$ and $G'$, respectively. Let $F \colon \lang[1]{G} \rightarrow \lang[1]{G'}$ be a fully faithful additive functor which commutes with suspension. Then, the functor $\mathscr{F}$ constructed in \Cref{Thm1: F preserves 2 inds} is an additive equivalence if and only if $F$ is an equivalence.
\end{corollary}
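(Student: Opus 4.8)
The plan is to reduce the corollary to the single assertion that $\mathscr{F}$ is essentially surjective if and only if $F$ is. This reduction is legitimate: $F$ is fully faithful by hypothesis, and $\mathscr{F}$ is fully faithful by \Cref{Thm1: F preserves 2 inds} (the proof of that theorem uses only full faithfulness of $F$, never essential surjectivity), so each of the two functors is an equivalence exactly when it is essentially surjective.

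Before addressing surjectivity I would record two structural facts. First, by \Cref{Prop1: Ind in langle 2} together with \Cref{Thm1: Perf 2 is Tri} applied to $\cS$ with generator $G'$ (so that $\cS = \lang[2]{G'}$), every indecomposable object of $\cS$ is isomorphic either to an indecomposable object of $\lang[1]{G'}$, or to a cone $X_{A', B'}$ for indecomposable $A', B' \in \lang[1]{G'}$ admitting a non-zero morphism $A' \to B'$ (in particular $A' \not\cong B'$, since the cone is a non-zero indecomposable); the analogous statement holds for $\cT$. Second, these two families are disjoint, i.e.\ $X_{A', B'} \notin \lang[1]{G'}$. I would deduce this from \Cref{Lem1: Hom X to P} and \Cref{Lem1: Hom P to X}: were $X_{A', B'} \cong R'$ with $R'$ indecomposable in $\lang[1]{G'}$, then $\cS(B', R') \cong k$ and $\cS(R', A'[1]) \cong k$ would both be non-zero, so \ref{G1} and \ref{G2} would force $A'[1] \leq R' \leq B' < A' \leq A'[1]$, which is absurd.

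With these facts the two implications become bookkeeping. Suppose first $F$ is an equivalence, hence essentially surjective, and let $Z \in \cS$ be indecomposable. If $Z \cong R' \in \lang[1]{G'}$, pick an indecomposable $R \in \lang[1]{G}$ with $F(R) \cong R'$ (possible since $F$ is essentially surjective and a fully faithful functor between Krull--Schmidt categories reflects indecomposability of direct summands); then $\mathscr{F}(R) = F(R) \cong Z$. If $Z \cong X_{A', B'}$, pick indecomposable $A_0, B_0 \in \lang[1]{G}$ with $F(A_0) \cong A'$ and $F(B_0) \cong B'$; full faithfulness of $F$ gives $\cT(A_0, B_0) \cong \cS(A', B') \cong k$, so $X_{A_0, B_0}$ is defined, and $\mathscr{F}(X_{A_0, B_0}) = X_{F(A_0), F(B_0)} \cong X_{A', B'} \cong Z$ by \Cref{Cor1: Xs arent iso} (the cone depends only on the isomorphism classes of its endpoints). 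Hence $\mathscr{F}$ is essentially surjective. Conversely, suppose $\mathscr{F}$ is an equivalence and let $R' \in \lang[1]{G'}$ be indecomposable; write $R' \cong \mathscr{F}(Y)$ for an indecomposable $Y \in \cT$. If $Y \cong X_{A, B}$, then $\mathscr{F}(Y) = X_{F(A), F(B)}$, which lies outside $\lang[1]{G'}$ by the disjointness fact (here $F(A) \not\cong F(B)$ and $\cS(F(A), F(B)) \cong \cT(A, B) \cong k$ since $F$ is fully faithful), contradicting $R' \in \lang[1]{G'}$; therefore $Y \in \lang[1]{G}$ and $R' \cong F(Y)$. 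Thus $F$ is essentially surjective, hence an equivalence.

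The only step that is not purely formal is the disjointness fact $X_{A', B'} \notin \lang[1]{G'}$; the rest is manipulation of the structural description of indecomposables from \Cref{Prop1: Ind in langle 2} and standard facts about fully faithful additive functors, so I expect that to be the main (mild) obstacle.
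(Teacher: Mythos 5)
Your proposal is correct and is exactly the natural unpacking of the paper's argument: the paper offers no written proof, treating the corollary as immediate from the construction of $\mathscr{F}$ in \Cref{Thm1: F preserves 2 inds} (which is fully faithful and sends $\lang[1]{G}$ into $\lang[1]{G'}$ via $F$ and cones to cones), so the content reduces, as you say, to matching essential images. Your one non-formal ingredient, the disjointness $X_{A',B'} \notin \lang[1]{G'}$ deduced from \Cref{Lem1: Hom X to P} and \Cref{Lem1: Hom P to X}, is correctly argued and is genuinely needed for the converse direction, so this is a sound and complete filling-in of the omitted proof.
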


\begin{corollary}\label[cor]{Cor: equivalence from one linear gen}
	Let $\cT$ and $\cS$ be triangulated categories satisfying \Cref{setup: cluster cat}. Suppose that $\cT$ admits a linear generator $G$ and that $\cS$ admits a generator $G'$. Let $F \colon \lang[1]{G} \rightarrow \lang[1]{G'}$ be an additive equivalence which commutes with suspension and is such that $F(G) \cong G'$. Then, $G'$ is a linear generator of $\cS$ and the functor $\mathscr{F}$ constructed in \Cref{Thm1: F preserves 2 inds} is an additive equivalence.
\end{corollary}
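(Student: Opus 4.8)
The plan is to proceed in two steps: first transport the four axioms of \Cref{def: linear gen} from $G$ to $G'$ along the equivalence $F$, so that $G'$ becomes a linear generator of $\cS$; then quote \Cref{Cor: Linear equivalence} to conclude that the functor $\mathscr F$ produced by \Cref{Thm1: F preserves 2 inds} is an additive equivalence.

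For the first step, note that $G'$ is a classical generator of $\cS$ by hypothesis, so only \ref{G1}--\ref{G4} for $\lang[1]{G'}$ need to be verified. Since $F\colon\lang[1]{G}\to\lang[1]{G'}$ is an additive equivalence, it restricts to a bijection between isomorphism classes of indecomposable objects on the two sides: an additive equivalence and a quasi-inverse preserve and reflect the zero object and direct-sum decompositions, hence indecomposability, and essential surjectivity together with Krull--Schmidt gives the bijection. Being fully faithful, $F$ gives natural isomorphisms $\cS(F(P),F(Q))\cong\hcT{P}{Q}$, and commuting with suspension it satisfies $F(P[1])\cong F(P)[1]$. From the first two facts, for indecomposables $P,Q$ one has $\cS(F(P),F(Q))\cong k$ exactly when $\hcT{P}{Q}\cong k$, so under the bijection the Hom-relation on $\mathrm{ind}(\lang[1]{G'})$ corresponds precisely to the total order on $\mathrm{ind}(\lang[1]{G})$; this shows the relation is a total order and that the vanishing clause of \ref{G1} is inherited. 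Writing an arbitrary indecomposable of $\lang[1]{G'}$ as $F(P)$, property \ref{G2} follows from $F(P)\le F(P[1])\cong F(P)[1]$; property \ref{G4} follows because $F(P)\le F(Q)\le F(P)[1]$ forces $P\le Q\le P[1]$, so $Q\cong P$ or $Q\cong P[1]$ and hence $F(Q)\cong F(P)$ or $F(Q)\cong F(P)[1]$; and \ref{G3} follows because any morphism $F(P)\to F(R)$ equals $F(f)$ for some $f\colon P\to R$ by fullness, $f=hg$ with $g\colon P\to Q$ and $h\colon Q\to R$ nonzero by \ref{G3} for $G$, and $F(g),F(h)$ remain nonzero by faithfulness. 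Thus $G'$ is a linear generator of $\cS$.

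For the second step, $\cT$ and $\cS$ now both satisfy \Cref{setup: cluster cat} and admit linear generators $G$ and $G'$, and $F$ is fully faithful and commutes with suspension, so \Cref{Thm1: F preserves 2 inds} applies and yields $\mathscr F\colon\cT\to\cS$; \Cref{Cor: Linear equivalence} then states that $\mathscr F$ is an additive equivalence if and only if $F$ is, and $F$ is an equivalence by hypothesis. I expect no genuine obstacle: the content is pure transport of structure along $F$ followed by a citation. The only point requiring care is that \ref{G1} does not transport an order as extra data but imposes a relation \emph{defined} by the Hom-condition, so one must check that the induced relation on $\mathrm{ind}(\lang[1]{G'})$ is really total — which is exactly what the bijection on indecomposables and the isomorphism $\cS(F(P),F(Q))\cong\hcT{P}{Q}$ deliver. (The hypothesis $F(G)\cong G'$ is not needed for the two stated conclusions; it merely guarantees $\mathscr F(G)\cong G'$, since $\mathscr F$ restricts to $F$ on $\lang[1]{G}$.)
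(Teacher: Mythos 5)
Your proof is correct and follows essentially the same route as the paper: reduce to showing that $G'$ is a linear generator of $\cS$ by transporting \ref{G1}--\ref{G4} along the additive equivalence $F$, and then invoke \Cref{Cor: Linear equivalence}. The paper's own proof merely asserts that the transport of the axioms is ``straightforward to check,'' whereas you have written out that verification; your parenthetical observation that the hypothesis $F(G)\cong G'$ is not strictly needed for the two stated conclusions is also accurate.
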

\begin{proof}
	In view of \Cref{Cor: Linear equivalence}, it suffices to to prove that $G'$ is a linear generator of $G$. Let $G = \bigoplus_{i=1}^n G_i$, where $G_i$ are indecomposable. Then, since $G' = F(G)$ and $F$ is an additive equivalence, $G' = \bigoplus_{i=1}^n F(G_i)$, with $F(G_i)$ indecomposable. Since $F$ is an equivalence which commutes with suspension, it is thus straightforward to check that $G'$ satisfies the conditions of \Cref{def: linear gen}. 	
\end{proof}

\begin{remark}
	It is possible that the functor $\mathscr{F}$ constructed in \Cref{Thm1: F preserves 2 inds} is in fact a triangulated functor which induces a triangulated equivalence under the conditions of \Cref{Cor: Linear equivalence}. In fact, it is even possible to show that for any triangle 
	\[
	X \xrightarrow{f} Y \xrightarrow{g} Z \xrightarrow{h} X[1]_{\cT},
	\]
	in $\cT$, then there exists a triangle of the form
	\[
	\mathscr{F}(X) \xrightarrow{u} \mathscr{F}(Y) \xrightarrow{g} \mathscr{F}(Z) \xrightarrow{\mathscr{w}} \mathscr(F)(X)[1]_{\cS},
	\]
	in $\cS$. However, it is a delicate question whether this triangle in $\cS$ is isomorphic to the image of the above triangle in $\cT$ under the functor $\mathscr{F}$. 
\end{remark}

\section{Graded Endomorphism Algebras of Linear Generators}\label{Sec: Graded Endo}

\subsection{The Algebra \texorpdfstring{$\Lambda^G$}{LG}}\label{Subsec: LambdaG}

In this subsection, we are interested in computing the graded endomorphism algebra of a \textit{minimal} linear generator $G$. of $\cT$.
Recall from \cite{Generators} that a generator is minimal if there exists no proper direct summand $H$ of $G$, such that $H$ is a generator of $\cT$.

\begin{definition}
	Let $\cT$ be a triangulated category. Recall that the \textit{graded endomorphism ring} of an object $Y\in \cT$ is the graded ring $\Endo[\cT]{\ast}{Y}$ which, at degree $i$, is given by 
	\[
	\Endo[\cT]{i}{Y} = \Ext[\cT]{i}{Y}{Y} = \hcT{Y}{Y [i]}.
	\]
	For $f \in \Endo[\cT]{i}{Y}$ and $g \in \Endo[\cT]{j}{Y}$, multiplication is given by the rule
	\[
	g \cdot f := g [i] \circ f \in \Endo[\cT]{i+j}{M}.
	\]
\end{definition}

Notice that if $\cT$ is Krull-Schmidt and $Y = \bigoplus_{i=1}^m Y_i$, then $\Endo[\cT]{\ast}{Y}$ can be written as a graded generalised $m\times m$-matrix ring
\[
\Endo[\cT]{\ast}{Y} = \begin{pmatrix} \Endo[\cT]{\ast}{Y_1} & \Ext[\cT]{\ast}{Y_2}{Y_1} & \dotsb & \Ext[\cT]{\ast}{Y_m}{Y_1} \\ \Ext[\cT]{\ast}{Y_1}{Y_2} & \Endo[\cT]{\ast}{Y_2} & \dotsb & \Ext[\cT]{\ast}{Y_m}{Y_2} \\ \vdots & \vdots & \ddots & \vdots \\ \Ext[\cT]{\ast}{Y_1}{Y_m} & \Ext[\cT]{\ast}{Y_2}{Y_m} & \dotsb & \Endo[\cT]{\ast}{Y_m} \end{pmatrix}
\]
where $\Endo[\cT]{\ast}{Y_i}$ are seen as graded rings and $\Ext[\cT]{\ast}{Y_i}{Y_j}$ are seen as graded $\Endo[\cT]{\ast}{Y_j}$-$\Endo[\cT]{\ast}{Y_i}$-bimodules.

\begin{proposition}\label[prop]{Prop: Endo Alg}
	Let $\cT$ be a triangulated category satisfying \Cref{setup: cluster cat} and admitting a minimal linear generator $G = \bigoplus_{i = 1}^m G_i$ with $m$ indecomposable summands. Then, $\Endo[\cT]{\ast}{G}$ is isomorphic to the upper triangular $m \times m$-matrix algebra $\chi^G$, with
	\[ \chi^G_{i, j} := \begin{cases} k[x] &  \text{if $i = j$ and $G_i < G_i[1]$} \\ 0 & \text{if $j < i$}  \\  k[x^\pm]  & \text{otherwise.} \end{cases} \]
	where $x$ is concentrated in degree $-1$. 
\end{proposition}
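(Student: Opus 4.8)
The plan is to compute each entry $\chi^G_{i,j} = \Ext^\ast_\cT(G_j, G_i)$ of the graded matrix algebra separately, then check that the multiplication matches the upper-triangular matrix algebra structure. First I would recall from \Cref{def: linear gen} that the indecomposable summands $G_1, \dots, G_m$ of $G$ lie in $\lang[1]{G}$ and are totally ordered by \ref{G1}; by minimality of $G$ (and the description of minimal generators in \cite{Generators}), no $G_i$ is a suspension of another $G_j$ for $i \neq j$, so after reindexing we may assume $G_1 < G_2 < \dots < G_m$ (possibly $G_i \cong G_i[1]$ for some $i$, but never $G_i \cong G_j$ or $G_i \cong G_j[\ell]$ for $i \neq j$ and $\ell \neq 0$; this needs a short argument using \ref{G2} and the fact that the order is preserved by $[1]$). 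The key structural input is that $[1]$ acts on $\ind(\lang[1]{G})$ as an order-preserving bijection, so for each $i$ the set $\{G_j[\ell] : j, \ell\}$ that is order-equivalent to... — more precisely, $\Ext^\ell_\cT(G_j, G_i) = \hcT{G_j}{G_i[\ell]}$ is $k$ if $G_i[\ell] \leq G_j$ and $G_j \not> G_i[\ell]$, i.e. if $G_i[\ell]$ and $G_j$ are "comparable in the right direction," and $0$ otherwise, by \ref{G1}.

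Next I would handle the diagonal entries $\Endo^\ast_\cT(G_i) = \bigoplus_\ell \hcT{G_i}{G_i[\ell]}$. By \ref{G2} we have $G_i \leq G_i[1]$, so iterating, $G_i[\ell] \leq G_i$ precisely when $\ell \leq 0$ if $G_i \not\cong G_i[1]$ (giving $k$ in each non-positive degree, hence $k[x]$ with $x$ in degree $-1$), whereas if $G_i \cong G_i[1]$ then $G_i[\ell] \cong G_i$ for all $\ell$ and every degree contributes $k$, giving $k[x^{\pm}]$. That the multiplication is the obvious one follows because each graded piece is one-dimensional and the product $\hcT{G_i}{G_i[\ell]} \otimes \hcT{G_i}{G_i[\ell']} \to \hcT{G_i}{G_i[\ell+\ell']}$ is given by $g \cdot f = g[\ell] \circ f$; one checks this is nonzero using the factorisation property \ref{G3} (a nonzero map $G_i \to G_i[\ell+\ell']$ with $\ell, \ell' \leq 0$ factors through $G_i[\ell']$, matching $\alpha^{G_i, G_i[\ell']}$ up to scalar), so the generator $x = \alpha^{G_i, G_i[-1]}$ has $x^{-\ell}$ spanning degree $\ell$; this identifies the diagonal with $k[x]$ or $k[x^\pm]$ as claimed. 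For the off-diagonal entries with $j < i$, i.e. $G_j < G_i$, I claim $\Ext^\ell_\cT(G_j, G_i) = 0$ for all $\ell$: since $G_j < G_i \leq G_i[\ell]$ for $\ell \geq 0$ and $G_i[\ell] < G_i \leq \dots$ — wait, one must be careful — the point is $G_j < G_i$ and applying \ref{G1} plus order-preservation of $[1]$ forces $\hcT{G_j}{G_i[\ell]} = 0$ for every $\ell$ because $G_i[\ell]$ is never $\leq G_j$ (as $G_i[\ell] \geq G_i[\ell_0]$ where $\ell_0$ is the least $\ell$ with $G_i[\ell] \cong G_i$ if $G_i \cong G_i[1]$, and $G_i > G_j$; if $G_i \not\cong G_i[1]$ one uses that $G_i$ is not a suspension of $G_j$). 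This gives the $0$ in the lower triangle.

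The remaining case is $i < j$, i.e. $G_i < G_j$, where I must show $\Ext^\ast_\cT(G_j, G_i) \cong k[x^\pm]$ as a graded $\chi^G_{jj}$-$\chi^G_{ii}$-bimodule. Here $\hcT{G_j}{G_i[\ell]} = k$ exactly when $G_i[\ell] \leq G_j$; since $[1]$ is order-preserving and $G_i < G_j$, there is some $\ell_1$ with $G_i[\ell_1] \leq G_j < G_i[\ell_1+1]$ (using \ref{G4} to control the jump), but for $\ell \leq \ell_1$ we get $k$ and for large positive $\ell$ eventually $0$ unless $G_j \cong G_j[1]$ or $G_i \cong G_i[1]$ — actually one shows $\hcT{G_j}{G_i[\ell]} = k$ for \emph{all} $\ell \in \bZ$, because if it vanished for some $\ell$ then... — the cleanest route is: the bimodule is cyclic over $\chi^G_{ii}$ and $\chi^G_{jj}$, and since at least one of the two diagonal algebras... hmm, in fact the right statement (matching "$k[x^\pm]$") must be that $G_i \cong G_i[1]$ \emph{or} $G_j \cong G_j[1]$ whenever $\Ext^\ast(G_j,G_i) \neq 0$ and $i \neq j$, which should follow from minimality of $G$: if neither $G_i$ nor $G_j$ were $[1]$-periodic, one could presumably drop a summand. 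Granting that, $\hcT{G_j}{G_i[\ell]} = k$ for all $\ell$ since $G_i[\ell]$ ranges over a $[1]$-orbit that is "confluent" with $G_j$'s orbit, and the bimodule is free of rank one over $k[x^\pm]$ on either side. I expect \textbf{this last case — pinning down exactly when the off-diagonal bimodules are nonzero and showing they are then $k[x^\pm]$, together with invoking minimality correctly — to be the main obstacle}; the diagonal computation and the lower-triangular vanishing are routine consequences of \ref{G1} and \ref{G2}. Finally, assembling the entries into a graded matrix algebra, the multiplication maps $\chi^G_{i,j} \otimes \chi^G_{j,k} \to \chi^G_{i,k}$ are each either zero (forced by the triangular shape) or a map of one-dimensional-in-each-degree spaces that is nonzero by the factorisation property \ref{G3} applied to a chain $G_k < G_j < G_i$ (or suspensions thereof), exactly as in the diagonal case; this shows the isomorphism is one of graded algebras, completing the proof.
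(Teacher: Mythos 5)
Your overall strategy --- computing each graded piece $\hcT{G_j}{G_i[\ell]}$ entry by entry from the total order of \ref{G1} and then checking multiplicativity --- is the same as the paper's, and your treatment of the diagonal entries and of the vanishing below the diagonal is essentially right (the paper streamlines the multiplicative checks by working with the compatible generators of \Cref{Prop1: Generators of Hom} rather than invoking \ref{G3} each time, but that is cosmetic). The genuine gap is in the upper-triangular case $i<j$, which you yourself flag as the main obstacle. There you posit an $\ell_1$ with $G_i[\ell_1]\le G_j< G_i[\ell_1+1]$ and then propose to finish via the claim that a nonzero off-diagonal entry forces $G_i\cong G_i[1]$ or $G_j\cong G_j[1]$. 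Both steps fail. If such an $\ell_1$ existed, \ref{G4} would force $G_j\cong G_i[\ell_1]$ or $G_j\cong G_i[\ell_1+1]$, so $G_j\in\lang[1]{G_i}$ and the summand $G_j$ could be deleted, contradicting minimality; hence no such $\ell_1$ exists. And the periodicity claim is contradicted by the paper's own target algebra $\Lambda_n$ (equivalently, by the fan generator of $\ocC$): for $i<j$ both odd, the summands $P_i$ and $P_j$ are both non-periodic, neither can be dropped, and yet $\Ext[\cT]{\ast}{P_i}{P_j}\cong k[x^{\pm}]$.

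The correct argument --- which is what the paper's terse sentence ``these assumptions imply that $G_j[t]<G_i$ for all $t\in\bZ$'' encodes --- is that for $i<j$ one has $G_i[\ell]<G_j$ for \emph{every} $\ell\in\bZ$, so that every graded piece of $\chi^G_{i,j}$ is one-dimensional. For $\ell\le 0$ this follows from \ref{G2} together with $G_i<G_j$. For $\ell>0$, if instead $G_j\le G_i[\ell]$, then $G_i\le G_j\le G_i[\ell]$, and iterating \ref{G4} yields $G_j\cong G_i[s]$ for some $0\le s\le\ell$; then $G_j\in\lang[1]{G_i}$, so removing $G_j$ from $G$ still leaves a classical generator, again contradicting minimality. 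With this in hand, $\hcT{G_j}{G_i[\ell]}\cong k$ for all $\ell$, and the same factorisation argument you use on the diagonal identifies the bimodule with $k[x^{\pm}]$. Note that the identical mechanism is also what makes your lower-triangular vanishing airtight: $G_i[\ell]\le G_j<G_i$ for some $\ell<0$ would sandwich $G_j$ in the suspension orbit of $G_i$ and contradict minimality in the same way.
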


\begin{proof}
	Let $\{ \alpha^{Y, Z} \}_{Y, Z \in \mathrm{ind} \, \cT}$ be a choice of generators of Hom spaces as in \Cref{Prop1: Generators of Hom}. Moreover, by reordering if necessary, we assume that $G_j < G_i$ for all $j < i$. By the definition of a linear generator, these assumptions imply that $G_j[t] < G_i$ for all $t \in \bZ$. To simplify notation, let $\Lambda := \Endo[\cT]{\ast}{G}$. Throughout the proof, $k[x]$ and $k[x^{\pm}]$ are graded rings with grading defined by considering $x$ as a homogeneous element of degree $-1$.
	
	We begin by constructing isomorphisms $\Lambda_{i, j} \cong \chi^G_{i, j}$ that repect grading. Observe first that the definition of a linear generator implies that 
	\[ \Lambda_{i, j}^t = \Hom[\cT]{G_j}{G_i[t]} = k \cdot \alpha^{G_j, G_i[t]} \]
	for all $i, j, t \in \bZ$, and where $\alpha^{G_j, G_i[t]}$ might possibly be zero. 

	If $j < i$, then $G_j < G_i[t]$ for all $t \in \bZ$ and so 
$\alpha^{G_j, G_i[t]} = 0$ by the definition of a linear generator. Hence, $\Lambda_{i, j} = 0 = \chi^G_{i,j}$.
	
	Consider next the case $j = i$ and $G_i < G_i[1]$. If $0<t$, then $G_i < G_i[t]$ so that $\Lambda_{i, i}^t = 0$. If, however, $t \leq 0$, then $\Lambda_{i, i}^t \neq 0$. In fact, For $t, s \leq 0$, observe that 
	\[ \alpha^{G_i, G_i[t]} \cdot \alpha^{G_i,G_i[s]} = \alpha^{G_i,G_i[t]}[s] \circ \alpha^{G_i, G_i[s]} = \alpha^{G_i, G_i[t+s]}, \]
	and we may construct a graded algebra isomorphism
	\[ \Lambda_{i,i} \to k[x] \colon \alpha^{G_i, G_i[t]} \mapsto  x^t \]
	for all $t \leq 0$.
	
	On the other hand, if $j = i$ and $G_i \cong G_i[1]$, then $\Lambda_{i, j}^t \neq 0$ for all $t$ and so there is an algebra isomorphism 
	\[ \Lambda_{i,i} \to k[x^\pm] \colon \alpha^{G_i, G_i[t]} \mapsto x^t \]
	for all $t \in \bZ$.
	
	Next, suppose $i < j$, so that $G_i < G_j$ and $G_i[t] < G_j$ for all $t \in \bZ$. Let $s, \ell \in \bZ$. If $s \leq 0$ or $G_i = G_i[1]$, and if $\ell \leq 0$ or $G_j = G_i[1]$, then
	\begin{align*}
		& \alpha^{G_i, G_i[s]} \cdot \alpha^{G_j, G_i[t]} = \alpha^{G_i[t], G_i[s+t]} \circ \alpha^{G_j, G_i[t]} = \alpha^{G_j,G_i[s+t]} \\
		& \alpha^{G_j, G_i[t]} \cdot  \alpha^{G_j, G_j[\ell]}  = \alpha^{G_j[\ell], G_i[\ell+t]} \circ \alpha^{G_j, G_j[\ell]} = \alpha^{G_j,G_i[\ell+t]}.
	\end{align*}
	Hence, under the identification $\Lambda_{i,i} \cong k[x]$ or $k[x^\pm]$ and $\Lambda_{j, j} \cong k[x]$ or $k[x^\pm]$, then there is a graded $\Lambda_{i,i}$-$\Lambda_{j,j}$-bimodule isomorphism 
	\[ \Lambda_{i, j} \to k[x^\pm] \colon \alpha^{G_j, G_i[t]} \mapsto x^t
	\]
	for all $t \in \bZ$.
	
	Finally, we let $\phi \colon \Lambda \to \chi^G$ be the graded ring morphism specified by 
	\[ [\phi([\alpha^{G_i, G_j[t]}]_{i, j})]_{i, j} := \begin{cases} x^t & \text{if $i =j$, $G_i < G_i[1]$ and $t \leq 0$} \\
		0 & \text{if $i =j$, $G_i < G_i[1]$ and $0 < t$} \\ 
		0 & \text{if $j < i$} \\
		x^t & \text{otherwise}.
	\end{cases}  
	\]
	It is straightforward to check that this is indeed a graded ring isomorphism. 
\end{proof}

Let $G \in \cT$ be a linear generator of a triangulated category satisfying \Cref{setup: cluster cat}.
Then we define $\Lambda^G$ to be the differential graded algebra with underlying graded algebra $\chi^G$ and with a trivial differential, $d=0$. Our goal for the remainder of this section is to show that $\Lambda^G$ is a linear generator of the triangulated category $\perf(\Lambda^G)$, so that we may construct an equivalence $\cT \xrightarrow{\sim} \perf(\Lambda^G)$. We first recall a well-known lemma that will be useful to us.

\begin{lemma}[{See e.g.\ \cite[Prop. 2.3]{KrauseKS} or \cite[A.1 (iv)]{Jor}}] \label[lem]{Lemma: projectivisation}
     Let $\cA$ be an additive idempotent complete category and consider objects $X, Y, Z \in \cA$. If $Y \in \add_\cA X$, then functor $\cA{X}{-}$ induces an isomorphism
    \[ \cA({Y},{Z}) \to \Hom[\End{\cA}{X}]{\cA({X},{Y})}{\cA({X},{Z}) }. \]
    which is natural in $Y$ and $Z$. \label{yoneda}
\end{lemma}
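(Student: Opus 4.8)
The plan is to exhibit the natural map explicitly, check it is a morphism of modules, and then reduce everything to the case $Y = X$, where it becomes the standard identification of module maps out of the regular module. Write $E := \End{\cA}{X}$ and observe that for every $W \in \cA$ the group $\cA(X,W)$ is a right $E$-module via precomposition, $g \cdot e := g \circ e$, and that a morphism $f\colon Y \to Z$ induces a right $E$-module homomorphism $\cA(X,f)\colon \cA(X,Y) \to \cA(X,Z)$, $g \mapsto f \circ g$. Thus one defines
\[ \Phi_{Y,Z}\colon \cA(Y,Z) \longrightarrow \Hom[E]{\cA(X,Y)}{\cA(X,Z)}, \qquad f \longmapsto \cA(X,f), \]
which is $k$-linear and, directly from the functoriality of $\cA(X,-)$, natural in both $Y$ and $Z$. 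It remains only to show $\Phi_{Y,Z}$ is bijective when $Y \in \add_\cA X$.

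First I would treat $Y = X$. Evaluation at $\id_X$ gives a map $\Hom[E]{E}{\cA(X,Z)} \to \cA(X,Z)$, $\phi \mapsto \phi(\id_X)$, which is an isomorphism: any right $E$-linear $\phi$ satisfies $\phi(e) = \phi(\id_X \circ e) = \phi(\id_X)\cdot e$, so it is determined by $\phi(\id_X)$, and conversely every element of $\cA(X,Z)$ arises this way. The composite
\[ \cA(X,Z) \xrightarrow{\ \Phi_{X,Z}\ } \Hom[E]{E}{\cA(X,Z)} \longrightarrow \cA(X,Z) \]
sends $f$ to $\cA(X,f)(\id_X) = f \circ \id_X = f$, hence is the identity; since the second arrow is an isomorphism, so is $\Phi_{X,Z}$.

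Next I would bootstrap to $\add_\cA X$. Both $\cA(-,Z)$ and $\Hom[E]{\cA(X,-)}{\cA(X,Z)}$ are additive contravariant functors of the first variable, and $\Phi_{-,Z}$ is a natural transformation between them; therefore $\Phi_{X^{n},Z}$ is a finite direct sum of copies of $\Phi_{X,Z}$ and hence an isomorphism for every $n \geq 0$. Finally, if $Y$ is a direct summand of $X^{n}$ with $\iota\colon Y \to X^{n}$, $\pi\colon X^{n} \to Y$ and $\pi\circ\iota = \id_Y$, then applying naturality of $\Phi$ to $\iota$ and $\pi$ exhibits $\Phi_{Y,Z}$ as a retract of $\Phi_{X^{n},Z}$, and a retract of an isomorphism is an isomorphism. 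Since $Y \in \add_\cA X$ means precisely that $Y$ is such a summand (idempotent completeness of $\cA$ being what makes $\add_\cA X$ the honest category of summands of powers of $X$), this finishes the proof.

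There is no real obstacle here; the only points needing care are the module bookkeeping — that $\Phi_{Y,Z}$ genuinely lands in $E$-linear maps, which forces the particular (pre-composition) action of $E$ — and making the "retract of an isomorphism" step precise, for which it suffices to apply naturality of $\Phi$ to the idempotent $\iota\circ\pi$ on $X^{n}$ and pass to its image.
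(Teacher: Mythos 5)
Your proof is correct. The paper gives no proof of this lemma itself --- it is quoted from the cited references of Krause and J{\o}rgensen --- and your argument (reduce to $Y=X$ via evaluation at the identity, extend by additivity to finite powers $X^{n}$, then pass to direct summands by the retract argument) is precisely the standard proof found in those sources.
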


\begin{proposition} \label[prop]{Prop: Ext is equivalence on lang}
Let $G \in \cT$ be a minimal linear generator of a triangulated category satisfying \Cref{setup: cluster cat}. Then, the functor
\[ \Ext[\cT]{\ast}{G}{-} \colon \lang[1]{G} \to \lang[1]{\Lambda^G} \]
is an equivalence. 
\end{proposition}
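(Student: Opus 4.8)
The plan is to recognise $\Phi := \Ext[\cT]{\ast}{G}{-}$ as the ``restricted Yoneda'' functor from $\lang[1]{G}$ onto finitely generated graded projective $\Lambda^G$-modules, and to prove it is an equivalence by a projectivisation argument together with the idempotent completeness of $\cT$.

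First I would record the formal properties of $\Phi$, viewed as a functor from $\cT$ to the category of differential graded $\Lambda^G$-modules; here one uses crucially that $\Lambda^G$ has zero differential, so that a graded $\Lambda^G$-module is the same thing as a DG module with zero differential, and hence an object of $D(\Lambda^G)$. The functor $\Phi$ is additive; it commutes with the suspension functor up to natural isomorphism, since $\hcT{G}{Y[1][i]} = \hcT{G}{Y[i+1]}$ yields $\Phi(Y[1]) \cong \Phi(Y)[1]$; and $\Phi(G) = \Endo[\cT]{\ast}{G}$ is the rank-one free module, which is isomorphic to $\Lambda^G$ by \Cref{Prop: Endo Alg}. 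It follows that $\Phi$ sends $G[n]$ to $\Lambda^G[n]$, a finite direct sum $\bigoplus_{j} G[n_j]$ to $\bigoplus_j \Lambda^G[n_j]$, and any direct summand of such to a direct summand of such. Since every object of $\lang[1]{G}$ is a direct summand of some $\bigoplus_{j=1}^{r} G[n_j]$, and every object of $\lang[1]{\Lambda^G}$ is a direct summand of some $\bigoplus_{j=1}^{r} \Lambda^G[n_j]$ --- an identification that realises $\lang[1]{\Lambda^G}$ as the category of finitely generated graded projective $\Lambda^G$-modules with their degree-zero homomorphisms, again because $\Lambda^G$ has zero differential --- the functor $\Phi$ restricts to a well-defined functor $\lang[1]{G} \to \lang[1]{\Lambda^G}$; in particular $\Phi(Y)$ really is perfect for $Y \in \lang[1]{G}$, even though $\Phi(Y)$ need not be perfect for general $Y \in \cT$.

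Next I would prove that $\Phi$ is fully faithful on $\lang[1]{G}$, that is, that the natural map $\hcT{Y}{Z} \to \lang[1]{\Lambda^G}(\Phi(Y), \Phi(Z))$ is bijective for all $Y, Z \in \lang[1]{G}$. This is a form of projectivisation (\Cref{Lemma: projectivisation}): after choosing $X = \bigoplus_{j} G[n_j]$ with $Y \in \add_\cT X$, applying \Cref{Lemma: projectivisation} to $X, Y, Z$ reduces the claim to identifying the Hom-spaces over $\End{\cT}{X}$ with $\Lambda^G$-module homomorphisms, which is immediate once $\Phi(G) \cong \Lambda^G$ is used. Concretely, the statement is transparent when $Y = \bigoplus_{j} G[n_j]$ is ``graded free'': a degree-zero graded $\Lambda^G$-module map $\Phi(Y) \to \Phi(Z)$ is determined by the images of the generators of the summands $\Phi(G[n_j])$, and these images are precisely elements of the summands $\hcT{G[n_j]}{Z}$ of $\hcT{Y}{Z}$, so the map is a bijection; the general case then follows by writing an arbitrary $Y \in \lang[1]{G}$ as a retract of a graded free object and using additivity of $\Phi$.

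Finally, for essential surjectivity, let $M \in \lang[1]{\Lambda^G}$. Then $M$ is a direct summand of $\Phi(X)$ for some $X = \bigoplus_{j=1}^{r} G[n_j] \in \lang[1]{G}$, so $M$ is the image of an idempotent $e \in \lang[1]{\Lambda^G}(\Phi(X), \Phi(X))$. By the full faithfulness just established, $e = \Phi(\tilde e)$ for a unique $\tilde e \in \End{\cT}{X}$, which is idempotent because $\Phi$ is faithful; since $\cT$ is Krull--Schmidt, hence idempotent complete, $\tilde e$ splits, giving a decomposition $X \cong Y \oplus Y'$ with $Y \in \lang[1]{G}$ and $\Phi(Y) \cong M$. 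Together with full faithfulness, this proves that $\Phi$ is an equivalence. The projectivisation and idempotent-splitting steps are routine; I expect the only point requiring real care to be the grading bookkeeping --- in particular, pinning down that $\lang[1]{\Lambda^G}$ genuinely is the category of finitely generated graded projective $\Lambda^G$-modules, which relies on the differential of $\Lambda^G$ being zero --- together with the compatibility of $\Phi$ with the suspension functor.
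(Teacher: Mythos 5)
Your proposal is correct and follows essentially the same route as the paper: full faithfulness via the projectivisation lemma (\Cref{Lemma: projectivisation}) applied to $G$ and its summands, and essential surjectivity by lifting idempotents through the fully faithful functor and splitting them using idempotent completeness of the Krull--Schmidt category $\cT$. The only cosmetic difference is in the grading bookkeeping for full faithfulness: you evaluate graded module maps on generators of the graded free modules directly, whereas the paper first shows a graded $\Lambda^G$-module map is determined by its degree-zero component and then applies the ungraded projectivisation lemma over $(\Lambda^G)^0 = \hcT{G}{G}$ --- these amount to the same computation.
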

\begin{proof}
	Let $G = \bigoplus_{i=1}^m G_i$. We begin by showing that the functor $\Ext[\cT]{\ast}{G}{-}$ is fully faithful. This amounts to showing that the assignment
\[ \hcT{G_i}{G_j[t]} \to \Hom[\Lambda^G]{\Ext[\cT]{\ast}{G}{G_i}}{\Ext[\cT]{\ast}{G}{G_j[t]}} \]
sending $f \colon G_i \to G_j[t]$ to $f \circ (-) $ is bijective. This is essentially a consequence \Cref{Lemma: projectivisation}, as we will explain below. 

	Observe first that morphism $f \in \Hom[\Lambda^G]{\Ext[\cT]{\ast}{G}{G_i}}{\Ext[\cT]{\ast}{G}{G_j[t]}}$ is equivalent to the data 
\[ (f^s \colon \hcT{G}{G_i[s]} \to \hcT{G}{G_j[s+t]})_{s \in \bZ}, \]
where for all for all $\alpha \in \hcT{G}{G_i[s]}$ and $\lambda \in (\Lambda^G)^{\ell}$, $f^s(\alpha) \cdot \lambda = f^{s+\ell}(\alpha \cdot \lambda)$. 

	We claim that $f$ is uniquely determined by $f^0$. To see this, note that, under the graded algebra isomorphism of \Cref{Prop: Endo Alg}, then $\alpha \in \hcT{G}{G_i[s]}$ can be written as $\alpha = (a_\ell x^s)_{\ell=1}^m$ with $a_\ell \in k$ for all $1 \leq \ell \leq m$. Let $I_{-s} \in \Lambda^G$ denote the $m \times m$ diagonal matrix with entries $x^{-s}$ along the diagonal. Then, 
\[ f^s(\alpha) \cdot I_{-s} = f^0( (\alpha_\ell x^s)_{\ell=1}^m I_{-s}  ) = f^0( (\alpha_\ell)_{\ell=1}^m ), \]
and so
\[ f^s(\alpha) =  f^0( (\alpha_\ell)_{\ell=1}^m ) I_s. \]
Hence, the morphisms $f^s$ are all uniquely determined by $f^0$. That is, $f$ is determined by its component $f^0$.

	Furthermore, morphisms $f^0$ are precisely the morphisms in $\Hom[(\Lambda^G)^0]{\hcT{G}{G_i}}{\hcT{G}{G_j[t]}}$. By definition, $(\Lambda^G)^0 = \hcT{G}{G}$ and $G_i \in \add_\cT G$, so that by \Cref{Lemma: projectivisation}, there are isomorphisms
\[ \hcT{G_i}{G_j[t]} \xrightarrow{\sim} \Hom[(\Lambda^G)^0]{\hcT{G}{G_i}}{\hcT{G}{G_j[t]}} \]
sending $h \colon G_i \to G_j[t]$ to $h \circ (-)$.

	Since morphisms  $f \in \Hom[\Lambda^G]{\Ext[\cT]{\ast}{G}{G_i}}{\Ext[\cT]{\ast}{G}{G_j[t]}}$ are determined by the component $f^0$, this is enough to conclude that the assignment
\[ \hcT{G_i}{G_j[t]} \to \Hom[\Lambda^G]{\Ext[\cT]{\ast}{G}{G_i}}{\Ext[\cT]{\ast}{G}{G_j[t]}} \]
sending $f \colon G_i \to G_j[t]$ to $f \circ (-) $ is bijective.

	To conclude that $\Ext[\cT]{\ast}{G}{-} \colon \lang[1]{G} \to \lang[1]{\Lambda^G}$ is an equivalence, it remains to show that it is essentially surjective. This is essentially the same arguement as in the proof of \cite[2.3]{KrauseKS}. An object $Y \in \lang[1]{\Lambda^G}$ is a summand of $(\Lambda^G)^s[t]$ for some intergers $s, t$. Hence, there is an idempotent morphism $\bar{e} \colon (\Lambda^G)^s \to (\Lambda^G)^s$and morphisms $\bar{p} \colon G^s \to Y[-t]$, $\bar{i} \colon Y[-t] \to G^s$ such that $\bar{i} \circ \bar{p} = \bar{e}$ and $\bar{p} \circ \bar{i} = \id_{Y[-t]}$. 

	Since the functor $\Ext[\cT]{\ast}{G}{-}$ is fully faithful and additive, there is an idempotent $e \colon G^s \to G^s \in \cT$ such that $\bar{e} = e \circ (-)$. Because $\cT$ is idempotent complete (since it is Krull-Schmidt), $e$ splits so that there are morphisms $p \colon G^s \to X$ and $i \colon X \to G^s$ with $i \circ p = e$ and $p \circ i = \id_X$.   
	
	It is straightforward to check that the morphisms
	\begin{align*}
		& (p \circ (-)) \circ \bar{i} \colon Y[-t] \to \Ext[\cT]{\ast}{G}{X}, \\
		& (i \circ (-)) \circ \bar{p} \colon \Ext[\cT]{\ast}{G}{X} \to Y[-t]
	\end{align*}
	are inverses. Hence, since $\Ext[\cT]{\ast}{G}{-}$ commutes with suspension, $ \Ext[\cT]{\ast}{G}{X[t]} \cong Y$, as required.
\end{proof}

\begin{theorem}\label{Thm: T is equivalent to perf}
	Let $G \in \cT$ be a minimal linear generator of a triangulated category satisfying \Cref{setup: cluster cat}. Then, there is an additive equivalence $\mathscr{F} \colon \cT \to \perf(\Lambda^G)$ which commutes with the respective suspension functors and preserves triangles with at least two indecomposable terms.
\end{theorem}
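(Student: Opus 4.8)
The plan is to realise the equivalence as an instance of \Cref{Thm1: F preserves 2 inds} together with \Cref{Cor: equivalence from one linear gen}, taking $\cS = \perf(\Lambda^G)$ and $G' = \Lambda^G$ (the free rank-one DG module over $\Lambda^G$), with the required functor on $\lang[1]{G}$ supplied by \Cref{Prop: Ext is equivalence on lang}. Before invoking the machinery of \Cref{Sec: Linear Gens}, I would first check two things: that $\perf(\Lambda^G)$ satisfies \Cref{setup: cluster cat} and that $\Lambda^G$ is a classical generator of it, and that $F := \Ext[\cT]{\ast}{G}{-}$ carries $G$ to $\Lambda^G$ and commutes with suspension.

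For the first point, $\Lambda^G$ is a classical generator of $\perf(\Lambda^G)$ essentially by definition, since $\perf(\Lambda^G) = \lang{\Lambda^G}$; in particular $\perf(\Lambda^G)$ is $k$-linear, triangulated, and idempotent complete. To see it is Hom-finite, I would use that the differential of $\Lambda^G$ is zero, so that
\[ \Hom[\perf(\Lambda^G)]{\Lambda^G}{\Lambda^G[n]} \cong H^n(\Lambda^G) = (\Lambda^G)^n \]
is the degree-$n$ component of $\chi^G$, which is spanned by the finitely many monomials $x^{-n}$ appearing in its nonzero matrix entries and hence is finite dimensional; a standard thick-subcategory d\'evissage over triangles and summands then shows every Hom-space in $\perf(\Lambda^G) = \lang{\Lambda^G}$ is finite dimensional. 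Since a $k$-linear, Hom-finite, idempotent complete additive category has finite-dimensional (hence semiperfect) endomorphism rings, it is automatically Krull--Schmidt, so \Cref{setup: cluster cat} holds for $\perf(\Lambda^G)$.

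For the second point, \Cref{Prop: Ext is equivalence on lang} already gives that $F = \Ext[\cT]{\ast}{G}{-}$ is an additive equivalence $\lang[1]{G} \xrightarrow{\sim} \lang[1]{\Lambda^G}$, and it commutes with suspension since $\Ext[\cT]{\ast}{G}{Y[1]} \cong (\Ext[\cT]{\ast}{G}{Y})[1]$ as graded $\Lambda^G$-modules; moreover $F(G) = \Endo[\cT]{\ast}{G} \cong \Lambda^G$ by \Cref{Prop: Endo Alg}, with this isomorphism identifying $\Ext[\cT]{\ast}{G}{G}$ with the free rank-one module. I would then apply \Cref{Cor: equivalence from one linear gen} with $\cS = \perf(\Lambda^G)$ and $G' = \Lambda^G$; it yields at once that $\Lambda^G$ is a linear generator of $\perf(\Lambda^G)$ and that the functor $\mathscr{F} \colon \cT \to \perf(\Lambda^G)$ produced by \Cref{Thm1: F preserves 2 inds} is an additive equivalence commuting with the respective suspension functors and preserving triangles with at least two indecomposable terms, which is exactly the assertion.

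The step I expect to require genuine care, rather than a pointer to an earlier result, is confirming that $\perf(\Lambda^G)$ fits \Cref{setup: cluster cat}: the algebra $\Lambda^G$ has infinite-dimensional entries $k[x]$ and $k[x^{\pm}]$, so Hom-finiteness is not obvious a priori, and the point is that only the individual graded pieces $(\Lambda^G)^n$ govern the Hom-spaces in $\perf(\Lambda^G)$, and these are finite dimensional. Once that is in place, everything else is a direct application of \Cref{Prop: Ext is equivalence on lang} and the functor-construction results of \Cref{Sec: Linear Gens}.
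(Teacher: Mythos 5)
Your proposal is correct and follows the same route as the paper, which proves this theorem as a direct consequence of \Cref{Cor: equivalence from one linear gen} and \Cref{Prop: Ext is equivalence on lang}. The only difference is that you explicitly verify that $\perf(\Lambda^G)$ satisfies \Cref{setup: cluster cat} (Hom-finiteness via the finite-dimensional graded pieces of $\chi^G$, and Krull--Schmidtness) and that $F$ sends $G$ to $\Lambda^G$ and commutes with suspension --- checks the paper leaves implicit but which are indeed needed to invoke those results.
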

\begin{proof}
	This is a direct consequence of \Cref{Cor: equivalence from one linear gen} and \Cref{Prop: Ext is equivalence on lang}.
\end{proof}

Recall that an \textit{algebraic triangulated category} is a triangulated category $\cT$ such that there exists a Frobenius exact category $\cE$ and a triangulated equivalence $\underline{\cE} \cong \cT$. Here, $\underline{\cE}$ denotes the stable category of $\cE$. 

\begin{corollary}
	Let $G \in \cT$ be a linear generator of a triangulated category satisfying \Cref{setup: cluster cat}. Suppose that $\cT$ is algebraic. If $G$ is indecomposable, then there is a triangulated equivalence $\cT \xrightarrow{\sim} \perf(\Lambda^G)$.
\end{corollary}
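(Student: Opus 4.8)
The plan is to combine the computation of the graded endomorphism algebra in \Cref{Prop: Endo Alg} with Keller's DG-model theorem and a formality argument. First I would record that an indecomposable object has no proper non-zero direct summands, so $G$ is automatically a minimal linear generator; hence \Cref{Prop: Endo Alg} applies with $m=1$ and identifies $\Endo[\cT]{\ast}{G}$ with the $1\times 1$ matrix algebra $\chi^G$, namely $k[x]$ when $G\not\cong G[1]$ and $k[x^{\pm}]$ when $G\cong G[1]$, with $x$ in degree $-1$. In particular $\Endo[\cT]{\ast}{G}\cong\Lambda^G$ as graded algebras.

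Next, since $\cT$ is algebraic, Krull--Schmidt (hence idempotent complete), and $G$ is a classical generator with $\lang{G}=\cT$, I would invoke Keller's theorem \cite{KellerDeriving} (as recalled in the introduction) to obtain a DG algebra $\mathcal{A}=\mathrm{End}^{\mathrm{DG}}(G)$ modelling the derived endomorphisms of $G$ in a DG enhancement of $\cT$, together with a triangle equivalence $\cT\xrightarrow{\sim}\perf(\mathcal{A})$ sending $G$ to $\mathcal{A}$; moreover $H^{\ast}(\mathcal{A})\cong\Endo[\cT]{\ast}{G}\cong\Lambda^G$. Since the perfect derived category depends only on the quasi-isomorphism type of a DG algebra, it then suffices to prove that $\mathcal{A}$ is \emph{formal}, i.e.\ linked to its homology $\Lambda^G$ by a chain of quasi-isomorphisms of DG algebras (equivalently, $A_{\infty}$-quasi-isomorphic to it); granting this, $\perf(\mathcal{A})\simeq\perf(\Lambda^G)$ and composition yields the desired triangulated equivalence $\cT\xrightarrow{\sim}\perf(\Lambda^G)$.

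For the formality step I would use Kadeishvili's transfer theorem together with the standard obstruction calculus: choose a minimal $A_{\infty}$-structure $\{m_n\}_{n\geq 2}$ on $\Lambda^G$ modelling $\mathcal{A}$, with $m_2$ the given product, and note that the successive obstructions to gauging away $m_3,m_4,\dots$ lie in the Hochschild cohomology groups $HH^{n}(\Lambda^G,\Lambda^G)$ of internal degree $2-n$, $n\geq 3$. Both $k[x]$ and $k[x^{\pm}]$ admit a length-one projective resolution over their enveloping algebra $(\Lambda^G)^{\mathrm{e}}$ — explicitly $0\to(\Lambda^G)^{\mathrm{e}}\xrightarrow{\,\cdot(x\otimes 1-1\otimes x)\,}(\Lambda^G)^{\mathrm{e}}\to\Lambda^G\to 0$, a map of graded bimodules of internal degree $-1$ — so $HH^{n}(\Lambda^G,\Lambda^G)=0$ for all $n\geq 2$ in every internal degree. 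Hence every obstruction class vanishes, $\Lambda^G$ is intrinsically formal, and therefore so is $\mathcal{A}$, which closes the argument.

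The main obstacle I anticipate is bookkeeping rather than conceptual: making the grading conventions line up, so that in the DG/$A_{\infty}$ setting ``$x$ in degree $-1$'' genuinely gives $\Lambda^G$ the stated two-term bimodule resolution and the obstruction classes really sit in the groups shown to vanish; one must also check that Keller's theorem applies in the Hom-finite, Krull--Schmidt generality at hand, with the classical generator $G$ playing the role of the compact generator — but this is exactly the situation already used for $\ocC$ in the introduction.
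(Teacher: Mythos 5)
Your argument is correct, and for the case $G\not\cong G[1]$ it is essentially the paper's: there $\Lambda^G=k[x]$ with $x$ in degree $-1$, and the paper likewise combines Keller's theorem with intrinsic formality of $k[x]$ — except that it simply cites \cite[Lemma 3.1]{ACFGS} for the formality, whereas you rederive it from the two-term bimodule resolution and the vanishing of the obstruction groups $HH^{n,2-n}$ for $n\geq 3$. Where you genuinely diverge is the periodic case $G\cong G[1]$, i.e.\ $\Lambda^G=k[x^{\pm}]$. The paper does not invoke formality there at all: it observes that every nonzero morphism between indecomposables of $\cT$ is then an isomorphism, so every triangle is a direct sum of shifts of $0\to G\xrightarrow{\mathrm{id}}G\to 0$, and hence the additive equivalence of \Cref{Thm: T is equivalent to perf} is automatically exact (this case does not even use that $\cT$ is algebraic). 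You instead run the same Keller-plus-formality argument for $k[x^{\pm}]$. That does work, but note the one point of bookkeeping you flagged is real: since $x$ sits in odd degree, the Koszul-signed enveloping algebra $(\Lambda^G)^{\mathrm e}$ is a $(-1)$-quantum plane (resp.\ torus) $k\langle u^{\pm},v^{\pm}\rangle/(uv+vu)$ rather than an honest (Laurent) polynomial ring; one checks that $u-v$ is still a non-zero-divisor there and that the cokernel of right multiplication by $u-v$ is $\Lambda^G$ as a graded bimodule, after which $HH^{n}(\Lambda^G,\Lambda^G)=0$ for all $n\geq 2$ in every internal degree and intrinsic formality follows for both $k[x]$ and $k[x^{\pm}]$. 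Your route buys a uniform treatment of both cases; the paper's treatment of the periodic case is more elementary and avoids the sign-sensitive Hochschild computation.
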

\begin{proof}
	If $G$ is indecomposable, then $\Lambda^G= k[x^\pm]$ or $\Lambda^G = k[x]$, depending on whether or not $G = G[1]$. If $G = G[1]$, then $\Lambda^G = k[x^\pm]$ and the morphisms in $\cT$ are isomorphisms. Hence, the only triangles in $\cT$ are triangles isomorphic to direct sums of shifts of the triangle $0 \to G \xrightarrow{\id_G} G \to 0$. Thus, it is straightforward to see that the equivalence $\mathscr{F}$ of \Cref{Thm: T is equivalent to perf} preserves all triangles. 
	
	On the other hand, if $G < G[1]$, then $\Lambda^G = k[x]$, with $x$ in degree $-1$. It follows from \cite[Lemma 3.1]{ACFGS} that $\Lambda^G$ is intrinsically formal, which, by definition, implies that $\Lambda^G$ is quasi-isomorphic to $\Endo[\cT]{\mathrm{DG}}{G}$. Hence, since $\cT$ is algebraic, \cite[\S 4.3]{KellerDeriving} implies that there is a triangulated equivalence $\cT \xrightarrow{\sim} \perf(\Lambda^G)$. 
\end{proof}

\subsection{The Algebra \texorpdfstring{$\Lambda_n$}{Ln}}\label{Subsec: Lambda}

With applications to Discrete Cluster Categories of type $A_\infty$ in mind, we consider a triangulated category $\cT$ satisfying \Cref{setup: cluster cat} and equipped with a minimal linear generator $G \cong \bigoplus_{i=1}^{2n-1} P_i$ such that $P_i \cong P_i[1]$ if and only if $i \equiv 0 \, \mathrm{mod}\, 2$. For such a generator $G$, we know by \Cref{Prop: Endo Alg}, that $\chi^G$ is the upper triangular $(2n-1) \times (2n-1)$-matrix algebra
\[
\Ext[\cT]{\ast}{P_i}{P_j} = \begin{cases} k[x] &  \text{if $i = j$ and $i$ odd,} \\ 0 & \text{if $j < i$,}  \\  k[x^\pm]  & \text{otherwise.} \end{cases} 
\]
From now on, we shall denote $\chi^G$ by $\Lambda_n$. Let $e_i \in \Lambda_n$ correspond to the idempotent matrix whose entries are zero everywhere except at index $(i, i)$, in which case the entry is $1$.

The algebra $\Lambda_n$ admits a particularly nice presentation as the
path algebra of a graded quiver. More precisely, let $Q$ be the quiver
\[
\begin{tikzcd}
	Q : & 1 \arrow[r,"\delta_{12}"] \arrow[loop right, in=290,out=250,looseness=15,"\alpha_1",swap] & 2 \arrow[r,"\delta_{23}"] \arrow[loop right, in=290,out=250,looseness=15,"\alpha_2",swap] \arrow[loop,in=70,out=110,looseness=12,"\beta_2"] & 3 \arrow[r,"\delta_{34}"] \arrow[loop right, in=290,out=250,looseness=15,"\alpha_3",swap] & 4 \arrow[loop right, in=290,out=250,looseness=15,"\alpha_4",swap] \arrow[loop,in=70,out=110,looseness=12,"\beta_4"] \arrow[r] & \cdots \arrow[r] & 2n-2 \arrow[loop right, in=290,out=250,looseness=15,"\alpha_{2n-2}",swap] \arrow[loop,in=70,out=110,looseness=12,"\beta_{2n-2}"] \arrow[rr,"\delta_{2n-2,2n-1}"] && 2n-1 \arrow[loop right, in=290,out=250,looseness=15,"\alpha_{2n-1}",swap].
\end{tikzcd}
\]
Let $Q_0$ denote the set of vertices of $Q$ and for each $a \in Q_0$ write $\iota_a$ for the trivial idempotent path in $Q$ at vertex $a$. Moreover, let $B \subset Q_0$ be the set of vertices such that there is no loop $\beta_b$ for $b \in B$. 
Consider the ideal $I$ is given by 
\[
I := \langle \alpha_a \beta_a  - \iota_a,\, \beta_a \alpha_a  - \iota_a,\, \alpha_{b} \delta_{b,b+1} - \delta_{b,b+1} \alpha_{b+1},\, \beta_a  \delta_{a,a+1} \delta_{a+1,a+2} - \delta_{a,a+1} \delta_{a+1,a+2} \beta_{a+2}\rangle,
\]
for $1 \leq a,b \leq 2n-1$, and $a \notin B$. The goal of this section is to prove the following result.
\begin{theorem}\label{Thm:path algebras}
	Let $G$ be a minimal linear generator of $\cT$, with $G \cong \bigoplus_{i=1}^{2n-1} P_i$, such that $P_i \cong P_i[1]$ if and only if $i \equiv 0 \, \mathrm{mod}\, 2$.
	Then there is an isomorphism of graded $k$-algebras,
	\[
	\Lambda_n \xrightarrow{\sim} kQ/I.
	\]
\end{theorem}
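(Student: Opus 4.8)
The plan is to build an explicit graded $k$-algebra homomorphism $\Psi \colon kQ \to \Lambda_n$, to check that it kills the ideal $I$, and then to show that the induced map $\bar\Psi \colon kQ/I \to \Lambda_n$ is bijective; the isomorphism $\Lambda_n \xrightarrow{\sim} kQ/I$ of the statement is then $\bar\Psi^{-1}$. Grade $kQ$ by setting $\deg \iota_a = \deg \delta_{a,a+1} = 0$, $\deg \alpha_a = -1$ and $\deg \beta_a = 1$; then every generator of $I$ is homogeneous, so $kQ/I$ inherits a grading. Write $1_{a,b}$ for the element of $\Lambda_n$ whose $(a,b)$-entry is $x^{0}$ and whose other entries vanish, so that $1_{a,a} = e_a$. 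Define $\Psi$ on generators by $\iota_a \mapsto e_a$, $\delta_{a,a+1} \mapsto 1_{a,a+1}$, $\alpha_a \mapsto x\,e_a$ and, for $a \notin B$ (equivalently $a$ even), $\beta_a \mapsto x^{-1} e_a$; the last assignment is legitimate precisely because $(\Lambda_n)_{a,a} = k[x^{\pm}]$ for $a$ even. Since $x$ is concentrated in degree $-1$ in $\Lambda_n$, the map $\Psi$ is graded of degree $0$. That $\Psi(I) = 0$ is a short matrix computation using the multiplication rule of $\Lambda_n$ recorded in the proof of \Cref{Prop: Endo Alg}: $\Psi(\alpha_a\beta_a - \iota_a) = x\,x^{-1} e_a - e_a = 0$ and likewise $\Psi(\beta_a\alpha_a - \iota_a) = 0$ for $a$ even; $\Psi(\alpha_b\delta_{b,b+1}) = x\,1_{b,b+1} = \Psi(\delta_{b,b+1}\alpha_{b+1})$; and $\Psi(\beta_a\delta_{a,a+1}\delta_{a+1,a+2}) = x^{-1}\,1_{a,a+2} = \Psi(\delta_{a,a+1}\delta_{a+1,a+2}\beta_{a+2})$.

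To see that $\bar\Psi$ is surjective it is enough to hit a $k$-basis of each component $(\Lambda_n)_{i,j}$. On the diagonal, $\Psi(\alpha_i^{\,t}) = x^{t} e_i$ with $t \geq 0$ spans $(\Lambda_n)_{i,i} = k[x]e_i$ when $i$ is odd, and for $i$ even the images of $\alpha_i^{\,t}$ and $\beta_i^{\,t}$ ($t \geq 0$) together span $k[x^{\pm}]e_i$. For $i < j$ the path $\delta_{i,i+1}\delta_{i+1,i+2}\cdots\delta_{j-1,j}$ maps to $1_{i,j}$, and multiplying it by powers of $\alpha_i$ on the left or $\alpha_j$ on the right produces every nonnegative power of $x$ in the slot $(i,j)$; the negative powers are obtained by inserting a power of $\beta_a$ at an even vertex $a$ with $i \leq a \leq j$, which always exists since any two vertices $i < j$ of $Q$ bracket an even vertex. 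Hence $\bar\Psi$ is surjective.

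Injectivity is the substantive point, and it is where I expect the main obstacle to lie. The key observation is that each homogeneous piece of $\Lambda_n$ cut out by a pair of vertices $(i,j)$ together with an internal degree is at most one-dimensional, so it suffices to prove the following normal-form statement: \emph{modulo $I$, for each pair $i \leq j$ and each internal degree $d$ with $(\Lambda_n)_{i,j}$ nonzero in degree $d$, every path $i \to j$ of degree $d$ is, up to a scalar, equal to one fixed path $\pi_{i,j,d}$, and $\Psi(\pi_{i,j,d})$ is the corresponding basis vector of $(\Lambda_n)_{i,j}$.} I would prove this by induction on path length, stripping off the last arrow of a path: the relations $\alpha_b\delta_{b,b+1} = \delta_{b,b+1}\alpha_{b+1}$ and $\beta_a\delta_{a,a+1}\delta_{a+1,a+2} = \delta_{a,a+1}\delta_{a+1,a+2}\beta_{a+2}$ transport loops along the $A$-type backbone of $Q$, and the relations $\alpha_a\beta_a = \beta_a\alpha_a = \iota_a$ at even vertices cancel or create loop pairs, so every path can be pushed to the form (a string of $\delta$'s)$\cdot$(a loop word at the target $j$), the loop word then reducing to a single power of $\alpha_j$ or $\beta_j$; when the target is an odd vertex but a negative internal degree is required, the $\beta$ must instead be parked at the nearest even vertex. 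The delicate part is that a $\beta$-loop at an even vertex can only be commuted past \emph{pairs} of $\delta$-arrows, so one has to verify carefully — keeping track of endpoint parity and internal degree — that this constraint, together with the invertibility of $\alpha$ at even vertices, never strands a loop and never forces a relation outside $I$. Granting the normal form, $\Psi$ carries the spanning set $\{\pi_{i,j,d}\}$ of $kQ/I$ bijectively onto the explicit $k$-basis of $\Lambda_n$ produced in \Cref{Prop: Endo Alg}, so $\bar\Psi$ is a graded algebra isomorphism and the theorem follows by inverting it.
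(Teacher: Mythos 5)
Your proposal is correct and follows essentially the same route as the paper: the same assignment on generators ($\delta$-strings to the degree-zero basis elements, $\alpha_a \mapsto x e_a$, $\beta_a \mapsto x^{-1}e_a$), and the same key normal-form lemma for paths modulo $I$ (this is precisely \Cref{Lem: equiv classes of paths}, including your observation that the $\beta$-loops must sometimes be parked one $\delta$ short of an odd target). The only difference is organizational: the paper assembles the map from local isomorphisms on the pieces $\iota_a(kQ/I)\iota_b$ (\Cref{Lem: loop algebras,Lem: path modules}) and checks multiplicativity last, whereas you define a homomorphism on generators and push all the work into the injectivity/dimension count.
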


To prove this theorem, we begin with some lemmas concerning submodules of $kQ/I$.

\begin{lemma}\label[lem]{Lem: equiv classes of paths}
	Let $a,b$ be different vertices in $Q_0$, and let $m \in \mathbb{Z}$.
	Then $(\iota_a(kQ/I)\iota_b)^m \cong (\iota_a(kQ/I)\iota_b)^0$ as $k$-vector spaces.
	Moreover, the dimension of $(\iota_a(kQ/I)\iota_b)^m$ as a $k$-vector space is at most 1. In fact, if there is a nonzero path $\sigma \in (\iota_a(kQ/I)\iota_b)^m$, then it is equivalent to  
	\[
	\sigma \equiv \begin{cases*}
		\delta \cdots \delta \delta \alpha^{-m}  & if $m \leq 0$,\\
		\delta \cdots \delta \delta \beta^m  & if $m \geq 0$ and $b \not\in B$,\\
		\delta \cdots \delta \beta^m \delta & if $m \geq 0$ and $b \in B$.
	\end{cases*}
	\]
\end{lemma}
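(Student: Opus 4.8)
The plan is to first dispose of the trivial case and then build a normal form for paths. Every arrow of $Q$ either fixes a vertex (the loops $\alpha_a,\beta_a$) or increases it by one (the arrows $\delta_{a,a+1}$), so there is no nonzero path from $a$ to $b$ unless $a<b$; in that case $(\iota_a(kQ/I)\iota_b)^m=0$ for every $m$ and the statement is vacuous. So I would assume $a<b$ and write $\delta_{a,b}:=\delta_{a,a+1}\delta_{a+1,a+2}\cdots\delta_{b-1,b}$ for the unique $\delta$-path from $a$ to $b$. The goal then is to show that every path from $a$ to $b$ reduces, modulo $I$, to a scalar multiple of one of the three displayed words, and that each of those is nonzero.

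The second step is to extract from the defining relations the moves needed to straighten a path. From $\alpha_c\delta_{c,c+1}-\delta_{c,c+1}\alpha_{c+1}\in I$ one gets, by an immediate induction, that powers of $\alpha$ slide freely along $\delta$-chains: $\alpha_c^{\,j}\delta_{c,d}=\delta_{c,d}\alpha_d^{\,j}$ for all $j\ge 0$ and $c<d$. From $\alpha_c\beta_c-\iota_c$ and $\beta_c\alpha_c-\iota_c$ in $I$ (for even $c$) one gets that $\alpha_c,\beta_c$ are mutually inverse, and in particular the ``cancellation across a $\delta$'' identity $\beta_c\delta_{c,c+1}\alpha_{c+1}=\beta_c\alpha_c\delta_{c,c+1}=\delta_{c,c+1}$. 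Finally, iterating $\beta_c\delta_{c,c+1}\delta_{c+1,c+2}-\delta_{c,c+1}\delta_{c+1,c+2}\beta_{c+2}\in I$ (for even $c$) shows that powers of $\beta$ slide along \emph{pairs} of $\delta$'s, i.e.\ $\beta_c^{\,j}\delta_{c,d}=\delta_{c,d}\beta_d^{\,j}$ whenever $c<d$ are both even.

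The heart of the argument is a sweeping normal-form computation. A path from $a$ to $b$ visits $a,a+1,\dots,b$ in order, so after collapsing the loops at each vertex via $\alpha_c\beta_c=\iota_c$ (at even vertices) it is a scalar multiple of $\alpha_a^{\,e_a}\delta_{a,a+1}\alpha_{a+1}^{\,e_{a+1}}\delta_{a+1,a+2}\cdots\delta_{b-1,b}\alpha_b^{\,e_b}$, with $e_c\ge 0$ for odd $c$ and $e_c\in\mathbb Z$ for even $c$ (negative exponent meaning a power of $\beta$). Now sweep from $a$ to $b$: a nonnegative loop-power at $c$ is pushed onto $c{+}1$ and added to $e_{c+1}$ by the $\alpha$-slide; a negative loop-power $\beta_c^{\,r}$ at even $c$ is pushed across $\delta_{c,c+1}$ using $\beta\delta\alpha=\delta$ to cancel against $\alpha_{c+1}^{\,e_{c+1}}$, and whatever surplus $\beta_c^{\,r-e_{c+1}}$ remains (when $e_{c+1}<r$) is slid across $\delta_{c,c+1}\delta_{c+1,c+2}$ onto the even vertex $c{+}2$ by the $\beta$-slide. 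Inductively the loops all accumulate at $b$, giving $\delta_{a,b}\alpha_b^{\,E}$ with $E\in\mathbb Z$ if $b$ is even and $E\ge 0$ if $b$ is odd, except in the single case where $b$ is odd and a $\beta$-surplus cannot cross the last, lone, $\delta_{b-1,b}$, in which case the reduced word is $\delta_{a,b-1}\beta_{b-1}^{\,r}\delta_{b-1,b}$ with $r\ge 1$. Reading off degrees ($\alpha$ in degree $-1$, $\beta$ in degree $+1$, $\delta$ in degree $0$) turns these into exactly the three forms in the statement, and shows each $(\iota_a(kQ/I)\iota_b)^m$ is spanned by a single word, hence has dimension at most $1$.

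It remains to see that all these spanning words are nonzero, which gives the isomorphism $(\iota_a(kQ/I)\iota_b)^m\cong(\iota_a(kQ/I)\iota_b)^0$. One checks directly, using the relations of the second step, that right multiplication by $\alpha_b$ carries the degree-$m$ normal form to the degree-$(m{-}1)$ one, so this map is surjective between the corresponding graded pieces and in particular $\dim(\iota_a(kQ/I)\iota_b)^m\ge\dim(\iota_a(kQ/I)\iota_b)^{m-1}$; but this alone does not pin the dimensions, since when $a$ and $b$ are both odd there is no degree-raising multiplication at the endpoints. To finish I would produce a nonzero element in each degree via the graded $k$-algebra map $\psi\colon kQ\to\Lambda_n$ sending $\iota_c\mapsto e_c$, $\alpha_c\mapsto e_cxe_c$, $\beta_c\mapsto e_cx^{-1}e_c$ (for even $c$), and $\delta_{c,c+1}$ to the degree-zero generator of the $(c,c{+}1)$-entry: the generators of $I$ are visibly killed, so $\psi$ descends to $\bar\psi\colon kQ/I\to\Lambda_n$, and $\bar\psi$ sends each normal form to a nonzero power of $x$ inside a copy of $k[x^{\pm}]$, so the normal form is nonzero. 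Hence $(\iota_a(kQ/I)\iota_b)^m$ is one-dimensional for all $m$ when $a<b$, and the isomorphism with the degree-$0$ part follows. I expect the main obstacle to be the bookkeeping of the sweep in the third step — in particular the case analysis governing when a surplus $\beta$-power gets stuck just before an odd vertex, which is precisely what produces the third normal form; the reliance on $\bar\psi$ for non-vanishing (the same map the main theorem later upgrades to an isomorphism) is a secondary point to state carefully.
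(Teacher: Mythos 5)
Your proposal is correct and follows essentially the same route as the paper: both reduce an arbitrary path to one of the three normal forms by sliding $\alpha$-powers along $\delta$'s, cancelling $\alpha\beta$-pairs at even vertices, and sliding $\beta$-powers along pairs of $\delta$'s, the only organisational difference being that you run a left-to-right sweep while the paper first commutes all $\alpha$'s to the right and then deals with the $\beta$'s. Your non-vanishing step via the graded map $\bar\psi\colon kQ/I\to\Lambda_n$ is in fact more careful than the paper's proof, which only asserts that the spanning words survive in the quotient by appealing to $Q$ being a tree; and since $\bar\psi$ only needs to kill the generators of $I$ rather than be injective, there is no circularity with \Cref{Thm:path algebras}.
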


\begin{proof}
	In order to avoid cluttered notation, given arrows $\delta_{a, b}$, $\alpha_a$ and $\beta_a$, we may supress the indices $a$ and $b$ when there is no ambiguity. 
	
	Suppose $\sigma = \rho \delta \rho \delta \cdots \rho \delta \rho$ is a path of degree $m \in \mathbb{Z}$, starting at $a$ and ending at $b$ where each $\rho$ is some path consisting combinations of cycles $\alpha_c$, $\beta_c$, and $\iota_c$ for a $c \in Q_0$.

	By the relations in $I$, we know that $\alpha_a \delta_{ab} = \delta_{ab} \alpha_b$ and $\alpha_a \beta_a = \beta_a \alpha_a$, and so we may "commute" $\alpha$ with $\beta$ and $\delta$. Hence
	\[
	\sigma \equiv \sigma' = \beta^{i_1} \delta \beta^{i_2} \delta \cdots \beta^{i_{p-1}} \delta \beta^{i_p} \alpha^i ,
	\]
	where $m = \sum_{l=1}^p (i_l) - i$.
	
	Similarly, according to the relations in $I$, for a vertex $c \notin B$, $\beta_c \delta_{c, c+1} \delta_{c+1, c+2} = \delta_{c, c+1} \delta_{c+1, c+2} \beta_{c+2}$, and so we may "commute" $\beta$ with paths starting and ending at vertices not in $B$. Hence, we have either
	\[
	\sigma \equiv \sigma_1'' = \delta \cdots \delta\delta \alpha^i \beta^j ,
	\]
	or
	\[
	\sigma \equiv \sigma_2'' = \delta \cdots \delta \beta^j \delta \alpha^i,
	\]
	where, in both $\sigma_1''$ and  $\sigma_2''$, we must have $m=j-i$, as $\sigma$ is in degree $m$ and all paths $\delta$ are in degree $0$.

	Thus, one of the following is true;
	\[
	\sigma \equiv \begin{cases*}
		\delta \cdots \delta \delta \alpha^{-m}  & if $m \leq 0$,\\
		\delta \cdots \delta \delta \beta^m  & if $m \geq 0$ and $b \not\in B$,\\
		\delta \cdots \delta \beta^m \delta & if $m \geq 0$ and $b \in B$.
	\end{cases*}
	\]
	That is, any non-zero path from $a$ to $b$ of degree $m$, is equivalent to one of the above paths in $kQ/I$, determined by $m$ and $b$.
	Indeed, such a path exists if and only if a path of degree $0$ between $a$ and $b$ exists, as any path starting and ending at different vertices must pass through a vertex not in $B$.
	Hence there is a $k$-vector space isomorphism $(\iota_a(kQ/I)\iota_b)^m \cong (\iota_a(kQ/I)\iota_b)^0$, and it follows that the dimension as a $k$-vector space is at most $1$ as $Q$ is a tree.
\end{proof}

\begin{lemma}\label[lem]{Lem: loop algebras}
	There is an isomorphism of graded $k$-algebras,
	\[
	\iota_a(kQ/I)\iota_a \cong e_a \Lambda_n e_a \cong \Endo[\cT]{\ast}{P_a},
	\]
	for any $a \in Q_0$.
\end{lemma}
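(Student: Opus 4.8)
The plan is to compute the corner algebra $\iota_a(kQ/I)\iota_a$ directly from the shape of $Q$ and then match it with $e_a\Lambda_n e_a$, which is $\Endo[\cT]{\ast}{P_a}$ by \Cref{Prop: Endo Alg} (the idempotent $e_a$ projects onto the summand $P_a$, so $e_a\Lambda_n e_a = \chi^G_{a,a} = \Ext[\cT]{\ast}{P_a}{P_a}$). The key structural remark is that no arrow of $Q$ decreases the vertex index: every arrow is a loop $\alpha_c$ or $\beta_c$, or a forward arrow $\delta_{c,c+1}$. Hence any path in $Q$ from $a$ to $a$ is a word in the loops at $a$ alone, so that $\iota_a(kQ)\iota_a = k[\alpha_a]$ when $a \in B$ (the only loop at $a$ is $\alpha_a$) and $\iota_a(kQ)\iota_a = k\langle \alpha_a,\beta_a\rangle$ when $a \notin B$, graded with $\alpha_a$ in degree $-1$ and $\beta_a$ in degree $+1$.

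The second step is to identify $\iota_a I \iota_a$, i.e. to determine which relation generators of $I$ survive in the $a$-to-$a$ corner. Writing a general element of $I$ as a sum of terms $p\,r\,q$ with $r$ a relation generator and $p,q$ paths, such a term survives after multiplying by $\iota_a$ on both sides only when $p$ starts at $a$ and ends where $r$ starts, and $q$ ends at $a$ and starts where $r$ ends. The relations $\alpha_b\delta_{b,b+1} - \delta_{b,b+1}\alpha_{b+1}$ and $\beta_c\delta_{c,c+1}\delta_{c+1,c+2} - \delta_{c,c+1}\delta_{c+1,c+2}\beta_{c+2}$ start and end at distinct vertices, and since $Q$ has a path $a \to c$ only if $a \le c$, a contributing term would force both $a\le b$ and $b+1\le a$ (resp. $a\le c$ and $c+2\le a$), which is impossible; and $\alpha_c\beta_c - \iota_c$, $\beta_c\alpha_c - \iota_c$ contribute only when $c = a$. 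Thus $\iota_a I \iota_a = 0$ when $a \in B$, while when $a \notin B$ it is exactly the two-sided ideal of $k\langle\alpha_a,\beta_a\rangle$ generated by $\alpha_a\beta_a - \iota_a$ and $\beta_a\alpha_a - \iota_a$. (Alternatively, one could extract this from the normal-form computation of \Cref{Lem: equiv classes of paths}.)

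Putting the two steps together, $\iota_a(kQ/I)\iota_a \cong k[\alpha_a]$ if $a \in B$, and $\iota_a(kQ/I)\iota_a \cong k\langle\alpha_a,\beta_a\rangle/(\alpha_a\beta_a-\iota_a,\ \beta_a\alpha_a-\iota_a) \cong k[\alpha_a^{\pm}]$ if $a\notin B$, the latter isomorphism because $\alpha_a$ becomes a two-sided unit with inverse $\beta_a$. Keeping track of degrees, these are respectively $k[x]$ and $k[x^{\pm}]$ with $x$ in degree $-1$, which is precisely $\chi^G_{a,a} = e_a\Lambda_n e_a$: recall $\chi^G_{a,a} = k[x]$ exactly when $G_a < G_a[1]$, i.e. $P_a \not\cong P_a[1]$, i.e. $a \in B$. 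Combined with the identification $e_a\Lambda_n e_a \cong \Endo[\cT]{\ast}{P_a}$ coming from \Cref{Prop: Endo Alg}, this yields the asserted chain of graded algebra isomorphisms, with $\alpha_a \mapsto x$ and, when $a \notin B$, $\beta_a \mapsto x^{-1}$.

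The only step that requires genuine care is the second one: verifying that none of the ``long'' relations of $I$ — those mediated by the $\delta$ arrows — leaves a residue in the corner at $a$. This is exactly where the acyclicity of $Q$ modulo loops is used, and it is what makes $\iota_a(kQ/I)\iota_a$ depend only on the local data at $a$; the remaining identifications are routine.
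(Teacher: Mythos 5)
Your proposal is correct, and it reaches the same endpoint as the paper but by a slightly different route. The paper's proof quotes \Cref{Lem: equiv classes of paths} (extended to the case $a=b$) to say that each graded piece of the corner $\iota_a(kQ/I)\iota_a$ is spanned by a single normal form ($\alpha_a^{-i}$, $\iota_a$, or $\beta_a^{i}$) and then writes down the map to $k[x]$ resp.\ $k[x^{\pm}]$ directly, asserting it is an isomorphism. You instead compute the corner as a presented algebra: first $\iota_a(kQ)\iota_a$ is free on the loops at $a$ because no arrow decreases the vertex index, and then $\iota_a I\iota_a$ is exactly the ideal generated by the invertibility relations at $a$, since a term $\iota_a\, p\, r\, q\, \iota_a$ with $r$ one of the $\delta$-mediated relations would need paths $a\to s(r)$ and $t(r)\to a$ with $s(r)<t(r)$, which the forward-only shape of $Q$ forbids (and $e(A/I)e\cong eAe/eIe$). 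This buys you something the paper's write-up leaves implicit: the presentation $k\langle\alpha_a,\beta_a\rangle/(\alpha_a\beta_a-\iota_a,\beta_a\alpha_a-\iota_a)\cong k[x^{\pm}]$ (resp.\ $k[\alpha_a]\cong k[x]$) gives both spanning and linear independence of the powers at once, whereas the normal-form lemma only bounds dimensions from above and one still needs to know the classes are nonzero for $\varphi_{a,a}$ to be injective. Your matching of the two cases with $\chi^G_{a,a}$ via $a\in B\Leftrightarrow a$ odd $\Leftrightarrow P_a<P_a[1]$ and the appeal to \Cref{Prop: Endo Alg} for $e_a\Lambda_n e_a\cong\Endo[\cT]{\ast}{P_a}$ agree with the paper, as does the degree bookkeeping ($\alpha_a$ in degree $-1$, $\beta_a$ in degree $+1$).
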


\begin{proof}
	By \Cref{Lem: equiv classes of paths}, there exists at most one non-zero equivalence class of paths of degree $i$ starting and ending at vertex $a$: If $i < 0$, then all nonzero paths are equivalent to $\alpha^i$, and so $(\iota_a (kQ/I) \iota_a)^i = k \cdot \alpha^i$. On the other hand, if $i = 0$, then all nonzero paths can be represented by $\iota_a$ and so $(\iota_a (kQ/I) \iota_a)^i = k \cdot \iota_a$. If $i > 0$ and $a \in B$, then there is no loop $\beta_a$ at $a$ and so there are no nonzero paths of degree $i$. However, if $i > 0$ and $a \notin B$, then all nonzero paths are equivalent to $\beta^i$, so that $(\iota_a (kQ/I) \iota_a)^i = k \cdot \beta^i$.. 
	
	Suppose first that $a \in B$, then $a$ is odd and $P_a < P_a[1]$ by definition of the linear generator $G$. Hence, there is an isomorphism $\Endo[\cT]{\ast}{P_a} \cong k[x]$, with $\lvert x \rvert =-1$ by \Cref{Prop: Endo Alg}. Moreover, since $a \in B$, then, by the above, there are no nonzero paths starting and ending at $a$ of degree $i > 0$. We may thus construct a graded $k$-linear map $\varphi_{a,a} : \iota_a(kQ/I)\iota_a \rightarrow k[x]$ sending $\alpha_a^{-i} \mapsto x^{-i}$ for $i < 0$ and sending $\iota_a \mapsto 1$. This is clearly a graded algebra isomorphism. 
	
	Next, consider the case $a \not\in B$. Then, $a$ is even, $P_a \cong P_a[1]$ and $\Endo[\cT]{\ast}{P_a} \cong k[x^{\pm}]$, with $\lvert x \rvert =-1$ by \Cref{Prop: Endo Alg}. Thus, similarly to before, we may construct the $k$-linear map $\varphi_{a,a} : \iota_a(kQ/I)\iota_a \rightarrow k[x^{\pm 1}]$, taking $\alpha_a^{-i} \mapsto x^{-i}$ for $i<0$, $\iota_a \mapsto 1$, and $\beta_a^{i} \mapsto x^{-i}$ for $i > 0$. Again, it is clear that this is an isomorphism of graded $k$-algebras given the relations $\alpha_a\beta_a -\iota_a$ and $\beta_a \alpha_a - \iota_a$.
\end{proof}

\begin{lemma}\label[lem]{Lem: path modules}
	Let $a,b$ be distinct vertices in $Q_0$.
	Then there is an isomorphism of graded $\Endo{\ast}{P_a}$-$\Endo{\ast}{P_b}$-bimodules
	\[
	\iota_a(kQ/I)\iota_b \cong e_a \Lambda_n e_b \cong \Ext[\cT]{\ast}{P_a}{P_b}.
	\]
\end{lemma}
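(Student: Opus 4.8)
The plan is to construct, for each ordered pair of distinct vertices $(a,b)$, an explicit isomorphism $\varphi_{a,b}\colon \iota_a(kQ/I)\iota_b \to \Ext[\cT]{\ast}{P_a}{P_b}$ of graded $\Endo[\cT]{\ast}{P_a}$-$\Endo[\cT]{\ast}{P_b}$-bimodules, and then to check it is bijective by a degreewise dimension count. The middle isomorphism $e_a\Lambda_n e_b\cong \Ext[\cT]{\ast}{P_a}{P_b}$ requires no work: it is simply the $(a,b)$-component of the graded algebra isomorphism $\Endo[\cT]{\ast}{G}\xrightarrow{\sim}\Lambda_n$ furnished by \Cref{Prop: Endo Alg}. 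I would first dispose of the case where there is no path in $Q$ from $a$ to $b$: since the only non-loop arrows of $Q$ are the $\delta_{c,c+1}$, which increase the vertex index, this is exactly the case $b<a$, and then $\iota_a(kQ/I)\iota_b = 0$ by \Cref{Lem: equiv classes of paths} while $e_a\Lambda_n e_b = 0$ by \Cref{Prop: Endo Alg}. So from now on assume $a<b$.

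Under this assumption, \Cref{Lem: equiv classes of paths} gives that $\iota_a(kQ/I)\iota_b$ is one-dimensional in every degree $m\in\bZ$, with homogeneous basis $\{\sigma_m\}_{m\in\bZ}$ the explicit normal-form paths described there (in particular $\sigma_0 = \delta_{a,a+1}\cdots\delta_{b-1,b}$), and \Cref{Prop: Endo Alg} gives that $\Ext[\cT]{\ast}{P_a}{P_b}$ is likewise one-dimensional in every degree, with homogeneous basis $\{\theta_m\}_{m\in\bZ}$ which I may take to be the generators $\alpha^{-,-}$ of \Cref{Prop1: Generators of Hom}. I would then define $\varphi_{a,b}(\sigma_m):=\theta_m$ and extend $k$-linearly. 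Since this sends a homogeneous basis to a homogeneous basis with matching degrees — the $\delta$'s and the degree-$0$ morphisms sit in degree $0$, while the $\alpha_c$, $\beta_c$ are matched to the appropriate powers of $x$ exactly as in \Cref{Lem: loop algebras} — the map $\varphi_{a,b}$ is automatically a graded vector-space isomorphism.

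It then remains to verify that $\varphi_{a,b}$ intertwines the left $\iota_a(kQ/I)\iota_a$-action and the right $\iota_b(kQ/I)\iota_b$-action with the corresponding $\Endo[\cT]{\ast}{P_a}$- and $\Endo[\cT]{\ast}{P_b}$-actions through the isomorphisms $\varphi_{a,a}$, $\varphi_{b,b}$ of \Cref{Lem: loop algebras}. By bilinearity it suffices to check this on the algebra generators of the two loop algebras, namely $\alpha_a$ and (when $a\notin B$) $\beta_a$ acting on the left, and $\alpha_b$ and (when $b\notin B$) $\beta_b$ acting on the right. Here the relations in $I$ do precisely the required work: $\alpha_c\delta_{c,c+1}=\delta_{c,c+1}\alpha_{c+1}$ lets one slide an $\alpha$ along the $\delta$-chain, turning left multiplication by $\alpha_a$ into right multiplication by $\alpha_b$ on $\sigma_0$, and $\beta_c\delta_{c,c+1}\delta_{c+1,c+2}=\delta_{c,c+1}\delta_{c+1,c+2}\beta_{c+2}$ (for $c\notin B$) does the same for $\beta$ past pairs of $\delta$'s. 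Carrying these manipulations out on each of the three shapes of normal form in \Cref{Lem: equiv classes of paths}, with the attendant casework on the parities of $a$ and $b$ (which is what governs whether a trailing $\delta$ appears), reproduces exactly the composition identities $\alpha^{G_i,G_i[s]}\cdot\alpha^{G_j,G_i[t]}=\alpha^{G_j,G_i[s+t]}$ and $\alpha^{G_j,G_i[t]}\cdot\alpha^{G_j,G_j[\ell]}=\alpha^{G_j,G_i[\ell+t]}$ established in the proof of \Cref{Prop: Endo Alg}. Hence $\varphi_{a,b}$ is a morphism of graded bimodules, and being bijective, an isomorphism.

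I expect the only genuine friction to lie in this last step: it is purely bookkeeping to track how left/right multiplication by the $\alpha$'s and $\beta$'s transforms the normal forms of \Cref{Lem: equiv classes of paths} and to match the outcome against the $\Ext$-composition formulas, and since the ideal $I$ was set up with exactly these identities in mind, no new idea is needed beyond care with the parity cases.
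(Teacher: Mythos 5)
Your proposal is correct and follows essentially the same route as the paper: both reduce the $b<a$ case to zero, then use \Cref{Lem: equiv classes of paths} to exhibit a one-dimensional homogeneous basis of normal-form paths in each degree, match it against the basis of $\Ext[\cT]{\ast}{P_a}{P_b}$ coming from \Cref{Prop: Endo Alg}, and verify the bimodule structure via the commutation relations in $I$. The only cosmetic difference is that the paper factors the identification through the model $k[x^{\pm}]$ while you map directly onto the $\Ext$-groups using the generators of \Cref{Prop1: Generators of Hom}; your uniform treatment of the parity cases is, if anything, slightly more complete than the paper's explicit two-case split.
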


\begin{proof}
	If $b < a$, then there are no paths from $a$ to $b$, and so $\iota_a (kQ/I) \iota_b = 0$. Similarly, by definition of the linear generator $G$, $b < a$ implies that $\Ext[\cT]{\ast}{P_a}{P_b} = 0$. Whence, we may assume $a < b$.

	There are two cases to consider: the case where $a \in B$ and $b \notin B$ or vice-versa. Let us first assume that $a \in B$ and $b \notin B$. By \Cref{Prop: Endo Alg} and the of $G$, we know that there are algebra isomorphisms $\Endo{\ast}{P_a} \cong k[x]$, $\Endo{\ast}{P_b} \cong k[x^{\pm}]$. Moreover, $\Ext[\cT]{\ast}{P_a}{P_b} \cong k[ x^{\pm} ]$ as a $k[x]$-$k[x^{\pm}]$-bimodule. Hence, it suffices to show that $\iota_a(kQ/I)\iota_b \cong k[x^{\pm}]$ as a $k[x]$-$k[x^{\pm}]$-bimodule. 

	There is only one degree zero path from $a$ to $b$ given by $\sigma = \delta_{a, a+1} \dotsb \delta_{b-1, b}$, so that $(\iota_a (kQ/I) \iota_b)^0 = k \cdot \sigma$. Moreover, by \Cref{Lem: equiv classes of paths}, any path $a \to b$ of degree $m < 0$ is equivalent to $\sigma \alpha^{-m}$, and any path $a \to b$ of degree $j > 0$ is equivalent to $\sigma \beta^j$. Hence, $(\iota_a (kQ/I) \iota_b)^m = k \cdot \sigma \alpha^m$ and  $(\iota_a (kQ/I) \iota_b)^j = k \cdot \sigma \beta^j$. We may thus construct the graded bimodule isomorphism $\varphi_{a, b} \colon \iota_a (kQ/I) \iota_b \to k[x^{\pm}]$ via the assignment $\sigma \alpha^{-m} \mapsto x^{-m}$, $\sigma \mapsto 1$, and $\sigma \beta^j \mapsto x^{-j}$ for $m < 0$ and $j > 0$. 

	The case where $a \notin B$ and $b \in B$ is similar, and and so we omit it.
\end{proof}

We are now in a position to prove Theorem \ref{Thm:path algebras}.

\begin{proof}[Proof of \Cref{Thm:path algebras}]
	We combine \Cref{Lem: loop algebras,Lem: path modules}, and \Cref{Prop: Endo Alg} to construct a $k$-linear isomorphism $h \colon kQ/I \rightarrow \Lambda_n$ sending a path $\sigma$ between vertices $a \to b$ to $\varphi_{a, b}(\sigma)$. Here, we mildly abuse notation and denote the composition of $\varphi_{a, b} \colon \iota_a kQ/I \iota_b \to e_a \Lambda_n e_b$ with the natural inclusion $i_{a, b} \colon e_a \Lambda_n e_b  \to \Lambda_n$ as $\varphi_{a, b} \colon \iota_a kQ/I \iota_b \to \Lambda_n$. 

All that is left to show is that $h$ is an isomorphism of graded $k$-algebras. Let $\sigma \in kQ/I$ be a path $a \to b$ of degree $i$ and let $\tau \in kQ/I$ be a path $b \to c$ of degree $j$. We would like to show that $h(\sigma \tau) = h(\sigma) h(\tau)$. 

	A case by case analysis of \Cref{Lem: equiv classes of paths} together with the definition of $\varphi_{a,b}$ and $\phi_{a,c}$ implies that \begin{align*}
	& h(\sigma) = \varphi_{a, b}(\sigma) = i_{a, b}(x^{i}) \\ 
	& h(\tau) = \varphi_{b, c}(\tau) = i_{b, c}(x^{j}). 
 \end{align*}
Thus, $h(\sigma) h (\tau) = i_{a, c}(x^{i+j})$.

	Moreover, since paths $\alpha$ "commute" with paths $\delta$ due to the relations $I$, then, by \Cref{Lem: equiv classes of paths}, there is a degree zero path $\gamma_0$ such that
\[ \sigma \tau = \begin{cases} 
			\gamma_0 \alpha^{-i - j}  & \text{if $i+j < 0$,} \\
			\gamma_0 \beta^{ i + j}  & \text{if $i+j > 0$ and $c \notin B$,} \\
			\gamma_0 \beta^{i + j} \delta & \text{if $i+j > 0$ and $c \in B$,} \\
			\gamma_0 & \text{if $i+j = 0$.}
		\end{cases}
\]
In any of the cases above, 
\[ h(\sigma \tau) = \phi_{a, c}(\sigma \tau) = i_{a, c}(x^{i+j}) =h(\sigma) h(\tau), \]
as required. 
\end{proof}

\section{Discrete Cluster Categories}\label{Sec: Those cats}

Discrete cluster categories of Dynkin type $A_{\infty}$, labelled $\cC_n$ for $n \in \mathbb{Z}^+$, were introduced by Igusa--Todorov \cite{Igusa2013}, as an example cluster categories arising from cyclic posets.
These categories may be thought of as an infinite rank version of the cluster categories of Dynkin type $A_n$, first studied independently in both \cite{BMRRT,CCS}.
Also, discrete cluster categories of Dynkin type $A_{\infty}$ can be seen as generalisation of a category studied by Holm--J\o rgensen \cite{Holm2009}, namely the perfect derived category of $k[x]$ viewed as a differential graded algebra, with $x$ in homogeneous degree $-1$ and equipped with a trivial differential.
In the case $n=1$, there is a triangulated equivalence between $\cC_1$ and $\perf{k[x]}$.

The categories $\cC_n$ are Hom-finite, $k$-linear, Krull-Schmidt, 2-Calabi-Yau, triangulated categories.
Becuase these categories are well-behaved examples of cluster categories of infinite rank, they have been studied by many authors since their introduction \cite{Fisher2014,Gratz2017,GZ2021,LiuPaquette,MurphyGrothendieck}.
The category $\cC_n$ admits a combinatorial model, with a discrete set of infinitely many marked points on the boundary of a closed orientated disc, and $n$ so-called accumulation points.
The indecomposable objects of $\cC_n$ are in bijection to isotopy classes of non-contractible curves between marked points, such that the curves are non-isotopic to the boundary.

Paquette--Y\i ld\i r\i m introduced a completion of $\cC_n$ in \cite{Paquette2020} by considering a Verdier localisation of $\cC_{2n}$ by a thick subcategory, labelled $\ocC$.
The category $\ocC$ also admits the same combinatorial model as $\cC_n$, however we now also consider isotopy classes of curves with endpoints in the set of accumulation points and marked points.

The categories $\ocC$ have since been studied in their own right \cite{ACFGS,Franchini,Generators,MurphyGrothendieck}, and are our primary example of triangulated categories admitting a linear generator.

\subsection{Paquette-Y\i ld\i r\i m Completions of Discrete Cluster Categories of Dynkin Type \texorpdfstring{$A_{\infty}$}{Ainf}}\label{Sec:Discrete}

By adapting a construction found in \cite{Gratz2017}, we construct the category $\ocC$ via a combinatorial model, namely the closed disk $D^2$ with marked points on the boundary.
We will consider $D^2$ to have an anti-clockwise orientation on the boundary $\partial D^2 = S^1$.

\begin{definition}[\cite{Gratz2017}]\label[def]{Def:admiss}
A subset $\sM$ of $\partial D^2 = S^1$ is called \textit{admissible} if it satisfies the following conditions:
\begin{enumerate}
    \item $\sM$ has infinitely many elements,
    \item $\sM \subset S^1$ is a discrete subset,
    \item $\sM$ satisfies the \textit{two-sided limit condition}, i.e.\ each $x \in S^1$ which is the limit of a sequence in $\sM$ is a limit of both an increasing and decreasing sequence from $\sM$ with respect to the cyclic order.
\end{enumerate}
\end{definition}

The points at which the two-sided limits in $\sM$ converge are called the \textit{accumulation points}.
Note that these accumulation points are not in $\sM$, as they do not have defined successors and predecessors, meaning $\sM$ would not be discrete if they were to be included.
As in \cite{Gratz2017}, we may think of $\sM$ as the vertices of the $\infty$-gon.
For an admissible subset $\sM \in S^1$, we label the set of accumulation points $L(\sM)$, and give them a cyclic ordering induced by the orientation of $S^1$.

Let $\mathfrak{a} \in L(\sM)$ be an accumulation point, we define $\mathfrak{a}_+ \in L(\sM)$ (resp.\ $\mathfrak{a}_- \in L(\sM)$) to be the accumulation point such that $\mathfrak{a} \leq \mathfrak{b} \leq \mathfrak{a}^+$ with $\mathfrak{b} \in L(\sM)$ (resp.\ $\mathfrak{a}_- \leq \mathfrak{c} \leq \mathfrak{a}$ with $\mathfrak{c} \in L(\sM)$) implies $\mathfrak{b}=\mathfrak{a}$ or $\mathfrak{b}=\mathfrak{a}_+$ (resp.\ $\mathfrak{c}=\mathfrak{a}_-$ or $\mathfrak{c}=\mathfrak{a}$).
Moreover, we say that $\mathfrak{a}$ is its own \textit{successor} and \textit{predecessor}, that is $\mathfrak{a}=\mathfrak{a}^+ = \mathfrak{a}^-$.
Let $\osM := L(\sM) \cup \sM$.

\begin{figure}[h]
	\centering
	\begin{tikzpicture}
		\draw (0,0) circle (2cm);
		
		\foreach[count = \i] \txt in {1,...,3}
		\draw[fill=white] ({2*sin(47+120*\i)},{2*cos(47+120*\i)}) circle (0.1cm);
		
		\foreach[count = \i] \txt in {1,...,7}
		\draw[thin] ({1.95*sin(47-(30/\i))},{1.95*cos(47-(30/\i))}) -- ({2.05*sin(47-(30/\i))},{2.05*cos(47-(30/\i))});
		
		\foreach[count = \i] \txt in {1,...,7}
		\draw[thin] ({1.95*sin(47+(30/\i))},{1.95*cos(47+(30/\i))}) -- ({2.05*sin(47+(30/\i))},{2.05*cos(47+(30/\i))});
		
		\foreach[count = \i] \txt in {1,...,7}
		\draw[thin] ({1.95*sin(167-(30/\i))},{1.95*cos(167-(30/\i))}) -- ({2.05*sin(167-(30/\i))},{2.05*cos(167-(30/\i))});
		
		\foreach[count = \i] \txt in {1,...,7}
		\draw[thin] ({1.95*sin(167+(30/\i))},{1.95*cos(167+(30/\i))}) -- ({2.05*sin(167+(30/\i))},{2.05*cos(167+(30/\i))});
		
		\foreach[count = \i] \txt in {1,...,7}
		\draw[thin] ({1.95*sin(287-(30/\i))},{1.95*cos(287-(30/\i))}) -- ({2.05*sin(287-(30/\i))},{2.05*cos(287-(30/\i))});
		
		\foreach[count = \i] \txt in {1,...,7}
		\draw[thin] ({1.95*sin(287+(30/\i))},{1.95*cos(287+(30/\i))}) -- ({2.05*sin(287+(30/\i))},{2.05*cos(287+(30/\i))});
		
		\foreach[count = \i] \txt in {1,2,3}
		\draw[thin] ({1.95*sin(77+15*\i)},{1.95*cos(77+15*\i)}) -- ({2.05*sin(77+15*\i)},{2.05*cos(77+15*\i)});
		
		\foreach[count = \i] \txt in {1,2,3}
		\draw[thin] ({1.95*sin(197+15*\i)},{1.95*cos(197+15*\i)}) -- ({2.05*sin(197+15*\i)},{2.05*cos(197+15*\i)});
		
		\foreach[count = \i] \txt in {1,2,3}
		\draw[thin] ({1.95*sin(317+15*\i)},{1.95*cos(317+15*\i)}) -- ({2.05*sin(317+15*\i)},{2.05*cos(317+15*\i)});
		
		\node at ({2.3*sin(47)},{2.3*cos(47)}) {\footnotesize $\mathfrak{a}$};
		\node at ({2.3*sin(92)},{2.3*cos(92)}) {\footnotesize $x^+$};
		\node at ({2.3*sin(107)},{2.3*cos(107)}) {\footnotesize $x$};
		\node at ({2.3*sin(122)},{2.*cos(122)}) {\footnotesize $x^-$};
	\end{tikzpicture}
	\caption{An admissible subset $\sM$ of $S^1$. The marked points in $\sM$ converge to the accumulation points represented as small circles, and each marked point $x$ has both a predecessor and a successor, labelled $x^-$ and $x^+$ respectively.}
	\label{fig:admissable}
\end{figure}
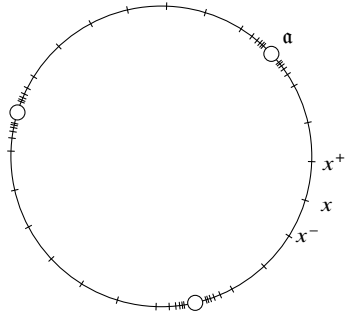

\begin{definition}[See \cite{Gratz2017}]\label[def]{Def:arcs}
A \textit{arc of} $\osM$ is a subset $\ell_X=\{x_0,x_1\} \subset \osM$ with $x_1 \notin \{x_0^-, x_0, x_0^+\}$, where $x^+$ and $x^-$ are the successor and predecessor respectively to $x \in \osM$.
If $\ell_Y= \{y_0,y_1\}$ is another arc, then $\ell_X$ and $\ell_Y$ \textit{cross} if $x_0<y_0<x_1<y_1 <x_0$ or $x_0<y_1<x_1<y_0<x_0$. We will call the points $x_0$ and $x_1$ the \textit{endpoints} of $\ell_X$.
\end{definition}

We may think of an arc $\ell_X=\{x_0,x_1\}$ as being a representative of the isotopy class of non-self-intersecting curves in $D^2$ between the marked points $x_0$ and $x_1$.
Two arcs cross if any of their representative curves cross in the interior of $D^2$.

There are four types of arcs for us to consider; \textit{short arcs}, \textit{long arcs}, \textit{limit arcs}, and \textit{double limit arcs}.
An arc $\ell = \{x,y\}$ with endpoints in $\sM$ is a short arc if $\mathfrak{a} < x < y < \mathfrak{a}_+$, and is a long arc if $\mathfrak{a}_- < x < \mathfrak{a} < y < \mathfrak{a}_+$, for some $\mathfrak{a} \in L(\sM)$.
An arc $\ell = \{x,y\}$ is a limit arc if $x \in L(\sM)$ and $y \in \sM$, or vice versa, and $\ell$ is a double limit arc if $x,y \in L(\sM)$.

Note that there are no long arcs, nor double limit arcs when $n=1$.
We write \textit{(double) limit arcs} to mean the set of all limit arcs and double limit arcs.

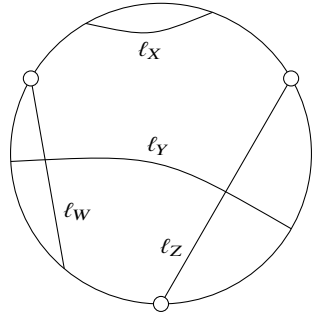
\begin{figure}[h]
    \centering
    \begin{tikzpicture}
    \draw (0,0) circle (2cm);
    \draw ({2*sin(20)},{2*cos(20)}) .. controls ({1.6*sin(355)},{1.6*cos(345)}) .. ({2*sin(330)},{2*cos(330)}) node [pos=0.5,below] {\footnotesize $\ell_X$};
    \draw ({2*sin(267)},{2*cos(267)}) .. controls (0,0) .. ({2*sin(120)},{2*cos(120)}) node [pos=0.5,above] {\footnotesize $\ell_Y$};
    \draw (0,-2) -- (1.73,1) node [pos=0.25,left] {\footnotesize $\ell_Z$};
    \draw (-1.73,1) -- ({2*sin(220)},{2*cos(220)}) node [pos=0.7,right] {\footnotesize $\ell_W$};
    \draw[fill=white] (0,-2) circle (0.1cm);
    \draw[fill=white] (1.73,1) circle (0.1cm);
    \draw[fill=white] (-1.73,1) circle (0.1cm);
    \end{tikzpicture}
    \caption{Four arcs of $\osM$, where $\ell_X$ is a short arc, $\ell_Y$ is a long arc, $\ell_W$ is a limit arc, and $\ell_Z$ is a double limit arc.}
    \label{fig:short and long arcs}
\end{figure}

In \cite{Paquette2020}, the authors construct a Hom-finite, Krull-Schmidt, $k$-linear, triangulated category, denoted $\cC(D^2,\osM)$ here, such that the indecomposable objects are in bijection to the arcs of $\osM$, for $\lvert L(\sM) \rvert =n$.
The suspension functor of $\cC(D^2,\osM)$ acts on indecomposable objects by taking endpoints in $\sM$ to their predecessor, and taking an endpoint in $\osM$ to itself.
The authors of \cite{Paquette2020} show that $\cC(D^2,\osM)(X,Y) \cong k$ if and only if $\ell_X$ and $\ell_{Y[-1]}$ cross for $X,Y$ indecomposable objects in $\cC(D^2,\osM)$, and $\cC(D^2,\osM)(X,Y) =0$ otherwise.

These categories are known as the \textit{Paquette-Y\i ld\i r\i m completions of discrete cluster categories of Dynkin type} $A_{\infty}$, however we shall hereafter refer to them as \textit{Paquette-Y\i ld\i r\i m categories}.

Let $\sM$ and $\mathscr{N}$ be admissible subsets of $S^1$ such that $\lvert L(\sM) \rvert = \lvert  L(\mathscr{N}) \rvert = n$, then there is an equivalence of categories between $\cC(D^2,\osM)$ and $\cC(D^2,\overline{\mathscr{N}})$.
Therefore for all $n \in \mathbb{Z}_{\geq 1}$, we will consider an admissible subset of $S^1$ with $n$ accumulation points, $\sM_n$, and so we consider the category $\ocC := \cC(S^1,\osM[n])$ as the representative of the equivalence class of discrete cluster categories of Dynkin type $A_{\infty}$ with $n$ accumulation points.
From now on we shall make no distinction between indecomposable objects in $\ocC$ and arcs of $\osM$.

Let $X = \{x_0,x_1\}$ and $Y = \{y_0,y_1\}$ be indecomposable objects in $\ocC$ such that $y_0 < x_0 < y_1 < x_1$, then there exists two triangles in $\ocC$;
\begin{align*}
	X \rightarrow A \oplus B & \rightarrow Y \rightarrow X[1],\\
	Y \rightarrow C \oplus D & \rightarrow X \rightarrow Y[1].
\end{align*}
where $A = \{y_1,x_1\}$, $B = \{y_0,x_0\}$, $C = \{y_0,x_1\}$, and $D = \{x_0,y_1\}$.
Moreover, for indecomposable objects $U =\{\mathfrak{a},u\}$ and $Y = \{ \mathfrak{a},v\}$ in $\ocC$ with $\mathfrak{a} < u < v < \mathfrak{a}$, then there also exists a triangle in $\ocC$ of the form;
\[
X \rightarrow Z \rightarrow Y \rightarrow X[1],
\]
where $Z = \{u,v\}$.

\begin{figure}[H]
	\centering
	\begin{tikzpicture}
		\draw (0,0) circle (4cm);
		\draw[thin] (0,4) to [bend right =10] node [pos=0.5, below] {\footnotesize $V$} ({4*sin(45)},{4*cos(45)});
		\draw[thin] (0,4) to [bend right = 20] node [pos=0.5,left] {\footnotesize $U$} ({4*sin(100)},{4*cos(100)});
		
		\draw[dotted, thin] ({4*sin(45)},{4*cos(45)}) to [bend right =10] node [pos=0.5, left] {\footnotesize $Z$} ({4*sin(100)},{4*cos(100)});
		
		\draw[thin] ({4*sin(200)},{4*cos(200)}) to [bend right =20] node [pos=0.6,right] {\footnotesize $Y$} ({4*sin(320)},{4*cos(320)});
		\draw[thin] ({4*sin(150)},{4*cos(150)}) to [bend right =10] node [pos=0.7,above] {\footnotesize $X$} ({4*sin(265)},{4*cos(265)});
		
		\draw[dashed, thin] ({4*sin(200)},{4*cos(200)}) to [bend right =10] node [pos=0.5,left] {\footnotesize $D$} ({4*sin(265)},{4*cos(265)});
		\draw[dashed, thin] ({4*sin(150)},{4*cos(150)}) to [bend right =20] node [pos=0.5,right] {\footnotesize $C$} ({4*sin(320)},{4*cos(320)});
		
		\draw[dotted, thin] ({4*sin(200)},{4*cos(200)}) to [bend left =10] node [pos=0.5,above] {\footnotesize $A$} ({4*sin(150)},{4*cos(150)});
		\draw[dotted, thin] ({4*sin(265)},{4*cos(265)}) to [bend right =10] node [pos=0.5,right] {\footnotesize $B$} ({4*sin(320)},{4*cos(320)});
		
		\draw[fill=white] (0,4) circle (0.1cm);
		
		\node at (0,4.3) {\footnotesize $\mathfrak{a}_1$};
		\node at ({4.2*sin(45)},{4.2*cos(45)}) {\footnotesize $v$};
		\node at ({4.2*sin(100)},{4.2*cos(100)}) {\footnotesize $u$};
		\node at ({4.2*sin(320)},{4.2*cos(320)}) {\footnotesize $y_0$};
		\node at ({4.2*sin(200)},{4.2*cos(200)}) {\footnotesize $y_1$};
		\node at ({4.2*sin(265)},{4.2*cos(265)}) {\footnotesize $x_0$};
		\node at ({4.2*sin(150)},{4.2*cos(150)}) {\footnotesize $x_1$};
		
	\end{tikzpicture}
	\caption{The arcs corresponding to triangles coming from non-split extensions between indecomposable objects.}
\end{figure}

\begin{definition}\label[def]{Def:Orbits}
Let $X$ be an indecomposable object in $\ocC$.
Let $\osM[X] \subseteq \osM$ be the set of marked points that are the endpoints of the arcs corresponding to suspensions and desuspensions of $X$.
If $A \cong \bigoplus^l_{i=1} X_i$, with all $X_i$ indecomposable, then $\osM[A] = \bigcup_{i=1}^l \osM[X_i]$.
We call $\osM[A]$ the \textit{orbit of} $A$ \textit{in} $\osM$.
If we have $\osM[A]=\osM$, then we say $A$ has a \textit{complete orbit in} $\osM$.
\end{definition}

The following lemma shows how the $\mathrm{Ext}^1$-spaces are given in $\ocC$.

\begin{proposition}[\cite{Paquette2020}, Prop. 3.14]\label[prop]{Prop:PY3.14}
Let $X,Y \in \ocC$ be indecomposable objects.
Then $\ocC(X,Y[1])$ is at most one dimensional.
It is one dimensional if and only if one of the following conditions are met for the arcs $\ell_X$ and $\ell_Y$:
\begin{itemize}
    \item $X, Y$ cross,
    \item $X \neq Y$ share exactly one accumulation point, and we can go from $X$ to $Y$ by rotating $X$ about their common endpoint following the orientation of $S^1$,
    \item $X = Y$ are double limit arcs.
\end{itemize}
\end{proposition}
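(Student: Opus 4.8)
The plan is to reduce the statement to the combinatorial description of morphism spaces in $\ocC$ and the action of the suspension functor on arcs, both recalled above from \cite{Paquette2020}. Since $\ocC(A,B)\cong k$ precisely when $\ell_A$ and $\ell_{B[-1]}$ cross and $\ocC(A,B)=0$ otherwise, applying this with $B=Y[1]$ and using $Y[1][-1]=Y$ shows at once that $\ocC(X,Y[1])$ is at most one-dimensional, and that it is one-dimensional exactly when $\ell_X$ and $\ell_Y$ ``cross''. The one subtlety is that $\ell_X$ and $\ell_Y$ may share endpoints lying at accumulation points, where the strict interleaving condition of \Cref{Def:arcs} does not literally apply; so the real content is to unwind what ``cross'' means in that boundary situation and match it to the stated trichotomy.

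For the ``if'' direction I would exhibit, in each of the three configurations, an explicit non-split triangle witnessing a nonzero class in the relevant $\operatorname{Ext}^1$-space. When $\ell_X$ and $\ell_Y$ cross with four distinct endpoints, this is exactly the triangle $Y\to C\oplus D\to X\to Y[1]$ recorded above (with $C,D$ the two smoothings of the crossing), together with its rotations; when $X\neq Y$ share a single accumulation point $\mathfrak a$ and $Y$ is obtained from $X$ by rotating about $\mathfrak a$ along the orientation of $S^1$, this is the triangle of the form $X\to Z\to Y\to X[1]$ recorded above for arcs emanating from a common accumulation point; and when $X=Y$ is a double limit arc, both endpoints are accumulation points, so the suspension fixes them and $X[1]=X$, whence $\ocC(X,X[1])=\ocC(X,X)$ contains the identity. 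In each case the triangle forces the corresponding $\operatorname{Ext}^1$-space to be nonzero, hence exactly one-dimensional by the first paragraph.

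For the ``only if'' direction I would run the complementary case analysis on how many endpoints $\ell_X$ and $\ell_Y$ share. If all four endpoints are distinct, the generalized crossing condition is honest crossing, i.e.\ the interleaving of \Cref{Def:arcs}, which is the first bullet. If they share exactly one endpoint: when that endpoint lies in $\sM$, the suspension moves it to its predecessor, the perturbed arcs separate, and $\ocC(X,Y[1])=0$, so a shared $\sM$-endpoint never contributes; when the shared endpoint is an accumulation point $\mathfrak a$ (so $\mathfrak a^+=\mathfrak a^-=\mathfrak a$ and the suspension fixes it), whether the two arcs ``cross'' at $\mathfrak a$ is governed precisely by the rotational/orientation condition, giving the second bullet, and one checks the two rotational senses are genuinely distinguished so that exactly one of $\ocC(X,Y[1])$, $\ocC(Y,X[1])$ is nonzero. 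Finally, if $\ell_X$ and $\ell_Y$ share both endpoints, i.e.\ $X=Y$: if some endpoint lies in $\sM$ the suspension moves it and the perturbed arcs do not cross, so $\ocC(X,X[1])=0$; if both endpoints are accumulation points then $X$ is a double limit arc and we are in the previous case. Collecting these possibilities yields exactly the trichotomy.

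The main obstacle is the book-keeping around arcs sharing endpoints, where \Cref{Def:arcs} gives no guidance and one must invoke the finer crossing and rotation conventions of \cite{Paquette2020}, keeping track of the four arc types (short, long, limit, double limit) and of how the suspension functor---the identity on accumulation-point endpoints and the predecessor map on $\sM$-endpoints---moves each type relative to the accumulation points; the double-limit case, where $X[1]=X$, is precisely where the $2$-Calabi--Yau-type self-extension appears and has to be isolated. A secondary point worth verifying is that the spaces in the claimed cases are exactly one-dimensional rather than merely nonzero, which again follows from the Hom formula of \cite{Paquette2020} (equivalently, from all indecomposables being bricks).
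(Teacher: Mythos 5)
This statement is not proved in the paper at all: it is imported verbatim as \cite{Paquette2020}, Proposition~3.14, so there is no in-paper argument to compare yours against. Judged on its own terms, your sketch is a reasonable reconstruction of the combinatorics, and your case analysis on shared endpoints (distinct endpoints; shared endpoint in $\sM$; shared accumulation point; $X=Y$ a double limit arc) does land on exactly the stated trichotomy.

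The genuine weakness is a circularity in your reduction. The Hom formula you start from --- $\ocC(A,B)\cong k$ iff $\ell_A$ and $\ell_{B[-1]}$ ``cross'' --- is, for arcs sharing endpoints at accumulation points, not defined by \Cref{Def:arcs} (which requires four strictly interleaved endpoints); the extended crossing convention in those boundary cases is precisely the content of the proposition, so you cannot ``unwind'' it without already knowing the answer. Likewise, the triangles $Y\to C\oplus D\to X\to Y[1]$ and $X\to Z\to Y\to X[1]$ that you use to witness nonzero classes in the ``if'' direction are themselves consequences of the Ext computation in \cite{Paquette2020} rather than independently available inputs. The actual proof in \cite{Paquette2020} computes morphism spaces directly in the Verdier quotient $\cC_{2n}/\mathcal{D}$ (via the description of maps in the localisation and the known Hom spaces of $\cC_{2n}$), and the three bullet points, including the self-extension of a double limit arc, fall out of that calculation. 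To make your argument non-circular you would need to either redo that localisation computation or take the full generalised crossing rule of \cite{Paquette2020} as the cited input, at which point the proposition is a restatement rather than a derivation.
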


\begin{lemma}\label[lem]{Lem: Factoring in Cn}
	Let $Y,Z \in \ocC$ be non-isomorphic indecomposable objects such that $\ocC(Y,Z) \cong k$, and write $Y = \{y_1,y_2\}$ and $Z = \{z_1,z_2\}$.
	Then any morphism $f: Y \rightarrow Z$ factors through an indecomposable object $W = \{w_1,w_2\}$ if and only if $z_1 \geq w_1 \geq y_1$ and $z_2 \geq w_2 \geq y_2$ with respect to $(S^1,\osM)$.
\end{lemma}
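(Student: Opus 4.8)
The plan is to transport the abstract factorisation criterion of \Cref{Prop1: Factoring arcs} across the combinatorial model of $\ocC$, using the dictionary between arcs and indecomposable objects together with the description of the $\mathrm{Ext}^1$-spaces in \Cref{Prop:PY3.14}. First I would fix a linear generator of $\ocC$ — for instance the fan generator $E$ discussed in \Cref{Sec: Those cats} — so that every indecomposable object is either of the form $R \in \lang[1]{E}$ or of the form $X_{P,Q}$ for indecomposables $P,Q \in \lang[1]{E}$, by \Cref{Prop1: Ind in langle 2} and \Cref{Thm1: Perf 2 is Tri}. Under this identification, the distinction between forwards and backwards morphisms and the combinatorial data of the arcs $\ell_Y, \ell_Z, \ell_W$ must be matched up explicitly: a forwards morphism corresponds to one type of rotation of arc endpoints, and a backwards morphism to the complementary type. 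Concretely, I would record, for each of the cases in \Cref{Lem1: Hom X to P}, \Cref{Lem1: Hom P to X} and \Cref{Lem1: Morphisms X to X}, which inequality on endpoints in $(S^1,\osM)$ it translates to.

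Once that translation is in place, the forward direction is a direct unwinding: given $f \colon Y \to Z$ factoring as $f = h \circ g$ through $W$, \Cref{Prop1: Factoring arcs} tells us that in the forwards case both $g$ and $h$ are forwards, and in the backwards case exactly one of them is backwards; in either situation the endpoint inequalities governing $g$ and $h$ compose to give precisely $z_1 \geq w_1 \geq y_1$ and $z_2 \geq w_2 \geq y_2$ with respect to the cyclic order on $(S^1,\osM)$. For the converse, assume $z_1 \geq w_1 \geq y_1$ and $z_2 \geq w_2 \geq y_2$. I would first check, using \Cref{Prop:PY3.14} (equivalently the crossing/rotation criterion for the shifted arcs), that the inequalities force $\ocC(Y,W) \cong k$ and $\ocC(W,Z) \cong k$, so that nonzero candidate morphisms $g$ and $h$ exist. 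Then I would verify that the pair $(g,h)$ satisfies the forwards/backwards bookkeeping required by \Cref{Prop1: Factoring arcs}: the position of $\ell_W$ "between" $\ell_Y$ and $\ell_Z$ is exactly what guarantees that $g$ and $h$ are both forwards when $f$ is, and that one is backwards when $f$ is. Applying \Cref{Prop1: Factoring arcs} then yields $f = \lambda\, h \circ g$ for some $\lambda \in k^\times$ (since $\ocC(Y,Z)$ is one-dimensional), so rescaling $h$ shows $f$ factors through $W$.

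The main obstacle I anticipate is the bookkeeping in the converse: the cyclic order on $(S^1,\osM)$ is genuinely cyclic, so the inequalities $z_1 \geq w_1 \geq y_1$ and $z_2 \geq w_2 \geq y_2$ must be interpreted consistently relative to a chosen basepoint or "fan" structure, and one has to rule out degenerate configurations (coincident endpoints, endpoints at accumulation points, arcs that become boundary segments or trivial after suspension) where the abstract lemmas' hypotheses of non-isomorphism or one-dimensionality could fail. Handling the limit and double limit arcs, where the suspension fixes an endpoint, will require separate attention, and I would treat those as special cases after establishing the generic short/long arc case. I also expect to need a short lemma matching the cone operations $X_{P,Q} \mapsto$ (its arc) with the explicit arcs $A,B,C,D,Z$ appearing in the triangles recorded in \Cref{Sec: Those cats}, so that "factoring through $W$" in the sense of \Cref{Prop1: Factoring arcs} is visibly the same as the endpoint condition stated here.
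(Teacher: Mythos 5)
There is a genuine logical gap in your approach: it is circular. Your plan rests on \Cref{Prop1: Factoring arcs}, which only applies to a triangulated category that already admits a linear generator, and you propose to supply one by taking the fan generator $E$ of \Cref{Sec: Those cats}. But the verification that $E$ is in fact a linear generator (\Cref{Lem: Cn has linear gen}) uses precisely the statement you are trying to prove: condition \ref{G3} of \Cref{def: linear gen} — that for $R \leq Q \leq P$ in $\lang[1]{E}$ every morphism $P \to R$ factors non-trivially through $Q$ — is checked in the paper by invoking \Cref{Lem: Factoring in Cn}. So you cannot bring the abstract machinery of \Cref{Sec: Linear Gens} to bear on $\ocC$ until this lemma is already in hand. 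The paper's actual proof avoids this entirely: it is a one-line citation of pre-existing combinatorial results about these categories, namely Lemma~2.4.2 of Igusa--Todorov and Lemmas~3.8 and 3.9 of Paquette--Y{\i}ld{\i}r{\i}m, which describe factorisation of morphisms directly in terms of the arc model, independently of any generator.

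A secondary concern, even if the circularity were repaired by some independent verification of \ref{G3}: \Cref{Prop1: Factoring arcs} is stated for indecomposables presented as objects of $\lang[1]{G}$ or as cones $X_{P,Q}$ relative to a fixed generator, and carries hypotheses ($\hcT{Y}{Z} \cong k$, non-isomorphism, the forwards/backwards dichotomy) that would all have to be matched case by case against the purely combinatorial condition $z_1 \geq w_1 \geq y_1$, $z_2 \geq w_2 \geq y_2$, including the degenerate limit-arc configurations you flag yourself. That translation is exactly the content already supplied by the cited references, so the detour through the abstract theory buys nothing and costs the logical foundation.
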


\begin{proof}
	This is a direct consequence of \cite[Lemma 2.4.2]{Igusa2013}, and \cite[Lemmas 3.8, 3.9]{Paquette2020}.
\end{proof}

\subsection{Linear Generators of \texorpdfstring{$\ocC$}{Cn}} \label{Sec: classical gens of ooC}

The classical generators of $\ocC$ were classified in \cite{Generators}, using a notion of an object being homologically connected.
An object $X$ in a triangulated category $\cT$ is \textit{homologically connected}, if for any two indecomposable objects $Y,Z \in \langle X \rangle$, there exists a sequence of non-zero, irreducible morphisms between indecomposables in $\langle X \rangle$, starting and ending at $Y$ and $Z$.
Note that there is no requirement on the direction or composition of the morphisms.

\begin{theorem}[\cite{Generators}]\label{Thm:Gens}
	Let $G$ be an object in $\ocC$. Then $G$ is a classical generator if and only if $G$ is homologically connected and $G$ has a complete orbit in $\mathscr{M}$.
\end{theorem}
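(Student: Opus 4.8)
The plan is to prove the two implications separately, in each case tracking how the endpoints of an arc behave under the operations that build the thick closure $\lang{G}$, and leaning on the explicit list of triangles in $\ocC$ recalled in \Cref{Sec:Discrete} together with the factorisation criterion of \Cref{Lem: Factoring in Cn}.

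For necessity, suppose $\lang{G} = \ocC$. Let $\mathcal{S} \subseteq \ocC$ be the full subcategory of objects all of whose indecomposable summands are arcs with both endpoints in the orbit $\osM[G]$. Since $\osM[G]$ is closed under successors and predecessors, $\mathcal{S}$ is closed under the suspension functor, and it is clearly closed under finite direct sums and direct summands; the structure of the triangles of $\ocC$ (the indecomposable summands of the middle term of a triangle between indecomposables are arcs whose endpoints occur among those of the outer terms, cf.\ \Cref{Sec:Discrete} and \cite{Paquette2020}) shows that $\mathcal{S}$ is closed under cones, so $\mathcal{S}$ is thick. As $G \in \mathcal{S}$ we get $\ocC = \lang{G} \subseteq \mathcal{S}$, and if $\osM[G]$ missed a point $m \in \osM$ then any arc with endpoint $m$ would lie outside $\mathcal{S}$; hence $G$ has a complete orbit. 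For homological connectedness, $\lang{G}=\ocC$ reduces the claim to the assertion that $\ocC$ is itself homologically connected, which is a combinatorial statement about its Auslander--Reiten quiver: an irreducible morphism slides a single endpoint of an arc to the adjacent point of $\osM$, and any two arcs are joined by a finite sequence of such moves, an endpoint being routed through an accumulation point (so that limit and double limit arcs, which are linked to neighbouring arcs by irreducible morphisms in the sense of \cite{Paquette2020}) whenever it must pass between the regions cut out by consecutive accumulation points.

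For sufficiency --- the substantive direction --- assume $G$ is homologically connected with $\osM[G]=\osM$ and set $\cT := \lang{G}$. Completeness of the orbit gives, for each accumulation point $\mathfrak{a}$, an indecomposable of $\cT$ with $\mathfrak{a}$ as an endpoint; since the only arcs with an accumulation point as an endpoint are limit and double limit arcs, $\cT$ contains such an arc at every $\mathfrak{a}$, and likewise an arc through every marked point. Using the triangle $X \to Z \to Y \to X[1]$ with $X=\{\mathfrak{a},u\}$, $Y=\{\mathfrak{a},v\}$, $Z=\{u,v\}$ of \Cref{Sec:Discrete}, the limit arcs at a fixed $\mathfrak{a}$ already in $\cT$ generate all limit arcs at $\mathfrak{a}$ and all short arcs in the adjacent region; combining this with the triangles coming from crossing arcs and from arcs sharing an accumulation point (\Cref{Prop:PY3.14}, \cite{Paquette2020}) and with \Cref{Lem: Factoring in Cn}, one produces successively more arcs. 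The argument is then finished by induction on a complexity measure of an arc $A=\{a_0,a_1\}$ --- say the number of points of $\osM$ it separates off on the shorter side plus the number of accumulation points strictly between $a_0$ and $a_1$ --- the base case being shifts of summands of $G$, and the inductive step writing $A$ as an indecomposable summand of the cone of a nonzero morphism between two strictly simpler indecomposables already in $\cT$. Homological connectedness enters exactly here: when $A$ is a long arc, a double limit arc, or a limit arc whose two potential building blocks lie in different regions, one needs a nonzero morphism in $\cT$ between objects belonging to those different regions, and such a morphism exists precisely because the indecomposables of $\cT$ are connected by irreducible morphisms within $\cT$.

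The main obstacle is this inductive step: one must check, uniformly across the four types of arcs and across all $n$ accumulation-point regions, that every arc not yet obtained really does arise as a summand of the cone of a morphism between strictly simpler objects of $\cT$ --- equivalently, that homological connectedness rules out $\cT$ stabilising at a proper thick subcategory (as happens, for instance, when $G$ is a direct sum of limit arcs based at distinct accumulation points with no nonzero morphisms between their shifts). The bookkeeping is heaviest in organising the crossing-arc and shared-accumulation-point triangles of \cite{Paquette2020} into a single induction and in handling the degenerate low-rank cases, such as $n=1$, where there are no long or double limit arcs and homological connectedness is automatic.
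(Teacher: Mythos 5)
First, note that the paper does not prove this statement at all: it is quoted verbatim from \cite{Generators}, so there is no in-paper argument to measure yours against; what follows assesses your proposal on its own terms. The sufficiency direction has a genuine gap. You run the induction on ``the number of points of $\osM$ the arc separates off on the shorter side'' (plus the number of accumulation points strictly between the endpoints), but this quantity is infinite for every long arc, limit arc and double limit arc: marked points accumulate at each accumulation point, so both sides of such an arc contain infinitely many points of $\osM$. The measure is therefore not well-founded precisely on the arcs that cause the difficulty, and the shifts of the summands of $G$ need not be minimal for any such order either, since $G$ may consist entirely of, say, double limit arcs. On top of this, the inductive step --- that every arc not yet obtained is a summand of the cone of a morphism between strictly simpler objects already in $\lang{G}$, with homological connectedness supplying the required nonzero morphism between arcs attached to different regions --- is essentially the entire content of the theorem, and you explicitly defer it as ``the main obstacle'' rather than carry it out. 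A workable argument has to be organised by arc type (e.g.\ first produce all limit arcs at each accumulation point via the triangle $\{\mathfrak{a},u\}\rightarrow\{u,v\}\rightarrow\{\mathfrak{a},v\}\rightarrow\{\mathfrak{a},u\}[1]$, then all short arcs, then long and double limit arcs), and must say explicitly how a zig-zag of irreducible morphisms is converted into the crossing needed to form the relevant cone.

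The necessity direction also leaves a nontrivial claim unproved: you reduce homological connectedness of $G$ to the assertion that $\ocC$ itself is homologically connected, i.e.\ that any two indecomposables are joined by a zig-zag of irreducible morphisms. Routing an endpoint ``through'' an accumulation point requires irreducible morphisms between short arcs and (double) limit arcs, and since $\ocC$ fails to admit Auslander--Reiten triangles at the limit arcs this is not automatic; it has to be extracted from the description of morphisms in \cite{Paquette2020} rather than asserted. The thick-subcategory argument for completeness of the orbit is fine in outline, modulo the citable fact that every indecomposable summand of a cone of a morphism in $\ocC$ is an arc whose endpoints occur among those of the source and target.
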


\begin{figure}[h!]
\centering
\begin{tikzpicture}
\draw (0,0) circle (3cm);
\draw[thin] (0,3) .. controls (0.5,2.6) .. ({3*sin(36)},{3*cos(36)}) node[pos=0.85, below] {\footnotesize $E_{2n-1}$};
\draw[thin] (0,3) .. controls (1,0.3) .. ({3*sin(108)},{3*cos(108)}) node[pos=0.75, below] {\footnotesize $E_{2n-3}$};
\draw[thin] (0,3) .. controls (-0.5,2.6) .. ({3*sin(324)},{3*cos(324)}) node[pos=0.85, below] {\footnotesize $E_1$};
\draw[thin] (0,3) .. controls (-1,0.3) .. ({3*sin(252)},{3*cos(252)}) node[pos=0.75, right] {\footnotesize $\, E_3$};
\draw[thin] (0,3) .. controls (1.5,1.5) .. ({3*sin(72)},{3*cos(72)}) node[pos=0.75, below] {\footnotesize $E_{2n-2}$};
\draw[thin] (0,3) .. controls (0.5,0) .. ({3*sin(144)},{3*cos(144)}) node[pos=0.85, left] {\footnotesize $E_{2n-4}$};
\draw[thin] (0,3) .. controls (-1.5,1.5) .. ({3*sin(288)},{3*cos(288)}) node[pos=0.75, below] {\footnotesize $E_2$};
\draw[thin] (0,3) .. controls (-0.5,0) .. ({3*sin(216)},{3*cos(216)}) node[pos=0.85, right] {\footnotesize $E_4$};
\draw[thick,dotted] ({2.5*sin(190)},{2.5*cos(190)}) -- ({2.5*sin(170)},{2.5*cos(170)});
\path[fill=white] ({3*sin(190)},{3*cos(190)}) -- ({3.1*sin(190)},{3.1*cos(190)}) -- ({3.1*sin(170)},{3.1*cos(170)}) -- ({3*sin(170)},{3*cos(170)}) -- cycle;
\node at ({3.4*sin(0)},{3.4*cos(0)}) {$\mathfrak{a}_1$};
\node at ({3.4*sin(288)},{3.4*cos(288)}) {$\mathfrak{a}_2$};
\node at ({3.4*sin(216)},{3.4*cos(216)}) {$\mathfrak{a}_3$};
\node at ({3.4*sin(144)},{3.4*cos(144)}) {$\mathfrak{a}_{n-1}$};
\node at ({3.4*sin(72)},{3.4*cos(72)}) {$\mathfrak{a}_n$};
\node at ({3.4*sin(324)},{3.4*cos(324)}) {$z_1$};
\node at ({3.4*sin(252)},{3.4*cos(252)}) {$z_2$};
\node at ({3.4*sin(36)},{3.4*cos(36)}) {$z_n$};
\node at ({3.5*sin(108)},{3.5*cos(108)}) {$z_{n-1}$};
\draw[fill=white] ({3*sin(144)},{3*cos(144)}) circle (0.1cm);
\draw[fill=white] ({3*sin(72)},{3*cos(72)}) circle (0.1cm);
\draw[fill=white] ({3*sin(288)},{3*cos(288)}) circle (0.1cm);
\draw[fill=white] ({3*sin(216)},{3*cos(216)}) circle (0.1cm);
\draw[fill=white] (0,3) circle (0.1cm);
\end{tikzpicture}
\caption{A \textit{fan generator} of $\ocC$.}
\label{fig:XandYs}
\end{figure}

Let $E \in \ocC$ be the object in Figure \ref{fig:XandYs}, then $E$ is a generator by \cite[Lemma 4.3.2]{MurphyThesis}, and we call $E$ a \textit{fan generator} of $\ocC$.

\begin{lemma}\label{Lem: Cn has linear gen}
	Let $E \in \ocC$ be a fan generator.
	Then $E$ is a linear generator.
\end{lemma}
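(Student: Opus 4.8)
The plan is to verify that the fan generator $E = \bigoplus_{i=1}^{2n-1} E_i$ satisfies the four axioms (G1)--(G4) of \Cref{def: linear gen}. First, by \Cref{Thm:Gens}, $E$ is indeed a classical generator since it is homologically connected and has a complete orbit in $\mathscr{M}$. The bulk of the work is the combinatorial verification of the axioms using the explicit arc description from Figure \ref{fig:XandYs} together with \Cref{Prop:PY3.14} and \Cref{Lem: Factoring in Cn}.

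For (G1), I would first identify $\operatorname{ind}(\langle E \rangle_1)$, the indecomposable summands of $E$ and all their suspensions/desuspensions. Since all the arcs $E_i$ emanate from the single accumulation point $\mathfrak{a}_1$, their shifts form a family of arcs all having one endpoint equal to $\mathfrak{a}_1$ (suspension fixes endpoints in $L(\sM)$ and moves endpoints in $\sM$ to their predecessor). I would then define the total order on this set: given two such arcs $P = \{\mathfrak{a}_1, p\}$ and $Q = \{\mathfrak{a}_1, q\}$, declare $Q \leq P$ according to the cyclic position of the non-accumulation endpoints relative to $\mathfrak{a}_1$ — concretely, ordering by where $q$ and $p$ sit as one travels around $S^1$ from $\mathfrak{a}_1$. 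Using the middle bullet of \Cref{Prop:PY3.14} (two arcs sharing exactly one accumulation point, rotating one to the other following the orientation), I would check that $\operatorname{Hom}(P, Q[-1])$, equivalently $\operatorname{Hom}(P, Q) \cong k$ precisely when $Q \leq P$ in this order, and vanishes otherwise for $Q \not\cong P$. This is where I must be careful that the shifts interleave correctly so that the order is genuinely total; the "fan" arrangement of the $E_i$ is exactly what makes this work, since between consecutive accumulation points the endpoints $z_i$ sit in a prescribed linear arrangement.

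For (G2), $P \leq P[1]$ amounts to checking that suspension moves the free endpoint "forward" in the chosen order, which follows from the fact that $[1]$ sends a marked point to its predecessor, combined with the orientation convention; I would just trace through the definition. For (G4), I need that if $P \leq Q \leq P[1]$ then $Q \cong P$ or $Q \cong P[1]$: this says there is no shift of an $E_i$ strictly between $P$ and $P[1]$ in the order. This is the key finiteness/density statement and follows because the orbit of $E$ is exactly $\mathscr{M}$ (complete orbit), so there are no "gaps" into which an extra arc could fall — between a marked point and its predecessor there is nothing. Finally, for (G3), the factorisation axiom, I would invoke \Cref{Lem: Factoring in Cn}: for $R \leq Q \leq P$ with all three arcs sharing the endpoint $\mathfrak{a}_1$, writing $R = \{\mathfrak{a}_1, r\}$, $Q = \{\mathfrak{a}_1, q\}$, $P = \{\mathfrak{a}_1, p\}$, the condition $r \geq q \geq p$ (or its mirror, depending on orientation bookkeeping) on the free endpoints is exactly what \Cref{Lem: Factoring in Cn} requires for a morphism $P \to R$ to factor through $Q$, and non-triviality of the factorisation is automatic since $Q$ is not isomorphic to $P$ or $R$ and Hom-spaces are one-dimensional.

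The main obstacle I anticipate is purely bookkeeping: pinning down the total order consistently with the suspension action and the cyclic orientation of $S^1$, and making sure the shifts $E_i[j]$ for all $j \in \mathbb{Z}$ genuinely assemble into a single chain without overlaps or gaps. The "fan" shape is designed precisely so that this works — all arcs pass through $\mathfrak{a}_1$, so comparing them reduces to comparing single endpoints in a linearly ordered (after cutting $S^1$ at $\mathfrak{a}_1$) set of marked points — but getting every inequality in (G1)--(G4) to line up with the right direction convention, and confirming via \Cref{Prop:PY3.14} that no unexpected $\operatorname{Ext}^1$ appears (e.g.\ from the double-limit-arc bullet, which does not apply here since the $E_i$ are limit arcs, not double limit arcs), will require care.
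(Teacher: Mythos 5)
Your proposal is correct and follows essentially the same route as the paper: impose the total order via \Cref{Prop:PY3.14} (all arcs in $\lang[1]{E}$ share the endpoint $\mathfrak{a}_1$, so comparison reduces to comparing free endpoints), verify (G2) and (G4) by tracking how suspension sends the free endpoint to its predecessor, and deduce (G3) from \Cref{Lem: Factoring in Cn}. The only quibble is that (G4) does not really rest on the completeness of the orbit, as you suggest, but simply on the fact that $x^-\leq y\leq x$ forces $y\in\{x,x^-\}$ — which is the observation you arrive at anyway.
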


\begin{proof}
	By \Cref{Prop:PY3.14}, we have the Hom spaces $\ocC(E_i,E_j[l]) \cong k$ if and only if $i < j$, or $i=j$ and $l \leq 0$.
	Hence we may impose a linear ordering on the indecomposable objects in $\lang[1]{E}$, such that $E_j[l] < E_i$ if and only if $\ocC(E_i,E_j[l]) \cong k$.
	It also follows from \Cref{Prop:PY3.14} that $P \leq P[1]$ for all indecomposable objects $P \in \lang[1]{E}$.
	
	Suppose that $P \leq Q \leq P[1]$ for some $Q \in \lang[1]{E}$, and suppose that $P = \{\mathfrak{a}_1,x\}$ for some $x \in \osM$.
	Then $P[1] = \{\mathfrak{a}_1,x^-\}$, and let $Q = \{\mathfrak{a}_1,y\}$.
	By \Cref{Prop:PY3.14}, and the construction of the linear ordering above, $P \leq Q$ if and only if $\mathfrak{a}_1 < y \leq x < \mathfrak{a}_1$, similarly $Q \leq P[1]$ if and only if $\mathfrak{a}_1 < x^- \leq y < \mathfrak{a}_1$.
	Hence either $y = x$ or $y = x^-$, and so either $Q \cong P$ or $Q \cong P[1]$.
	
	Finally, if $R \leq Q \leq P$ in $\lang[1]{E}$, then a morphism from $P$ to $R$ factors non-trivially through $Q$ by \Cref{Lem: Factoring in Cn}.
\end{proof}

Lemma \ref{Lem: Cn has linear gen} means that we may apply the results of Sections \ref{Sec: Linear Gens} and \ref{Sec: Graded Endo} to the category $\ocC$, as it is a Hom-finite, Krull-Schmidt, triangulated category with a linear generator. Importantly, this means that by \Cref{Thm: T is equivalent to perf}, there is an additive equivalence $\ocC \xrightarrow{\sim} \perf(\Lambda^E)$. 

In fact, it follows from subsection \Cref{Subsec: Lambda} (see also \cite[Theorem 4.3.10]{MurphyThesis}) that the graded endomorphism ring of $E$ is isomorphic to $\Lambda_n$. We thus have the following result.

\begin{theorem}\label{Thm: Add Equiv}
	Let $\ocC$ be a Paquette-Y\i ld\i r\i m category with $n$ accumulation points.
	Then there is a additive equivalence
	\[
	\mathscr{F} \colon \ocC \xrightarrow{\sim} \mathrm{perf}\Lambda_n,
	\]
	such that $\mathscr{F}$ commutes with the respective suspension functors, and preserves triangles with two indecomposable terms.
	
	Moreover, let $\cS$ be a triangulated category satisfying \Cref{setup: cluster cat}, admitting a linear generator $H$.
	Then $\cS$ is additively equivalent to a thick subcategory of $\ocC$ for some $n \in \mathbb{Z}$.
\end{theorem}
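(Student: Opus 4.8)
For the first statement the plan is to invoke the machinery developed earlier. A fan generator $E\in\ocC$ is a linear generator by \Cref{Lem: Cn has linear gen}; it is moreover minimal, and by \Cref{Subsec: Lambda} its graded endomorphism algebra satisfies $\Endo[\ocC]{\ast}{E}\cong\chi^E=\Lambda_n$, so the DG algebra $\Lambda^E$ (with trivial differential) is $\Lambda_n$. Feeding $G=E$ into \Cref{Thm: T is equivalent to perf} then produces the additive equivalence $\mathscr{F}\colon\ocC\xrightarrow{\sim}\perf(\Lambda^E)=\perf\Lambda_n$ which commutes with suspension and preserves triangles with two indecomposable terms.

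For the second statement, I would first reduce to the case where the linear generator $H$ of $\cS$ is minimal: a direct summand of a linear generator which still generates is again a linear generator (properties \ref{G1}--\ref{G4} of \Cref{def: linear gen} restrict), so after discarding redundant summands we may assume the indecomposable summands $H_1,\dots,H_m$ of $H$ are pairwise non-isomorphic up to suspension; reorder them so that $H_1<\dots<H_m$. Using \ref{G2} and \ref{G4} of \Cref{def: linear gen}, the totally ordered set $\mathrm{ind}\,\lang[1]{H}$ then breaks into consecutive blocks $\mathscr{B}_1<\dots<\mathscr{B}_m$, where $\mathscr{B}_i=\{H_i[s]:s\in\mathbb{Z}\}$ has order type $\mathbb{Z}$ when $H_i\not\cong H_i[1]$ and is a single point when $H_i\cong H_i[1]$.

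The heart of the argument is to realise $\lang[1]{H}$ as a full, suspension-closed subcategory of $\lang[1]{E}$ for a fan generator $E=\bigoplus_{j=1}^{2n-1}E_j$ of $\ocC$ with $n$ large. By \Cref{Prop:PY3.14}, $\mathrm{ind}\,\lang[1]{E}$ admits the analogous block decomposition $\mathscr{C}_1<\dots<\mathscr{C}_{2n-1}$ with $\mathscr{C}_j$ of type $\mathbb{Z}$ for $j$ odd and a single point for $j$ even. A suspension-equivariant functor must send a point-block to a point-block and a $\mathbb{Z}$-block to a $\mathbb{Z}$-block, so I would choose, for $n$ large enough (e.g.\ $n=m+1$), a strictly increasing map $\sigma\colon\{1,\dots,m\}\to\{1,\dots,2n-1\}$ with $\sigma(i)$ odd exactly when $\mathscr{B}_i$ is of type $\mathbb{Z}$; such a $\sigma$ exists because we only need an order embedding, not a surjection, and the strictly alternating target sequence has only single gaps, so any finite pattern of $\mathbb{Z}$-blocks and point-blocks threads through it. Put $F(H_i[s])=E_{\sigma(i)}[s]$ on indecomposables (consistent, since $E_{\sigma(i)}\cong E_{\sigma(i)}[1]$ precisely when $H_i\cong H_i[1]$) and, on morphisms, send a composition-compatible choice of Hom-space generators for $\lang[1]{H}$ (furnished by \Cref{Prop1: Generators of Hom}, restricted) to the corresponding generators for $\lang[1]{E}$; this is well defined because in both categories \ref{G3} forces the composite of two such generators to be again such a generator. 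The resulting $F\colon\lang[1]{H}\to\lang[1]{E}$ is additive, fully faithful and commutes with suspension, so \Cref{Thm1: F preserves 2 inds} yields a fully faithful additive functor $\mathscr{F}\colon\cS\to\ocC$ which commutes with suspension and preserves triangles with at least two indecomposable terms.

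Finally I would identify the essential image of $\mathscr{F}$. It is a full subcategory of $\ocC$ closed under finite sums, direct summands (since $\cS$ is Krull--Schmidt and $\mathscr{F}$ is fully faithful and additive) and $[\pm 1]$, hence it is the additive closure of its set of indecomposables, which by the formula for $\mathscr{F}$ on objects in the proof of \Cref{Thm1: F preserves 2 inds} together with \Cref{Prop1: Ind in langle 2} equals $\{F(P):P\in\mathrm{ind}\,\lang[1]{H}\}\cup\{X_{F(P),F(Q)}\}$. On the other hand, $\mathscr{F}(H)=F(H)$ is a linear generator of the thick subcategory $\lang{\mathscr{F}(H)}\subseteq\ocC$: its $\lang[1]{}$ has indecomposables $\{F(P)\}$, and the isomorphisms $\ocC(F(P),F(Q))\cong\cS(P,Q)$ transport \ref{G1}--\ref{G4} of \Cref{def: linear gen} verbatim. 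Applying \Cref{Thm1: Perf 2 is Tri} and \Cref{Prop1: Ind in langle 2} to this thick subcategory shows its indecomposables are again exactly $\{F(P)\}\cup\{X_{F(P),F(Q)}\}$, so the essential image of $\mathscr{F}$ coincides with $\lang{\mathscr{F}(H)}$ and $\mathscr{F}$ is an additive equivalence onto it, as required. The main obstacle is the third step: one must check both that the rigidly alternating block pattern of a fan generator can absorb an arbitrary block pattern (this is exactly why $n$ must be allowed to grow, accounting for the ``for some $n$'') and that $F(H)$ is genuinely a linear generator of its thick closure, so that \Cref{Prop1: Ind in langle 2} applies there.
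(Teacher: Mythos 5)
Your proposal is correct and follows essentially the same route as the paper: the first part is obtained by feeding the fan generator into Theorem \ref{Thm: T is equivalent to perf}, and the second by embedding $\lang[1]{H}$ into $\lang[1]{E}$ for a fan generator of $\ocC[m+1]$ via $H_i\mapsto E_{2i-1}$ or $E_{2i}$ according to whether $H_i\cong H_i[1]$, then applying Theorem \ref{Thm1: F preserves 2 inds} and identifying the essential image with $\lang{F(H)}$. Your explicit reduction to a minimal $H$ (so the summands are pairwise non-isomorphic up to suspension) is a point the paper leaves implicit, and it is in fact needed for the block decomposition and the well-definedness of $F$ on objects.
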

\begin{proof}
	It is clear that the fan generator $E$ satisfies the assumptions of \Cref{Subsec: Lambda}. Hence, the existence of the additive equivalence $\mathscr{F} \colon \ocC \xrightarrow{\sim} \mathrm{perf}\Lambda_n$ follows from \Cref{Thm: T is equivalent to perf}.

	For the second statement, let $\cS$ me a thick subcategory of $\ocC$ for some $n \in \bZ$, and suppose that $H$ is a linear generator for $\cS$. To prove our claim, it is enough to show that there exists a fully faithful additive functor $F' \colon \lang[1]{H} \rightarrow \lang[1]{E} \subset \ocC$ for some $n \in \mathbb{Z}^+$ by \Cref{Thm: Add Equiv} and that the image  $\mathscr{F}'(\lang{H})$ of $\lang{H}$ under the induced functor $\mathscr{F}'$ is a thick subcategory of $\ocC$.
	
	Suppose that $H \cong \bigoplus_{i=1}^m H_i$, and let $E \in \ocC[m+1]$.
	Then we define the functor $F' \colon \lang[1]{H} \rightarrow \lang[1]{E}$ such that;
	\[
	F'(H_i) \cong \begin{cases*}
		E_{2i-1} & if $H_i < H_i[1]$,\\
		E_{2i} & if $H_i \cong H_i[1]$.
	\end{cases*}
	\]
	Then it is clear that $F'$ is a fully faithful additive functor by \Cref{def: linear gen}, and the construction of $E \in \ocC[m+1]$.
	
	Finally, to see that $\mathscr{F}'(\lang{H})$ is a thick subcategory of $\ocC$, consider that we need to prove that $\mathscr{F}'(\lang{H}) = \lang{E'}$ for the direct summand $E'=F'(H)$ of $E$. Since $\mathscr{F}'$ is an equivalence, and $\lang{E'}$ is a full subcategory, it suffices to show that the objects in $\lang{E'}$ are the same as the objects in $\mathscr{F}'(\lang{H})$.
	Given that $E'$ is itself a linear generator of $\lang{E'}$, then every indecomposable object of $\lang{E'}$ fits into a triangle with at least two indecomposable terms in $\lang[1]{E'}$ by \Cref{Cor: Ind in a tri with PQ}.
	However, the functor $\mathscr{F}'$ preserves triangles with at least two indecomposable terms by \Cref{Thm: Add Equiv}, and $\mathscr{F}'(\lang[1]{H}) = \lang[1]{E'}$, an indecomposable object is in the thick closure of $E'$ if and only if it is in the image of $\cS = \lang{H}$ under $\mathscr{F}'$.
	Hence $\cS$ is additively equivalent to a thick subcategory of $\ocC[m+1]$.
\end{proof}

\begin{remark}
	We note that the characterisation of $\cS$ as additively equivalent to a thick subcategory of $\ocC[m+1]$ is not unique.
	For instance, we may consider a linear generator $H \cong \bigoplus_{i=1}^m H_i$ such that $H_i < H_i[1]$ for all $i$.
	Then one may check that $\cS$ is also additively equivalent to a thick subcategory of $\ocC[m]$, under that assignment $F'(H_i) = E_{2i-1}$.
\end{remark}

\appendix

\section{Choice of Generators for Hom-sets in Categories with Linear Generators} \label[appendix]{sec: choice of gen}

The goal of this section is to present a proof and explore some corollaries of the following result.

\begin{proposition}
	Let $\mathrm{ind} \, \cT$ denote the set of indecomposable objects in $\cT$. Then, there is a choice of morphisms $\{ \alpha^{Y, Z} \colon Y \to Z \}_{Y, Z \in \mathrm{ind} \, \cT}$ such that for any pair of indecomposable objects $Y,Z \in \cT$ 
	\[
	\alpha^{Y,Z} = \alpha^{W,Z} \alpha^{Y,W},
	\]
	whenever $\alpha^{Y, Z} \colon Y \rightarrow Z$ factors through an indecomposable object $W$.
\end{proposition}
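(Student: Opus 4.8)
The plan is to build the family $\{\alpha^{Y,Z}\}$ by working relative to a distinguished indecomposable object. As in the later proof of \Cref{Cor: Mor in Triangles}, one fixes indecomposable objects $\iP, \iQ \in \lang[1]{G}$ with $\iQ \leq \iP$ and sets $\iX := X_{\iP,\iQ}$, chosen so that $\iQ$ is small enough and $\iP$ is large enough that for every indecomposable $W \in \cT$ there are nonzero \emph{forwards} morphisms into $W$ from a "source" object built out of $\iP,\iQ$ (concretely: from $\iP$ or $\iQ[t]$ when $W \in \lang[1]{G}$, and from $X_{\iP,Q}$ for a suitable shift when $W = X_{P,Q}$). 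The existence of such universal sources, and the fact that the relevant maps can be taken to be forwards morphisms, is exactly what \Cref{Lem1: Forward and Back morphisms}, \Cref{Lem1: Hom P to X}, \Cref{Lem1: Hom X to P}, \Cref{Lem1: Morphisms X to X} and \Cref{Prop1: Factoring arcs} are set up to provide.

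\textbf{Step 1: a compatible "spine".} First I would fix, for each indecomposable $Y$, a nonzero morphism $s_Y$ from the chosen source object $S_Y$ to $Y$, insisting that $s_Y$ is a forwards morphism, and that $s_{S_Y} = \id$. Using \Cref{Prop1: Factoring arcs}, whenever there is a nonzero $Y \to Z$ that is itself forwards and the composite of sources is consistent, the triangle of sources forces a unique scalar relation; so one can rescale the $s_Y$ so that composites along the "spine" are strictly compatible. The key input here is that forwards morphisms compose to forwards morphisms (\Cref{Prop1: Factoring arcs}\,\eqref{for for}), so the subfamily supported on forwards morphisms is closed under composition, and \Cref{Lem1: X to X cone} controls cones so that the relevant Hom-spaces witnessing factorisations are one-dimensional with no obstruction.

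\textbf{Step 2: transport and define $\alpha^{Y,Z}$.} For indecomposables $Y,Z$ with $\hcT{Y}{Z}\cong k$, I would define $\alpha^{Y,Z}$ by choosing the generator that is compatible with the spine: if the unique nonzero class $Y\to Z$ is forwards, take the one induced from $s_Y$ and $s_Z$ via the morphism of triangles in \Cref{Lem1: Morphisms X to X}; if it is backwards, use \Cref{Lem1: Forward and Back morphisms}\,\eqref{backward to P} and \Cref{Lem1: Morphisms X to X}\,\eqref{En1: Morph 2} to factor it as a forwards map followed by one canonical backwards map (e.g. a connecting map of a defining triangle, normalised once and for all using \Cref{rem: unique up to canonical iso} and the scalar $\delta$ of \Cref{Cor: Mor in Triangles}). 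When $\hcT{Y}{Z}=0$ set $\alpha^{Y,Z}=0$. One then verifies the required identity $\alpha^{Y,Z}=\alpha^{W,Z}\alpha^{Y,W}$ whenever $\alpha^{Y,Z}$ factors through the indecomposable $W$: \Cref{Prop1: Factoring arcs} tells us exactly which of $\alpha^{Y,W},\alpha^{W,Z}$ are forwards and which backwards in each of the three surviving configurations, and in each case \Cref{Lem1: X to X cone} makes the relevant mapping-cone Hom-space vanish, so the long exact sequence (as in the proofs of \Cref{Lem1: P to X via X}, \Cref{Lem1: X to P via X}, \Cref{Lem1: X to X via P}) shows the precomposition map is an isomorphism and the composite is a nonzero scalar multiple of $\alpha^{Y,Z}$ — and the normalisation from Step 1 pins that scalar to $1$.

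\textbf{Main obstacle.} The genuine difficulty is \emph{coherence}: a single morphism $Y\to Z$ may factor through several different intermediate indecomposables $W, W', \dots$, each factorisation imposing its own scalar constraint, and one must check that these constraints are mutually consistent before any normalisation is possible. The way around this is precisely the spine construction: by routing \emph{every} object through the universal source objects built from $\iP,\iQ,\iX$ and recording that all the building-block maps are forwards morphisms (closed under composition by \Cref{Prop1: Factoring arcs}), every factorisation scalar is expressed in terms of spine scalars, so consistency is automatic and a single global rescaling of the $s_Y$ suffices. The backwards maps contribute only through the one universally-fixed connecting morphisms and the scalar $\delta$, which \Cref{Cor: Mor in Triangles} already shows is object-independent. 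I expect the write-up to be long mainly because of the eight-way case analysis on the forms of $Y,Z,W$, not because of any conceptual gap.
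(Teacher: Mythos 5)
Your proposal is essentially the paper's own argument: the paper's proof (in \Cref{sec: choice of gen}) also fixes a maximal summand $\iP$ of $G$, uses its shifts $\iP[m]$ as universal sources for $\lang[1]{G}$, uses the objects $X_{\iP[t],\iQ}$ as sources for the cone objects, and defines each $\alpha^{Y,Z}$ as the unique morphism making the triangle through the source commute --- exactly your ``spine'', with coherence following because the precomposition maps between one-dimensional Hom-spaces are forced to be isomorphisms. The remaining work in both write-ups is the same case analysis on whether $Y$, $Z$, $W$ lie in $\lang[1]{G}$ and on forwards versus backwards, driven by \Cref{Prop1: Factoring arcs}.

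One point needs correcting. You propose to normalise the backwards morphisms using the connecting maps of defining triangles together with the object-independence of the scalar $\delta$ from \Cref{Cor: Mor in Triangles}. That corollary is proved \emph{from} \Cref{Prop1: Generators of Hom}, so invoking it here is circular. It is also unnecessary: the paper handles backwards morphisms $X_{R,S}\to X_{P',Q'}$ without any reference to connecting maps or to $\delta$, simply by choosing an intermediate $P\in\lang[1]{G}$ with $Q'\leq P\leq R[1]$ and setting $\alpha^{Y,Z}:=\alpha^{P,Z}\circ\alpha^{Y,P}$, where $\alpha^{Y,P}$ has already been pinned down in the earlier step via the sources $X_{\iP[t],\iQ}$ (note the factorisation is backwards-then-forwards through $P$, by \Cref{Lem1: Forward and Back morphisms}, not forwards-then-backwards as you wrote). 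With that substitution your argument goes through and coincides with the paper's.
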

\begin{proof}
	Since $\cT$ is Krull-Schmidt, then $G$ is a finite direct sum of indecomposable objects $\{P_i\}_{i =1}^n$. Throughout this proof, let $\iP = P_j$ and $\iQ = P_k$ where $\iQ = P_k \leq P_i \leq P_j =\iP$ for all $1 \leq i \leq n$. 
	\begin{enumerate}
	\item We first choose $\{ \alpha^{P, Q} \}_{P, Q \in \mathrm{ind} \, \lang[1]{G}}$.

	Let $\alpha^{\iP, \iP}$ be the identity, and fix nonzero morphisms $\alpha^{\iP, Q}$ for $Q < \iP$. For $0< j \in \bZ$, choose nonzero morphisms $\alpha^{\iP[j], \iP}$, and let $\alpha^{\iP[j], Q} := \alpha^{\iP[j], \iP} \circ \alpha^{\iP, Q}$. Extend this choice to include all shifts of the morphisms defined.

	Let $P, Q \in \lang[1]{G}$ such that there is a nonzero morphism $P \to Q$. Equivalently, $Q \leq P$. Then, there are integers $i, j$ and $s,t$ such that $P \cong P_i[s]$ and $Q \cong P_j[t]$. Let $m > \max(s, t)$. Then, by \Cref{Prop1: Factoring arcs} there is a morphism $h$ making the diagram 
\[ \begin{tikzcd}
	{} & \iP[m] \arrow[dl, "{\alpha^{\iP[m-s], P_i}[s]}", swap] \arrow[dr, "{\alpha^{\iP[m-t], P_j}[t]}"] & {} \\
	P \cong P_i[s] \arrow[rr, "h", dashed] & {} & Q \cong P_j[t] 
   \end{tikzcd}
\]
commute. We let $\alpha^{P, Q} := h$. In fact, $h$ is independent of choice of $m$. To see this, let $m_1, m_2 > \max(s, t)$ and let $h_1 \colon P \to Q$ and $h_2 \colon P \to Q$ be the morphisms induced by $m_1$ and $m_2$, respectively. Without loss of generality, suppose $m_2 < m_1$ so that there is a morphism $\alpha^{\iP[m_1], \iP[m_2]}$. Then, we have the following diagram.
	\[
	\begin{tikzcd}
		&& \iP[m_1] \arrow[llddd,swap,"\alpha^{\iP[m_1],P}", bend right=3em] \arrow[dd,"\alpha^{\iP[m_1],\iP[m_2]}",pos=0.75] \arrow[rrddd,"\alpha^{\iP[m_1],Q}", bend left=3em] &&\\
		\\
		&& \iP[m_2] \arrow[rrd,"\alpha^{\iP[m_2],Q}",pos=0.1] &&\\
		P  \arrow[rrrr,swap,"h_2", yshift=-0.5em] \arrow[rrrr,swap,"h_1", yshift=0.1em, swap] \arrow[rru,"\alpha^{\iP[m_2],P}", pos=0.9, leftarrow] &&&& Q.
	\end{tikzcd}
	\]
	By construction of the maps $\alpha^{\iP[k], R}$, the top two triangles commute. Hence, 
\[	h_1 \circ \alpha^{\iP[m_1], P} = \alpha^{\iP[m_1], Q} = \alpha^{\iP[m_2], Q} \circ \alpha^{\iP[m_1], \iP[m_2]} = h_2 \circ \alpha^{\iP[m_2], P} \circ \alpha^{\iP[m_1], \iP[m_2]} = h_2 \circ \alpha^{\iP[m_1], P} \]  
	Since the Hom spaces are one-dimensional, $\hcT{\alpha^{\iP[m_1], P} }{Q}$ is either zero or an isomorphism. Since $\alpha^{\iP[m_1], Q}$ is nonzero, then it must be an isomorphism. Hence, $h_1 = h_2$, as required.

	\item The choice of $\alpha^{P, Q}$ is compatible with composition. 

	To see this, note first that $\alpha^{P, Q}$ cannot factor through $W \notin \lang[1]{G}$ by \Cref{Prop1: Factoring arcs}. So, suppose that $\alpha^{P, Q}$ factors through $R \in \lang[1]{G}$. Then, $Q \leq R \leq P$ and we may choose a large enough $m$ such that diagram
	\[
	\begin{tikzcd}
		&& \iP[m] \arrow[lldd,swap,"\alpha^{\iP[m],P}", bend right=3em] \arrow[d,"\alpha^{\iP[m],R}",pos=0.35] \arrow[rrdd,"\alpha^{\iP[m],Q}", bend left=3em] &&\\
		&& R \arrow[rrd,"\alpha^{R,Q}",pos=0.1] &&\\
		P  \arrow[rrrr,swap,"\alpha^{P, Q}"] \arrow[rru,"\alpha^{P, R}", pos=0.9] &&&& Q.
	\end{tikzcd}
	\]
 	exists. The top two triangles and the outer triangle commute by construction of $\alpha^{P, R}$, $\alpha^{R, Q}$ and $\alpha^{P, Q}$. Therefore, 
	\[ \alpha^{R, Q} \circ \alpha^{P, R} \circ \alpha^{\iP[m], P}  = \alpha^{\iP[m], Q} = \alpha^{P, Q} \circ \alpha^{\iP[m], P}.\]
	As before, this is enough to conclude that $\alpha^{R, Q} \circ \alpha^{P, R} = \alpha^{P, Q}$, as required.
	
	\item We next choose morphisms $\{\alpha^{P, Z}\}_{P \in \mathrm{ind} \, \lang[1]{G}, Z \in \mathrm{ind} \, \cT \setminus \mathrm{ind} \, \lang[1]{G}}$.

	For each indecomposable object $Z \notin \lang[1]{G}$, \Cref{Thm1: Perf 2 is Tri} implies that $Z \cong X_{P, Q}$ for $P, Q \in \lang[1]{G}$. For each such $Z$, choose a nonzero morphism $\alpha^{Q, Z} \in \hcT{Q, Z}$. Moreover, for $R \in \lang[1]{G}$ such that there is a nonzero morphism $R \to Z$, then there is a morphism $R \to Q$ by \Cref{Prop1: Factoring arcs}. Thus, define $\alpha^{R, Z} := \alpha^{Q, Z} \circ \alpha^{R, Q}$. 

	\item Let us choose forwards morphisms $\{\alpha^{Y, Z}\}_{Y, Z \in \mathrm{ind} \, \cT \setminus \mathrm{ind} \, \lang[1]{G}}$. 
	
	Since $Y, Z \in \mathrm{ind} \, \cT \setminus \mathrm{ind} \, \lang[1]{G}$, then $Y \cong X_{P, Q}$ and $Z \cong X_{P', Q'}$. Therefore, by \Cref{Prop1: Factoring arcs}, there exists for any $Q \leq R < P$ a morphism $h$ making the diagram
	\[
	\begin{tikzcd}
		& R \arrow[ld,"\alpha^{R, Y}",swap] \arrow[rd,"\alpha^{R, Z}"] &\\
		X_{P,Q} \arrow[rr,dashed, "h",swap] && X_{P',Q'}.
	\end{tikzcd}
	\]
commute. We let $\alpha^{Y, Z} := h$. As in the proof of (1), $h$ is actually independent of the choice of $R$ by construction of the morphisms $\alpha^{R, Y}$ and $\alpha^{R, Z}$. 

	We will postpone showing compatibility of this choice with composition until we have defined our choices for backwards morphisms.  

	\item We now choose morphisms $\{\alpha^{Y, Q}\}_{Q \in \mathrm{ind} \, \lang[1]{G}, Y \in \mathrm{ind} \, \cT \setminus \mathrm{ind} \, \lang[1]{G}}$.

	To ease notation, let $\iX^t := X_{\iP[t], \iQ}$. Choose a nonzero morphism $\alpha^{\iX^0, \iP[1]} \colon \iX^0 \to \iP[1]$. By \Cref{Prop1: Factoring arcs}, for $S \in \lang[1]{G}$ and $t \geq 0$, there exist morphisms $\alpha^{\iX^t, \iP[t+1]}$ and $\alpha^{X_{\iP[t],S}, \iP[t+1}]$ making the following diagram commute.
\[\begin{tikzcd}[column sep=6em]
	X_{\iP[t], S} \arrow[r, "{\alpha^{X_{\iP[t], S}, \iX^t}}"] \arrow[d, "{\alpha^{X_{\iP[t], S},\iP[t+1]}}", swap, dashed]  & X_{\iP[t], \iQ} \arrow[r, "{\alpha^{\iX^t, \iX^0}}"] \arrow[d, "{\alpha^{\iX^t, \iP[t+1]}}", swap, dashed]  & X_{\iP, \iQ} \arrow[d, "{\alpha^{\iX, \iP[1]}}", swap] \\
	\iP[t+1] \arrow[r, "{\alpha^{\iP, \iP}[t+1]}"] & \iP[t+1] \arrow[r, "{\alpha^{\iP[t+1], \iP[1]}}"] & \iP[1]
\end{tikzcd}
\]
	
	Now, let $Y \notin \lang[1]{G}$, so that we may assume $Y = X_{R, S}$, where $R = P_i[t]$ for integers $i$ and $t$. By choosing $\kmax > \max(t, 0)$, we have that $R \leq \iP[\kmax]$ so that, by \Cref{Lem1: Morphisms X to X}, there is a forwards morphism $X^{\iP[\kmax], S} \to X_{R, S}$. Thus, by \Cref{Prop1: Factoring arcs}, there is a nonzero morphism $\alpha^{Y, R[1]}$ making the following diagram commute.
\[\begin{tikzcd}[column sep=6em]
	X_{\iP[\kmax], S} \arrow[d, "{\alpha^{X_{\iP[\kmax], S},\iP[\kmax+1]}}"] \arrow[r, "{\alpha^{X_{\iP[\kmax], S}, Y}}"] & X_{R, S} \arrow[d, "{\alpha^{Y, R[1]}}", dashed] \\
	\iP[\kmax+1] \arrow[r, "{\alpha^{\iP[\kmax+1], R[1]}}"] & R[1]
\end{tikzcd}
\]
	As before, our choice of $\alpha^{Y, R[1]}$ does not depend on which $\kmax$ we choose. 
 
	Next, suppose that there is a nonzero morphism $Y \to Q$. Then, $S \leq Q < R$ by \Cref{Lem1: Hom X to P}, and so we let $\alpha^{Y, Q} := \alpha^{R[1], Q} \circ \alpha^{Y, R[1]}$.

	\item It remains to check that our choice of $\{\alpha^{Y, Q}\}_{Q \in \mathrm{ind} \, \lang[1]{G}, Y \in \mathrm{ind} \, \cT \setminus \mathrm{ind} \, \lang[1]{G}}$ is compatible with composition. 

	To see this, suppose that a morphism $Y \to Q$ factors through an indecomposable object $W$. If $W \in \lang[1]{G}$, then it is easy to check that, by construction, $\alpha^{Y, Q} = \alpha^{W, Q} \circ \alpha^{Y, W}$. So, suppose that $W \notin \lang[1]{G}$. Then, $W \cong X_{P', Q'}$ and, by \Cref{Prop1: Factoring arcs}, morphisms $Y \to W$ are forwards morphisms.  Moreover, by taking low enough $\kmin$ and high enough $\kmax$, there are forwards morphisms $X_{\iP, \iQ[\Delta]}[\kmax] \to Y$ and $X_{\iP, \iQ[\Delta]}[\kmax] \to X$. For brevity, let $Z:= X_{\iP, \iQ[\Delta]}[\kmax]$. We thus have the diagram
	\[
	\begin{tikzcd}
		&& Z \arrow[llddd,swap,"\alpha^{Z, Y}"] \arrow[dd,"\alpha^{Z,Q}",pos=0.75] \arrow[rrddd,"\alpha^{Z,W}"] &&\\
		\\
		&& Q \arrow[rrd,"\alpha^{W,Q}",pos=0.1, leftarrow] &&\\
		Y  \arrow[rrrr,swap,"\alpha^{Y, W}"] \arrow[rru,"\alpha^{Y, Q}", pos=0.9] &&&& W.
	\end{tikzcd}
	\]
	where the top two triangles and the outer triangle commute by construction of the morphisms in the diagram. It is straightforward to check that
\[ \alpha^{Y, Q} \circ \alpha^{Z, Y} = \alpha^{Z,Q} = \alpha^{W, Q} \circ \alpha^{Z, W} = \alpha^{W, Q} \circ \alpha^{Y, W} \circ \alpha^{Z, Y} \]
which, as in the proof of (1), is enough to conclude that $\alpha^{Y, Q} = \alpha^{W, Q} \circ \alpha^{Y, W}$, as required.

\item We complete our choice by choosing backwards morphisms $\{\alpha^{Y, Z}\}_{Y, Z \in \mathrm{ind} \, \cT \setminus \mathrm{ind} \, \lang[1]{G}}$.

	Write $Y = X_{R,S}$ and $Z = X_{P', Q'}$, and suppose that $Y \to Z$ is a backwards morphism. Then, for any $Q' \leq P \leq R[1]$, \Cref{Prop1: Factoring arcs} implies the existence of the diagram
\[\begin{tikzcd}
	Y \arrow[rr, "\alpha^{Y, Z}", dashed] \arrow[dr, "\alpha^{Y, P}"] && Z \arrow[dl, "\alpha^{P, Z}", leftarrow] \\
        {} & P & {}
\end{tikzcd}
\] 
where we define $\alpha^{Y, Z}$ to be the morphism making the diagram commute. As before, this choice is independent of $P$. 

\item Finally, we show that our choices are compatible with composition. 

	Consider $\alpha^{Y, Z} \colon Y \to Z$, and suppose it factors through an indecomposable object $W$. If both $Y, Z \in \lang[1]{P}$, then $W \in \lang[1]{G}$ by \Cref{Prop1: Factoring arcs}, and we have shown in (2) that compatibility of composition holds. 

	Suppose that $Y \in \lang[1]{G}$ and $Z =X_{R, S}$. If and $W \in \lang[1]{G}$,  then by (1) and the definition of $\alpha^{Y, Z}$ and $\alpha^{W, Z}$,  
\[ \alpha^{W, Z} \circ \alpha^{Y, W} = \alpha^{S, Z} \circ \alpha^{W, S} \circ \alpha^{Y, W} = \alpha^{S, Z} \circ \alpha^{Y, S} = \alpha^{Y, Z} \]
as required. If, on the other hand, $W = X_{P, Q}$, then morphisms $W \to Z$ are forwards by \Cref{Prop1: Factoring arcs}. Hence,  $\alpha^{W, Z} \circ \alpha^{Y, W} = \alpha^{Y, Z}$ by definition of forwards morphisms $\alpha^{W, Z}$ in (4).

	The case where $Y = X_{P, Q}$ and $Z \in \lang[1]{G}$ was shown in (6). Hence, it remains to consider the setting of $Y = X_{P, Q}$ and $Z = X_{R, S}$.
	
	Under the assumption $Y = X_{P, Q}$ and $Z = X_{R, S}$, suppose that $\alpha^{Y, Z}$ is a forwards morphism. Then by 
\Cref{Prop1: Factoring arcs}, $W=X_{P',Q'}$ and the morphisms $\alpha^{Y, W}$ and $\alpha^{W, Z}$ are forwards. It follows from \Cref{Lem1: Hom P to X,Lem1: Morphisms X to X}, as well as the choice of forwards morphisms in (4) that there is that there is a diagram
	\[
	\begin{tikzcd}
		&& Q \arrow[llddd,swap,"\alpha^{Q,Y}"] \arrow[dd,"\alpha^{Q,W}",pos=0.75] \arrow[rrddd,"\alpha^{Q,Z}"] &&\\
		\\
		&& W \arrow[rrd,"\alpha^{W,Z}",pos=0.1] &&\\
		Y  \arrow[rrrr,swap,"\alpha^{Y, Z}"] \arrow[rru,"\alpha^{Y, W}", pos=0.9] &&&& Z.
	\end{tikzcd}
	\]
	where the top two triangles and the outer triangle commute. It is easily checked that this implies that the entire diagram commutes (see e.g. \. (2) ), which proves compatibility with composition. 

	Finally, suppose that $Y = X_{P, Q}$, $Z = X_{R, S}$, and $\alpha^{Y, Z}$ is a backwards morphism. If $W \in \lang[1]{G}$, then compatibility follows from the definition of $\alpha^{Y, Z}$ in (7). If $W = X_{P', Q'}$ and $\alpha^{Y, W}$ is a backwards morphism, then $\alpha^{W, Z}$ is forwards and the following diagram exists by \Cref{Lem1: Hom P to X,Lem1: Morphisms X to X} and \Cref{Prop1: Factoring arcs}.
	\[
	\begin{tikzcd}
		&& Q' \arrow[llddd,swap,"\alpha^{Q',Y}", leftarrow] \arrow[dd,"\alpha^{Q',W}",pos=0.75] \arrow[rrddd,"\alpha^{Q',Z}"] &&\\
		\\
		&& W \arrow[rrd,"\alpha^{W,Z}",pos=0.1] &&\\
		Y  \arrow[rrrr,swap,"\alpha^{Y, Z}"] \arrow[rru,"\alpha^{Y, W}", pos=0.9] &&&& Z.
	\end{tikzcd}
	\]
	Here, the top left triangle and the outer triangle commute by the choice of backwards morphisms in (7), and the top right triangle commutes by the choice of forwards morphism in (4). As before, it is easily checked that this implies that the entire diagram commutes, as required.

	On the other hand, if, under the assumption that $W = X_{P', Q'}$, $\alpha^{Y, W}$ is a forwards morphism, then $\alpha^{W, Z}$ is backwards and the following diagram exists by \Cref{Lem1: Hom P to X,Lem1: Morphisms X to X} and \Cref{Prop1: Factoring arcs}.
	\[
	\begin{tikzcd}
		&& Q \arrow[lldddd,swap,"\alpha^{Q,Y}", bend right = 3em] \arrow[d,"\alpha^{Q,S}",pos=0.75, leftarrow] \arrow[rrdddd,"\alpha^{Q,W}", bend left = 3em] &&\\
		&& S \arrow[llddd,swap,"\alpha^{Y,S}", leftarrow] \arrow[dd,"\alpha^{S,Z}",pos=0.75] \arrow[rrddd,"\alpha^{W, S}", leftarrow] &&\\
		\\
		&& Z \arrow[rrd,"\alpha^{W,Z}",pos=0.1, leftarrow] &&\\
		Y  \arrow[rrrr,swap,"\alpha^{Y, W}"] \arrow[rru,"\alpha^{Y, Z}", pos=0.9] &&&& W.
	\end{tikzcd}
	\]
	The triangle with vertices $Y, Q, W$ commutes by construction of forwards morphisms in (4), and the triangle with vertices $Y, S, W$ commutes by (6). Moreover, the triangles with vertices $Y, S, Z$ and $Z, S, W$ commute by choice of backwards morphisms in (7). Hence, 
\[ \alpha^{W, Z} \circ \alpha^{Y, W} \circ \alpha^{Q, Y} = \alpha^{W, Z} \circ \alpha^{Q, W} = \alpha^{S, Z} \circ \alpha^{W, S} \circ \alpha^{Q, W} = \alpha^{S, Z} \circ \alpha^{Y, S} \circ \alpha^{Q, Y} = \alpha^{Y, Z} \circ \alpha^{Q, Y} \]
which implies that $\alpha^{W, Z} \circ \alpha^{Y, W} = \alpha^{Y, Z}$, as required. 
	\end{enumerate}
\end{proof}

\addcomment{
\begin{proposition}\label[prop]{Prop1: Generators of Hom}
	Let $\cT$ be a Krull-Schmidt, Hom-finite, triangulated category with a linear generator.
	For any pair of indecomposable objects $X,Z \in \cT$ such that $\hcT{X}{Z} \neq 0$, there exists a choice of generator $\alpha^{X,Z} \in \hcT{X}{Z}$, such that 
	\[
	\alpha^{X,Z} = \alpha^{Y,Z} \alpha^{X,Y},
	\]
	whenever $X \rightarrow Z$ factors through an indecomposable object $Y$.
\end{proposition}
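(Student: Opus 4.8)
The plan is to build the family $\{\alpha^{X,Z}\}$ in stages, using the stratification of $\mathrm{ind}\,\cT$ coming from \Cref{Prop1: Ind in langle 2} and \Cref{Thm1: Perf 2 is Tri}: every indecomposable object is either some $P \in \lang[1]{G}$ or a cone $X_{P,Q}$ with $P,Q \in \lang[1]{G}$ indecomposable. I would fix generators in the order: (i) morphisms between objects of $\lang[1]{G}$; (ii) morphisms from $\lang[1]{G}$ into the objects $X_{P,Q}$, which are forwards by \Cref{Lem1: Forward and Back morphisms}; (iii) forwards morphisms between objects $X_{P,Q}$; (iv) morphisms from $X_{P,Q}$ into $\lang[1]{G}$, which are backwards; and (v) backwards morphisms between objects $X_{P,Q}$. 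At each stage the new generator $\alpha^{X,Z}$ is \emph{defined} by forcing a factorisation identity relative to the generators already chosen, and the verification of compatibility with composition is postponed to the end.

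The technical engine throughout is a pair of universal objects. Fix a maximal indecomposable summand $\iP$ and a minimal indecomposable summand $\iQ$ of $G$. For $m \gg 0$ the shift $\iP[m]$ admits nonzero maps to every indecomposable of $\lang[1]{G}$, and by \Cref{Lem1: Morphisms X to X} the object $\iX^{m} := X_{\iP[m],\iQ}$ does the same via forwards morphisms into an arbitrary $X_{R,S}$; dually, a sufficiently large shift of $X_{\iP,\iQ[\Delta]}$ receives forwards morphisms from everything and maps backwards into $\lang[1]{G}$. In stage (i), after fixing the maps $\alpha^{\iP,Q}$ out of $\iP$ together with a compatible ladder $\alpha^{\iP[m_{1}],\iP[m_{2}]}$, I would define $\alpha^{P,Q}$ (for $Q \le P$ in $\lang[1]{G}$) to be the unique morphism through which the canonical map $\alpha^{\iP[m],Q}$ factors over $\alpha^{\iP[m],P}$; existence of such a factorisation is \Cref{Prop1: Factoring arcs}, and both independence of $m$ and compatibility with composition reduce to the fact that precomposition with a nonzero map out of $\iP[m]$ is injective on the (at most one-dimensional) Hom-spaces, so any two candidates that agree after this precomposition are equal. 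The later stages are formally identical, with $\iX^{m}$ or the sink object at the apex of the relevant triangle of maps in place of $\iP[m]$, and with the cone computations of \Cref{subsec: cones} used to recognise which Hom-spaces vanish.

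The hard part will be the final compatibility check in stage (v), where $X = X_{P,Q}$, $Z = X_{R,S}$, and the intermediate object $W$ through which $\alpha^{X,Z}$ factors is itself of the form $X_{P',Q'}$. By \Cref{Prop1: Factoring arcs} one must treat separately whether $\alpha^{X,W}$ and $\alpha^{W,Z}$ are forwards or backwards (with exactly one of each when $\alpha^{X,Z}$ is backwards), and in each configuration one has to write down a commuting diagram with a universal source or sink at its apex and chase it, cancelling a nonzero morphism at the end exactly as in the base case. This remains manageable because, once the universal object dominates $X$, $W$, and $Z$, every triangle appearing in the diagram commutes either by the defining property of a generator chosen at an earlier stage or by one-dimensionality of the Hom-spaces, so no genuinely new input is needed beyond bookkeeping. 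I would therefore organise the write-up as an explicit case analysis running in parallel with the eight cases of \Cref{Prop1: Factoring arcs}, settling the $\lang[1]{G}$-only case first (which also handles the base of the induction) and then peeling off one $X_{P,Q}$ factor at a time.
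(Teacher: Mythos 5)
Your proposal follows essentially the same route as the paper's proof in \Cref{sec: choice of gen}: the same five-stage ordering of the choices, the same use of dominant objects ($\iP[m]$ for $m\gg 0$ and the cones $X_{\iP[m],\iQ}$, $X_{\iP,\iQ[\Delta]}[\kmax]$) as apices of commuting triangles, the same uniqueness-and-compatibility argument by cancelling a nonzero precomposition on one-dimensional Hom-spaces, and the same final case analysis parallel to \Cref{Prop1: Factoring arcs} for backwards morphisms between cones. The only slip is directional: the large shift of $X_{\iP,\iQ[\Delta]}$ is a universal \emph{source} of forwards morphisms (it maps to everything relevant), not a sink receiving them, but this does not affect the plan.
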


\begin{proof}
	Let $G \in \cT$ be a linear generator.
	Then we may choose $\alpha^{P,Q}$ for all $Q < P \in \lang[1]{G}$,

	Suppose that $X \cong \mathrm{cone} \, (P[-1] \rightarrow Q)$, and let us make a choice $\alpha^{Q,X}$.
	This choice is unique as any indecomposable object $X \in \ocC$ is uniquely determined as the cone of a morphism between indecomposable objects in $\langle G \rangle_1$ by \Cref{Cor1: Xs arent iso}.
	Further, let $\alpha^{P',X} = \alpha^{Q,X} \alpha^{P',Q}$ whenever there is a non-zero morphism $P' \rightarrow Q \in \langle G \rangle_1$.
	
	Now suppose $X \rightarrow Y$ is a forward morphism, then there exists some indecomposable $P \in \lang[1]{G}$ such that the following diagram commutes, by \Cref{Prop1: Factoring arcs},
	\[
	\begin{tikzcd}
		& P \arrow[ld,"\alpha^{P,X}",swap] \arrow[rd,"\alpha^{P,Y}"] &\\
		X \arrow[rr,dashed, "h",swap] && Y.
	\end{tikzcd}
	\]
	We let $h = \alpha^{X,Y}$, and note that this is independent of the choice of $P$, as for some other indecomposable $P' \in \lang[1]{G}$ with $P' \rightarrow P$ non-zero, we have
	\[
	\begin{tikzcd}
		& P' \arrow[ld,"\alpha^{P',X}",swap] \arrow[rd,"\alpha^{P',Y}"] &\\
		X \arrow[rr,dashed, "\beta^{X,Y}",swap] && Y.
	\end{tikzcd}
	\]
	However,
	\[
	\alpha^{P',Y} = \alpha^{P,Y} \alpha^{P',P} = \alpha^{X,Y} \alpha^{P,X} \alpha^{P',P} = \alpha^{X,Y} \alpha^{P',X},
	\]
	and so $\alpha^{X,Y} = \beta^{X,Y}$.
	If $P \rightarrow P'$ is non-zero, then an analogous argument holds.
	
	Suppose we have two forward morphisms $X \rightarrow Y$ and $Y \rightarrow Z$ between indecomposable objects, such that they compose non-trivially.
	Consider the following diagram, where there must exist such a $P \in \lang[1]{G}$ as both morphisms are forward morphisms, and the triangles containing $P$ commute by  \Cref{Prop1: Factoring arcs};
	\[
	\begin{tikzcd}
		&& P \arrow[llddd,swap,"\alpha^{P,X}"] \arrow[dd,"\alpha^{P,Y}",pos=0.75] \arrow[rrddd,"\alpha^{P,Z}"] &&\\
		\\
		&& Y \arrow[rrd,"\alpha^{Y,Z}",pos=0.4,swap] &&\\
		X \arrow[rrrr,swap,"\alpha^{X,Z}"] \arrow[rru,"\alpha^{X,Y}",swap] &&&& Z.
	\end{tikzcd}
	\]
	Then we have;
	\begin{align*}
		\alpha^{X,Z}\alpha^{P,X} &= \alpha^{P,Z}\\
		&= \alpha^{Y,Z} \alpha^{P,Y}\\
		&= \alpha^{Y,Z} \alpha^{X,Y} \alpha^{P,X},
	\end{align*}
	and so we see that $\alpha^{X,Z} = \alpha^{Y,Z} \alpha^{X,Y}$.
	
	Now, let $X \in \ocC$ be an indecomposable object not in $\lang[1]{G}$, and let 
	\[
	Q \rightarrow X \rightarrow P \rightarrow Q[1]
	\]
	be the unique triangle with $Q,P \in \lang[1]{G}$ indecomposable.
	Choose some generator $\alpha^{X,P}$ of $\Hom[\ocC]{X}{P}$, and $\alpha^{X,R} = \alpha^{P,R}\alpha^{X,P}$ for any $R \in \lang[1]{G}$ such that $X \rightarrow P \rightarrow R$ composes non-trivially.
	
	For any forward morphism $X \rightarrow Y$, for $Y \not\in \lang[1]{G}$ and indecomposable, suppose 
	\[
	Q' \rightarrow Y \rightarrow P' \rightarrow Q[1]
	\]
	is the unique triangle with $Q',P' \in \lang[1]{G}$ indecomposable.
	Then we define $\alpha^{Y,P'}$ to be the unique morphism making the following diagram commute;
	\[
	\begin{tikzcd}
		X \arrow[rr,"\alpha^{X,Y}"] \arrow[dd,"\alpha^{X,P}"] && Y \arrow[dd,dashed,"\alpha^{Y,P'}"]\\
		\\
		P \arrow[rr,"\alpha^{P,P'}"] && P'.
	\end{tikzcd}
	\]
	Analogously, for a forward morphism $Z \rightarrow X$, we define $\alpha^{Z,P''}$ as the unique morphism making the following diagram commute;
	\[
	\begin{tikzcd}
		Z \arrow[rr,"\alpha^{Z,X}"] \arrow[dd,dashed,"\alpha^{Z,P''}"] && X \arrow[dd,"\alpha^{X,P}"]\\
		\\
		P'' \arrow[rr,"\alpha^{P'',P}"] && P.
	\end{tikzcd}
	\]
	We note that the composition of morphisms in these commutative diagrams is non-trivial by the definition of a forward morphism and \Cref{Prop1: Factoring arcs}.
	For any non-zero morphism $W \rightarrow S$ between indecomposable objects, for $W \not\in \lang[1]{G}$ and $S \in \lang[1]{G}$, we may define $\alpha^{W,S}$ iteratively.
	
	To see that our choice is consistent, let $X' \in \cT$ be indecomposable, and let  $X$ and $X'$ cross.
	Suppose we find two choices for our generator of $\Hom[\ocC]{X'}{P'}$, and consider the following commutative diagram;
	\[
	\begin{tikzcd}
		&& X' \arrow[rrrd,"\alpha^{X',Y}"] \arrow[dddd,pos=0.75,"\alpha^{X',P'}",shift left] \arrow[dddd,pos=0.75,"\beta^{X',P'}",shift right,swap] &&&\\
		Z \arrow[rru,"\alpha^{Z,X'}"] \arrow[dddd,swap,"\alpha^{Z,R}"] \arrow[rrrd,swap,"\alpha^{Z,X}",crossing over] &&&&& Y \arrow[dddd,"\alpha^{Y,Q}"]\\
		&&& X \arrow[rru,"\alpha^{X,Y}",swap] &&\\
		\\
		&& P' \arrow[rrrd,"\alpha^{P',Q}"]  &&&\\
		R \arrow[rru,"\alpha^{R,P'}"]  \arrow[rrrd,swap,"\alpha^{R,P}"] &&&&& Q \\
		&&& P \arrow[rru,"\alpha^{P,Q}",swap] \arrow[from=uuuu,crossing over,pos=0.25,"\alpha^{X,P}"] &&
	\end{tikzcd}
	\]
	where the objects are all indecomposable, the morphisms in the top face are forward morphisms, and the objects in the bottom face are in $\lang[1]{G}$.
	Then the commutativity of each face, the vertical faces by construction, the top face by $\alpha^{X,Y} \alpha^{Z,X} = \alpha^{Z,Y} = \alpha^{X',Y} \alpha^{Z,X'}$, and the bottom face by definition of a linear generator, then we may see that $\beta^{X',P} = \alpha^{X',P}$.
	
	For an arbitrary indecomposable object $W \in \cT$, we may choose an indecomposable object $X''$ such that $X''$ crosses both $W$ and $X$.
	Then $\alpha^{X'',P''}$ is uniquely defined, and so we may apply the above argument to see that our choice is consistent.
	
	Now suppose that $Y \rightarrow X$ is a backwards morphism, then there exists an indecomposable object $P \in \lang[1]{G}$ such that;
	\[
	\begin{tikzcd}
		Y \arrow[rr,"\alpha^{Y,P}"] \arrow[rrdd, dashed,"h'",swap] && P \arrow[dd, "\alpha^{P,X}"] \\
		\\
		&& X,
	\end{tikzcd}
	\]
	commutes.
	We choose $\alpha^{Y,X}$ to be $h'$.
	The choice of $\alpha^{Y,X}$ is again independent of the choice of $P$, as we have the following commutative diagram;
	\[
	\begin{tikzcd}
		Y \arrow[rrrr] \arrow[rrrd] \arrow[rrrrddd] &&&& P' \arrow[ddd]\\
		&&& P \arrow[rdd] \arrow[ru,dash] &\\
		\\
		&&&& X,
	\end{tikzcd}
	\]
	where there either exists a non-zero morphism $P \rightarrow P'$ or $P' \rightarrow P$.
	
	Finally, suppose we have two morphisms $X \rightarrow Y$ and $Y \rightarrow Z$ that compose non-trivially.
	If both are forward morphisms, we have already seen that our claim holds.
	Both cannot be backwards morphisms, as they compose non-trivially, so we need to check when one is a forwards morphism, and the other a backwards morphism.
	Suppose $X \rightarrow Y$ is a forwards morphism, then we have the commutative diagram;
	\[
	\begin{tikzcd}
		X \arrow[rrd,"\alpha^{X,Y}"] \arrow[rrrr,"\alpha^{X,Z}"] \arrow[rrddd,"\alpha^{X,P}",swap] &&&& Z.\\
		&&  Y \arrow[rru,"\alpha^{Y,Z}"] \arrow[dd,"\alpha^{Y,P}"]&&\\
		\\
		&& P \arrow[uuurr,"\alpha^{P,Z}",swap] &&
	\end{tikzcd}
	\]
	Hence we can see that our claim holds as we have;
	\begin{align*}
		\alpha^{X,Z} &= \alpha^{P,Z}\alpha^{X,P}\\
		&= \alpha^{P,Z} \alpha^{Y,P} \alpha^{X,Y}\\
		&= \alpha^{Y,Z} \alpha^{X,Y}.
	\end{align*}
	
	Analogously, if $X \rightarrow Y$ is a backwards morphism, then we have the following commutative diagram showing that $\alpha^{X,Z} = \alpha^{Y,Z} \alpha^{X,Y}$;
	\[
	\begin{tikzcd}
		X \arrow[rrd,"\alpha^{X,Y}"] \arrow[rrrr,"\alpha^{X,Z}"] \arrow[rrddd,"\alpha^{X,P}",swap] &&&& Z.\\
		&&  Y \arrow[rru,"\alpha^{Y,Z}"] \arrow[from=dd,"\alpha^{P,Y}",swap]&&\\
		\\
		&& P \arrow[uuurr,"\alpha^{Z,P}",swap] &&
	\end{tikzcd}
	\]
	Therefore we have made a choice of $\alpha^{X,Y}$ for all indecomposable $X,Y \in \ocC$ such that $\Hom[\ocC]{X}{Y} \neq 0$, such that 
	\[
	\alpha^{X,Z} = \alpha^{Y,Z} \alpha^{X,Y},
	\] 
	whenever a morphism $X \rightarrow Z$ factors through $Y$.
\end{proof}
}

\begin{corollary} \label[cor]{cor: extending basis}
	Let $\cT$ be a triangulated category satisfying \Cref{setup: cluster cat} and admitting a linear generator $G$. Let $G'$ be a summand of $G$, let $\cS = \lang{G'} \subseteq \cT$ and suppose that $\{\alpha^{X, Y}\}_{X, Y \in \mathrm{ind} \, \cS}$ is a choice of generators for $\cS$ as in \Cref{Prop1: Generators of Hom}. Then, there is a choice of generators $\{\beta^{X, Y}\}_{X, Y \in \mathrm{ind} \, \cT}$ such that $\beta^{X, Y} = \alpha^{X, Y}$ whenever $X, Y \in \cS$.
\end{corollary}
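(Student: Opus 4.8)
The plan is to re-run the procedure from the proof of \Cref{Prop1: Generators of Hom} for $\cT$, pinning down the free choices so that the family $\{\beta^{X,Y}\}$ it produces restricts to $\{\alpha^{X,Y}\}$ on $\cS$. Two preliminary remarks make this work. First, $\cS=\lang{G'}=\lang[2]{G'}$ is a full triangulated subcategory of $\cT$ in which $G'$ is a linear generator: conditions \ref{G1}--\ref{G3} for $G'$ are the restrictions of those for $G$, since the order on $\mathrm{ind}\,\lang[1]{G'}$ is induced from that on $\mathrm{ind}\,\lang[1]{G}$ and Hom spaces and composition in $\cS$ agree with those in $\cT$; by \Cref{Prop1: Ind in langle 2} and \Cref{Thm1: Perf 2 is Tri}, $\mathrm{ind}\,\cS$ consists of the indecomposables of $\lang[1]{G'}$ together with the cones $X_{P',Q'}$ with $P',Q'\in\mathrm{ind}\,\lang[1]{G'}$, and by \Cref{cor: unique triangle} such a cone lies in $\mathrm{ind}\,\cS$ precisely when $P',Q'\in\mathrm{ind}\,\lang[1]{G'}$. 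Second, because $\{\alpha^{X,Y}\}$ is compatible with composition on $\cS$, it satisfies every identity $\alpha^{X,Z}=\alpha^{W,Z}\circ\alpha^{X,W}$ with $X,W,Z\in\mathrm{ind}\,\cS$ for which the corresponding factorisation exists, and these are exactly the identities the procedure of \Cref{Prop1: Generators of Hom} uses to ``force'' morphisms lying in $\cS$.

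Recall that that procedure runs through stages $(1)$--$(9)$, each time selecting some \emph{seed} morphisms subject only to being nonzero and forcing every other morphism by an equation ``$\beta^{X,Z}$ is the unique $h$ with $h\circ\beta^{A,X}=\beta^{A,Z}$'' (or its dual) for an auxiliary indecomposable $A$. We carry it out for $\cT$ under the rule: whenever a seed $X\to Y$ with $X,Y\in\mathrm{ind}\,\cS$ is to be chosen, take it to be $\alpha^{X,Y}$; and whenever the auxiliary object $A$ forcing some $\beta^{X,Z}$ with $X,Z\in\mathrm{ind}\,\cS$ may be taken in $\mathrm{ind}\,\cS$, do so (e.g.\ in the stage-$(4)$ definition of a forwards morphism $X_{P,Q}\to X_{P',Q'}$ inside $\cS$ one may take $A=Q$). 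With the auxiliary object chosen inside $\cS$, each forcing equation for a morphism in $\cS$ becomes \emph{verbatim} the equation defining the corresponding $\alpha$ in the $\cS$-instance of the procedure, so---granting that the earlier $\beta$'s already agree with the $\alpha$'s---the forced $\beta$ equals the forced $\alpha$, since both are the unique solution of the same equation in the same one-dimensional Hom space.

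The one genuine obstacle is that the two distinguished seed objects of the proof need not lie in $\cS$: the top indecomposable summand $\iP$ of $G$ used in stage $(1)$, and the cone $\iX=X_{\iP,\iQ}$ used in stage $(6)$. If $\iP$ is not a summand of $G'$, let $\iP'$ be the top summand of $G'$ (so $\iP'<\iP$), fix a nonzero $\phi\colon\iP\to\iP'$, and make the stage-$(1)$ seed choices $\beta^{\iP,\iP'}:=\phi$ and $\beta^{\iP,R}:=\alpha^{\iP',R}\circ\phi$ for $R\in\mathrm{ind}\,\lang[1]{G'}$ with $R\le\iP'$, while choosing the remaining stage-$(1)$ seeds $\beta^{\iP,Q}$ (for $Q\notin\mathrm{ind}\,\lang[1]{G'}$) and $\beta^{\iP[j],\iP}$ so that, in addition, the induced values $\beta^{\iP'[j],\iP'}$ agree with $\alpha^{\iP'[j],\iP'}$. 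All of these morphisms are nonzero and the constraints are solvable: \ref{G3} guarantees that the relevant nontrivial factorisations through $\iP'$ (respectively through $\iP$) exist, so post- and pre-composition with $\phi$, and with the $\alpha^{\iP',-}$ and $\alpha^{\iP'[j],\iP'}$, are isomorphisms of the one-dimensional Hom spaces in play, and the shift-compatibility relations built into stage $(1)$ make the finitely many independent constraints mutually consistent. One then verifies, exactly as in the proof of \Cref{Prop1: Generators of Hom}, that the forced morphisms $\beta^{P,Q}$ with $P,Q\in\mathrm{ind}\,\lang[1]{G'}$ coincide with $\alpha^{P,Q}$, because both are computed by routing through $\iP'[m]$ for $m\gg0$ and the data attached to the shift-orbit of $\iP'$ now match; an entirely parallel device routes the stage-$(6)$ seeds through $\iX'=X_{\iP',\iQ'}$.

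Granting this, the procedure goes through and outputs a choice of generators $\{\beta^{X,Y}\}$ for $\cT$ as in \Cref{Prop1: Generators of Hom}---validity only uses nonzero-ness of the seeds, which holds since each $\alpha^{X,Y}\neq0$, together with the existence of the factorisations of \Cref{Prop1: Factoring arcs} and \Cref{subsec: cones}, which is unaffected by our choices---and the stage-by-stage check above shows $\beta^{X,Y}=\alpha^{X,Y}$ for all $X,Y\in\mathrm{ind}\,\cS$. I expect the hardest part to be precisely the stage-$(1)$ and stage-$(6)$ bookkeeping: checking that the extra freedom in the seed morphisms attached to $\iP$ and $\iX$ is exactly enough to reproduce the prescribed $\cS$-choice on the shift-orbits of $\iP'$ and $\iX'$ while remaining consistent with all the other forced identities. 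Everything else is a routine induction over the nine stages.
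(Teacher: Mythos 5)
Your proposal is correct and follows essentially the same route as the paper's proof: both re-run the construction of \Cref{Prop1: Generators of Hom} with the seed data pinned down via a fixed morphism $\iP\to\iP'$ (forcing $\beta^{\iP,R}=\alpha^{\iP',R}\circ\beta^{\iP,\iP'}$ and constraining $\beta^{\iP[j],\iP}$ to reproduce $\alpha^{\iP'[j],\iP'}$), and both route the cone-to-suspension seeds through $\iX'=X_{\iP',\iQ'}$, verifying agreement on $\cS$ by the same factor-through-$\iP'[m]$ diagram chase.
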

\begin{proof}
	As in the proof of \Cref{Prop1: Generators of Hom}, $\iP$ and $\iP'$ be the biggest (with respect to the order on $\lang[1]{G}$) indecomposable summands of $G$ and $G'$, respectively. Observe that $\iP' \leq \iP$. 
	\begin{enumerate}
	\item We first construct a collection $\{\beta^{P, Q}\}_{P, Q \in \mathrm{ind} \, \lang[1]{G}}$ compatible with composition such that $\beta^{P, Q} = \alpha^{P, Q}$ whenever $P, Q \in \lang[1]{G'}$. 
 	
	We are given morphisms $\alpha^{\iP', Q}$ for all $Q \leq \iP'$ and $Q \in \lang[1]{G'}$. Extend this collection by making choices of morphisms $\alpha^{\iP', Q}$ for all $Q \leq \iP'$ and $Q \in \lang[1]{G} \setminus \lang[1]{G'}$. Moreover, we are also given morphisms $\alpha^{\iP'[j],\iP'}$ for $0 < j \in \bZ$.

 	Next, fix a morphism $\beta^{\iP, \iP'}$. Note that for $0 < j \in \bZ$, $\iP' \leq \iP \leq \iP[j]$. Hence, by the definition of linear generators, there is an $h$ such that 
\[ \alpha^{\iP'[j], \iP'} \circ \beta^{\iP, \iP'}[j] = \beta^{\iP, \iP'} \circ h. \]
We let $\beta^{\iP[j], \iP} := h$.

	Furthermore, for all $Q \leq \iP' \in \lang[1]{G}$, let $\beta^{\iP, Q} = \alpha^{\iP', Q} \circ \beta^{\iP, \iP'}$. On the other hand, for $Q \in \lang[1]{G}$ with $\iP' < Q < \iP$, just fix a choice of morphisms $\beta^{\iP, Q}$. Finally, let $\beta^{\iP, \iP}$ be the identity.  With this initial choice of morphisms, apply \Cref{Prop1: Generators of Hom}(1) to get a collection $\{\beta^{P, Q}\}_{P, Q \in \mathrm{ind} \, \lang[1]{G}}$. 

	It remains to prove that $\beta^{P, Q} = \alpha^{P, Q}$ whenever $P, Q \in \lang[1]{G'}$. Well, by the proof of \Cref{Prop1: Generators of Hom} and our choices for $\alpha^{P, Q}$ and $\beta^{P, Q}$, there is an integer $m$ such that there is a diagram
\[
\begin{tikzcd}[column sep=8em]
	& \iP[m] \arrow[d, "{\beta^{\iP, \iP'}[m]}", pos=0.7, yshift=0.2em] \arrow[ddl, "{\beta^{\iP[m], P}}", swap, pos=0.3, xshift=-0.5em, yshift=0.2em] \arrow[ddr, "{\beta^{\iP[m], Q}}", pos=0.3, xshift=0.5em, yshift=0.2em]  & \\[1.5em] 
	& \iP'[m] \arrow[dl, "{\alpha^{\iP'[m], P}}", swap, pos=0.15, yshift=0.2em] \arrow[dr, "{\alpha^{\iP'[m], Q}}", pos=0.15, yshift=0.2em] & \\
	P \arrow[rr, "\alpha^{P, Q}", xshift=0.2em] \arrow[rr, "\beta^{P, Q}", yshift=-0.5em, xshift=0.2em, swap] & & Q
\end{tikzcd}
\]
where the triangle with vertices $P, Q, \iP[m]$ and edge $\beta^{P, Q}$, as well as the triangle with vertices $P, Q, \iP'[m]$ and edge $\alpha^{P, Q}$ commute by construction of $\beta^{P, Q}$ and $\alpha^{P, Q}$. Moreover, the top left triangle commutes since
\begin{align*}
\alpha^{\iP'[m], P} \circ \beta^{\iP, \iP'}[m] & = \alpha^{\iP', P} \circ \alpha^{\iP'[m], \iP'} \circ \beta^{\iP, \iP'}[m] \tag{\text{Since $\{\alpha^{P, Q}\}$ is compatible with composition} }\\
& = \alpha^{\iP', P} \circ \beta^{\iP, \iP'} \circ \beta^{\iP[m], \iP} \tag{ \text{By definition of $\beta^{\iP[m], \iP}$}} \\
& = \beta^{\iP[m], P} \tag{\text{By definition of $\beta^{\iP[m], P}$}}. 
\end{align*}
Similarly, the top right triangle also commutes. Hence,
\[ \alpha^{P, Q} \circ \beta^{\iP[m], P} = \alpha^{P, Q} \circ \alpha^{\iP'[m], P} \circ \beta^{\iP, \iP'}[m] = \alpha^{\iP'[m], Q} \circ \beta^{\iP, \iP'}[m] = \beta^{\iP[m], Q} = \beta^{P, Q} \circ \beta^{\iP[m], P}\]
which is enough to conclude that $\alpha^{P, Q} = \beta^{P, Q}$, as required.

\item  For the morphisms $\{\beta^{P, Z}\}_{P \in \mathrm{ind} \, \lang[1]{G}, Z \in \mathrm{ind} \, \cT \setminus \lang[1]{G}}$, we make the obvious choice: If $Z = X_{R, S} \in \cS$, then we let $\beta^{S, Z} = \alpha^{S, Z}$, otherwise if $Z = X_{R, S} \notin \cS$, then we fix some morphism $\beta^{S, Z}$. For any $R \in \lang[1]{G}$ such that there is a nonzero morphism $R \to S$, we let $\beta^{R, Z} = \beta^{S, Z} \circ \beta^{R, S}$. Thus, it is clear that $\beta^{P, Z} = \alpha^{P, Z}$ whenever $P, Z \in \cS$.

\item The construction forwards morphisms $\{\beta^{Y, Z}\}_{Y, Z \in \mathrm{ind} \, \cT \setminus \lang[1]{G}}$ follows exactly as in \Cref{Prop1: Generators of Hom}(4). It is clear from (1) that $\beta^{Y, Z} = \alpha^{Y, Z}$ for $Y, Z \in \cS$.

\item To choose morphisms $\{\beta^{Y, P}\}_{P \in \mathrm{ind} \, \lang[1]{G}, Y \in \mathrm{ind} \, \cT \setminus \lang[1]{G}}$, consider the object $\iX' := X_{\iP', \iQ'}$ and note that we are given a morphism $\alpha^{\iX', \iP'[1]} \colon \iX' \to \iP'[1]$. Since there are forwards morphisms $X_{\iP, \iQ'} \to X_{\iP, \iQ}$ and $X_{\iP, \iQ'} \to \iX'$, \Cref{Prop1: Factoring arcs} implies that there are morphisms $g$ and $h$ making the diagram 
\[\begin{tikzcd}[column sep=4em] \label{cd: extending basis}
X_{\iP, \iQ} \arrow[d, "g", dashed, swap] \arrow[r, "{\beta^{X_{\iP, \iQ'}, X_{\iP, \iQ} } }", leftarrow] & X_{\iP, \iQ'} \arrow[r, "{\beta^{X_{\iP, \iQ'}, \iX'} }"] \arrow[d, "h", dashed, swap] & \iX'  \arrow[d, "{ \alpha^{\iX', \iP'[1]} }"] \\
\iP[1] \arrow[r, "{\beta^{\iP, \iP}[1]}", leftarrow] & \iP[1] \arrow[r, "{\beta^{\iP, \iP'}[1]}"] & \iP'[1]
\end{tikzcd}
\]
commute. Let $\iX := X_{\iP, \iQ}$ and $\beta^{\iX, \iP[1]} := g$. Then, construct the morphisms $\{\beta^{Y, P}\}_{P \in \mathrm{ind} \, \lang[1]{G}, Y \in \mathrm{ind} \, \cT \setminus \lang[1]{G}}$ as in \Cref{Prop1: Generators of Hom}(5). 

It remains to show that this choice of morphisms $\beta^{Y, P}$ restricts to $\alpha^{Y, P}$. To see this, let $S \in \lang[1]{G'}$. We may construct the diagram
\[ \begin{tikzcd}[column sep=4em, row sep=2em] \label{cd: extending basis 2}
X_{\iP, \iQ} \arrow[d, "{\beta^{X_{\iP, \iQ}, \iP[1]}}"] & X_{\iP, \iQ'}  \arrow[d, "{\beta^{X_{\iP, \iQ'}, \iP[1]}}"] \arrow[l, "{\beta^{X_{\iP, \iQ'},X_{\iP, \iQ}}}", swap] & X_{\iP, S}  \arrow[d, "{\beta^{X_{\iP, S}, \iP[1]}}"] \arrow[l, "{\beta^{X_{\iP, S},X_{\iP, \iQ'}}}",swap]  \arrow[r, "{\beta^{X_{\iP, S},X_{\iP', S}}}"] & X_{\iP', S}  \arrow[d, "{\alpha^{X_{\iP', S}, \iP[1]}}"] \arrow[r, "{\beta^{X_{\iP', S},X_{\iP', \iQ'}}}"] & \iX' \arrow[d, "{\alpha^{iX', \iP[1]}}"] \\
\iP[1] \arrow[r, "{\beta^{\iP, \iP}[1]}", swap] & \iP[1] \arrow[r, "{\beta^{\iP, \iP}[1]}", swap] & \iP[1]  \arrow[r, "{\beta^{\iP, \iP'}[1]}", swap]   & \iP'[1]  \arrow[r, "{\beta^{\iP', \iP'}[1]}", swap] & \iP'[1]
\end{tikzcd}
\]
where all the morphisms in the first row are forwards. The two leftmost squares and the rightmost square commute by construction of the vertical morphisms. The outermost square commutes by \eqref{cd: extending basis}. It is then straighforward to check that the third inner square commutes. 
 
Next, let $X_{R, S} \in \cS$, then, as in the proof of \Cref{Prop1: Generators of Hom}(5), there is an integer $t$ such that there are forwards morphisms $X_{\iP'[t],S}\to X_{R, S}$ and $X_{\iP[t], S} \to X_{R,S}$, and so following diagram exists
\[\begin{tikzcd}[column sep=5em, row sep=2em]
	X_{\iP'[t], S} \arrow[d, equals] & {} & X_{\iP[t], S} \arrow[ll, "{\beta^{X_{\iP[t], S}, X_{\iP'[t], S}}}", swap] \\
	X_{\iP'[t], S} \arrow[d, "{\alpha^{X_{\iP'[t], S}, \iP'[t+1]}}", swap] \arrow[r, "\beta^{X_{\iP'[t], S}, X_{R,S}}"] & X_{R, S}  \arrow[d, "{\alpha^{X_{R, S}, R[1]}}", xshift=-0.25em, swap]  \arrow[d, "{\beta^{X_{R, S}, R[1]}}", xshift=0.25em] & X_{\iP[t], S} \arrow[l, "\beta^{X_{\iP[t], S}, X_{R,S}}", swap] \arrow[u, equals] \arrow[d, "{\beta^{X_{\iP[t], S}, \iP[t+1]}}"] \\
	\iP'[t+1] \arrow[r, "{\beta^{\iP'[t+1], R[1]}}", swap]  & R[1] & \iP[t+1]  \arrow[l, "{\beta^{\iP[t+1], R[1]}}"] \arrow[d, equals] \\
	\iP'[t+1] \arrow[u, equals]     & {}       &  \iP[t+1]. \arrow[ll, "{ \beta^{\iP[t+1], \iP'[t+1]} }"] 
\end{tikzcd}
\]
Here, the outer square commutes commutes by \eqref{cd: extending basis}. Moreover, the two middle squares commute by construction of the morphisms $\alpha^{X_{R, S}, R[1]}$ and $\beta^{X_{R, S}, R[1]}$. It is, thus, straightforward to check that this implies $\alpha^{X_{R, S}, R[1]} = \beta^{X_{R, S}, R[1]}$, as required.

\item Finally, we construct backwards morphisms $\{\beta^{Y, Z \}}_{Y, Z \in \mathrm{ind} \, \cT \setminus \lang[1]{G}}$ exactly as in \Cref{Prop1: Generators of Hom}(7). It is clear from the construcition that if $Y, Z \in \cS$ then $\beta^{Y, Z} = \alpha^{Y, Z}$.
\end{enumerate}
\end{proof}

\printbibliography

\end{document}